\numberwithin{equation}{section}
\theoremstyle{plain}
\newtheorem{theorem}{Theorem}[section]
\newtheorem{lemma}[theorem]{Lemma}
\newtheorem{proposition}[theorem]{Proposition}
\theoremstyle{definition}
\newtheorem{definition}[theorem]{Definition}
\title[Periodicities of T and Y-systems II: Types $C_r$, $F_4$, and $G_2$]
{Periodicities of T and Y-systems,\\
dilogarithm identities,
and cluster algebras II:\\ Types $C_r$, $F_4$, and $G_2$}
\author[R.\  Inoue]{Rei Inoue}
\address{ R.\  Inoue:
Faculty of Pharmaceutical Sciences, Suzuka University of Medical Science,
Suzuka, 513-8670, Japan}
\author[O.\ Iyama]{Osamu Iyama}
\address{ O.\ Iyama:
 Graduate School of Mathematics, Nagoya University,
Nagoya, 464-8604, Japan}
\author[B.\ Keller]{Bernhard Keller}
\address{B.\ Keller:
Universit\'e Paris Diderot -- Paris 7,
UFR de Math\'ematiques,
Institut de Math\'ematiques de Jussieu, UMR 7586 du CNRS,
Case 7012,
2, place Jussieu,
75251 Paris Cedex 05,
France}
\author[A.\ Kuniba]{\\Atsuo Kuniba}
\address{ A.\ Kuniba:
Institute of Physics,
University of Tokyo,
Tokyo, 153-8902, Japan}
\author[T.\ Nakanishi]{Tomoki Nakanishi}
\address{ T.\ Nakanishi:
 Graduate School of Mathematics, Nagoya University,
Nagoya, 464-8604, Japan}
\begin{document}

\footnote[0]{2010 {\em Mathematics Subject Classification}.
Primary 13F60; Secondary 17B37.}

\begin{abstract}
We prove the
 periodicities
of the restricted T and Y-systems associated with
the quantum affine algebra of type $C_r$, $F_4$,
and $G_2$ at any level.
We also prove the dilogarithm identities
for these Y-systems  at
any level.
Our proof is based on
the tropical Y-systems
and the categorification of
the cluster algebra associated with 
any skew-symmetric matrix by Plamondon.
\end{abstract}

\maketitle

\tableofcontents

\section{Introduction}
This is the continuation of the paper \cite{IIKKN}.
In \cite{IIKKN}, we proved the periodicities of the restricted
T and Y-systems associated with the quantum affine algebra of
type $B_r$ at any level.
We also proved the dilogarithm  identities for these
Y-systems at any level.
Our proof was based on
the tropical Y-systems
and the categorification of
the cluster algebra associated with 
any skew-symmetric matrix by Plamondon \cite{P1,P2}.
In this paper, using the same method,
we prove the corresponding statements for
type $C_r$, $F_4$, and $G_2$,
thereby completing all the nonsimply laced types.

The results are basically parallel to type $B_r$.
Since the common method and the proofs of the statements
for type $B_r$ were described in \cite{IIKKN} in detail,
in this paper, we skip the proofs of  most statements,
and concentrate on presenting the results with emphasis
on the special features of each case.
Notably, the tropical Y-system at level 2,
which is the core part in the entire method,
is quite specific to each case.

While we try to make the paper as self-contained as
possible, we also try to minimize  duplication with
\cite{IIKKN}.
Therefore, we have to ask the reader's patience to refer to
the companion paper  \cite{IIKKN} for the things which are omitted.
In particular, basic  definitions for cluster algebras
are summarized in \cite[Section 2.1]{IIKKN}.

The organization of the paper is as follows.
In section 2 we present the main results
as well as the T and Y-systems for each type.
In Section 3 the results for type $C_r$ are presented.
The tropical Y-system at level 2 is the key 
and described in detail in Section 3.6.
In Section 4 the results for type $F_4$ are presented.
In Section 5 the results for type $G_2$ are presented.
In Section 6 we list the known mutation equivalences
of quivers corresponding to the T and Y-systems.


\section{Main results}

\subsection{Restricted T and Y-systems of types $C_r$,
$F_4$, and $G_2$}

\label{subsect:restrictedT}

Let $X_r$ be the Dynkin diagram of type $C_r$,
$F_4$, or $G_2$ with rank $r$,
and $I=\{1,\dots, r\}$ be the enumeration
of the vertices of $X_r$ as below.

%

\begin{align*}
\begin{picture}(270,25)(0,-15)
%
%
\put(0,0){
\put(0,0){\circle{6}}
\put(20,0){\circle{6}}
\put(80,0){\circle{6}}
\put(100,0){\circle{6}}
\put(45,0){\circle*{1}}
\put(50,0){\circle*{1}}
\put(55,0){\circle*{1}}
\drawline(3,0)(17,0)
\drawline(23,0)(37,0)
\drawline(63,0)(77,0)
\drawline(82,-2)(98,-2)
\drawline(82,2)(98,2)
\drawline(87,0)(93,-6)
\drawline(87,0)(93,6)
\put(-25,-2){$C_r$}
\put(-2,-15){\small $1$}
\put(18,-15){\small $2$}
\put(70,-15){\small $ r-1$}
\put(98,-15){\small $r$}
}
%
\put(150,0){
\put(0,0){\circle{6}}
\put(20,0){\circle{6}}
\put(40,0){\circle{6}}
\put(60,0){\circle{6}}
\drawline(3,0)(17,0)
\drawline(43,0)(57,0)
\drawline(22,-2)(38,-2)
\drawline(22,2)(38,2)
\drawline(27,6)(33,0)
\drawline(27,-6)(33,0)
\put(-25,-2){$F_4$}
\put(-2,-15){\small $1$}
\put(18,-15){\small $2$}
\put(38,-15){\small $3$}
\put(58,-15){\small $4$}
}
%
\put(260,0){
\put(0,0){\circle{6}}
\put(20,0){\circle{6}}
\drawline(3,0)(17,0)
\drawline(2,-2)(18,-2)
\drawline(2,2)(18,2)
\drawline(7,6)(13,0)
\drawline(7,-6)(13,0)
\put(-25,-2){$G_2$}
\put(-2,-15){\small $1$}
\put(18,-15){\small $2$}
}
\end{picture}
\end{align*}
Let $h$ and $h^{\vee}$ be 
the Coxeter number and the dual Coxeter number of $X_r$,
respectively.
\begin{align}
\begin{tabular}{c|ccc}
  $X_r$ &   $C_r$ & $F_4$ & $G_2$ \\
  \hline
 $h$& $2r$&$12$&$6$\\
 $h^{\vee}$& $r+1$&$9$&$4$\\
\end{tabular}
\end{align}

We set  numbers $t$ and $t_a$ ($a\in I$) by
\begin{align}
\label{eq:t1}
t=
\begin{cases}
2 & \text{$X_r=C_r$, $F_4$}\\
3 & \text{$X_r=G_2$},
\end{cases}
\quad t_a=
\begin{cases}
1 & \text{$\alpha_a$: long root}\\
t & \text{$\alpha_a$: short root}.
\end{cases}
\end{align}

For a given integer $\ell \geq 2$,
we introduce a set of triplets
\begin{align}
\mathcal{I}_{\ell}=\mathcal{I}_{\ell}(X_r):=
\{(a,m,u)\mid
a\in I ; m=1,\dots,t_a\ell-1;
u\in \frac{1}{t}\mathbb{Z}
\}.
\end{align}

\begin{definition}[\cite{KNS}]
\label{defn:RT}
Fix an integer $\ell \geq 2$.
The {\it level $\ell$ restricted T-system $\mathbb{T}_{\ell}(X_r)$
of type $X_r$
(with the unit boundary condition)}
is the following system of relations
for
a family of variables $T_{\ell}=\{T^{(a)}_m(u)
\mid (a,m,u)\in \mathcal{I}_{\ell}
\}$,
where 
$T^{(0)}_m (u)=T^{(a)}_0 (u)=1$,
and furthermore,  $T^{(a)}_{t_a\ell}(u)=1$
(the {\em unit boundary condition\/}) if they occur
in the right hand sides in the relations:

(Here and throughout the paper,
$2m$ (resp.\ $2m+1$) in the left hand sides, for example,
represents elements  $ 2,4,\dots$
(resp.\ $1,3,\dots$).)

For $X_r=C_r$,
\begin{align}
\begin{split}
\label{eq:TC1}
T^{(a)}_m\left(u-\textstyle\frac{1}{2}\right)
T^{(a)}_m\left(u+\textstyle\frac{1}{2}\right)
&=
T^{(a)}_{m-1}(u)T^{(a)}_{m+1}(u)
+T^{(a-1)}_{m}(u)T^{(a+1)}_{m}(u)\\
&\hskip120pt
 (1\leq a\leq r-2),\\
T^{(r-1)}_{2m}\left(u-\textstyle\frac{1}{2}\right)
T^{(r-1)}_{2m}\left(u+\textstyle\frac{1}{2}\right)
&=
T^{(r-1)}_{2m-1}(u)T^{(r-1)}_{2m+1}(u)\\
&\qquad
+
T^{(r-2)}_{2m}(u)
T^{(r)}_{m}\left(u-\textstyle\frac{1}{2}\right)
T^{(r)}_{m}\left(u+\textstyle\frac{1}{2}\right),\\
T^{(r-1)}_{2m+1}\left(u-\textstyle\frac{1}{2}\right)
T^{(r-1)}_{2m+1}\left(u+\textstyle\frac{1}{2}\right)
&=
T^{(r-1)}_{2m}(u)T^{(r-1)}_{2m+2}(u)\\
&\qquad
+
T^{(r-2)}_{2m+1}(u)
T^{(r)}_{m}(u)T^{(r)}_{m+1}(u),\\
T^{(r)}_{m}(u-1)
T^{(r)}_{m}(u+1)
&=
T^{(r)}_{m-1}(u)T^{(r)}_{m+1}(u)
+
T^{(r-1)}_{2m}(u).
\end{split}
\end{align}

For $X_r=F_4$,

\begin{align}
\begin{split}
\label{eq:TF1}
T^{(1)}_m(u-1)T^{(1)}_m(u+1)
&=
T^{(1)}_{m-1}(u)T^{(1)}_{m+1}(u)
+T^{(2)}_{m}(u),\\
T^{(2)}_m(u-1)T^{(2)}_m(u+1)
&=
T^{(2)}_{m-1}(u)T^{(2)}_{m+1}(u)
+
T^{(1)}_{m}(u)T^{(3)}_{2m}(u),\\
T^{(3)}_{2m}\left(u-\textstyle\frac{1}{2}\right)
T^{(3)}_{2m}\left(u+\textstyle\frac{1}{2}\right)
&=
T^{(3)}_{2m-1}(u)T^{(3)}_{2m+1}(u)\\
&\qquad
+
T^{(2)}_{m}\left(u-\textstyle\frac{1}{2}\right)
T^{(2)}_{m}\left(u+\textstyle\frac{1}{2}\right)
T^{(4)}_{2m}(u),\\
T^{(3)}_{2m+1}\left(u-\textstyle\frac{1}{2}\right)
T^{(3)}_{2m+1}\left(u+\textstyle\frac{1}{2}\right)
&=
T^{(3)}_{2m}(u)T^{(3)}_{2m+2}(u)
+
T^{(2)}_{m}(u)T^{(2)}_{m+1}(u)
T^{(4)}_{2m+1}(u),\\
T^{(4)}_{m}\left(u-\textstyle\frac{1}{2}\right)
T^{(4)}_{m}\left(u+\textstyle\frac{1}{2}\right)
&=
T^{(4)}_{m-1}(u)T^{(4)}_{m+1}(u)
+T^{(3)}_m(u).
\end{split}
\end{align}

For $X_r=G_2$,
\begin{align}
\begin{split}
\label{eq:TG1}
T^{(1)}_m(u-1)T^{(1)}_m(u+1)
&=
T^{(1)}_{m-1}(u)T^{(1)}_{m+1}(u)
+
T^{(2)}_{3m}(u),\\
T^{(2)}_{3m}\left(u-\textstyle\frac{1}{3}\right)
T^{(2)}_{3m}\left(u+\textstyle\frac{1}{3}\right)
&=
T^{(2)}_{3m-1}(u)T^{(2)}_{3m+1}(u)\\
&\qquad
+
T^{(1)}_{m}\left(u-\textstyle\frac{2}{3}\right)
T^{(1)}_m(u)
T^{(1)}_{m}\left(u+\textstyle\frac{2}{3}\right),\\
T^{(2)}_{3m+1}\left(u-\textstyle\frac{1}{3}\right)
T^{(2)}_{3m+1}\left(u+\textstyle\frac{1}{3}\right)
&=
T^{(2)}_{3m}(u)T^{(2)}_{3m+2}(u)\\
&\qquad+
T^{(1)}_{m}\left(u-\textstyle\frac{1}{3}\right)
T^{(1)}_{m}\left(u+\textstyle\frac{1}{3}\right)
T^{(1)}_{m+1}(u),\\
T^{(2)}_{3m+2}\left(u-\textstyle\frac{1}{3}\right)
T^{(2)}_{3m+2}\left(u+\textstyle\frac{1}{3}\right)
&=
T^{(2)}_{3m+1}(u)T^{(2)}_{3m+3}(u)\\
&\qquad+
T^{(1)}_{m}(u)
T^{(1)}_{m+1}\left(u-\textstyle\frac{1}{3}\right)
T^{(1)}_{m+1}\left(u+\textstyle\frac{1}{3}\right).
\end{split}
\end{align}
\end{definition}

\begin{definition}[\cite{KN}]
\label{defn:RY}
Fix an integer $\ell \geq 2$.
The {\it level $\ell$ restricted Y-system $\mathbb{Y}_{\ell}(X_r)$
of type $X_r$}
is the following system of relations
for a family of variables $Y_{\ell}=\{Y^{(a)}_m(u)
\mid
(a,m,u)\in \mathcal{I}_{\ell}
\}$,
where 
$Y^{(0)}_m (u)=Y^{(a)}_0 (u)^{-1}
=Y^{(a)}_{t_a\ell}(u)^{-1}=0$
 if they occur
in the right hand sides in the relations:

For $X_r=C_r$,
\begin{align}
\begin{split}
\label{eq:YC1}
Y^{(a)}_m\left(u-\textstyle\frac{1}{2}\right)
Y^{(a)}_m\left(u+\textstyle\frac{1}{2}\right)
&=
\frac{
(1+Y^{(a-1)}_{m}(u))(1+Y^{(a+1)}_{m}(u))
}{
(1+Y^{(a)}_{m-1}(u)^{-1})(1+Y^{(a)}_{m+1}(u)^{-1})
}\\
&\hskip130pt
 (1\leq a\leq r-2),\\
Y^{(r-1)}_{2m}\left(u-\textstyle\frac{1}{2}\right)
Y^{(r-1)}_{2m}\left(u+\textstyle\frac{1}{2}\right)
&=
\frac{
(1+Y^{(r-2)}_{2m}(u))(1+Y^{(r)}_{m}(u))
}{
(1+Y^{(r-1)}_{2m-1}(u)^{-1})(1+Y^{(r-1)}_{2m+1}(u)^{-1})
},\\
Y^{(r-1)}_{2m+1}\left(u-\textstyle\frac{1}{2}\right)
Y^{(r-1)}_{2m+1}\left(u+\textstyle\frac{1}{2}\right)
&=
\frac{
1+Y^{(r-2)}_{2m+1}(u)
}{
(1+Y^{(r-1)}_{2m}(u)^{-1})(1+Y^{(r-1)}_{2m+2}(u)^{-1})
},\\
Y^{(r)}_{m}(u-1)
Y^{(r)}_{m}(u+1)
&=
\frac{
\begin{array}{l}
\textstyle
(1+Y^{(r-1)}_{2m+1}(u))
(1+Y^{(r-1)}_{2m-1}(u))\\
\textstyle
\quad\times(1+Y^{(r-1)}_{2m}\left(u-\frac{1}{2}\right))
(1+Y^{(r-1)}_{2m}\left(u+\frac{1}{2}\right))
\end{array}
}
{
(1+Y^{(r)}_{m-1}(u)^{-1})(1+Y^{(r)}_{m+1}(u)^{-1})
}.
\end{split}
\end{align}

For $X_r=F_4$,
\begin{align}
\begin{split}
Y^{(1)}_m(u-1)Y^{(1)}_m(u+1)
&=
\frac{
1+Y^{(2)}_{m}(u)
}{
(1+Y^{(1)}_{m-1}(u)^{-1})(1+Y^{(1)}_{m+1}(u)^{-1})
},\\
Y^{(2)}_m(u-1)Y^{(2)}_m(u+1)
&=
\frac{
{
\begin{array}{l}
\textstyle
(1+Y^{(1)}_{m}(u))
(1+Y^{(3)}_{2m-1}(u))
(1+Y^{(3)}_{2m+1}(u))\\
\textstyle
\quad\times(1+Y^{(3)}_{2m}\left(u-\frac{1}{2}\right))
(1+Y^{(3)}_{2m}\left(u+\frac{1}{2}\right))
\end{array}
}
}
{
(1+Y^{(2)}_{m-1}(u)^{-1})(1+Y^{(2)}_{m+1}(u)^{-1})
},
\\
Y^{(3)}_{2m}\left(u-\textstyle\frac{1}{2}\right)
Y^{(3)}_{2m}\left(u+\textstyle\frac{1}{2}\right)
&=
\frac{
(1+Y^{(2)}_{m}(u))(1+Y^{(4)}_{2m}(u))
}{
(1+Y^{(3)}_{2m-1}(u)^{-1})(1+Y^{(3)}_{2m+1}(u)^{-1})
},\\
Y^{(3)}_{2m+1}\left(u-\textstyle\frac{1}{2}\right)
Y^{(3)}_{2m+1}\left(u+\textstyle\frac{1}{2}\right)
&=
\frac{
1+Y^{(4)}_{2m+1}(u)
}{
(1+Y^{(3)}_{2m}(u)^{-1})(1+Y^{(3)}_{2m+2}(u)^{-1})
},\\
Y^{(4)}_{m}\left(u-\textstyle\frac{1}{2}\right)
Y^{(4)}_{m}\left(u+\textstyle\frac{1}{2}\right)
&=
\frac{
1+Y^{(3)}_{m}(u)
}{
(1+Y^{(4)}_{m-1}(u)^{-1})(1+Y^{(4)}_{m+1}(u)^{-1})
}.
\end{split}
\end{align}

For $X_r=G_2$,
\begin{align}
\begin{split}
\label{eq:YG1}
Y^{(1)}_m(u-1)Y^{(1)}_m(u+1)
&=
\frac{
{
\begin{array}{l}
\textstyle
(1+Y^{(2)}_{3m-2}(u))
(1+Y^{(2)}_{3m+2}(u))\\
\textstyle
\times(1+Y^{(2)}_{3m-1}\left(u-\frac{1}{3}\right))
(1+Y^{(2)}_{3m-1}\left(u+\frac{1}{3}\right))\\
\textstyle
\times(1+Y^{(2)}_{3m+1}\left(u-\frac{1}{3}\right))
(1+Y^{(2)}_{3m+1}\left(u+\frac{1}{3}\right))\\
\textstyle
\times(1+Y^{(2)}_{3m}\left(u-\frac{2}{3}\right))
(1+Y^{(2)}_{3m}\left(u+\frac{2}{3}\right))\\
\times (1+Y^{(2)}_{3m}\left(u\right))
\end{array}
}
}
{
(1+Y^{(1)}_{m-1}(u)^{-1})(1+Y^{(1)}_{m+1}(u)^{-1})
},
\\
Y^{(2)}_{3m}\left(u-\textstyle\frac{1}{3}\right)
Y^{(2)}_{3m}\left(u+\textstyle\frac{1}{3}\right)
&=
\frac{1+Y^{(1)}_m(u)}
{
(1+Y^{(2)}_{3m-1}(u)^{-1})(1+Y^{(2)}_{3m+1}(u)^{-1})
},
\\
Y^{(2)}_{3m+1}\left(u-\textstyle\frac{1}{3}\right)
Y^{(2)}_{3m+1}\left(u+\textstyle\frac{1}{3}\right)
&=
\frac{1}
{
(1+Y^{(2)}_{3m}(u)^{-1})(1+Y^{(2)}_{3m+2}(u)^{-1})
},
\\
Y^{(2)}_{3m+2}\left(u-\textstyle\frac{1}{3}\right)
Y^{(2)}_{3m+2}\left(u+\textstyle\frac{1}{3}\right)
&=
\frac{1}
{
(1+Y^{(2)}_{3m+1}(u)^{-1})(1+Y^{(2)}_{3m+3}(u)^{-1})
}.
\end{split}
\end{align}
\end{definition}

Let us write \eqref{eq:TC1}--\eqref{eq:TG1} in a unified manner
\begin{align}
\label{eq:Tu}
T^{(a)}_{m}\left(u-\textstyle\frac{1}{t_a}\right)
T^{(a)}_{m}\left(u+\textstyle\frac{1}{t_a}\right)
&=
T^{(a)}_{m-1}(u)T^{(a)}_{m+1}(u)
+
\prod_{(b,k,v)\in \mathcal{I}_{\ell}}
T^{(b)}_{k}(v)^{G(b,k,v;a,m,u)}.
\end{align}
Define the transposition
$^{t}G(b,k,v;a,m,u)=G(a,m,u;b,k,v)$.
Then, \eqref{eq:YC1}--\eqref{eq:YG1} are written as
\begin{align}
\label{eq:Yu}
Y^{(a)}_{m}\left(u-\textstyle\frac{1}{t_a}\right)
Y^{(a)}_{m}\left(u+\textstyle\frac{1}{t_a}\right)
&=
\frac{
\displaystyle
\prod_{(b,k,v)\in \mathcal{I}_{\ell}}
(1+Y^{(b)}_{k}(v))^{{}^t\!G(b,k,v;a,m,u)}
}
{
(1+Y^{(a)}_{m-1}(u)^{-1})(1+Y^{(a)}_{m+1}(u)^{-1})
}.
\end{align}

\subsection{Periodicities}


\begin{definition}
\label{defn:RT2}
Let  $\EuScript{T}_{\ell}(X_r)$
be the commutative ring over $\mathbb{Z}$ with identity
element, with generators
$T^{(a)}_m(u)^{\pm 1}$
 ($(a,m,u)\in \mathcal{I}_{\ell}$)
and relations $\mathbb{T}_{\ell}(X_r)$
together with $T^{(a)}_m(u)T^{(a)}_m(u)^{-1}=1$.
Let $\EuScript{T}^{\circ}_{\ell}(X_r)$ be
 the subring of $\EuScript{T}_{\ell}(X_r)$
generated by 
$T^{(a)}_m(u)$ 
 ($(a,m,u)\in \mathcal{I}_{\ell}$).
\end{definition}

\begin{definition}
\label{def:YB}
Let $\EuScript{Y}_{\ell}(X_r)$
be the semifield with generators
$Y^{(a)}_m(u)$
 ($(a,m,u)\in \mathcal{I}_{\ell}$
and relations $\mathbb{Y}_{\ell}(X_r)$.
Let $\EuScript{Y}^{\circ}_{\ell}(X_r)$
be the multiplicative subgroup
of $\EuScript{Y}_{\ell}(X_r)$
generated by
$Y^{(a)}_m(u)$, $1+Y^{(a)}_m(u)$
 ($(a,m,u)\in \mathcal{I}_{\ell}$).
(Here we use the symbol $+$ instead of $\oplus$ 
for simplicity.)
\end{definition}

The first main result of the paper is the periodicities
of the T and Y-systems.

\begin{theorem}[Conjectured by \cite{IIKNS}]
\label{thm:Tperiod}
The following relations hold in
$\EuScript{T}^{\circ}_{\ell}(X_r)$.
\par
(i) Half periodicity: $T^{(a)}_m(u+h^{\vee}+\ell)
=T^{(a)}_{t_a \ell -m}(u)$.
\par
(ii) Full periodicity: $T^{(a)}_m(u+2(h^{\vee}+\ell))
=T^{(a)}_m(u)$.
\end{theorem}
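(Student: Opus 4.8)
The plan is to follow, in outline, the strategy of the companion paper \cite{IIKKN}, only the type-specific combinatorics being new. First observe that (ii) is immediate from (i): applying (i) twice gives $T^{(a)}_m(u+2(h^{\vee}+\ell))=T^{(a)}_{t_a\ell-m}(u+h^{\vee}+\ell)=T^{(a)}_{t_a\ell-(t_a\ell-m)}(u)=T^{(a)}_m(u)$, so the whole content is the half-periodicity (i).

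The first step is to realize $\mathbb{T}_\ell(X_r)$ inside a cluster algebra. Following \cite{IIKKN}, I would attach to $\mathbb{T}_\ell(X_r)$ an infinite skew-symmetric quiver $Q_\ell(X_r)$, with vertices indexed by pairs $(a,m)$, $a\in I$, $1\le m\le t_a\ell-1$, doubled by a parity refining the $u$-slicing, the arrows and their multiplicities chosen so that the forward evolution in $u$ dictated by \eqref{eq:Tu} is implemented by a composition $\mu_\odot$ of mutations at a set of pairwise non-adjacent vertices, followed by a shift automorphism $\rho$ of $Q_\ell(X_r)$. One then checks directly from \eqref{eq:Tu} and \eqref{eq:Yu} that the cluster variables produced along $(\rho\circ\mu_\odot)^n$ are exactly the $T^{(a)}_m(u)$, and --- working with the universal (equivalently tropical) coefficient system --- that the associated coefficient variables are the $Y^{(a)}_m(u)$ of \eqref{eq:Yu}. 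Relative to the case $B_r$ of \cite{IIKKN}, the genuinely new points are the values $t=2$ ($C_r$, $F_4$) and $t=3$ ($G_2$) and the ``multiplied'' $m$-index on the short-root node, which force the $t_a$-dependent shifts $u\pm 1/t_a$; encoding these correctly inside a skew-symmetric quiver is the delicate part of the construction, handled exactly as for $B_r$ in \cite{IIKKN}.

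With this dictionary, (i) becomes the assertion that the loop of seed mutations obtained by iterating $\rho\circ\mu_\odot$ for a ``time'' $h^{\vee}+\ell$ of the $u$-parameter, composed with the involution $\iota\colon (a,m,u)\mapsto(a,t_a\ell-m,u)$ --- which is readily seen to be an automorphism of $Q_\ell(X_r)$ --- acts as the identity on the seed, i.e.\ on all cluster and coefficient variables. Here I would invoke the general principle, established in \cite{IIKKN} by means of Plamondon's categorification \cite{P1,P2} of cluster algebras of arbitrary skew-symmetric type (needed because $Q_\ell(X_r)$ has infinite rank), that such a seed-mutation loop is a periodicity as soon as it is a periodicity of the associated tropical Y-system; equivalently, as soon as the $c$-vectors (tropical signs) of the seeds along the loop return to their initial values. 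Sign-coherence of the $c$-vectors, supplied by the categorification, is exactly what forbids spurious cancellations and lets the tropical data determine the entire seed. This reduces Theorem \ref{thm:Tperiod} to a purely combinatorial statement about $\mathbb{Y}_\ell(X_r)$ evaluated in the tropical semifield.

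It remains to prove that periodicity. As in \cite{IIKKN}, the crux is the level $\ell=2$ case: the nontrivial behaviour --- the change of tropical sign --- is confined to a subconfiguration near the ends of the $m$-range whose combinatorics is insensitive to $\ell$ beyond a rescaling, so the general level follows from $\ell=2$ by a stacking argument. The level-$2$ computation is then done type by type: for $C_r$ (the content of Section~3.6) the rank-dependence is dealt with by isolating a stable ``bulk'' pattern along the Dynkin direction and treating the two ends explicitly; for $F_4$ and $G_2$ it is a finite, if lengthy, direct verification. I expect this last step --- exhibiting the full orbit of tropical $y$-vectors for $C_r$ of arbitrary rank and checking that it closes up after the predicted number of mutations uniformly in $r$ --- to be the main obstacle, and the place where essentially all the real work lies; by contrast, the cluster-algebraic reduction of the preceding paragraphs is a routine transcription of \cite{IIKKN}.
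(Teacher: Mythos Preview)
Your proposal is correct and follows essentially the same route as the paper: embed $\mathbb{T}_\ell(X_r)$ in a cluster algebra via a quiver $Q_\ell(X_r)$, reduce the periodicity of cluster variables to that of the tropical $Y$-system by the categorification argument of \cite{IIKKN,P1,P2}, verify the tropical periodicity at level~$2$ type by type, and pass to general~$\ell$ by the factorization property. One small correction: the quiver $Q_\ell(X_r)$ is of \emph{finite} rank (its vertex set $\mathbf{I}$ is finite); what Plamondon's work supplies is categorification for an arbitrary finite skew-symmetric matrix, not necessarily of finite Dynkin type --- this is the relevant generality, not infinite rank. Also, for the level-$2$ case of $C_r$ the paper organises the computation via the root systems $D_{r+1}$ and $A_{r-1}$ (with a Coxeter-like transformation) rather than a ``bulk plus ends'' description, though your informal summary is not far off.
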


\begin{theorem}[Conjectured by \cite{KNS}]
\label{thm:Yperiod}
The following relations hold in
$\EuScript{Y}^{\circ}_{\ell}(X_r)$.
\par
(i) Half periodicity: $Y^{(a)}_m(u+h^{\vee}+\ell)
=Y^{(a)}_{t_a \ell -m}(u)$.
\par
(ii) Full periodicity: $Y^{(a)}_m(u+2(h^{\vee}+\ell))
=Y^{(a)}_m(u)$.
\end{theorem}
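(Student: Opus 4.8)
The plan is to deduce the periodicity of the Y-system from the periodicity of the tropical Y-system, following exactly the strategy of the companion paper \cite{IIKKN}. The starting observation is that the relations \eqref{eq:Yu} are precisely the Y-system relations attached to a sequence of mutations of a cluster algebra (or cluster category): one constructs a quiver $Q = Q_\ell(X_r)$ whose vertices are indexed by a suitable ``fundamental domain'' of $\mathcal{I}_\ell$, together with a periodic sequence of mutations $\mu$ so that iterating $\mu$ produces the $u \mapsto u+\frac{2}{t}$ shift of all the $Y$-variables. Then Theorem~\ref{thm:Yperiod} becomes the assertion that $\mu^N$ acts as the identity (resp. as the diagram involution realizing $m \mapsto t_a\ell - m$) on the $Y$-seed, for $N$ corresponding to the shift by $h^\vee + \ell$. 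By Plamondon's categorification \cite{P1,P2} of cluster algebras of arbitrary skew-symmetric type, it suffices to prove the corresponding statement for the associated \emph{cluster-tilting objects} (equivalently, for the $c$-vectors / $g$-vectors), since a sequence of mutations that fixes the cluster-tilting object necessarily fixes the whole $Y$-seed in the semifield $\EuScript{Y}^\circ_\ell(X_r)$.

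The next step is the reduction to the \emph{tropical} Y-system. The $c$-vectors of a cluster pattern coincide with the tropical sign vectors obtained by running the same mutation sequence in the tropical semifield $\mathbb{P}^{\mathrm{trop}}$; thus the periodicity of the cluster-tilting object is equivalent to the periodicity of the tropical Y-system $\mathbb{Y}_\ell(X_r)$ together with the statement that all tropical $Y$-variables encountered are sign-coherent (each is a pure monomial, either a positive or a negative power product of the coefficients). The sign-coherence is automatic from Plamondon's results. So the whole theorem collapses to: \emph{the tropical Y-system has half-periodicity $h^\vee+\ell$ and full-periodicity $2(h^\vee+\ell)$, and the tropical $Y$-variables have a controlled sign pattern along the way.} This tropical computation factorizes by level, and the key case is $\ell = 2$ (``level 2''), where the combinatorics is genuinely specific to each of $C_r$, $F_4$, $G_2$; the higher-level cases are then handled by a standard bootstrapping that uses the level-2 result as input, exactly as in \cite[\S 5]{IIKKN}.

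Concretely the steps I would carry out, in order, are: (1) write the unified form \eqref{eq:Yu} as a $Y$-seed mutation sequence, defining $Q_\ell(X_r)$ and the period-$N$ mutation sequence, and record the diagram involution $\nu$ realizing $m \mapsto t_a\ell - m$; (2) invoke Plamondon's categorification to reduce periodicity of the $Y$-seed to periodicity of the cluster-tilting object; (3) reduce further to the tropical Y-system via $c$-vectors and sign-coherence; (4) settle the level-2 tropical Y-system for each type by explicit analysis of the sign tropical sequence — this is the content to be presented ``in detail in Section 3.6'' for $C_r$ and correspondingly for $F_4$, $G_2$; (5) bootstrap from level $2$ to general $\ell$. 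Finally, Theorem~\ref{thm:Tperiod} is obtained in parallel: the T-system relations \eqref{eq:Tu} are the cluster-variable (not $Y$-variable) relations for the same quiver, and the $g$-vectors governing $\EuScript{T}^\circ_\ell(X_r)$ are controlled by the same tropical data (the transpose $\,{}^tG$ versus $G$), so the periodicity of the cluster-tilting object yields (i) and (ii) for $T$ as well, with the same half/full periods.

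The main obstacle is step (4), the level-2 tropical Y-system. Everything before it is formal and essentially identical to type $B_r$, but the explicit tropical sign pattern at level 2 — the precise sequence of positive/negative $c$-vectors as one runs the mutation sequence through one half-period — has no uniform description and must be computed case by case, tracking carefully how the folding (short versus long roots, the factors $t_a$ and the fractional shifts $\frac{1}{t}\mathbb{Z}$) deforms the simply-laced picture. For $G_2$ in particular the $t=3$ shifts and the elaborate right-hand side of the first relation in \eqref{eq:YG1} make the bookkeeping heaviest. Once the level-2 sign pattern is pinned down and shown to return to its initial configuration after the stated number of mutations (up to $\nu$), the remainder of the argument is routine.
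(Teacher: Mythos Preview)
Your proposal is correct and follows essentially the same route as the paper: embed $\mathbb{Y}_\ell(X_r)$ into a cluster algebra via the quiver $Q_\ell(X_r)$ and a periodic mutation sequence, establish the tropical Y-system periodicity at level~2 by direct case-specific analysis, bootstrap to higher levels via the factorization property, and then lift tropical periodicity to genuine periodicity using Plamondon's categorification (packaged in the paper as \cite[Theorem~5.1]{IIKKN}). One minor correction: the level-2-to-higher-$\ell$ bootstrapping is \cite[Proposition~4.1]{IIKKN}, not \S5; \cite[Theorem~5.1]{IIKKN} is the categorification step you describe in (2)--(3).
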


\subsection{Dilogarithm identities}
Let $L(x)$ be the {\em Rogers dilogarithm function\/}
\begin{align}
\label{eq:L0}
L(x)=-\frac{1}{2}\int_{0}^x 
\left\{ \frac{\log(1-y)}{y}+
\frac{\log y}{1-y}
\right\} dy
\quad (0\leq x\leq 1).
\end{align}

We introduce the {\em constant version\/} of the Y-system.

\begin{definition}
\label{defn:RYc}
Fix an integer $\ell \geq 2$.
The {\it level $\ell$ restricted constant Y-system
 $\mathbb{Y}^{\mathrm{c}}_{\ell}(X_r)$
of type $X_r$}
is the following system of relations
for a family of variables $Y^{\mathrm{c}}_{\ell}=\{Y^{(a)}_m
\mid
a\in I; m=1,\dots,t_a\ell-1 \}$,
where 
$Y^{(0)}_m =Y^{(a)}_0{}^{-1}=
Y^{(a)}_{t_a\ell}{}^{-1}=0$
 if they occur
in the right hand sides in the relations:

For $X_r=C_r$,
\begin{align}
\begin{split}
\label{eq:cYC1}
(Y^{(a)}_m)^2
&=
\frac{
(1+Y^{(a-1)}_{m})(1+Y^{(a+1)}_{m})
}{
(1+Y^{(a)}_{m-1}{}^{-1})(1+Y^{(a)}_{m+1}{}^{-1})
}\quad
 (1\leq a\leq r-2),\\
(Y^{(r-1)}_{2m})^2
&=
\frac{
(1+Y^{(r-2)}_{2m})(1+Y^{(r)}_{m})
}{
(1+Y^{(r-1)}_{2m-1}{}^{-1})(1+Y^{(r-1)}_{2m+1}{}^{-1})
},\\
(Y^{(r-1)}_{2m+1})^2
&=
\frac{
1+Y^{(r-2)}_{2m+1}
}{
(1+Y^{(r-1)}_{2m}{}^{-1})(1+Y^{(r-1)}_{2m+2}{}^{-1})
},\\
(Y^{(r)}_{m})^2
&=
\frac{
(1+Y^{(r-1)}_{2m-1})
(1+Y^{(r-1)}_{2m})^2
(1+Y^{(r-1)}_{2m+1})
}
{
(1+Y^{(r)}_{m-1}{}^{-1})(1+Y^{(r)}_{m+1}{}^{-1})
}.
\end{split}
\end{align}

For $X_r=F_4$,
\begin{align}
\begin{split}
(Y^{(1)}_m)^2
&=
\frac{
1+Y^{(2)}_{m}
}{
(1+Y^{(1)}_{m-1}{}^{-1})(1+Y^{(1)}_{m+1}{}^{-1})
},\\
(Y^{(2)}_m)^2
&=
\frac{
(1+Y^{(1)}_{m})
(1+Y^{(3)}_{2m-1})
(1+Y^{(3)}_{2m})^2
(1+Y^{(3)}_{2m+1})
}
{
(1+Y^{(2)}_{m-1}(u)^{-1})(1+Y^{(2)}_{m+1}(u)^{-1})
},
\\
(Y^{(3)}_{2m})^2
&=
\frac{
(1+Y^{(2)}_{m})(1+Y^{(4)}_{2m})
}{
(1+Y^{(3)}_{2m-1}{}^{-1})(1+Y^{(3)}_{2m+1}{}^{-1})
},\\
(Y^{(3)}_{2m+1})^2
&=
\frac{
1+Y^{(4)}_{2m+1}
}{
(1+Y^{(3)}_{2m}{}^{-1})(1+Y^{(3)}_{2m+2}{}^{-1})
},\\
(Y^{(4)}_{m})^2
&=
\frac{
1+Y^{(3)}_{m}
}{
(1+Y^{(4)}_{m-1}{}^{-1})(1+Y^{(4)}_{m+1}{}^{-1})
}.
\end{split}
\end{align}

For $X_r=G_2$,
\begin{align}
\begin{split}
\label{eq:cYG1}
(Y^{(1)}_m)^2
&=
\frac{
{
\begin{array}{l}
\textstyle
(1+Y^{(2)}_{3m-2})
(1+Y^{(2)}_{3m-1})^2
(1+Y^{(2)}_{3m})^3
(1+Y^{(2)}_{3m+1})^2
(1+Y^{(2)}_{3m+2})
\end{array}
}
}
{
(1+Y^{(1)}_{m-1}{}^{-1})(1+Y^{(1)}_{m+1}{}^{-1})
},
\\
(Y^{(2)}_{3m})^2
&=
\frac{1+Y^{(1)}_m}
{
(1+Y^{(2)}_{3m-1}{}^{-1})(1+Y^{(2)}_{3m+1}{}^{-1})
},
\\
(Y^{(2)}_{3m+1})^2
&=
\frac{1}
{
(1+Y^{(2)}_{3m}{}^{-1})(1+Y^{(2)}_{3m+2}{}^{-1})
},
\\
(Y^{(2)}_{3m+2})^2
&=
\frac{1}
{
(1+Y^{(2)}_{3m+1}{}^{-1})(1+Y^{(2)}_{3m+3}{}^{-1})
}.
\end{split}
\end{align}
\end{definition}

\begin{proposition}
There exists a unique positive real solution
of  $\mathbb{Y}^{\mathrm{c}}_{\ell}(X_r)$.
\end{proposition}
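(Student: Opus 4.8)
The plan is to prove existence and uniqueness separately, reducing the problem to a fixed-point / convexity argument that is uniform across the three types $C_r$, $F_4$, $G_2$. For \emph{existence}, I would take the logarithmic coordinates $y^{(a)}_m = \log Y^{(a)}_m$ and rewrite the constant Y-system as a single equation of the form $M\,y = F(y)$, where $M$ is the ``mass matrix'' coming from the left-hand sides (the coefficient $2$ on each $(Y^{(a)}_m)^2$, i.e.\ $M = 2\,\mathrm{Id}$ after accounting for the diagonal action, combined with the Cartan-like incidence on the indices) and $F$ is the map whose components are sums of terms $\pm\log(1+Y^{(b)}_k)$ with the exponents read off from the numerators and from the denominators. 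The key structural observation, exactly as in the $B_r$ case of \cite{IIKKN}, is that this system is the Euler--Lagrange equation of a strictly concave functional on $\mathbb{R}^{|\mathcal{I}_\ell|}$ built out of Rogers-dilogarithm-type potentials (more precisely, out of the functions $x \mapsto L\!\left(\frac{1}{1+e^{-x}}\right)$ and quadratic terms); I would then obtain a positive real solution either by showing this functional is coercive and hence attains a maximum, or by a direct Brouwer fixed-point argument on a suitable compact box after establishing \emph{a priori} bounds $0 < c \le Y^{(a)}_m \le C$.

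For the \emph{a priori} bounds, which I expect to be the main obstacle, the idea is to exploit the sign pattern of the exponents: every variable $Y^{(a)}_m$ appears with positive exponent on the numerator side of its neighbours' equations but the denominators always contribute factors $1+Y^{(\cdot)}_{\cdot}{}^{-1} \ge 1$, so one gets one-sided monotone estimates. Concretely, from $(Y^{(a)}_m)^2 \le \prod_{(b,k)}(1+Y^{(b)}_k)^{n_{bk}}$ together with the ``boundary'' equations (those with numerator $1$, such as the $(Y^{(2)}_{3m+1})^2$ and $(Y^{(2)}_{3m+2})^2$ relations in $G_2$, or the edge relations in $C_r$ and $F_4$) one can bootstrap an upper bound on the ``small'' variables first and then propagate it through the incidence structure of the Dynkin diagram to all variables, using the fact that the level $\ell$ truncation makes the index set finite. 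The lower bound is obtained dually, since each denominator factor $(1+Y^{(a)}_{m\pm1}{}^{-1})$ is bounded above once the $Y$'s are bounded below, and the numerators are $\ge 1$.

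For \emph{uniqueness}, I would invoke strict concavity of the associated functional: if the system is the critical-point equation of a strictly concave function, a positive critical point is unique. Equivalently, and perhaps more robustly, I would argue that the map is a contraction in an appropriate $\ell^\infty$-type metric on the logarithmic variables after the change of coordinates, using that $\frac{d}{dx}\log(1+e^{x}) = \frac{e^x}{1+e^x} \in (0,1)$, so each component of $F$ is $1$-Lipschitz in each argument while the total ``incoming weight'' at each node is strictly less than the diagonal weight $2$ coming from the left-hand side squared — this strict inequality is precisely the statement that the relevant incidence matrix is of ``finite type'' / positive definite, which holds because $X_r$ is a finite Dynkin diagram. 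This is again parallel to \cite{IIKKN}, and since the three cases here differ only in the explicit incidence data, I would verify the decisive inequality case by case (it is a short finite check for $C_r$, $F_4$, $G_2$) and otherwise refer to the argument given there.

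In summary, the proof is: (1) pass to logarithmic coordinates and identify the system as $2y = F(y)$; (2) establish \emph{a priori} two-sided positive bounds by bootstrapping from the boundary relations through the Dynkin incidence — the hard step; (3) deduce existence by Brouwer on the resulting box, or by maximizing the concave dilogarithm functional; (4) deduce uniqueness from strict concavity / the contraction estimate, which rests on the positive-definiteness of the incidence matrix of the finite diagram $X_r$. I expect most of steps (1), (3), (4) to be essentially identical to \cite[Proof of the corresponding proposition]{IIKKN}, so the write-up can be brief there and concentrate on the type-specific verification in step (2).
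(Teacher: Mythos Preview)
The paper's own proof is a one-line citation to \cite[Proposition~1.8]{IIKKN}, so your final paragraph (``defer to \cite{IIKKN}'') already matches it. What requires comment is your attempted reconstruction of that argument, because the uniqueness step as you have sketched it does not go through for the non-simply-laced systems treated here.

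The contraction claim fails outright: the total incoming weight at a node is \emph{not} bounded by the diagonal coefficient $2$. In the $G_2$ equation for $(Y^{(1)}_m)^2$, the numerator alone contributes exponents summing to $1+2+3+2+1=9$; together with the two denominator factors this gives a Lipschitz bound of order $11$, not $<2$. The analogous equations for $Y^{(r)}_m$ in $C_r$ and $Y^{(2)}_m$ in $F_4$ have the same feature. The inequality you call ``finite type / positive definite'' is a statement about the Cartan matrix of $X_r$, not about the incidence matrix of the level-$\ell$ constant Y-system, and the two do not coincide once the $t_a$-fold unfolding of the short-root indices is in play.

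The strictly-concave-functional route also breaks: for the system $2y=F(y)$ to be an Euler--Lagrange equation one needs the Jacobian of $F$ to be symmetric (equivalently, the exponent matrix must be symmetrizable by a diagonal rescaling). But for $G_2$ the pair $\bigl((1,m),(2,3m{+}1)\bigr)$ has exponent $2$ in one direction and $0$ in the other, and no choice of positive weights can symmetrize $2$ against $0$. The same obstruction appears in $C_r$ and $F_4$ at the short-root nodes with odd second index. So there is no scalar potential whose critical points give exactly these equations, and ``strict concavity $\Rightarrow$ uniqueness'' is not available in the form you propose.

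Your existence sketch (bootstrap a priori bounds from the numerator-$1$ equations, then Brouwer) is plausible and can likely be made to work. For uniqueness you will need a different mechanism---for example a maximum-principle/comparison argument on the ratio of two putative positive solutions, or the explicit quantum-dimension formula for the solution---rather than the gradient/contraction structure you have assumed.
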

\begin{proof}
The same proof of \cite[Proposition 1.8]{IIKKN} is applicable.
\end{proof}

The second main result of the paper is the dilogarithm
identities conjectured
by Kirillov \cite[Eq.~(7)]{Ki}, properly corrected by
Kuniba \cite[Eqs.~(A.1a), (A.1c)]{Ku},

\begin{theorem}[Dilogarithm identities]
\label{thm:DI}
Suppose that a family of positive real numbers
$\{Y^{(a)}_m \mid a\in I; m=1,\dots,t_a\ell-1\}$
satisfies  $\mathbb{Y}^{\mathrm{c}}_{\ell}(X_r)$.
Then, we have the identity
\begin{align}
\label{eq:DI}
\frac{6}{\pi^2}\sum_{a\in I}
\sum_{m=1}^{t_a\ell-1}
L\left(\frac{Y^{(a)}_m}{1+Y^{(a)}_m}\right)
=
\frac{\ell \dim \mathfrak{g}}{h^{\vee}+\ell} - r,
\end{align}
where $\mathfrak{g}$ is
 the simple Lie algebra of type $X_r$.
\end{theorem}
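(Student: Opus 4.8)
The plan is to follow the same route as in \cite{IIKKN} for type $B_r$, namely to derive \eqref{eq:DI} from the periodicity of the $Y$-system (Theorem \ref{thm:Yperiod}) via the cluster-algebraic ``periodicity implies dilogarithm identity'' principle. First, by the preceding Proposition the positive real solution of $\mathbb{Y}^{\mathrm c}_\ell(X_r)$ is unique, and for any positive real solution each argument $Y^{(a)}_m/(1+Y^{(a)}_m)$ lies in $(0,1)$, so the left-hand side of \eqref{eq:DI} is well defined and it suffices to prove \eqref{eq:DI} for that unique solution. Setting $Y^{(a)}_m(u):=Y^{(a)}_m$ (independent of $u$) one checks directly from \eqref{eq:Yu} that this is a constant solution of the restricted $Y$-system $\mathbb{Y}_\ell(X_r)$ itself, the boundary conventions matching on the nose.

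Next I would invoke the realization of $\mathbb{Y}_\ell(X_r)$ as the $Y$-pattern (coefficient pattern) of a cluster algebra with \emph{skew-symmetric} exchange matrix that already underlies the proof of Theorem \ref{thm:Yperiod}: there is a quiver and an explicit mutation sequence, periodic after the shift $u\mapsto u+2(h^\vee+\ell)$ by the full periodicity, under which \eqref{eq:Yu} is precisely the $Y$-seed mutation rule. Evaluating this $Y$-pattern at the constant solution above gives a constant point on the periodic orbit, and to it one applies the general dilogarithm identity attached to a periodic $Y$-pattern (proved by cluster-theoretic means, using the Plamondon categorification \cite{P1,P2} of skew-symmetric cluster algebras as throughout this series; see \cite{IIKKN} and the references therein). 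That identity equates a signed sum of Rogers-dilogarithm contributions of the $Y$-variables occurring along one period with $\tfrac{\pi^2}{6}$ times a combinatorial quantity read off from the mutation sequence — concretely from the tropical signs $\varepsilon_t$, equivalently the $c$-vectors. Using the half-periodicity $Y^{(a)}_m(u+h^\vee+\ell)=Y^{(a)}_{t_a\ell-m}(u)$ together with the parity decomposition of $\mathcal{I}_\ell$ to regroup the terms, the signed sum collapses (up to a harmless rational multiple of $L(1)=\pi^2/6$) onto $\sum_{a,m}L\bigl(Y^{(a)}_m/(1+Y^{(a)}_m)\bigr)$, reducing \eqref{eq:DI} to the evaluation of that combinatorial quantity.

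The remaining step — and the one genuinely specific to $C_r$, $F_4$, and $G_2$, hence the main obstacle — is to determine the tropical signs $\varepsilon_t$ along one period and evaluate the resulting count, i.e.\ to show that it forces $\tfrac{6}{\pi^2}\sum_{a,m}L\bigl(Y^{(a)}_m/(1+Y^{(a)}_m)\bigr)=\tfrac{\ell\dim\mathfrak g}{h^\vee+\ell}-r$. This sign bookkeeping is governed by the tropical $Y$-system at level $2$, which must be written out and analyzed separately for each type (Section 3.6 for $C_r$, and the analogous parts of Sections 4 and 5 for $F_4$ and $G_2$) and then propagated to general level $\ell$ exactly as in \cite{IIKKN}. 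Once the periodicity (Theorem \ref{thm:Yperiod}) and the sign data are in hand, \eqref{eq:DI} follows by a purely formal manipulation of the Rogers dilogarithm, positivity of the solution guaranteeing that all arguments stay in $[0,1]$; so the entire difficulty is concentrated in the combinatorics of the level-$2$ tropical $Y$-system in the three cases.
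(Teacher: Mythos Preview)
Your proposal is correct and follows essentially the same route as the paper: Theorem~\ref{thm:DI} is obtained from the functional identity (Theorem~\ref{thm:DI2}), which in turn rests on the periodicity of the $Y$-system together with the count $N_\pm$ of positive and negative tropical $y$-monomials along one period (Theorems~\ref{thm:Clevhd}, \ref{thm:Flevhd}, \ref{thm:Glevhd}), all reduced to the level-$2$ tropical $Y$-system exactly as in \cite[Section~6]{IIKKN}. The only cosmetic difference is that the paper states and proves the functional version first and then specializes, whereas you specialize to the constant solution at the outset.
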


The right hand side of \eqref{eq:DI} is
 equal to the number
\begin{align}
\label{eq:c1}
\frac{ r(\ell h - h^{\vee})}{h^{\vee}+\ell}.
\end{align}

In fact, we prove a functional generalization of
Theorem \ref{thm:DI}.

\begin{theorem}[Functional dilogarithm identities]
\label{thm:DI2}
Suppose that 
a family of positive real numbers
 $\{Y^{(a)}_m (u)\mid (a,m,u)\in \mathcal{I}_{\ell} \}$
satisfies $\mathbb{Y}_{\ell}(X_r)$.
Then, we have the identities
\begin{align}
\begin{split}
\label{eq:DI2}
\frac{6}{\pi^2}
\sum_{
\genfrac{}{}{0pt}{1}
{
(a,m,u)\in \mathcal{I}_{\ell}
}
{
0\leq u < 2(h^{\vee}+\ell)
}
}
L\left(
\frac{Y^{(a)}_m(u)}{1+Y^{(a)}_m(u)}
\right)
&=
2tr(\ell h-h^{\vee})\\
&=
\begin{cases}
4r(2r\ell -r-1)& C_r\\
48(4\ell-3)& F_4\\
24(3\ell - 2)& G_2,
\end{cases}
\end{split}
\\
\label{eq:DI3}
\frac{6}{\pi^2}
\sum_{
\genfrac{}{}{0pt}{1}
{
(a,m,u)\in \mathcal{I}_{\ell}
}
{
0\leq u < 2(h^{\vee}+\ell)
}
}
L\left(
\frac{1}{1+Y^{(a)}_m(u)}
\right)
&
=
\begin{cases}
4\ell(2r\ell -\ell-1)& C_r\\
8\ell(3\ell+1)& F_4\\
12\ell(2\ell +1)& G_2.
\end{cases}
\end{align}
\end{theorem}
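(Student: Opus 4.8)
The plan is to deduce the functional dilogarithm identities \eqref{eq:DI2} and \eqref{eq:DI3} from the periodicities of the Y-system (Theorem \ref{thm:Yperiod}) together with the standard ``sum-of-dilogarithms is constant'' argument, following the template established for type $B_r$ in \cite{IIKKN}. The key input is the fact that each restricted Y-system $\mathbb{Y}_\ell(X_r)$ is a sequence of mutations in the associated cluster algebra; by the periodicity of Theorem \ref{thm:Yperiod}(ii), the total sequence of mutations over one period $0\le u<2(h^\vee+\ell)$ returns to the initial seed (up to a permutation of indices), so the corresponding composition of cluster transformations is the identity. The Rogers dilogarithm then supplies a function on the space of positive real $Y$-tuples that is invariant under each individual mutation in the standard sense, hence the total variation over one period vanishes; rearranging this telescoping sum produces the claimed closed-form constant.

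The steps, in order, are as follows. First I would set up the periodic ``staircase'' description: realize the left-running and right-running evolutions of $Y_\ell$ as explicit mutation sequences in the seed pattern underlying $\EuScript{Y}^\circ_\ell(X_r)$, exactly as in \cite{IIKKN}, so that a full period is a composition of $N$ mutations returning to a permuted copy of the initial seed. Second, invoke the fundamental functional identity for the Rogers dilogarithm under a single mutation $\mu_k$: if $Y'$ is obtained from $Y$ by mutating at vertex $k$, then
\begin{align*}
\sum_j L\!\left(\frac{Y_j}{1+Y_j}\right)
-\sum_j L\!\left(\frac{Y'_j}{1+Y'_j}\right)
= L\!\left(\frac{1}{1+Y_k}\right) - L\!\left(\frac{Y_k}{1+Y_k}\right)
\end{align*}
(this is the classical pentagon-type relation, and it is precisely the tool used in \cite{IIKKN}). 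Summing this over the whole period and using that the composite is the identity, the left side telescopes to zero, and the right side becomes a sum of terms $L(1/(1+Y))-L(Y/(1+Y))$ over the mutated vertices, i.e.\ essentially $\sum L(1/(1+Y^{(a)}_m(u)))$ minus $\sum L(Y^{(a)}_m(u)/(1+Y^{(a)}_m(u)))$ suitably counted. Third, I would evaluate the constant: a careful bookkeeping of how many times each index $(a,m,u)$ is mutated in one period, together with the boundary values $Y^{(a)}_0=Y^{(a)}_{t_a\ell}=\infty$ (where $L(Y/(1+Y))=L(1)=\pi^2/6$ and $L(1/(1+Y))=L(0)=0$), pins down both right-hand sides; the identity $\ell\dim\mathfrak g/(h^\vee+\ell)-r=r(\ell h-h^\vee)/(h^\vee+\ell)$ from \eqref{eq:c1} and the numerology of $h,h^\vee,t_a$ in the table then give the explicit case-by-case numbers $4r(2r\ell-r-1)$, $48(4\ell-3)$, $24(3\ell-2)$ for \eqref{eq:DI2} and likewise for \eqref{eq:DI3}. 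Finally, \eqref{eq:DI} follows from \eqref{eq:DI2} by restricting to the constant solution (where $Y^{(a)}_m(u)$ is independent of $u$), so there are $2(h^\vee+\ell)$ equal contributions per index, and dividing through.

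The main obstacle I anticipate is not conceptual but combinatorial: correctly identifying the mutation sequence realizing one full period and, above all, counting multiplicities. The Y-systems here are non-simply-laced with the shift parameters $t_a\in\{1,t\}$ and half- or third-integer steps in $u$, so the ``staircase'' is genuinely type-dependent — the excerpt itself warns that the tropical/level-2 analysis ``is quite specific to each case.'' Getting the exact constant on the right of \eqref{eq:DI2}--\eqref{eq:DI3} requires knowing precisely how $\mathcal I_\ell$ and its boundary interact with the period, and this is where the three types $C_r$, $F_4$, $G_2$ must be treated separately. A secondary subtlety is ensuring the single-mutation dilogarithm identity is applied with the correct sign and index conventions (it depends on whether one uses the ``tropical sign'' $\varepsilon$ of the mutation), but since all $Y$'s here are positive reals this reduces to the clean form above. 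Given that \cite{IIKKN} carried this out in full for $B_r$, I would organize the write-up to reuse that machinery verbatim wherever possible and only spell out the three type-specific counts.
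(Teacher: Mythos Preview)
Your outline has a genuine gap at the two decisive points: the single-mutation identity you state, and the mechanism by which the constant is evaluated.

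First, the identity
\[
\sum_j L\!\left(\frac{Y_j}{1+Y_j}\right)-\sum_j L\!\left(\frac{Y'_j}{1+Y'_j}\right)=L\!\left(\frac{1}{1+Y_k}\right)-L\!\left(\frac{Y_k}{1+Y_k}\right)
\]
is not correct: under $Y$-seed mutation at $k$ every $Y_j$ with $B_{kj}\neq 0$ changes, so the left side does not reduce to a single $k$-th term. More fatally, if your identity held, telescoping over a full period would force
\(\sum L\bigl(\tfrac{1}{1+Y}\bigr)=\sum L\bigl(\tfrac{Y}{1+Y}\bigr)\),
which contradicts the theorem whenever $N_+\neq N_-$ (as happens for all three types here). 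The actual constancy argument in \cite[Section~6]{IIKKN} is not a pointwise telescoping of $\sum_j L$; it shows that the sum over the \emph{mutated} variables $\sum_t L\bigl(\tfrac{y_{k_t}(t)}{1+y_{k_t}(t)}\bigr)$ is independent of the initial positive real seed, via the five-term relation (equivalently, by differentiating and using the exchange relation).

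Second, and more importantly, your plan to pin down the constant from ``boundary values $Y^{(a)}_0=Y^{(a)}_{t_a\ell}=\infty$'' cannot work: those indices do not lie in $\mathcal I_\ell$ and never appear in the sum. In the paper's approach the constant is obtained by degenerating to the tropical limit, where each term contributes $0$ or $\pi^2/6$ according to the \emph{sign of the tropical $y$-variable} at that mutation point. Thus the right-hand sides of \eqref{eq:DI2} and \eqref{eq:DI3} are exactly $2N_-$ and $2N_+$ (the factor $2$ from the parity decomposition $\mathcal I_\ell=\mathcal I_{\ell+}\sqcup\mathcal I_{\ell-}$), where $N_\pm$ are the counts in Theorems~\ref{thm:Clevhd}, \ref{thm:Flevhd}, \ref{thm:Glevhd}. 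Those counts in turn rest on the detailed positivity/negativity analysis of the tropical Y-system (Propositions~\ref{prop:Clev2}, \ref{prop:Flev2}, \ref{prop:Glev2} and their higher-level versions) --- precisely the ``quite specific to each case'' work the introduction flags. Your proposal bypasses this entirely, so as written it cannot produce the numbers $4r(2r\ell-r-1)$, $48(4\ell-3)$, $24(3\ell-2)$.
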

The two identities \eqref{eq:DI2}
and \eqref{eq:DI3} are equivalent to each other,
since the sum of the right hand sides
is equal to $2t(h^{\vee}+\ell)((\sum_{a\in I}t_a)\ell-r)$,
which is the total number of $(a,m,u)\in I_{\ell}$
in the region $0\leq u < 2(h^{\vee}+\ell)$.

It is clear that Theorem \ref{thm:DI}
follows form Theorem \ref{thm:DI2}.

\section{Type $C_r$}

The $C_r$ case is quite parallel to the $B_r$ case.
For the reader's convenience, we repeat most of 
the basic definitions and results in \cite{IIKKN}.
Most propositions are proved in a parallel manner
to the $B_r$ case, so that proofs are omitted.
The properties of the tropical Y-system at level 2
(Proposition \ref{prop:Clev2}) are crucial and specific to $C_r$.
Since its derivation is a little more complicated
than the  $B_r$ case, the outline of the proof is provided.

\begin{figure}
\begin{picture}(270,120)(-80,-40)
\put(0,-15)
{
\put(-30,0){\circle*{5}}
\put(0,0){\circle*{5}}
\put(30,0){\circle*{5}}
\put(60,0){\circle*{5}}
\put(-27,0){\vector(1,0){24}}
\put(27,0){\vector(-1,0){24}}
\put(33,0){\vector(1,0){24}}
}
\put(0,0)
{
\put(-30,0){\circle*{5}}
\put(0,0){\circle*{5}}
\put(30,0){\circle*{5}}
\put(60,0){\circle*{5}}
\put(120,0){\circle{5}}
\put(150,0){\circle*{5}}
\put(150,15){\circle*{5}}
\put(180,0){\circle{5}}
\put(-3,0){\vector(-1,0){24}}
\put(3,0){\vector(1,0){24}}
\put(57,0){\vector(-1,0){24}}
\put(147,0){\vector(-1,0){24}}
\put(153,0){\vector(1,0){24}}
}
\put(0,15)
{
\put(-30,0){\circle*{5}}
\put(0,0){\circle*{5}}
\put(30,0){\circle*{5}}
\put(60,0){\circle*{5}}
\put(-27,0){\vector(1,0){24}}
\put(27,0){\vector(-1,0){24}}
\put(33,0){\vector(1,0){24}}
}
\put(0,30)
{
\put(-30,0){\circle*{5}}
\put(0,0){\circle*{5}}
\put(30,0){\circle*{5}}
\put(60,0){\circle*{5}}
\put(120,0){\circle{5}}
\put(150,0){\circle*{5}}
\put(150,15){\circle*{5}}
\put(180,0){\circle{5}}
\put(-3,0){\vector(-1,0){24}}
\put(3,0){\vector(1,0){24}}
\put(57,0){\vector(-1,0){24}}
\put(147,0){\vector(-1,0){24}}
\put(153,0){\vector(1,0){24}}
}
\put(0,45)
{
\put(-30,0){\circle*{5}}
\put(0,0){\circle*{5}}
\put(30,0){\circle*{5}}
\put(60,0){\circle*{5}}
\put(-27,0){\vector(1,0){24}}
\put(27,0){\vector(-1,0){24}}
\put(33,0){\vector(1,0){24}}
}
\put(0,60)
{
\put(-30,0){\circle*{5}}
\put(0,0){\circle*{5}}
\put(30,0){\circle*{5}}
\put(60,0){\circle*{5}}
\put(120,0){\circle{5}}
\put(150,0){\circle*{5}}
\put(180,0){\circle{5}}
\put(150,15){\circle*{5}}
\put(-3,0){\vector(-1,0){24}}
\put(3,0){\vector(1,0){24}}
\put(57,0){\vector(-1,0){24}}
\put(147,0){\vector(-1,0){24}}
\put(153,0){\vector(1,0){24}}
}
\put(0,75)
{
\put(-30,0){\circle*{5}}
\put(0,0){\circle*{5}}
\put(30,0){\circle*{5}}
\put(60,0){\circle*{5}}
\put(-27,0){\vector(1,0){24}}
\put(27,0){\vector(-1,0){24}}
\put(33,0){\vector(1,0){24}}
}
\put(150,-15){\circle*{5}}
%
\put(-30,0)
{
\put(0,-3){\vector(0,-1){9}}
\put(0,3){\vector(0,1){9}}
\put(0,27){\vector(0,-1){9}}
\put(0,33){\vector(0,1){9}}
\put(0,57){\vector(0,-1){9}}
\put(0,63){\vector(0,1){9}}
}
\put(0,0)
{
\put(0,-12){\vector(0,1){9}}
\put(0,12){\vector(0,-1){9}}
\put(0,18){\vector(0,1){9}}
\put(0,42){\vector(0,-1){9}}
\put(0,48){\vector(0,1){9}}
\put(0,72){\vector(0,-1){9}}
}
\put(30,0)
{
\put(0,-3){\vector(0,-1){9}}
\put(0,3){\vector(0,1){9}}
\put(0,27){\vector(0,-1){9}}
\put(0,33){\vector(0,1){9}}
\put(0,57){\vector(0,-1){9}}
\put(0,63){\vector(0,1){9}}
}
\put(60,0)
{
\put(0,-12){\vector(0,1){9}}
\put(0,12){\vector(0,-1){9}}
\put(0,18){\vector(0,1){9}}
\put(0,42){\vector(0,-1){9}}
\put(0,48){\vector(0,1){9}}
\put(0,72){\vector(0,-1){9}}
}
\put(150,0)
{
\put(0,-12){\vector(0,1){9}}
\put(0,12){\vector(0,-1){9}}
\put(0,18){\vector(0,1){9}}
\put(0,42){\vector(0,-1){9}}
\put(0,48){\vector(0,1){9}}
\put(0,72){\vector(0,-1){9}}
}
\put(123,-2){\vector(2,-1){24}}
\put(123,2){\vector(2,1){24}}
\put(123,58){\vector(2,-1){24}}
\put(123,62){\vector(2,1){24}}
\put(177,28){\vector(-2,-1){24}}
\put(177,32){\vector(-2,1){24}}
\put(120,27){\vector(0,-1){24}}
\put(180,3){\vector(0,1){24}}
\put(120,33){\vector(0,1){24}}
\put(180,57){\vector(0,-1){24}}
\put(-50,-17){$\cdots$}
\put(-50,-2){$\cdots$}
\put(-50,13){$\cdots$}
\put(-50,28){$\cdots$}
\put(-50,43){$\cdots$}
\put(-50,58){$\cdots$}
\put(-50,73){$\cdots$}
\put(0,-14)
{
\put(-28,2){\small $ -$}
\put(2,2){\small $+$}
\put(32,2){\small $-$}
\put(62,2){\small $+$}
}
\put(0,1)
{
\put(-28,2){\small $ +$}
\put(2,2){\small $-$}
\put(32,2){\small $+$}
\put(62,2){\small $-$}
}
\put(0,16)
{
\put(-28,2){\small $ -$}
\put(2,2){\small $+$}
\put(32,2){\small $-$}
\put(62,2){\small $+$}
}
\put(0,31)
{
\put(-28,2){\small $ +$}
\put(2,2){\small $-$}
\put(32,2){\small $+$}
\put(62,2){\small $-$}
}
\put(0,46)
{
\put(-28,2){\small $ -$}
\put(2,2){\small $+$}
\put(32,2){\small $-$}
\put(62,2){\small $+$}
}
\put(0,61)
{
\put(-28,2){\small $ +$}
\put(2,2){\small $-$}
\put(32,2){\small $+$}
\put(62,2){\small $-$}
}
\put(0,76)
{
\put(-28,2){\small $ -$}
\put(2,2){\small $+$}
\put(32,2){\small $-$}
\put(62,2){\small $+$}
}
\put(0,1)
{
\put(122,5){\small $-$}
\put(122,32){\small $+$}
\put(122,65){\small $-$}
\put(152,-13){\small $+$}
\put(152,2){\small $-$}
\put(152,20){\small $+$}
\put(152,32){\small $-$}
\put(152,47){\small $+$}
\put(152,62){\small $-$}
\put(152,77){\small $+$}
\put(182,2){\small $+$}
\put(182,32){\small $-$}
\put(182,62){\small $+$}
}
%
%
%
\put(-60,-25){$\underbrace{\hskip120pt}_{r-1}$}
\put(85,28){${\scriptstyle\ell -1}\left\{ \makebox(0,37){}\right.$}
\put(-85,28){${\scriptstyle2\ell -1}\left\{ \makebox(0,50){}\right.$}
\end{picture}
\begin{picture}(270,160)(-80,-40)
\put(0,-15)
{
\put(-30,0){\circle*{5}}
\put(0,0){\circle*{5}}
\put(30,0){\circle*{5}}
\put(60,0){\circle*{5}}
\put(-27,0){\vector(1,0){24}}
\put(27,0){\vector(-1,0){24}}
\put(33,0){\vector(1,0){24}}
}
\put(0,0)
{
\put(-30,0){\circle*{5}}
\put(0,0){\circle*{5}}
\put(30,0){\circle*{5}}
\put(60,0){\circle*{5}}
\put(120,0){\circle{5}}
\put(150,0){\circle*{5}}
\put(150,15){\circle*{5}}
\put(180,0){\circle{5}}
\put(-3,0){\vector(-1,0){24}}
\put(3,0){\vector(1,0){24}}
\put(57,0){\vector(-1,0){24}}
\put(147,0){\vector(-1,0){24}}
\put(153,0){\vector(1,0){24}}
}
\put(0,15)
{
\put(-30,0){\circle*{5}}
\put(0,0){\circle*{5}}
\put(30,0){\circle*{5}}
\put(60,0){\circle*{5}}
\put(-27,0){\vector(1,0){24}}
\put(27,0){\vector(-1,0){24}}
\put(33,0){\vector(1,0){24}}
}
\put(0,30)
{
\put(-30,0){\circle*{5}}
\put(0,0){\circle*{5}}
\put(30,0){\circle*{5}}
\put(60,0){\circle*{5}}
\put(120,0){\circle{5}}
\put(150,0){\circle*{5}}
\put(150,15){\circle*{5}}
\put(180,0){\circle{5}}
\put(-3,0){\vector(-1,0){24}}
\put(3,0){\vector(1,0){24}}
\put(57,0){\vector(-1,0){24}}
\put(147,0){\vector(-1,0){24}}
\put(153,0){\vector(1,0){24}}
}
\put(0,45)
{
\put(-30,0){\circle*{5}}
\put(0,0){\circle*{5}}
\put(30,0){\circle*{5}}
\put(60,0){\circle*{5}}
\put(-27,0){\vector(1,0){24}}
\put(27,0){\vector(-1,0){24}}
\put(33,0){\vector(1,0){24}}
}
\put(0,60)
{
\put(-30,0){\circle*{5}}
\put(0,0){\circle*{5}}
\put(30,0){\circle*{5}}
\put(60,0){\circle*{5}}
\put(120,0){\circle{5}}
\put(150,0){\circle*{5}}
\put(180,0){\circle{5}}
\put(150,15){\circle*{5}}
\put(-3,0){\vector(-1,0){24}}
\put(3,0){\vector(1,0){24}}
\put(57,0){\vector(-1,0){24}}
\put(147,0){\vector(-1,0){24}}
\put(153,0){\vector(1,0){24}}
}
\put(0,75)
{
\put(-30,0){\circle*{5}}
\put(0,0){\circle*{5}}
\put(30,0){\circle*{5}}
\put(60,0){\circle*{5}}
\put(-27,0){\vector(1,0){24}}
\put(27,0){\vector(-1,0){24}}
\put(33,0){\vector(1,0){24}}
}
\put(0,90)
{
\put(-30,0){\circle*{5}}
\put(0,0){\circle*{5}}
\put(30,0){\circle*{5}}
\put(60,0){\circle*{5}}
\put(120,0){\circle{5}}
\put(150,0){\circle*{5}}
\put(180,0){\circle{5}}
\put(150,15){\circle*{5}}
\put(-3,0){\vector(-1,0){24}}
\put(3,0){\vector(1,0){24}}
\put(57,0){\vector(-1,0){24}}
\put(147,0){\vector(-1,0){24}}
\put(153,0){\vector(1,0){24}}
}
\put(0,105)
{
\put(-30,0){\circle*{5}}
\put(0,0){\circle*{5}}
\put(30,0){\circle*{5}}
\put(60,0){\circle*{5}}
\put(-27,0){\vector(1,0){24}}
\put(27,0){\vector(-1,0){24}}
\put(33,0){\vector(1,0){24}}
}
\put(150,-15){\circle*{5}}
%
\put(-30,0)
{
\put(0,-3){\vector(0,-1){9}}
\put(0,3){\vector(0,1){9}}
\put(0,27){\vector(0,-1){9}}
\put(0,33){\vector(0,1){9}}
\put(0,57){\vector(0,-1){9}}
\put(0,63){\vector(0,1){9}}
\put(0,87){\vector(0,-1){9}}
\put(0,93){\vector(0,1){9}}
}
\put(0,0)
{
\put(0,-12){\vector(0,1){9}}
\put(0,12){\vector(0,-1){9}}
\put(0,18){\vector(0,1){9}}
\put(0,42){\vector(0,-1){9}}
\put(0,48){\vector(0,1){9}}
\put(0,72){\vector(0,-1){9}}
\put(0,78){\vector(0,1){9}}
\put(0,102){\vector(0,-1){9}}
}
\put(30,0)
{
\put(0,-3){\vector(0,-1){9}}
\put(0,3){\vector(0,1){9}}
\put(0,27){\vector(0,-1){9}}
\put(0,33){\vector(0,1){9}}
\put(0,57){\vector(0,-1){9}}
\put(0,63){\vector(0,1){9}}
\put(0,87){\vector(0,-1){9}}
\put(0,93){\vector(0,1){9}}
}
\put(60,0)
{
\put(0,-12){\vector(0,1){9}}
\put(0,12){\vector(0,-1){9}}
\put(0,18){\vector(0,1){9}}
\put(0,42){\vector(0,-1){9}}
\put(0,48){\vector(0,1){9}}
\put(0,72){\vector(0,-1){9}}
\put(0,78){\vector(0,1){9}}
\put(0,102){\vector(0,-1){9}}
}
\put(150,0)
{
\put(0,-12){\vector(0,1){9}}
\put(0,12){\vector(0,-1){9}}
\put(0,18){\vector(0,1){9}}
\put(0,42){\vector(0,-1){9}}
\put(0,48){\vector(0,1){9}}
\put(0,72){\vector(0,-1){9}}
\put(0,78){\vector(0,1){9}}
\put(0,102){\vector(0,-1){9}}
}
\put(123,-2){\vector(2,-1){24}}
\put(123,2){\vector(2,1){24}}
\put(123,58){\vector(2,-1){24}}
\put(123,62){\vector(2,1){24}}
\put(177,28){\vector(-2,-1){24}}
\put(177,32){\vector(-2,1){24}}
\put(177,88){\vector(-2,-1){24}}
\put(177,92){\vector(-2,1){24}}
\put(120,27){\vector(0,-1){24}}
\put(180,3){\vector(0,1){24}}
\put(120,33){\vector(0,1){24}}
\put(180,57){\vector(0,-1){24}}
\put(120,87){\vector(0,-1){24}}
\put(180,63){\vector(0,1){24}}
\put(-50,-17){$\cdots$}
\put(-50,-2){$\cdots$}
\put(-50,13){$\cdots$}
\put(-50,28){$\cdots$}
\put(-50,43){$\cdots$}
\put(-50,58){$\cdots$}
\put(-50,73){$\cdots$}
\put(-50,88){$\cdots$}
\put(-50,103){$\cdots$}
\put(0,-14)
{
\put(-28,2){\small $ -$}
\put(2,2){\small $+$}
\put(32,2){\small $-$}
\put(62,2){\small $+$}
}
\put(0,1)
{
\put(-28,2){\small $ +$}
\put(2,2){\small $-$}
\put(32,2){\small $+$}
\put(62,2){\small $-$}
}
\put(0,16)
{
\put(-28,2){\small $ -$}
\put(2,2){\small $+$}
\put(32,2){\small $-$}
\put(62,2){\small $+$}
}
\put(0,31)
{
\put(-28,2){\small $ +$}
\put(2,2){\small $-$}
\put(32,2){\small $+$}
\put(62,2){\small $-$}
}
\put(0,46)
{
\put(-28,2){\small $ -$}
\put(2,2){\small $+$}
\put(32,2){\small $-$}
\put(62,2){\small $+$}
}
\put(0,61)
{
\put(-28,2){\small $ +$}
\put(2,2){\small $-$}
\put(32,2){\small $+$}
\put(62,2){\small $-$}
}
\put(0,76)
{
\put(-28,2){\small $ -$}
\put(2,2){\small $+$}
\put(32,2){\small $-$}
\put(62,2){\small $+$}
}
\put(0,91)
{
\put(-28,2){\small $ +$}
\put(2,2){\small $-$}
\put(32,2){\small $+$}
\put(62,2){\small $-$}
}
\put(0,106)
{
\put(-28,2){\small $ -$}
\put(2,2){\small $+$}
\put(32,2){\small $-$}
\put(62,2){\small $+$}
}
\put(0,1)
{
\put(122,5){\small $-$}
\put(122,32){\small $+$}
\put(122,65){\small $-$}
\put(122,92){\small $+$}
\put(152,-13){\small $+$}
\put(152,2){\small $-$}
\put(152,20){\small $+$}
\put(152,32){\small $-$}
\put(152,47){\small $+$}
\put(152,62){\small $-$}
\put(152,80){\small $+$}
\put(152,92){\small $-$}
\put(152,107){\small $+$}
\put(182,2){\small $+$}
\put(182,32){\small $-$}
\put(182,62){\small $+$}
\put(182,92){\small $-$}
}
%
%
%
\put(-60,-25){$\underbrace{\hskip120pt}_{r-1}$}
\put(85,43){${\scriptstyle\ell -1}\left\{ \makebox(0,50){}\right.$}
\put(-85,43){${\scriptstyle2\ell -1}\left\{ \makebox(0,65){}\right.$}
\end{picture}
\caption{The quiver $Q_{\ell}(C_r)$  for even $\ell$
(upper) and for odd $\ell$ (lower),
where we identify the rightmost column in the left quiver
with the middle column in the right quiver.}
\label{fig:quiverC}
\end{figure}
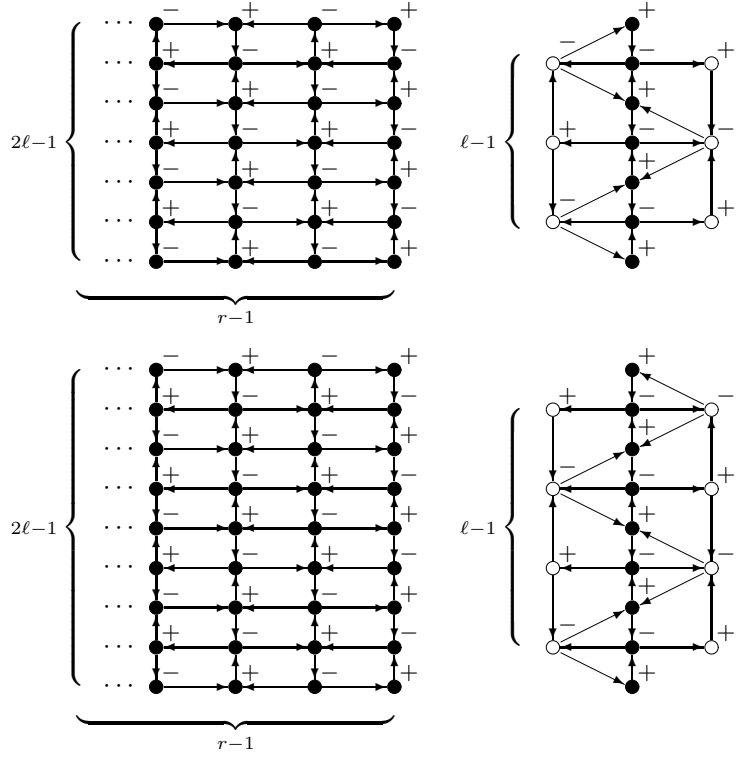

\subsection{Parity decompositions of T and Y-systems}

For a triplet $(a,m,u)\in \mathcal{I}_{\ell}$,
we set the `parity conditions' $\mathbf{P}_{+}$ and
$\mathbf{P}_{-}$ by
\begin{align}
\label{eq:CPcond}
\mathbf{P}_{+}:& \ \mbox{
 $r+a+m+2u$ is odd if $a\neq r$;
$2u$ is even if
$a=r$,}\\
\mathbf{P}_{-}:& \ \mbox{
 $r+a+m+2u$ is even if $a\neq r$;
$2u$ is odd if
$a=r $.}
\end{align}
We write, for example, $(a,m,u):\mathbf{P}_{+}$ if $(a,m,u)$ satisfies
$\mathbf{P}_{+}$.
We have $\mathcal{I}_{\ell}=
\mathcal{I}_{\ell+}\sqcup \mathcal{I}_{\ell-}$,
where $\mathcal{I}_{\ell\varepsilon}$ 
is the set of all $(a,m,u):\mathbf{P}_{\varepsilon}$.

Define $\EuScript{T}^{\circ}_{\ell}(C_r)_{\varepsilon}$
($\varepsilon=\pm$)
to be the subring of $\EuScript{T}^{\circ}_{\ell}(C_r)$
generated by
 $T^{(a)}_m(u)$
$((a,m,u)\in \mathcal{I}_{\ell\varepsilon})$.
Then, we have
$\EuScript{T}^{\circ}_{\ell}(C_r)_+
\simeq
\EuScript{T}^{\circ}_{\ell}(C_r)_-
$
by $T^{(a)}_m(u)\mapsto T^{(a)}_m(u+\frac{1}{2})$ and
\begin{align}
\EuScript{T}^{\circ}_{\ell}(C_r)
\simeq
\EuScript{T}^{\circ}_{\ell}(C_r)_+
\otimes_{\mathbb{Z}}
\EuScript{T}^{\circ}_{\ell}(C_r)_-.
\end{align}

For a triplet $(a,m,u)\in \mathcal{I}_{\ell}$ ,
we set another `parity conditions' $\mathbf{P}'_{+}$ and
$\mathbf{P}'_{-}$ by
\begin{align}
\label{eq:CPcond2}
\mathbf{P}'_{+}:& \ \mbox{
 $r+a+m+2u$ is even if $a\neq r$;
$2u$ is even if
$a=r$,}\\
\mathbf{P}'_{-}:& \ \mbox{
 $r+a+m+2u$ is odd if $a\neq r$;
$2u$ is odd if
$a=r $.}
\end{align}
We have $\mathcal{I}_{\ell}=
\mathcal{I}'_{\ell+}\sqcup \mathcal{I}'_{\ell-}$,
where $\mathcal{I}'_{\ell\varepsilon}$ 
is the set of all $(a,m,u):\mathbf{P}'_{\varepsilon}$.
We also have
\begin{align}
(a,m,u):\mathbf{P}'_+
\ \Longleftrightarrow \
\textstyle (a,m,u\pm \frac{1}{t_a}):\mathbf{P}_+.
\end{align}

Define $\EuScript{Y}^{\circ}_{\ell}(C_r)_{\varepsilon}$
($\varepsilon=\pm$)
to be the subgroup of $\EuScript{Y}^{\circ}_{\ell}(C_r)$
generated by
$Y^{(a)}_m(u)$, $1+Y^{(a)}_m(u)$
$((a,m,u)\in \mathcal{I}'_{\ell\varepsilon})$.
Then, we have
$\EuScript{Y}^{\circ}_{\ell}(C_r)_+
\simeq
\EuScript{Y}^{\circ}_{\ell}(C_r)_-
$
by $Y^{(a)}_m(u)\mapsto Y^{(a)}_m(u+\frac{1}{2})$,
$1+Y^{(a)}_m(u)\mapsto 1+Y^{(a)}_m(u+\frac{1}{2})$,
 and
\begin{align}
\EuScript{Y}^{\circ}_{\ell}(C_r)
\simeq
\EuScript{Y}^{\circ}_{\ell}(C_r)_+
\times
\EuScript{Y}^{\circ}_{\ell}(C_r)_-.
\end{align}

\subsection{Quiver $Q_{\ell}(C_r)$}

With type $C_r$ and $\ell\geq 2$ we associate
 the quiver $Q_{\ell}(C_r)$
by Figure
\ref{fig:quiverC},
where the rightmost column in the left quiver
and the middle column in the right quiver are
identified.
Also, we assign the empty or filled circle $\circ$/$\bullet$ and
the sign +/$-$ to each vertex.

Let us choose  the index set $\mathbf{I}$
of the vertices of $Q_{\ell}(C_r)$
so that $\mathbf{i}=(i,i')\in \mathbf{I}$ represents
the vertex 
at the $i'$th row (from the bottom)
of the $i$th column (from the left)
in the left quiver for $i=1,\dots,r-1$,
the one
of the right column 
in the right quiver
for $i=r$,
and 
the one
of the left column 
in the right quiver 
for $i=r+1$.
Thus, $i=1,\dots,r+1$, and $i'=1,\dots,\ell-1$ if $i\neq r,r+1$
and $i'=1,\dots,2\ell-1$ if $i=r,r+1$.
We use a natural notation $\mathbf{I}^{\circ}$
(resp.\ $\mathbf{I}^{\circ}_+$)
for the set of the vertices $\mathbf{i}$ with
property $\circ$ (resp.\ $\circ$ and +), and so on.
We have $\mathbf{I}=\mathbf{I}^{\circ}\sqcup \mathbf{I}^{\bullet}
=\mathbf{I}^{\circ}_+\sqcup
\mathbf{I}^{\circ}_-\sqcup
\mathbf{I}^{\bullet}_+\sqcup
\mathbf{I}^{\bullet}_-$.

We define composite mutations,
\begin{align}
\label{eq:Cmupm2}
\mu^{\circ}_+=\prod_{\mathbf{i}\in\mathbf{I}^{\circ}_+}
\mu_{\mathbf{i}},
\quad
\mu^{\circ}_-=\prod_{\mathbf{i}\in\mathbf{I}^{\circ}_-}
\mu_{\mathbf{i}},
\quad
\mu^{\bullet}_+=\prod_{\mathbf{i}\in\mathbf{I}^{\bullet}_+}
\mu_{\mathbf{i}},
\quad
\mu^{\bullet}_-=\prod_{\mathbf{i}\in\mathbf{I}^{\bullet}_-}
\mu_{\mathbf{i}}.
\end{align}
Note that they do not depend on the order of the product.

Let $\boldsymbol{r}$ be the involution acting on $\mathbf{I}$
by the left-right reflection of the right quiver.
Let $\boldsymbol{\omega}$ be the involution acting on $\mathbf{I}$
defined by,
for even $r$,
 the up-down reflection of the left quiver
and the $180^{\circ}$ rotation of the right quiver;
and for odd $r$,
 the up-down reflection of the left and right quivers.
Let $\boldsymbol{r}(Q_{\ell}(C_r))$
and $\boldsymbol{\omega}(Q_{\ell}(C_r))$ denote the quivers
 induced from $Q_{\ell}(C_r)$
 by
$\boldsymbol{r}$ and $\boldsymbol{\omega}$,
respectively.
For example, 
if there is an arrow
 $\mathbf{i}\rightarrow
\mathbf{j}$ in $Q_{\ell}(C_r)$,
then, there is an arrow
$\boldsymbol{r}(\mathbf{i})
\rightarrow
\boldsymbol{r}(\mathbf{j})
$
in  $\boldsymbol{r}(Q_{\ell}(C_r))$.
For a quiver $Q$, $Q^{\mathrm{op}}$ denotes the opposite quiver.

\begin{lemma}
\label{lem:CQmut}
Let $Q=Q_{\ell}(C_r)$.
\par
(i)
We have a periodic sequence of mutations of quivers
\begin{align}
\label{eq:CB2}
Q\ 
\mathop{\longleftrightarrow}^{\mu^{\bullet}_+
\mu^{\circ}_+}
\
Q^{\mathrm{op}}
\
\mathop{\longleftrightarrow}^{\mu^{\bullet}_-}
\
\boldsymbol{r}(Q)
\
\mathop{\longleftrightarrow}^{\mu^{\bullet}_+
\mu^{\circ}_-}
\
\boldsymbol{r}(Q){}^{\mathrm{op}}
\
\mathop{\longleftrightarrow}^{\mu^{\bullet}_-}
\
Q.
\end{align}
\par
(ii)
 $\boldsymbol{\omega}(Q)=Q$ if $h^{\vee}+\ell$ is even,
and  $\boldsymbol{\omega}(Q)=\boldsymbol{r}(Q)$ if $h^{\vee}+\ell$ is odd.
\end{lemma}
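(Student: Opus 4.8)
The plan is to prove Lemma~\ref{lem:CQmut} by a direct inspection of the quiver $Q_\ell(C_r)$ as drawn in Figure~\ref{fig:quiverC}, exploiting the fact that it is built from two pieces: a ``type $A_{r-1}$-like'' left block of width $r-1$ and height $\ell-1$ with alternating horizontal arrow directions and $\circ/\bullet$, $+/-$ labels, glued along its rightmost column to a ``type $A_3$-like'' right block of height $2\ell-1$ carrying the trivalent branching near the short node. For part~(i), I would first record the local rule: at a vertex $\mathbf i$ the composite mutation $\mu^\bullet_\varepsilon\mu^\circ_\varepsilon$ (or the single-colour versions appearing in \eqref{eq:CB2}) only mutates at a subset of vertices that form an independent set in the underlying graph, so each individual mutation is an ordinary mutation at a source or sink of the full-subquiver on its colour class; hence the composite is order-independent (the parenthetical remark after \eqref{eq:Cmupm2}) and its effect on arrows is computed by the usual mutation rule applied vertex by vertex. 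Then I would check, column by column and row by row, that applying $\mu^\bullet_+\mu^\circ_+$ reverses every arrow, i.e. produces $Q^{\mathrm{op}}$; the key observation is that the $+$ vertices are exactly the sources of one orientation class and the sinks of the other, so mutating all of them at once flips all arrows incident to them, and because $Q$ is bipartite-like with respect to the $+/-$ labelling in the right way, the arrows between two $-$ vertices (there are none of the wrong type) cause no obstruction. The remaining three steps of the cycle \eqref{eq:CB2} are checked the same way, using that $\mu^\bullet_-$ applied to $Q^{\mathrm{op}}$ produces $\boldsymbol r(Q)$ (the left-right reflection of the right block), etc.

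Concretely I would organize the verification of part~(i) as four claims, one for each arrow $\mathop{\longleftrightarrow}$ in \eqref{eq:CB2}, and for each claim split the check into (a) the interior of the left block, where the pattern is exactly periodic and a single generic $2\times 2$ plaquette computation suffices, (b) the interior of the right block, (c) the gluing column (the rightmost column of the left block = an internal column of the right block), and (d) the branch vertex near the short simple root where the trivalent configuration sits. Parts (a)--(c) are routine once the sign/colour bookkeeping is set up; the only place that needs genuine care is (d), the trivalent vertex, because there the mutation rule creates and cancels arrows in a $3$-cycle and one must verify no spurious arrows survive — this is the analogue of the corresponding computation in type $B_r$ in \cite{IIKKN}, and I would either cite that computation verbatim or reproduce the small diagram. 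Note also that the statement uses the identification of the rightmost column of the left quiver with the middle column of the right quiver, so one must make sure the mutation performed ``from the left side'' and ``from the right side'' at those shared vertices agree; this compatibility is exactly what the colouring $\circ/\bullet$ and $+/-$ is designed to guarantee, and it is worth an explicit sentence.

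For part~(ii), the involution $\boldsymbol\omega$ is purely combinatorial: it is the up-down reflection of the left block composed (for even $r$) with the $180^\circ$ rotation of the right block, or (for odd $r$) the up-down reflection of both blocks. I would argue as follows. The left block has height $\ell-1$ and the right block has height $2\ell-1$; the up-down reflection of the left block preserves its arrow pattern iff the alternating horizontal-arrow colouring is symmetric under flipping the $\ell-1$ rows, which depends on the parity of $\ell-1$, i.e. of $\ell$; similarly the right block is preserved (resp.\ sent to its left-right mirror $\boldsymbol r(Q)$) by its rotation according to the parity of its height $2\ell-1$ combined with the parity of $r$ (the number of columns in the left block, which controls the sign pattern in the shared column). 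Collecting these parities, the net condition for $\boldsymbol\omega(Q)=Q$ is that $r+\ell$ — equivalently, since $h^\vee=r+1$ for $C_r$, that $h^\vee+\ell$ is $\equiv 1 \pmod 2$ is the odd case and $h^\vee+\ell$ even is the $\boldsymbol\omega(Q)=Q$ case. So the main content is a careful parity computation matching the $+/-$ labels under the two reflections; I expect this to be entirely mechanical.

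The main obstacle I anticipate is not conceptual but notational: setting up coordinates on $\mathbf I$ (as in the paragraph fixing $\mathbf i=(i,i')$) and the $\circ/\bullet$, $+/-$ assignments precisely enough that the four plaquette computations and the trivalent-vertex computation can be done uniformly, and in particular handling the ``seam'' where the left and right blocks are glued. Once that bookkeeping is in place, part~(i) reduces to finitely many local mutation computations of the type already carried out for $B_r$ in \cite{IIKKN}, and part~(ii) reduces to a parity count; so in the write-up I would state the coordinatization carefully, do the generic interior plaquette in detail, do the trivalent vertex in detail (or cite $B_r$), and then assert the boundary and seam cases ``by the same computation,'' exactly in the spirit announced at the start of Section~3 that proofs parallel to $B_r$ are omitted.
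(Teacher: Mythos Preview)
Your plan is essentially correct and matches the paper's own stance: the paper gives no proof of this lemma, declaring it parallel to the $B_r$ case in \cite{IIKKN}, and what you outline is exactly the kind of direct local verification that constitutes that parallel proof. One caution: you have the block heights swapped --- in $Q_\ell(C_r)$ the left block (the $r-1$ columns for the short-root nodes) has height $2\ell-1$, while the $\circ$-columns of the right block have height $\ell-1$ (the paper's own text just below Figure~\ref{fig:quiverC} appears to contain the same swap, but the figure and the T-system indexing in \eqref{eq:TC1} make clear which is which); this does not affect your strategy, but you should correct it before carrying out the parity count in part~(ii), since the outcome there depends on exactly which block has which height.
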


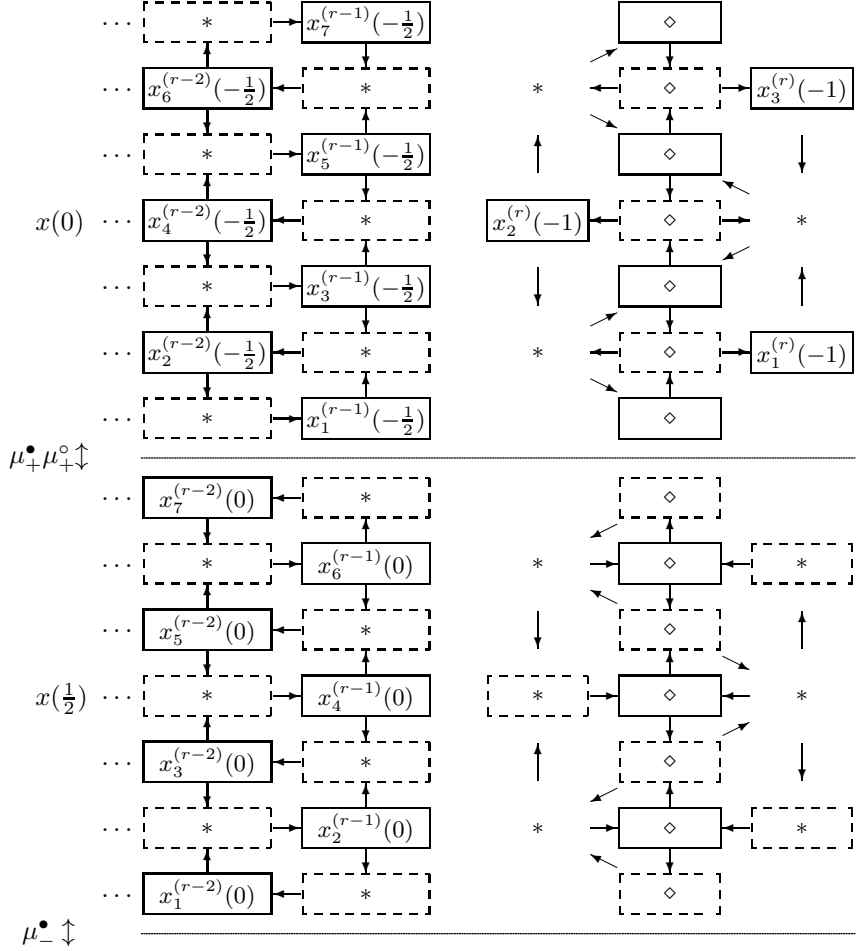
\begin{figure}
\begin{picture}(280,370)(-50,-230)
\put(80,0)
{
\put(-140,46.5){$x(0)$}
\put(-19,-7.5)
{
\put(50,0){\makebox(38,15){\small $*$}}
\put(150,0){\framebox(38,15){\small $x^{(r)}_1(-1)$}}
\put(50,50){\framebox(38,15){\small $x^{(r)}_2(-1)$}}
\put(150,50){\makebox(38,15){\small $*$}}
\put(50,100){\makebox(38,15){\small $*$}}
\put(150,100){\framebox(38,15){\small $x^{(r)}_3(-1)$}}
\put(100,-25){\framebox(38,15){\small $\diamond$}}
\put(100,0){\dashbox{3}(38,15){\small $\diamond$}}
\put(100,25){\framebox(38,15){\small $\diamond$}}
\put(100,50){\dashbox{3}(38,15){\small $\diamond$}}
\put(100,75){\framebox(38,15){\small $\diamond$}}
\put(100,100){\dashbox{3}(38,15){\small $\diamond$}}
\put(100,125){\framebox(38,15){\small $\diamond$}}
}
%
\put(80,0){$\vector(-1,0){10}$}
\put(120,0){$\vector(1,0){10}$}
\put(80,50){$\vector(-1,0){10}$}
\put(120,50){$\vector(1,0){10}$}
\put(80,100){$\vector(-1,0){10}$}
\put(120,100){$\vector(1,0){10}$}
\put(50,32){\vector(0,-1){14}}
\put(50,68){\vector(0,1){14}}
\put(100,-17){\vector(0,1){9}}
\put(100,17){\vector(0,-1){9}}
\put(100,33){\vector(0,1){9}}
\put(100,67){\vector(0,-1){9}}
\put(100,83){\vector(0,1){9}}
\put(100,117){\vector(0,-1){9}}
\put(150,18){\vector(0,1){14}}
\put(150,82){\vector(0,-1){14}}

\put(70,-10){\vector(2,-1){10}}
\put(70,10){\vector(2,1){10}}
\put(70,90){\vector(2,-1){10}}
\put(70,110){\vector(2,1){10}}
\put(130,40){\vector(-2,-1){10}}
\put(130,60){\vector(-2,1){10}}
%
\put(30,0)
{
\put(-169,-7.5)
{
\put(40,-25){\dashbox{3}(48,15){\small $*$}}
\put(40,0){\framebox(48,15){\small $x^{(r-2)}_2(-\frac{1}{2})$}}
\put(40,25){\dashbox{3}(48,15){\small $*$}}
\put(40,50){\framebox(48,15){\small $x^{(r-2)}_4(-\frac{1}{2})$}}
\put(40,75){\dashbox{3}(48,15){\small $*$}}
\put(40,100){\framebox(48,15){\small $x^{(r-2)}_6(-\frac{1}{2})$}}
\put(40,125){\dashbox{3}(48,15){\small $*$}}
\put(100,-25){\framebox(48,15){\small $x^{(r-1)}_1(-\frac{1}{2})$}}
\put(100,0){\dashbox{3}(48,15){\small $*$}}
\put(100,25){\framebox(48,15){\small $x^{(r-1)}_3(-\frac{1}{2})$}}
\put(100,50){\dashbox{3}(48,15){\small $*$}}
\put(100,75){\framebox(48,15){\small $x^{(r-1)}_5(-\frac{1}{2})$}}
\put(100,100){\dashbox{3}(48,15){\small $*$}}
\put(100,125){\framebox(48,15){\small $x^{(r-1)}_7(-\frac{1}{2})$}}
}
\put(-45,-17){\vector(0,1){9}}
\put(-45,17){\vector(0,-1){9}}
\put(-45,33){\vector(0,1){9}}
\put(-45,67){\vector(0,-1){9}}
\put(-45,83){\vector(0,1){9}}
\put(-45,117){\vector(0,-1){9}}
\put(-105,-8){\vector(0,-1){9}}
\put(-105,8){\vector(0,1){9}}
\put(-105,42){\vector(0,-1){9}}
\put(-105,58){\vector(0,1){9}}
\put(-105,92){\vector(0,-1){9}}
\put(-105,108){\vector(0,1){9}}
\put(-80,-25){$\vector(1,0){10}$}
\put(-70,0){$\vector(-1,0){10}$}
\put(-80,25){$\vector(1,0){10}$}
\put(-70,50){$\vector(-1,0){10}$}
\put(-80,75){$\vector(1,0){10}$}
\put(-70,100){$\vector(-1,0){10}$}
\put(-80,125){$\vector(1,0){10}$}
\put(-145,-26){$\dots$}
\put(-145,-1){$\dots$}
\put(-145,24){$\dots$}
\put(-145,49){$\dots$}
\put(-145,74){$\dots$}
\put(-145,99){$\dots$}
\put(-145,124){$\dots$}
} 
\dottedline(-100,-40)(170,-40)
\put(-125,-42){$\updownarrow$}
\put(-150,-42){$\mu^{\bullet}_+\mu^{\circ}_+$}
}
\put(80,-180)
{
\put(-140,46.5){$x(\frac{1}{2})$}
\put(-19,-7.5)
{
\put(50,0){\makebox(38,15){\small $*$}}
\put(150,0){\dashbox{3}(38,15){\small $*$}}
\put(50,50){\dashbox{3}(38,15){\small $*$}}
\put(150,50){\makebox(38,15){\small $*$}}
\put(50,100){\makebox(38,15){\small $*$}}
\put(150,100){\dashbox{3}(38,15){\small $*$}}
\put(100,-25){\dashbox{3}(38,15){\small $\diamond$}}
\put(100,0){\framebox(38,15){\small $\diamond$}}
\put(100,25){\dashbox{3}(38,15){\small $\diamond$}}
\put(100,50){\framebox(38,15){\small $\diamond$}}
\put(100,75){\dashbox{3}(38,15){\small $\diamond$}}
\put(100,100){\framebox(38,15){\small $\diamond$}}
\put(100,125){\dashbox{3}(38,15){\small $\diamond$}}
}
%
\put(70,0){$\vector(1,0){10}$}
\put(130,0){$\vector(-1,0){10}$}
\put(70,50){$\vector(1,0){10}$}
\put(130,50){$\vector(-1,0){10}$}
\put(70,100){$\vector(1,0){10}$}
\put(130,100){$\vector(-1,0){10}$}
\put(50,18){\vector(0,1){14}}
\put(50,82){\vector(0,-1){14}}
\put(100,-8){\vector(0,-1){9}}
\put(100,8){\vector(0,1){9}}
\put(100,42){\vector(0,-1){9}}
\put(100,58){\vector(0,1){9}}
\put(100,92){\vector(0,-1){9}}
\put(100,108){\vector(0,1){9}}
\put(150,32){\vector(0,-1){14}}
\put(150,68){\vector(0,1){14}}

\put(80,-15){\vector(-2,1){10}}
\put(80,15){\vector(-2,-1){10}}
\put(80,85){\vector(-2,1){10}}
\put(80,115){\vector(-2,-1){10}}
\put(120,35){\vector(2,1){10}}
\put(120,65){\vector(2,-1){10}}
%
\put(30,0)
{
\put(-169,-7.5)
{
\put(40,-25){\framebox(48,15){\small $x^{(r-2)}_1(0)$}}
\put(40,0){\dashbox{3}(48,15){\small $*$}}
\put(40,25){\framebox(48,15){\small $x^{(r-2)}_3(0)$}}
\put(40,50){\dashbox{3}(48,15){\small $*$}}
\put(40,75){\framebox(48,15){\small $x^{(r-2)}_5(0)$}}
\put(40,100){\dashbox{3}(48,15){\small $*$}}
\put(40,125){\framebox(48,15){\small $x^{(r-2)}_7(0)$}}
\put(100,-25){\dashbox{3}(48,15){\small $*$}}
\put(100,0){\framebox(48,15){\small $x^{(r-1)}_2(0)$}}
\put(100,25){\dashbox{3}(48,15){\small $*$}}
\put(100,50){\framebox(48,15){\small $x^{(r-1)}_4(0)$}}
\put(100,75){\dashbox{3}(48,15){\small $*$}}
\put(100,100){\framebox(48,15){\small $x^{(r-1)}_6(0)$}}
\put(100,125){\dashbox{3}(48,15){\small $*$}}
}
\put(-45,-8){\vector(0,-1){9}}
\put(-45,8){\vector(0,1){9}}
\put(-45,42){\vector(0,-1){9}}
\put(-45,58){\vector(0,1){9}}
\put(-45,92){\vector(0,-1){9}}
\put(-45,108){\vector(0,1){9}}
\put(-105,-17){\vector(0,1){9}}
\put(-105,17){\vector(0,-1){9}}
\put(-105,33){\vector(0,1){9}}
\put(-105,67){\vector(0,-1){9}}
\put(-105,83){\vector(0,1){9}}
\put(-105,117){\vector(0,-1){9}}
\put(-70,-25){$\vector(-1,0){10}$}
\put(-80,0){$\vector(1,0){10}$}
\put(-70,25){$\vector(-1,0){10}$}
\put(-80,50){$\vector(1,0){10}$}
\put(-70,75){$\vector(-1,0){10}$}
\put(-80,100){$\vector(1,0){10}$}
\put(-70,125){$\vector(-1,0){10}$}
\put(-145,-26){$\dots$}
\put(-145,-1){$\dots$}
\put(-145,24){$\dots$}
\put(-145,49){$\dots$}
\put(-145,74){$\dots$}
\put(-145,99){$\dots$}
\put(-145,124){$\dots$}
} 
\dottedline(-100,-40)(170,-40)
\put(-130,-42){$\updownarrow$}
\put(-145,-42){$\mu^{\bullet}_-$}
}
\end{picture}
\caption{(Continues to Figure \ref{fig:labelxC2})
Label of cluster variables $x_{\mathbf{i}}(u)$
by $\mathcal{I}_{\ell+}$  for
$C_r$, $\ell=4$.
The variables framed by solid/dashed lines
satisfy the condition $\mathbf{p}_+$/$\mathbf{p}_-$,
respectively.
The middle column in the right quiver (marked by $\diamond$)
is identified with the rightmost column in the left quiver.
}
\label{fig:labelxC1}
\end{figure}

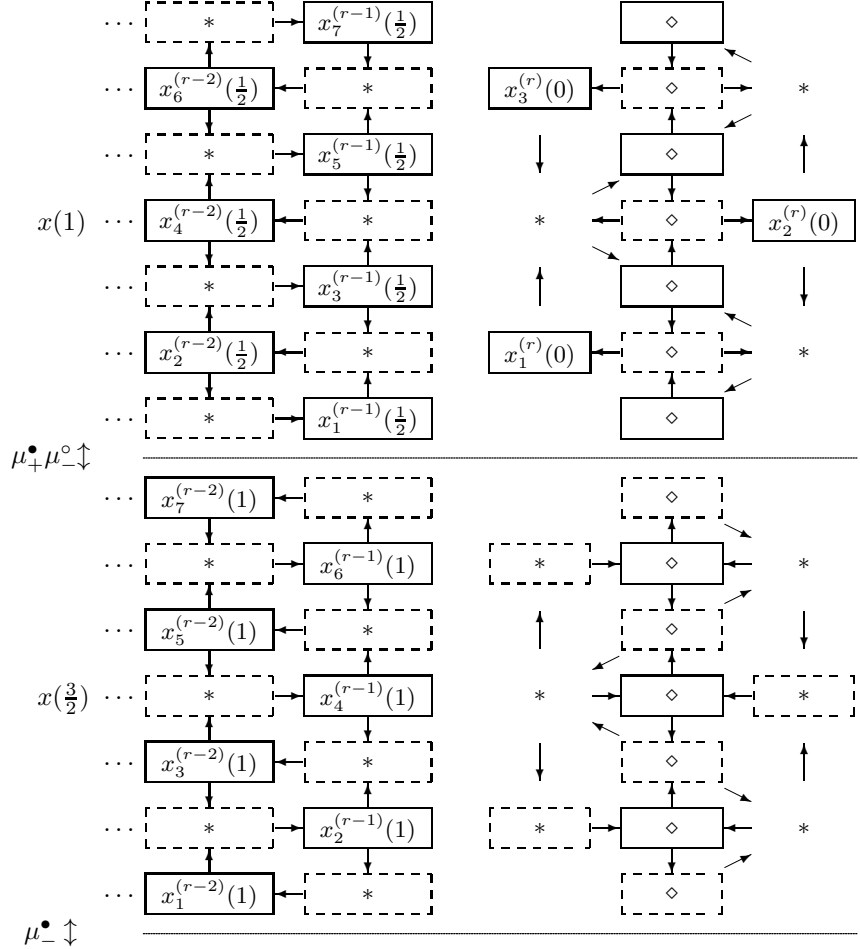
\begin{figure}
\begin{picture}(280,370)(-50,-230)
\put(80,0)
{
\put(-140,46.5){$x(1)$}
\put(-19,-7.5)
{
\put(50,0){\framebox(38,15){\small $x^{(r)}_1(0)$}}
\put(150,0){\makebox(38,15){\small $*$}}
\put(50,50){\makebox(38,15){\small $*$}}
\put(150,50){\framebox(38,15){\small $x^{(r)}_2(0)$}}
\put(50,100){\framebox(38,15){\small $x^{(r)}_3(0)$}}
\put(150,100){\makebox(38,15){\small $*$}}
\put(100,-25){\framebox(38,15){\small $\diamond$}}
\put(100,0){\dashbox{3}(38,15){\small $\diamond$}}
\put(100,25){\framebox(38,15){\small $\diamond$}}
\put(100,50){\dashbox{3}(38,15){\small $\diamond$}}
\put(100,75){\framebox(38,15){\small $\diamond$}}
\put(100,100){\dashbox{3}(38,15){\small $\diamond$}}
\put(100,125){\framebox(38,15){\small $\diamond$}}
}
%
\put(80,0){$\vector(-1,0){10}$}
\put(120,0){$\vector(1,0){10}$}
\put(80,50){$\vector(-1,0){10}$}
\put(120,50){$\vector(1,0){10}$}
\put(80,100){$\vector(-1,0){10}$}
\put(120,100){$\vector(1,0){10}$}
\put(50,18){\vector(0,1){14}}
\put(50,82){\vector(0,-1){14}}
\put(100,-17){\vector(0,1){9}}
\put(100,17){\vector(0,-1){9}}
\put(100,33){\vector(0,1){9}}
\put(100,67){\vector(0,-1){9}}
\put(100,83){\vector(0,1){9}}
\put(100,117){\vector(0,-1){9}}
\put(150,32){\vector(0,-1){14}}
\put(150,68){\vector(0,1){14}}

\put(130,-10){\vector(-2,-1){10}}
\put(130,10){\vector(-2,1){10}}
\put(130,90){\vector(-2,-1){10}}
\put(130,110){\vector(-2,1){10}}
\put(70,40){\vector(2,-1){10}}
\put(70,60){\vector(2,1){10}}
%
\put(30,0)
{
\put(-169,-7.5)
{
\put(40,-25){\dashbox{3}(48,15){\small $*$}}
\put(40,0){\framebox(48,15){\small $x^{(r-2)}_2(\frac{1}{2})$}}
\put(40,25){\dashbox{3}(48,15){\small $*$}}
\put(40,50){\framebox(48,15){\small $x^{(r-2)}_4(\frac{1}{2})$}}
\put(40,75){\dashbox{3}(48,15){\small $*$}}
\put(40,100){\framebox(48,15){\small $x^{(r-2)}_6(\frac{1}{2})$}}
\put(40,125){\dashbox{3}(48,15){\small $*$}}
\put(100,-25){\framebox(48,15){\small $x^{(r-1)}_1(\frac{1}{2})$}}
\put(100,0){\dashbox{3}(48,15){\small $*$}}
\put(100,25){\framebox(48,15){\small $x^{(r-1)}_3(\frac{1}{2})$}}
\put(100,50){\dashbox{3}(48,15){\small $*$}}
\put(100,75){\framebox(48,15){\small $x^{(r-1)}_5(\frac{1}{2})$}}
\put(100,100){\dashbox{3}(48,15){\small $*$}}
\put(100,125){\framebox(48,15){\small $x^{(r-1)}_7(\frac{1}{2})$}}
}
\put(-45,-17){\vector(0,1){9}}
\put(-45,17){\vector(0,-1){9}}
\put(-45,33){\vector(0,1){9}}
\put(-45,67){\vector(0,-1){9}}
\put(-45,83){\vector(0,1){9}}
\put(-45,117){\vector(0,-1){9}}
\put(-105,-8){\vector(0,-1){9}}
\put(-105,8){\vector(0,1){9}}
\put(-105,42){\vector(0,-1){9}}
\put(-105,58){\vector(0,1){9}}
\put(-105,92){\vector(0,-1){9}}
\put(-105,108){\vector(0,1){9}}
\put(-80,-25){$\vector(1,0){10}$}
\put(-70,0){$\vector(-1,0){10}$}
\put(-80,25){$\vector(1,0){10}$}
\put(-70,50){$\vector(-1,0){10}$}
\put(-80,75){$\vector(1,0){10}$}
\put(-70,100){$\vector(-1,0){10}$}
\put(-80,125){$\vector(1,0){10}$}
\put(-145,-26){$\dots$}
\put(-145,-1){$\dots$}
\put(-145,24){$\dots$}
\put(-145,49){$\dots$}
\put(-145,74){$\dots$}
\put(-145,99){$\dots$}
\put(-145,124){$\dots$}
} 
\dottedline(-100,-40)(170,-40)
\put(-125,-42){$\updownarrow$}
\put(-150,-42){$\mu^{\bullet}_+\mu^{\circ}_-$}
}
\put(80,-180)
{
\put(-140,46.5){$x(\frac{3}{2})$}
\put(-19,-7.5)
{
\put(50,0){\dashbox{3}(38,15){\small $*$}}
\put(150,0){\makebox(38,15){\small $*$}}
\put(50,50){\makebox(38,15){\small $*$}}
\put(150,50){\dashbox{3}(38,15){\small $*$}}
\put(50,100){\dashbox{3}(38,15){\small $*$}}
\put(150,100){\makebox(38,15){\small $*$}}
\put(100,-25){\dashbox{3}(38,15){\small $\diamond$}}
\put(100,0){\framebox(38,15){\small $\diamond$}}
\put(100,25){\dashbox{3}(38,15){\small $\diamond$}}
\put(100,50){\framebox(38,15){\small $\diamond$}}
\put(100,75){\dashbox{3}(38,15){\small $\diamond$}}
\put(100,100){\framebox(38,15){\small $\diamond$}}
\put(100,125){\dashbox{3}(38,15){\small $\diamond$}}
}
%
\put(70,0){$\vector(1,0){10}$}
\put(130,0){$\vector(-1,0){10}$}
\put(70,50){$\vector(1,0){10}$}
\put(130,50){$\vector(-1,0){10}$}
\put(70,100){$\vector(1,0){10}$}
\put(130,100){$\vector(-1,0){10}$}
\put(50,32){\vector(0,-1){14}}
\put(50,68){\vector(0,1){14}}
\put(100,-8){\vector(0,-1){9}}
\put(100,8){\vector(0,1){9}}
\put(100,42){\vector(0,-1){9}}
\put(100,58){\vector(0,1){9}}
\put(100,92){\vector(0,-1){9}}
\put(100,108){\vector(0,1){9}}
\put(150,18){\vector(0,1){14}}
\put(150,82){\vector(0,-1){14}}

\put(120,-15){\vector(2,1){10}}
\put(120,15){\vector(2,-1){10}}
\put(120,85){\vector(2,1){10}}
\put(120,115){\vector(2,-1){10}}
\put(80,35){\vector(-2,1){10}}
\put(80,65){\vector(-2,-1){10}}
%
%
\put(30,0)
{
\put(-169,-7.5)
{
\put(40,-25){\framebox(48,15){\small $x^{(r-2)}_1(1)$}}
\put(40,0){\dashbox{3}(48,15){\small $*$}}
\put(40,25){\framebox(48,15){\small $x^{(r-2)}_3(1)$}}
\put(40,50){\dashbox{3}(48,15){\small $*$}}
\put(40,75){\framebox(48,15){\small $x^{(r-2)}_5(1)$}}
\put(40,100){\dashbox{3}(48,15){\small $*$}}
\put(40,125){\framebox(48,15){\small $x^{(r-2)}_7(1)$}}
\put(100,-25){\dashbox{3}(48,15){\small $*$}}
\put(100,0){\framebox(48,15){\small $x^{(r-1)}_2(1)$}}
\put(100,25){\dashbox{3}(48,15){\small $*$}}
\put(100,50){\framebox(48,15){\small $x^{(r-1)}_4(1)$}}
\put(100,75){\dashbox{3}(48,15){\small $*$}}
\put(100,100){\framebox(48,15){\small $x^{(r-1)}_6(1)$}}
\put(100,125){\dashbox{3}(48,15){\small $*$}}
}
\put(-45,-8){\vector(0,-1){9}}
\put(-45,8){\vector(0,1){9}}
\put(-45,42){\vector(0,-1){9}}
\put(-45,58){\vector(0,1){9}}
\put(-45,92){\vector(0,-1){9}}
\put(-45,108){\vector(0,1){9}}
\put(-105,-17){\vector(0,1){9}}
\put(-105,17){\vector(0,-1){9}}
\put(-105,33){\vector(0,1){9}}
\put(-105,67){\vector(0,-1){9}}
\put(-105,83){\vector(0,1){9}}
\put(-105,117){\vector(0,-1){9}}
\put(-70,-25){$\vector(-1,0){10}$}
\put(-80,0){$\vector(1,0){10}$}
\put(-70,25){$\vector(-1,0){10}$}
\put(-80,50){$\vector(1,0){10}$}
\put(-70,75){$\vector(-1,0){10}$}
\put(-80,100){$\vector(1,0){10}$}
\put(-70,125){$\vector(-1,0){10}$}
\put(-145,-26){$\dots$}
\put(-145,-1){$\dots$}
\put(-145,24){$\dots$}
\put(-145,49){$\dots$}
\put(-145,74){$\dots$}
\put(-145,99){$\dots$}
\put(-145,124){$\dots$}
} 
\dottedline(-100,-40)(170,-40)
\put(-130,-42){$\updownarrow$}
\put(-145,-42){$\mu^{\bullet}_-$}
}
\end{picture}
\caption{(Continues from Figure \ref{fig:labelxC1}).
}
\label{fig:labelxC2}
\end{figure}

\subsection{Cluster algebra and alternative labels}

It is standard to identify
a quiver $Q$
with no loop and no 2-cycle
and a skew-symmetric matrix $B$.
We use the convention for the direction of arrows as
\begin{align}
i \longrightarrow j\quad
\Longleftrightarrow
\quad
B_{ij}=1.
\end{align}
(In this paper we only encounter the situation
where $B_{ij}=-1,0,1$.)
Let $B_{\ell}(C_r)$
be the corresponding skew-symmetric matrix 
to the quiver $Q_{\ell}(C_r)$.
In the rest of the section,
we set the matrix $B=(B_{\mathbf{i}\mathbf{j}})_{\mathbf{i},
\mathbf{j}\in \mathbf{I}}=B_{\ell}(C_r)$
unless otherwise mentioned.

Let $\mathcal{A}(B,x,y)$ 
be the {\em cluster algebra
 with coefficients
in the universal
semifield
$\mathbb{Q}_{\mathrm{sf}}(y)$},
where $(B,x,y)$ is the initial seed  \cite{FZ2}.
See also \cite[Section 2.1]{IIKKN} for
the conventions and notations on
cluster algebras we employ.
(Here we use the symbol $+$ instead of $\oplus$ 
in $\mathbb{Q}_{\mathrm{sf}}(y)$,
since it is the ordinary addition of subtraction-free
expressions of rational functions of $y$.)

\begin{definition}
The {\em coefficient group $\mathcal{G}(B,y)$
associated with $\mathcal{A}(B,x,y)$}
is the multiplicative subgroup of
the semifield $\mathbb{Q}_{\mathrm{sf}}(y)$ generated by all
the coefficients $y_{\mathbf{i}}'$ of $\mathcal{A}(B,x,y)$
together with $1+y_{\mathbf{i}}'$.
\end{definition}

In view of Lemma \ref{lem:CQmut}
we set $x(0)=x$, $y(0)=y$ and define 
clusters $x(u)=(x_{\mathbf{i}}(u))_{\mathbf{i}\in \mathbf{I}}$
 ($u\in \frac{1}{2}\mathbb{Z}$)
 and coefficient tuples $y(u)=(y_\mathbf{i}(u))_{\mathbf{i}\in \mathbf{I}}$
 ($u\in \frac{1}{2}\mathbb{Z}$)
by the sequence of mutations
\begin{align}
\label{eq:Cmutseq}
\begin{split}
\cdots
& 
\mathop{\longleftrightarrow}^{\mu^{\bullet}_-}
\
(B,x(0),y(0))\
\mathop{\longleftrightarrow}^{\mu^{\bullet}_+
\mu^{\circ}_+}
\
(-B,x({\textstyle \frac{1}{2}),y(\frac{1}{2})})
\\
&
\mathop{\longleftrightarrow}^{\mu^{\bullet}_-}
\
(\boldsymbol{r}(B),x(1),y(1))
\
\mathop{\longleftrightarrow}^{\mu^{\bullet}_+
\mu^{\circ}_-}
\
(-\boldsymbol{r}(B),x({\textstyle \frac{3}{2}),y(\frac{3}{2})})
\
\mathop{\longleftrightarrow}^{\mu^{\bullet}_-}
\
\cdots,
\end{split}
\end{align}
where $\boldsymbol{r}(B)=B'$ is defined
by $B'_{\mathbf{i}\mathbf{j}}=B_{\boldsymbol{r}(\mathbf{i})
\boldsymbol{r}(\mathbf{j})}$.

For a pair $(\mathbf{i},u)\in
 \mathbf{I}\times \frac{1}{2}\mathbb{Z}$,
we set the parity condition $\mathbf{p}_+$
and $\mathbf{p}_-$ by
\begin{align}
\label{eq:Cpp}
\mathbf{p}_+:&
\begin{cases}
 \mathbf{i}\in \mathbf{I}^{\circ}_+\sqcup \mathbf{I}^{\bullet}_+
& u\equiv 0\\
 \mathbf{i}\in \mathbf{I}^{\bullet}_-
& u\equiv \frac{1}{2},\frac{3}{2}\\
 \mathbf{i}\in \mathbf{I}^{\circ}_-\sqcup \mathbf{I}^{\bullet}_+
& u\equiv 1,
\end{cases}
\quad
\mathbf{p}_-:
\begin{cases}
 \mathbf{i}\in \mathbf{I}^{\circ}_+\sqcup \mathbf{I}^{\bullet}_+
& u\equiv \frac{1}{2}\\
 \mathbf{i}\in \mathbf{I}^{\bullet}_-
& u\equiv 0,1\\
 \mathbf{i}\in \mathbf{I}^{\circ}_-\sqcup \mathbf{I}^{\bullet}_+
& u\equiv \frac{3}{2},
\end{cases}
\end{align}
where $\equiv$ is modulo $2\mathbb{Z}$.
We have
\begin{align}
\label{eq:Cppm}
(\mathbf{i},u):\mathbf{p}_+
\quad
\Longleftrightarrow
\quad
(\mathbf{i},u+\frac{1}{2}):\mathbf{p}_-.
\end{align}
Each $(\mathbf{i},u):\mathbf{p}_+$
is a mutation point of \eqref{eq:Cmutseq} in the forward
direction of $u$,
and each  $(\mathbf{i},u):\mathbf{p}_-$
is so in the backward direction of $u$.
Notice that there are also some $(\mathbf{i},u)$ which
do not satisfy $\mathbf{p}_+$ nor $\mathbf{p}_-$,
and are not mutation points of \eqref{eq:Cmutseq};
explicitly, they are $(\mathbf{i},u)$
with $\mathbf{i}\in \mathbf{I}^{\circ}_+$,
$u\equiv 1,\frac{3}{2}$ mod $2\mathbb{Z}$,
or 
with $\mathbf{i}\in \mathbf{I}^{\circ}_-$,
$u\equiv 0,\frac{1}{2}$ mod $2\mathbb{Z}$.

There is a correspondence between the parity condition
$\mathbf{p}_{\pm}$ here and $\mathbf{P}_{\pm}$,
$\mathbf{P}'_{\pm}$
in \eqref{eq:CPcond} and \eqref{eq:CPcond2}.
\begin{lemma}
 Below $\equiv$ means the equivalence modulo
$2\mathbb{Z}$.
\par
(i)
The map
\begin{align}
\begin{matrix}
g: &\mathcal{I}_{\ell+}&\rightarrow & \{ (\mathbf{i},u): \mathbf{p}_+
\}\hfill \\
&(a,m,u-\frac{1}{t_a})&\mapsto &
\begin{cases}
((a,m),u)& \mbox{\rm  $a\neq r$}\\
((r+1,m),u)& \mbox{\rm $a=r$;
$m+u\equiv 0$}\\
((r,m),u)& 
 \mbox{$a=r$; $m+u\equiv 1$}\\
\end{cases}
\end{matrix}
\end{align}
is a bijection.
\par
(ii)
The map
\begin{align}
\begin{matrix}
g': &\mathcal{I}'_{\ell+}&\rightarrow & \{ (\mathbf{i},u): \mathbf{p}_+
\ \mbox{\rm or}\ \mathbf{p}_-\}\hfill \\
&(a,m,u)&\mapsto &
\begin{cases}
((a,m),u)& a\neq r\\
((r+1,m),u)& \mbox{\rm $a=r$;
$m+u\equiv 0$}\\
((r,m),u)& 
 \mbox{$a=r$; $m+u\equiv 1$}\\
\end{cases}
\end{matrix}
\end{align}
is a bijection.
\end{lemma}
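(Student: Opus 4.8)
The plan is to prove both (i) and (ii) by a direct, if somewhat lengthy, verification: for each of $g$ and $g'$ I would check in turn that the map is well defined, injective, and surjective onto the asserted target. Part (ii) is entirely parallel to (i) --- one simply carries along both $\mathbf p_+$ and $\mathbf p_-$ vertices, passing between them via \eqref{eq:Cppm} --- so I would spell out (i) in detail and only indicate the minor changes needed for (ii).

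Well-definedness of $g$ splits into a trivial and a substantive part. The trivial part: the output $u=(u-\tfrac1{t_a})+\tfrac1{t_a}$ lies in $\tfrac12\mathbb Z$ because $u-\tfrac1{t_a}\in\tfrac1t\mathbb Z=\tfrac12\mathbb Z$ and $\tfrac1{t_a}\in\{\tfrac12,1\}$; and the first coordinate is a genuine vertex of $Q_\ell(C_r)$ once one matches the range $\{1,\dots,t_a\ell-1\}$ of $m$ against the set of row indices occurring in the column (or pair of columns, when $a=r$) attached to the Dynkin node $a$, as read off from the description of $\mathbf I$ and from Figure~\ref{fig:quiverC}. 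The substantive part is the parity bookkeeping: one unwinds $\mathbf P_+$ of \eqref{eq:CPcond} for the triple $(a,m,u-\tfrac1{t_a})$ --- equivalently $\mathbf P'_+$ of \eqref{eq:CPcond2} for $(a,m,u)$, via the correspondence between $\mathbf P_+$ and $\mathbf P'_+$ displayed after \eqref{eq:CPcond2} --- then reads off from Figure~\ref{fig:quiverC} the shape ($\circ$ or $\bullet$) and sign ($+$ or $-$) of the vertex $(i,i')$ that $g$ assigns, and checks that the resulting constraint on $(a,m,u)$ is exactly the condition $\mathbf p_+$ of \eqref{eq:Cpp}. I would organize this into the cases $a\le r-2$, $a=r-1$, and $a=r$, the last one further split according to $m+u\bmod 2$.

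Injectivity is essentially formal: from $((i,i'),u)$ one recovers $u$, then $a$ (namely $a=i$ if $i\le r-1$, and $a=r$ if $i\in\{r,r+1\}$), then $m=i'$, and finally the original third coordinate $u-\tfrac1{t_a}$. For surjectivity I would run this recovery in reverse on an arbitrary $((i,i'),u)$ satisfying $\mathbf p_+$, form the candidate triple, and verify from the clauses of \eqref{eq:Cpp} that it satisfies $\mathbf P_+$ and --- when $i\in\{r,r+1\}$ --- that $m+u$ has precisely the parity ($\equiv 0$ for $i=r+1$, $\equiv 1$ for $i=r$) that makes $g$ return $((i,i'),u)$. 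A useful cross-check is to count, over a single period of $u$, the elements of $\mathcal I_{\ell+}$ against the $\mathbf p_+$-pairs permitted by \eqref{eq:Cpp}, using $|\mathcal I_{\ell+}|=\tfrac12|\mathcal I_\ell|$.

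The only point requiring genuine care --- exactly as in the $B_r$ treatment in \cite{IIKKN} --- is the node $a=r$: it is a single Dynkin vertex but is realized in $Q_\ell(C_r)$ by the two columns $i=r$ and $i=r+1$, so one must verify that the rule ``$m+u\equiv 0\mapsto$ column $r+1$, $m+u\equiv 1\mapsto$ column $r$'' is exactly compatible with the $\circ/\bullet$- and sign-patterns of those two columns in Figure~\ref{fig:quiverC}, and, for $g'$, that merging the two parity classes yields precisely $\{(\mathbf i,u):\mathbf p_+\text{ or }\mathbf p_-\}$ rather than either half alone. Everything else reduces to elementary parity arithmetic.
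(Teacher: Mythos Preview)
Your proposal is correct and is precisely the kind of direct parity verification that this lemma calls for; the paper itself omits the proof entirely, declaring it parallel to the $B_r$ case treated in \cite{IIKKN}, so your outline is in fact more detailed than what appears here. The one point worth tightening is your handling of part (ii): for $a\neq r$ the vertex $(a,m)$ is a $\bullet$-vertex, and every pair $((a,m),u)$ with $\mathbf i\in\mathbf I^\bullet$ satisfies either $\mathbf p_+$ or $\mathbf p_-$ (these exhaust the $\bullet$-vertices over a period), whereas for $a=r$ the target vertices are $\circ$-vertices and it is exactly there that some $(\mathbf i,u)$ are neither $\mathbf p_+$ nor $\mathbf p_-$---so you should check that the rule ``$m+u\equiv 0\mapsto r+1$, $m+u\equiv 1\mapsto r$'' lands only on the $\mathbf p_+\cup\mathbf p_-$ pairs and misses the excluded ones, rather than framing it as ``merging two parity classes.''
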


We introduce alternative labels
$x_{\mathbf{i}}(u)=x^{(a)}_m(u-1/t_a)$
($(a,m,u-1/t_a)\in \mathcal{I}_{\ell+}$)
for $(\mathbf{i},u)=g((a,m,u-1/t_a))$
and
$y_{\mathbf{i}}(u)=y^{(a)}_m(u)$
($(a,m,u)\in \mathcal{I}'_{\ell+}$)
for $(\mathbf{i},u)=g'((a,m,u))$,
respectively.
See Figures \ref{fig:labelxC1}--\ref{fig:labelxC2}.

\subsection{T-system and cluster algebra}
The result in this subsection is completely parallel
to the $B_r$ case \cite{IIKKN}.

Let $\mathcal{A}(B,x)$ be the cluster algebra
with trivial coefficients, where $(B,x)$ is
the initial seed \cite{FZ2}.
Let $\mathbf{1}=\{1\}$ 
be the {\em trivial semifield}
and $\pi_{\mathbf{1}}:
\mathbb{Q}_{\mathrm{sf}}(y)\rightarrow 
\mathbf{1}$, $y_{\mathbf{i}}\mapsto 1$ be the projection.
Let $[x_{\mathbf{i}}(u)]_{\mathbf{1}}$
denote the image of $x_{\mathbf{i}}(u)$
 by the algebra homomorphism
$\mathcal{A}(B,x,y)\rightarrow \mathcal{A}(B,x)$
 induced from $\pi_{\mathbf{1}}$.
It is called the {\em trivial evaluation}.

Recall that $G(b,k,v;a,m,u)$ is defined in \eqref{eq:Tu}.

\begin{lemma}
\label{lem:Cx2}
The family $\{x^{(a)}_m(u)
\mid (a,m,u)\in \mathcal{I}_{\ell+}\}$
satisfies a system of relations
\begin{align}
\label{eq:Cx2}
\begin{split}
x^{(a)}_{m}\left(u-\textstyle\frac{1}{t_a}\right)
x^{(a)}_{m}\left(u+\textstyle\frac{1}{t_a}\right)
&=
\frac{y^{(a)}_m(u)}{1+y^{(a)}_m(u)}
\prod_{(b,k,v)\in \mathcal{I}_{\ell+}}
x^{(b)}_{k}(v)^{G(b,k,v;\, a,m,u)}
\\
&\qquad +
\frac{1}{1+y^{(a)}_m(u)}
x^{(a)}_{m-1}(u)x^{(a)}_{m+1}(u),
\end{split}
\end{align}
where $(a,m,u)\in \mathcal{I}'_{\ell+}$.
In particular,
the family $\{ [x^{(a)}_m(u)]_{\mathbf{1}}
\mid (a,m,u)\in \mathcal{I}_{\ell+}\}$
satisfies the T-system $\mathbb{T}_{\ell}(C_r)$
in $\mathcal{A}(B,x)$
by replacing $T^{(a)}_m(u)$ with $[x^{(a)}_m(u)]_{\mathbf{1}}$.
\end{lemma}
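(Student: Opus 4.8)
The plan is to verify the relations \eqref{eq:Cx2} by a direct computation in the cluster algebra $\mathcal{A}(B,x,y)$, using the explicit sequence of mutations \eqref{eq:Cmutseq} together with the alternative labelling of Figures \ref{fig:labelxC1}--\ref{fig:labelxC2}, and then to deduce the $\mathbb{T}_\ell(C_r)$ statement by applying the trivial evaluation $\pi_{\mathbf 1}$. The key observation is that each $(\mathbf i,u):\mathbf p_+$ is, by construction, a mutation point of \eqref{eq:Cmutseq} in the forward direction; hence the exchange relation for the cluster algebra at that vertex reads
\begin{align}
\label{eq:Cexch}
x_{\mathbf i}(u-\textstyle\frac{1}{t_a})\,x_{\mathbf i}(u+\textstyle\frac{1}{t_a})
&=
\frac{y_{\mathbf i}(u)}{1+y_{\mathbf i}(u)}\prod_{\mathbf j\to\mathbf i}x_{\mathbf j}(u)
+\frac{1}{1+y_{\mathbf i}(u)}\prod_{\mathbf i\to\mathbf j}x_{\mathbf j}(u),
\end{align}
where the arrows are those of the quiver at the stage of \eqref{eq:Cmutseq} immediately preceding the mutation at $\mathbf i$, and the two neighbours $u\pm\frac{1}{t_a}$ arise because $x_{\mathbf i}(u-\frac{1}{t_a})$ is the variable before the mutation and $x_{\mathbf i}(u+\frac{1}{t_a})$ is the one after. (Here $t_a$ depends on whether the vertex $\mathbf i$ sits in a $\circ$ or $\bullet$ column, which is exactly the $C_r$ analogue of the $B_r$ bookkeeping in \cite{IIKKN}.)

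First I would fix $(a,m,u)\in\mathcal I'_{\ell+}$, pass to $(\mathbf i,u)=g'((a,m,u))$ via the Lemma, and read off from Figure \ref{fig:quiverC} the set of arrows incident to $\mathbf i$ in the appropriate quiver among $Q$, $-Q$, $\boldsymbol r(Q)$, $-\boldsymbol r(Q)$ (determined by $u\bmod 2$). Matching the incoming/outgoing neighbours of $\mathbf i$ against the indices appearing on the right-hand side of \eqref{eq:Tu} — that is, checking that $\prod_{\mathbf i\to\mathbf j}x_{\mathbf j}(u)=x^{(a)}_{m-1}(u)x^{(a)}_{m+1}(u)$ and $\prod_{\mathbf j\to\mathbf i}x_{\mathbf j}(u)=\prod_{(b,k,v)}x^{(b)}_k(v)^{G(b,k,v;a,m,u)}$ — then turns \eqref{eq:Cexch} into \eqref{eq:Cx2}. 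This is a case analysis over $a\in\{1,\dots,r-2\}$, $a=r-1$ (with its $m$ even/odd subcases coming from the two $\bullet$-columns of height $2\ell-1$), and $a=r$ (with the $m+u\bmod 2$ split between the $\mathbf I$-indices $r$ and $r+1$, i.e. the two halves of the right quiver in Figure \ref{fig:quiverC}); in each case the arrow pattern in the figure reproduces precisely the corresponding relation among \eqref{eq:TC1}. Because all of this is formally identical to the $B_r$ treatment in \cite{IIKKN}, the bulk can be cited rather than re-derived; what genuinely must be checked is that the quiver $Q_\ell(C_r)$ of Figure \ref{fig:quiverC}, the mutation sequence \eqref{eq:Cmutseq}, and the labels of Figures \ref{fig:labelxC1}--\ref{fig:labelxC2} are mutually consistent and reproduce $G(b,k,v;a,m,u)$ for $C_r$.

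The main obstacle I anticipate is the vertex $a=r$ (the $\mathbf I$-columns $i=r,r+1$): there the T-system relation $T^{(r)}_m(u-1)T^{(r)}_m(u+1)=T^{(r)}_{m-1}(u)T^{(r)}_{m+1}(u)+T^{(r-1)}_{2m}(u)$ has time-step $1$ rather than $\frac12$, the variable $x^{(r)}_m$ is threaded through \emph{two} $\mathbf I$-columns alternately according to the parity of $m+u$, and the coupling to column $r-1$ crosses between the left and right quivers of Figure \ref{fig:quiverC}. Getting the exponents $G(r-1,2m,u;r,m,u)$ and the half-step shifts on the $T^{(r-1)}$ factors right — and checking that the single arrow into the $i=r$ (or $i=r+1$) vertex indeed yields $x^{(r-1)}_{2m}(u)$ with the correct $u$ — is the delicate point, and is exactly where the $C_r$ case differs from $B_r$. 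Once \eqref{eq:Cx2} holds, applying $\pi_{\mathbf 1}$ sends every $y^{(a)}_m(u)\mapsto 1$, so the coefficient $\frac{y}{1+y}$ and $\frac{1}{1+y}$ both become $\frac12$; but the T-system \eqref{eq:TC1} is homogeneous in the sense that multiplying every $T$ in it by an appropriate scalar is harmless, and more to the point the standard argument of \cite{IIKKN} shows $[x^{(a)}_m(u)]_{\mathbf 1}$ satisfies \eqref{eq:TC1} exactly (the factor $\frac12$ cancels against the normalisation of the $[\,\cdot\,]_{\mathbf 1}$), giving the claimed identification $T^{(a)}_m(u)=[x^{(a)}_m(u)]_{\mathbf 1}$ and completing the proof.
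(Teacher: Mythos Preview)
Your overall approach is correct and is exactly what the paper intends: the lemma is stated without proof under the blanket remark that ``the result in this subsection is completely parallel to the $B_r$ case \cite{IIKKN}'', and the $B_r$ proof is precisely the direct verification you outline --- write the cluster exchange relation \eqref{eq:Cexch} at each forward mutation point $(\mathbf i,u):\mathbf p_+$, and match the in/out arrow products against the monomials in \eqref{eq:TC1} via the labelling of Figures \ref{fig:labelxC1}--\ref{fig:labelxC2}. Your identification of the $a=r$ vertex as the one place requiring genuine care is also right.

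However, your final paragraph contains a real error. The projection $\pi_{\mathbf 1}:\mathbb{Q}_{\mathrm{sf}}(y)\to\mathbf 1$ is a \emph{semifield} homomorphism to the trivial semifield $\mathbf 1=\{1\}$, in which the addition satisfies $1\oplus 1=1$. Hence $\pi_{\mathbf 1}(1+y^{(a)}_m(u))=1$, not $2$, and both coefficients $\frac{y}{1+y}$ and $\frac{1}{1+y}$ evaluate to $1$, not $\frac12$. The relation \eqref{eq:Cx2} therefore specialises \emph{directly} to $\mathbb T_\ell(C_r)$ under $[\,\cdot\,]_{\mathbf 1}$, with no spurious factors. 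Your attempted repair via ``homogeneity'' and ``the factor $\frac12$ cancels against the normalisation of $[\,\cdot\,]_{\mathbf 1}$'' is incorrect and unnecessary: there is no such normalisation, and the T-system \eqref{eq:TC1} is not homogeneous in a way that would absorb a global $\frac12$. Once you fix this point, the proof is complete.
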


\begin{definition}
\label{def:Tsub}
The {\em T-subalgebra
$\mathcal{A}_T(B,x)$
of ${\mathcal{A}}(B,x,y)$
associated with the sequence \eqref{eq:Cmutseq}}
is the subalgebra of
${\mathcal{A}}(B,x)$
generated by
$[x_{\mathbf{i}}(u)]_{\mathbf{1}}$
($(\mathbf{i},u)\in \mathbf{I}\times \frac{1}{2}\mathbb{Z}$).
\end{definition}

\begin{theorem}
\label{thm:CTiso}
The ring $\EuScript{T}^{\circ}_{\ell}(C_r)_+$ is isomorphic to
$\mathcal{A}_T(B,x)$ by the correspondence
$T^{(a)}_m(u)\mapsto [x^{(a)}_m(u)]_{\mathbf{1}}$.
\end{theorem}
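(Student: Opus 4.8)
The plan is to exhibit a surjective ring homomorphism $\EuScript{T}^{\circ}_{\ell}(C_r)_+ \to \mathcal{A}_T(B,x)$, then to construct an inverse by producing a presentation of $\mathcal{A}_T(B,x)$ by generators and relations that matches the defining presentation of $\EuScript{T}^{\circ}_{\ell}(C_r)_+$. For the forward map: by Lemma \ref{lem:Cx2}, the family $\{[x^{(a)}_m(u)]_{\mathbf{1}} \mid (a,m,u)\in\mathcal{I}_{\ell+}\}$ satisfies the relations $\mathbb{T}_{\ell}(C_r)$ (restricted to the $+$-parity part), with the boundary conventions $[x^{(0)}_m(u)]_{\mathbf{1}}=[x^{(a)}_0(u)]_{\mathbf{1}}=1$ and $[x^{(a)}_{t_a\ell}(u)]_{\mathbf{1}}=1$ holding because those symbols simply do not appear as cluster variables in the sequence \eqref{eq:Cmutseq} and are set to $1$ by fiat in the T-system. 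Since $\EuScript{T}^{\circ}_{\ell}(C_r)_+$ is by definition the subring of $\EuScript{T}^{\circ}_{\ell}(C_r)$ generated by the $T^{(a)}_m(u)$ with $(a,m,u)\in\mathcal{I}_{\ell+}$, subject only to the relations $\mathbb{T}_{\ell}(C_r)$, the assignment $T^{(a)}_m(u)\mapsto [x^{(a)}_m(u)]_{\mathbf{1}}$ extends to a well-defined ring homomorphism $\varphi$, and it is surjective onto $\mathcal{A}_T(B,x)$ because the latter is generated by exactly the elements $[x_{\mathbf{i}}(u)]_{\mathbf{1}}$, each of which equals some $[x^{(a)}_m(u)]_{\mathbf{1}}$ under the alternative labelling bijection $g$ of the previous lemma.

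For injectivity, I would argue as in the $B_r$ case of \cite{IIKKN}. The key point is that $\mathcal{A}_T(B,x)$ is a polynomial ring (more precisely, that a suitable subset of the $[x^{(a)}_m(u)]_{\mathbf{1}}$ is algebraically independent and generates it), which one gets from the Laurent phenomenon together with the fact that each mutation in \eqref{eq:Cmutseq} exchanges exactly one cluster variable in each column-block in a triangular fashion; equivalently, one identifies an ``initial'' slab of $u$-values whose cluster variables form a transcendence basis and shows that every other $[x^{(a)}_m(u)]_{\mathbf{1}}$ is determined by the T-relations as a Laurent polynomial in these. One then checks that $\EuScript{T}^{\circ}_{\ell}(C_r)_+$ has the analogous structure: using the T-system relations \eqref{eq:TC1} to solve for $T^{(a)}_m(u+\frac{1}{t_a})$ in terms of variables at smaller $u$, one sees that $\EuScript{T}^{\circ}_{\ell}(C_r)_+$ is generated by the variables in the same initial slab and that no further relations among them are forced. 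Matching the two presentations gives that $\varphi$ is an isomorphism. Concretely this reduces, just as in \cite{IIKKN}, to checking that the normalization/positivity of the exchange relation in Lemma \ref{lem:Cx2} degenerates under $\pi_{\mathbf{1}}$ to exactly the monomial-plus-monomial shape of the T-system, and that the combinatorics of which $(\mathbf{i},u)$ are mutation points (the parity conditions $\mathbf{p}_{\pm}$) exactly mirrors the parity decomposition $\mathcal{I}_{\ell}=\mathcal{I}_{\ell+}\sqcup\mathcal{I}_{\ell-}$ and the shift $g'$.

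The main obstacle I expect is not the algebra of the isomorphism itself — that is genuinely parallel to $B_r$ and can be cited — but the bookkeeping needed to verify Lemma \ref{lem:Cx2} in the $C_r$ case, i.e.\ that the cluster exchange relations along the specific sequence \eqref{eq:Cmutseq} reproduce \emph{all four} families of T-relations in \eqref{eq:TC1}, including the ones with the doubled factors $T^{(r)}_m(u-\frac12)T^{(r)}_m(u+\frac12)$ and $T^{(r-1)}_{2m}(u-\frac12)T^{(r-1)}_{2m}(u+\frac12)$ coming from the short simple root at node $r$. This is controlled by the quiver $Q_{\ell}(C_r)$ of Figure \ref{fig:quiverC} — in particular the identification of the rightmost column of the left quiver with the middle column of the right quiver, and the $2\ell-1$ versus $\ell-1$ heights of the columns — together with the arrow pattern near node $r$ that produces the length-two vertical paths. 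Since Lemma \ref{lem:Cx2} is already stated (and its proof declared parallel to $B_r$), for the present theorem I would simply invoke it, state that the argument of the corresponding theorem in \cite{IIKKN} applies verbatim, and record that $\varphi$ is the desired isomorphism.
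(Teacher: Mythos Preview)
Your proposal is correct and matches the paper's approach: the paper gives no proof for this theorem, explicitly declaring at the start of Section~3 that ``most propositions are proved in a parallel manner to the $B_r$ case, so that proofs are omitted,'' and this theorem is one of them. Your outline --- surjectivity from Lemma~\ref{lem:Cx2} and the definition of $\mathcal{A}_T(B,x)$, injectivity by identifying an initial slab of algebraically independent generators on both sides via the Laurent phenomenon, all parallel to \cite{IIKKN} --- is exactly the argument being deferred to.
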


\subsection{Y-system and cluster algebra}

The result in this subsection is completely parallel
to the $B_r$ case \cite{IIKKN}.

\begin{lemma}
\label{lem:Cy2}
The family $\{ y^{(a)}_m(u)
\mid (a,m,u)\in \mathcal{I}'_{\ell+}\}$
satisfies the Y-system $\mathbb{Y}_{\ell}(C_r)$
by replacing $Y^{(a)}_m(u)$ with $y^{(a)}_m(u)$.
\end{lemma}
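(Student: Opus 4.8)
The plan is to obtain the relations $\mathbb{Y}_{\ell}(C_r)$ directly from the coefficient mutation rule of the cluster algebra $\mathcal{A}(B,x,y)$, evaluated along the sequence \eqref{eq:Cmutseq}; this runs parallel to the type $B_r$ argument of \cite{IIKKN}. First I would recall the mutation rule for the coefficients in the universal semifield: if $(B',x',y')=\mu_k(B,x,y)$ then $y'_k=y_k^{-1}$ and $y'_j=y_j\,y_k^{[B_{kj}]_+}(1+y_k)^{-B_{kj}}$ for $j\neq k$, all operations taken in $\mathbb{Q}_{\mathrm{sf}}(y)$ (see \cite{FZ2} and \cite[Section~2.1]{IIKKN}). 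Next I would note that in each of the quivers occurring in \eqref{eq:Cmutseq} the vertices of $\mathbf{I}^{\circ}_+$ are pairwise non-adjacent, and likewise for $\mathbf{I}^{\circ}_-$, $\mathbf{I}^{\bullet}_+$, $\mathbf{I}^{\bullet}_-$; hence each composite mutation $\mu^{\circ}_{\pm}$, $\mu^{\bullet}_{\pm}$ acts on the coefficient tuple as the order-independent composite of the individual rules, with no cross terms between the $(1+y)$-factors belonging to distinct vertices of one set.

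The core of the argument is then a local computation. Fixing $(a,m,u)\in\mathcal{I}'_{\ell+}$ and letting $\mathbf{i}$ be the first component of $g'(a,m,u)$, so that $y^{(a)}_m(v)=y_{\mathbf{i}}(v)$ under the alternative labelling, the parity conditions \eqref{eq:Cpp} and the bijection lemma above show that $(\mathbf{i},u)$ is a forward mutation point of \eqref{eq:Cmutseq} and that $u-\tfrac{1}{t_a}$ and $u+\tfrac{1}{t_a}$ are the two consecutive times at which $\mathbf{i}$ is mutated. Applying the coefficient rule at time $u-\tfrac{1}{t_a}$ inverts $y_{\mathbf{i}}$; then, tracking up to time $u+\tfrac{1}{t_a}$ the contributions of the mutations at the neighbours $\mathbf{j}$ of $\mathbf{i}$ — each contributing $1+y_{\mathbf{j}}(v_{\mathbf{j}})$ or $1+y_{\mathbf{j}}(v_{\mathbf{j}})^{-1}$ according to the sign of $B_{\mathbf{j}\mathbf{i}}$ at the step where $\mathbf{j}$ is mutated — one arrives (schematically) at
\[
y_{\mathbf{i}}\!\left(u-\tfrac{1}{t_a}\right) y_{\mathbf{i}}\!\left(u+\tfrac{1}{t_a}\right)
=\frac{\displaystyle\prod_{\mathbf{j}}\bigl(1+y_{\mathbf{j}}(v_{\mathbf{j}})\bigr)}{\displaystyle\prod_{\mathbf{j}}\bigl(1+y_{\mathbf{j}}(v_{\mathbf{j}})^{-1}\bigr)},
\]
the two products running over the appropriate in- and out-neighbours of $\mathbf{i}$ during the intervening steps. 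Transporting the exponents and the times $v_{\mathbf{j}}$ back through $g$ and $g'$, and using that the arrows of $Q_{\ell}(C_r)$ were built precisely to match the exponents $G$ of \eqref{eq:Tu} and the denominator $(1+Y^{(a)}_{m\pm1}(u)^{-1})$ of \eqref{eq:Yu}, this identity becomes the matching relation of \eqref{eq:YC1} with $Y^{(a)}_m$ replaced by $y^{(a)}_m$. I would carry this out for each of the four families of relations in \eqref{eq:YC1} in turn.

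The main obstacle is bookkeeping rather than anything conceptual, and it is concentrated in the relation for $a=r$: there $t_r=1$, so one must compose the coefficient rule over several half-steps of \eqref{eq:Cmutseq}, and one must keep careful track of the identification of the middle column of the right quiver of Figure~\ref{fig:quiverC} with the rightmost column of its left quiver, together with the splitting $i=r,r+1$ in $g$ and $g'$, in order to verify that the arrows incident to a column-$r$ or column-$(r+1)$ vertex reproduce exactly the four factors $(1+Y^{(r-1)}_{2m-1}(u))$, $(1+Y^{(r-1)}_{2m+1}(u))$, $(1+Y^{(r-1)}_{2m}(u-\tfrac12))$, $(1+Y^{(r-1)}_{2m}(u+\tfrac12))$ of the last relation of \eqref{eq:YC1}. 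Since all the remaining steps are formally identical to the corresponding computation carried out in detail for type $B_r$ in \cite{IIKKN}, I would present only the points at which $C_r$ genuinely differs.
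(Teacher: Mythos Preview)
Your proposal is correct and follows essentially the same approach as the paper, which in fact omits the proof entirely with the remark that it is ``completely parallel to the $B_r$ case \cite{IIKKN}''. Your outline --- deriving the relations of $\mathbb{Y}_{\ell}(C_r)$ from the coefficient mutation rule along the sequence \eqref{eq:Cmutseq}, with the case $a=r$ requiring extra bookkeeping across several half-steps --- is exactly the parallel computation the paper is alluding to.
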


\begin{definition}
\label{def:Ysub}
The {\em Y-subgroup
$\mathcal{G}_Y(B,y)$
of ${\mathcal{G}}(B,y)$
associated with the sequence \eqref{eq:Cmutseq}}
is the subgroup of
${\mathcal{G}}(B,y)$ 
generated by
$y_{\mathbf{i}}(u)$
($(\mathbf{i},u)\in \mathbf{I}\times \frac{1}{2}\mathbb{Z}$)
and $1+y_{\mathbf{i}}(u)$
($(\mathbf{i},u):\mathbf{p}_+$ or $\mathbf{p}_-$).
\end{definition}

\begin{theorem}
\label{thm:CYiso}
The group $\EuScript{Y}^{\circ}_{\ell}(C_r)_+$ is isomorphic to
$\mathcal{G}_Y(B,y)$ by the correspondence
$Y^{(a)}_m(u)\mapsto y^{(a)}_m(u)$
and $1+Y^{(a)}_m(u)\mapsto 1+y^{(a)}_m(u)$.
\end{theorem}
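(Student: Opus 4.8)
The plan is to establish \textbf{Theorem \ref{thm:CYiso}} as a consequence of the general formalism already developed in \cite{IIKKN} for type $B_r$, since the statement asserts an isomorphism of the abstract Y-system group $\EuScript{Y}^{\circ}_{\ell}(C_r)_+$ with the Y-subgroup $\mathcal{G}_Y(B,y)$ of the cluster algebra $\mathcal{A}(B,x,y)$. First I would note that by Lemma \ref{lem:Cy2} the family $\{y^{(a)}_m(u)\mid (a,m,u)\in \mathcal{I}'_{\ell+}\}$, viewed inside $\mathcal{G}(B,y)$, satisfies the relations $\mathbb{Y}_{\ell}(C_r)$ after the substitution $Y^{(a)}_m(u)\mapsto y^{(a)}_m(u)$. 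By the universal property of $\EuScript{Y}^{\circ}_{\ell}(C_r)_+$ (which by Definition \ref{def:YB} together with the parity decomposition is the multiplicative group presented by the generators $Y^{(a)}_m(u)$, $1+Y^{(a)}_m(u)$ for $(a,m,u)\in\mathcal{I}'_{\ell+}$ and the relations coming from $\mathbb{Y}_{\ell}(C_r)$ restricted to the $+$ parity class), this yields a well-defined group homomorphism
\[
\varphi:\EuScript{Y}^{\circ}_{\ell}(C_r)_+ \longrightarrow \mathcal{G}_Y(B,y),\qquad
Y^{(a)}_m(u)\mapsto y^{(a)}_m(u),\quad 1+Y^{(a)}_m(u)\mapsto 1+y^{(a)}_m(u).
\]
It is surjective essentially by construction: the alternative-label bijection $g'$ of the preceding Lemma identifies the generators $y_{\mathbf{i}}(u)$, $1+y_{\mathbf{i}}(u)$ (for $(\mathbf{i},u):\mathbf{p}_+$ or $\mathbf{p}_-$) of $\mathcal{G}_Y(B,y)$ with the $y^{(a)}_m(u)$, $1+y^{(a)}_m(u)$, all of which lie in the image of $\varphi$; one must also check that the images $1+y^{(a)}_m(u)$ over $\mathcal{I}'_{\ell+}$ together with the $y^{(a)}_m(u)$ generate all of $\mathcal{G}_Y(B,y)$, which follows because the half-shift identification $\EuScript{Y}^{\circ}_{\ell}(C_r)_+\simeq\EuScript{Y}^{\circ}_{\ell}(C_r)_-$ and the product decomposition of $\EuScript{Y}^{\circ}_{\ell}(C_r)$ mirror exactly the way the two parity classes of mutation points in \eqref{eq:Cmutseq} cover $\mathbf{I}\times\frac12\mathbb{Z}$.

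The substantive point, and the step I expect to be the main obstacle, is \textbf{injectivity} of $\varphi$. Here I would follow the argument of the corresponding theorem in \cite{IIKKN}: the separation formula of Fomin--Zelevinsky expresses each cluster variable $x_{\mathbf{i}}(u)$ in $\mathcal{A}(B,x,y)$ in terms of the $F$-polynomials and the $y$-variables, and one shows that the monomials in the coefficients $y_{\mathbf{i}}(u)$ appearing along the sequence \eqref{eq:Cmutseq} are "independent enough" that no nontrivial relation among the $y^{(a)}_m(u)$ and $1+y^{(a)}_m(u)$ can hold beyond those forced by $\mathbb{Y}_{\ell}(C_r)$. Concretely, one constructs an inverse by using the tropical/principal-coefficient picture: the $c$-vectors and $g$-vectors attached to \eqref{eq:Cmutseq} give, via the tropical evaluation, a faithful record of the group-theoretic structure, so that any relation in $\mathcal{G}_Y(B,y)$ among the relevant generators pulls back to a relation already present in $\EuScript{Y}^{\circ}_{\ell}(C_r)_+$. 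This is where the case-specific input enters only through the combinatorics of $Q_{\ell}(C_r)$ and the periodic mutation sequence of Lemma \ref{lem:CQmut}; the formal scaffolding is type-independent.

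In summary the plan has three steps: (1) use Lemma \ref{lem:Cy2} and the presentation of $\EuScript{Y}^{\circ}_{\ell}(C_r)_+$ to get the homomorphism $\varphi$; (2) check surjectivity by tracking the generators through the bijection $g'$ and the parity decomposition; (3) prove injectivity by the separation-formula argument of \cite{IIKKN}, the only part requiring genuine work, and even there the work is "the same proof as for $B_r$" applied to the quiver $Q_{\ell}(C_r)$ and the mutation sequence \eqref{eq:Cmutseq}. Since the paper explicitly states (beginning of Section 3 and of subsection 3.7) that this subsection is completely parallel to the $B_r$ case, the cleanest write-up is to record $\varphi$, note its surjectivity, and then say that injectivity follows verbatim from \cite[proof of the analogous theorem]{IIKKN}, with $Q_{\ell}(B_r)$ replaced by $Q_{\ell}(C_r)$ and Lemma \ref{lem:CQmut} in place of its $B_r$ counterpart.
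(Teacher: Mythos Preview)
Your proposal is correct and matches the paper's approach exactly: the paper gives no proof of this theorem, stating only that ``the result in this subsection is completely parallel to the $B_r$ case \cite{IIKKN}'', and your plan---construct the homomorphism from Lemma~\ref{lem:Cy2}, check surjectivity via the bijection $g'$, and defer injectivity to the identical argument in \cite{IIKKN} with $Q_{\ell}(C_r)$ and Lemma~\ref{lem:CQmut} in place of their $B_r$ counterparts---is precisely that parallel. Your final sentence is essentially what the paper does.
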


\subsection{Tropical Y-system at level 2}

The {\em tropical semifield} $\mathrm{Trop}(y)$
is an abelian multiplicative group freely generated by
the elements $y_{\mathbf{i}}$ ($\mathbf{i}\in \mathbf{I}$)
with the addition $\oplus$ 
\begin{align}
\label{eq:Ctrop}
\prod_{\mathbf{i}\in \mathbf{I}}y_{\mathbf{i}}^{a_{\mathbf{i}}}
\oplus
\prod_{\mathbf{i}\in \mathbf{I}}y_{\mathbf{i}}^{b_{\mathbf{i}}}
=
\prod_{\mathbf{i}\in \mathbf{I}}
y_{\mathbf{i}}^{\min(a_\mathbf{i},b_\mathbf{i})}.
\end{align}
Let $\pi_{\mathbf{T}}:
\mathbb{Q}_{\mathrm{sf}}(y)\rightarrow 
\mathrm{Trop}(y)$, $y_{\mathbf{i}}\mapsto 
y_{\mathbf{i}}$ be the projection.
Let $[y_{\mathbf{i}}(u)]_{\mathbf{T}}$
and $[\mathcal{G}_Y(B,y)]_{\mathbf{T}}$
denote the images of $y_{\mathbf{i}}(u)$
and $\mathcal{G}_Y(B,y)$
by the multiplicative group
 homomorphism induced from $\pi_{\mathbf{T}}$, respectively.
They are called the {\em tropical evaluations},
and the resulting relations in
the group $[\mathcal{G}_Y(B,y)]_{\mathbf{T}}$
is called the {\em tropical Y-system}.

We say a (Laurent) monomial $m=\prod_{\mathbf{i}\in \mathbf{I}}
y_{\mathbf{i}}^{k_{\mathbf{i}}}$
is {\em positive} (resp. {\em negative})
if $m\neq 1$ and $k_{\mathbf{i}}\geq 0$
(resp. $k_{\mathbf{i}}\leq 0$)
for any $\mathbf{i}$.

The following properties
of the tropical Y-system at level 2
will be the key in the entire method.

\begin{proposition}
\label{prop:Clev2}
 For 
$[\mathcal{G}_Y(B,y)]_{\mathbf{T}}$
with $B=B_{2}(C_r)$, the following facts hold.
\par
(i) Let $u$ be in the region $0\le u < 2$.
For any $(\mathbf{i},u):\mathbf{p}_+$,
the  monomial $[y_{\mathbf{i}}(u)]_{\mathbf{T}}$
is positive.
\par
(ii) Let $u$ be in the region $-h^{\vee}\le u < 0$.
\begin{itemize}
\item[\em (a)]
 Let $\mathbf{i}=(i,2)$ $(i\leq r-1)$,
$(r,1)$, or $(r+1,1)$.
For any $(\mathbf{i},u):\mathbf{p}_+$,
the  monomial $[y_{\mathbf{i}}(u)]_{\mathbf{T}}$
is negative.
\item[\em (b)]
Let $\mathbf{i}=(i,1), (i,3)$  $(i\leq r-1)$.
For any $(\mathbf{i},u):\mathbf{p}_+$,
the  monomial $[y_{\mathbf{i}}(u)]_{\mathbf{T}}$
is positive for $u=-\frac{1}{2}h^{\vee},
-\frac{1}{2}h^{\vee}-\frac{1}{2}$
and negative otherwise.
\end{itemize}
\par
(iii)
$y_{ii'}(2)=y_{i,4-i'}^{-1}$ if $i\leq r-1$ and
$y_{ii'}^{-1}$ if $i=r,r+1$.
\par
(iv) For even $r$,  $y_{ii'}(-h^{\vee})=
y_{ii'}^{-1}$ if $i\leq r-1$ and 
$y_{2r+1-i,i'}^{-1}$ if  $i=r, r+1$.
 For odd $r$,  $y_{ii'}(-h^{\vee})=
y_{ii'}^{-1}$.
\end{proposition}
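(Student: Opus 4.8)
The plan is to run the single mutation sequence \eqref{eq:Cmutseq} with $B=B_2(C_r)$ through the tropicalisation $\pi_{\mathbf{T}}$ and read off all four assertions from the resulting piecewise‑linear dynamics on exponent vectors. The mechanism making this computable is that in $\mathrm{Trop}(y)$ one has $1\oplus m=1$ for a positive monomial $m$ and $1\oplus m=m$ for a negative one; moreover, at every mutation point $(\mathbf{i},u):\mathbf{p}_+$ the monomial $[y_{\mathbf{i}}(u)]_{\mathbf{T}}$ is a $c$-vector, hence automatically positive or negative by sign-coherence, exactly as in the treatment of $B_r$ in \cite{IIKKN}. So the whole proof reduces to (1) determining, stage by stage along \eqref{eq:Cmutseq}, which of the two signs occurs at each vertex — this is the content of (i) and (ii) — and, in parallel, (2) tracking the exponent vectors, of which (iii) and (iv) are simply the values at the two ends $u=2$ and $u=-h^{\vee}$.

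First I would handle the forward window. Starting from $y(0)=y$, where every $[y_{\mathbf{i}}]_{\mathbf{T}}$ is a positive generator, I would prove (i) by induction on $u$, stepping through the blocks $\mu^{\bullet}_+\mu^{\circ}_+$, $\mu^{\bullet}_-$, $\mu^{\bullet}_+\mu^{\circ}_-$, $\mu^{\bullet}_-$: at each stage the vertices being mutated carry positive monomials by the inductive hypothesis, so each factor $1\oplus y_{\mathbf{k}}$ trivialises and every new monomial is a product, with nonnegative exponents, of old positive monomials and generators; one checks against the arrows of $Q_2(C_r)$ (Figure \ref{fig:quiverC}) that within $0\le u<2$ no vertex is mutated again in a direction that would feed back an inverse generator. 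Pushing the same bookkeeping to $u=2$ gives the closed form (iii): on the body columns $i\le r-1$ the net effect is the column-reflection $i'\mapsto 4-i'$ composed with inversion, and on the tail columns $i=r,r+1$ it is plain inversion; in particular $[\mathcal{G}_Y(B,y)]_{\mathbf{T}}$ is a group of Laurent monomials and $y_{\mathbf{i}}\mapsto[y_{\mathbf{i}}(2)]_{\mathbf{T}}$ is, up to inversion, a permutation of the generators.

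Then I would run \eqref{eq:Cmutseq} backwards from $y(0)=y$; here the signs genuinely change and the task is to locate the changes. I would again induct on $u$, but propagate a finer invariant: at each stage, the partition of the vertices into those with positive and those with negative monomial, organised by column $i$ and row $i'$ as in the statement, together with the exponent vectors. The feature that makes level $2$ tractable is that the $C_r$ tropical $Y$-system splits into a body part on the columns $1\le i\le r-1$ and a tail part on the columns $i=r,r+1$, coupled only through the shared column; the body part is essentially a simply-laced level-$2$ system whose negative region is governed by a Coxeter-type transformation and is already controlled by the analysis of \cite{IIKKN}, while the tail part is the genuinely $C_r$-specific piece — of bounded rank, so a finite check, conveniently organised as an induction on $r$ by peeling off the leftmost body column. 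This shows that the vertices $(i,2)$ $(i\le r-1)$ and $(r,1),(r+1,1)$ turn negative at the first backward step and remain negative throughout $-h^{\vee}\le u<0$, giving (ii)(a), whereas $(i,1),(i,3)$ $(i\le r-1)$ are negative except that they return to positive precisely at the two midpoint values $u=-\frac{1}{2}h^{\vee}$ and $u=-\frac{1}{2}h^{\vee}-\frac{1}{2}$, giving (ii)(b). Carrying on to $u=-h^{\vee}$ yields (iv): on the body $y_{ii'}(-h^{\vee})=y_{ii'}^{-1}$, while on the tail one picks up the involution of Lemma \ref{lem:CQmut}(ii) — $y_{ii'}(-h^{\vee})=y_{2r+1-i,i'}^{-1}$ for even $r$, when $\boldsymbol{\omega}(Q)=\boldsymbol{r}(Q)$, and $y_{ii'}^{-1}$ for odd $r$, when $\boldsymbol{\omega}(Q)=Q$ — in accordance with the parity of $h^{\vee}+\ell=r+3$.

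The main obstacle is the backward analysis for (ii), and specifically pinning down the two isolated positive values of $u$ in (ii)(b): unlike the forward window there is no global positivity to exploit, and the tail dynamics do not parallel any simply-laced case, so they must be followed by hand, the long bond at the end of $C_r$ making this more delicate than the $B_r$ computation of \cite{IIKKN}. If instead one uses the induction on $r$, the care is needed where the recursion meets the unit boundary condition at the node $a=1$. Once the sign patterns of (i) and (ii) are established, (iii) and (iv) follow as the endpoint values of an otherwise mechanical monomial recursion.
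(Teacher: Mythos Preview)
Your plan for (i) and (iii) — direct forward tracking of the tropical mutation sequence — is correct and matches the paper's.

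For (ii) and (iv) your decomposition differs from the paper's in an interesting way. You split the \emph{vertices} into body columns $i\le r-1$ versus tail columns $i=r,r+1$. The paper instead splits the \emph{exponent vector} of each $[y_{\mathbf{i}}(u)]_{\mathbf{T}}$ by variable: the ``$D$ part'' $\pi_D$ sets $y_{i,1}=y_{i,3}=1$ and tracks only the powers of $y_{i,2}$ ($i\le r-1$) together with $y_{r,1},y_{r+1,1}$; the ``$A$ part'' $\pi_A$ sets those to $1$ and tracks the powers of $y_{i,1}$ (and, by the $i'\leftrightarrow 4-i'$ symmetry, $y_{i,3}$). The $D$ part is then identified with the positive root system of type $D_{r+1}$ under an explicit Coxeter-like element $\sigma=\sigma_-\sigma_+\sigma_{r+1}\sigma_-\sigma_+\sigma_r$ (Lemmas \ref{lem:Corbit}--\ref{lem:Ctrec}, Proposition \ref{prop:Ctvec}), and the $A$ part with $A_{r-1}$ under the bipartite Coxeter element (Proposition \ref{prop:CtvecA}). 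The sign patterns of (ii) — including the two isolated positive values in (ii)(b) — and the endpoint values of (iv) are then read off from the explicit orbit tables, uniformly in $r$.

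Note that the paper's decomposition runs \emph{transverse} to yours: its $D$ part contains the tail vertices and the \emph{middle} row of the body together, not separately, while the outer body rows form the $A$ part. Your claim that the body is ``essentially a simply-laced level-$2$ system already controlled by \cite{IIKKN}'' is not quite right as stated: the $(r-1)\times 3$ body grid is not the $B_r$ picture, and the tail is only two vertices but its backward evolution is driven by the full body through the shared column, so ``bounded rank, finite check'' undersells what is needed. An induction on $r$ by peeling off the leftmost column could in principle be made to work, but you would still need a closed-form handle on the tail and middle-row dynamics uniformly in $r$ — and that is exactly what the $D_{r+1}$ root-system identification supplies.
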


One can directly verify  (i) and (iii)
in the same way as the $B_r$ case \cite[Proposition 3.2]{IIKKN}.
In the rest of this subsection
 we give the outline of the proof of
(ii) and (iv).
Note that (ii) and (iv) can be proved
independently for each variable $y_{\mathbf{i}}$.
(To be precise, we also need to assure that
each monomial is not 1 in total.
However, this can be easily followed up, so that we do not
describe details here.)
Below we separate the variables into two parts.
Here is a brief summary of the results.

(1) {\em The D part.} 
 The powers of $[y_{\mathbf{i}}(u)]_{\mathbf{T}}$
in  the variables
$y_{i,2}$ ($i\leq r-1$)
and $y_{r,1}$, $y_{r+1,1}$
are described by the root system of type $D_{r+1}$
with a Coxeter-like transformation.
It turns out that they are further described
by  (a subset of ) the root system of type $A_{2r+1}$
with the Coxeter transformation.

(2) {\em The A part.} 
 The powers of $[y_{\mathbf{i}}(u)]_{\mathbf{T}}$
in  the variables
$y_{i,1}$ and $y_{i,3}$ ($i\leq r-1$),
are mainly described by the root system of type $A_{r-1}$
with the Coxeter transformation.

\subsubsection{D part}

Let us consider the $D$ part first.
Let $D_{r+1}$ be the Dynkin diagram of type $D$
with index set $J=\{1,\dots,r+1\}$.
We assign the sign +/$-$ to vertices of $D_{r+1}$
(no sign for $r$ and $r+1$) as inherited from
$Q_2(C_r)$.
\begin{align*}
\begin{picture}(310,68)(0,20)
\put(45,70)
{
\put(0,0){\circle{5}}
\put(30,0){\circle{5}}
\put(60,0){\circle{5}}
\put(90,0){\circle{5}}
\put(120,0){\circle{5}}
\put(150,0){\circle{5}}
\put(180,-10){\circle{5}}
\put(180,10){\circle{5}}
\put(27,0){\line(-1,0){24}}
\put(57,0){\line(-1,0){24}}
\put(87,0){\line(-1,0){24}}
\put(93,0){\line(1,0){24}}
\put(147,0){\line(-1,0){24}}
\put(153,1){\line(3,1){24}}
\put(153,-1){\line(3,-1){24}}
\put(-4,8)
{
\put(2,2){\small $1$}
\put(32,2){\small $2$}
\put(62,2){\small }
\put(92,2){\small $$}
\put(122,2){\small }
\put(142,2){\small $r-1$}
\put(192,0){\small $r$}
\put(192,-20){\small $r+1$}
}
\put(-5,-14)
{
\put(2,2){\small $-$}
\put(32,2){\small $+$}
\put(62,2){\small $-$}
\put(122,2){\small $+$}
\put(152,2){\small $-$}
}
\put(230,-2){$r$: even}
}
%
%
\put(45,30)
{
\put(-30,0){\circle{5}}
\put(0,0){\circle{5}}
\put(30,0){\circle{5}}
\put(60,0){\circle{5}}
\put(90,0){\circle{5}}
\put(120,0){\circle{5}}
\put(150,0){\circle{5}}
\put(180,10){\circle{5}}
\put(180,-10){\circle{5}}
\put(-3,0){\line(-1,0){24}}
\put(27,0){\line(-1,0){24}}
\put(57,0){\line(-1,0){24}}
\put(87,0){\line(-1,0){24}}
\put(93,0){\line(1,0){24}}
\put(147,0){\line(-1,0){24}}
\put(153,1){\line(3,1){24}}
\put(153,-1){\line(3,-1){24}}
\put(-4,8)
{
\put(-28,2){\small $1$}
\put(2,2){\small $2$}
\put(62,2){\small }
\put(122,2){\small }
\put(142,2){\small $r-1$}
\put(192,0){\small $r$}
\put(192,-20){\small $r+1$}
}
\put(-5,-14)
{
\put(-28,2){\small $+$}
\put(2,2){\small $-$}
\put(32,2){\small $+$}
\put(62,2){\small $-$}
\put(122,2){\small $+$}
\put(152,2){\small $-$}
}
\put(230,-2){$r$: odd}
}
\end{picture}
\end{align*}

Let $\Pi=\{ \alpha_1,\dots,\alpha_{r+1}\}$, $-\Pi$,
 $\Phi_+$ be the set of the simple roots,
the negative simple roots, the positive roots, respectively,
of type $D_{r+1}$.
Following \cite{FZ1}, we introduce the {\em piecewise-linear
analogue} $\sigma_i$ of the simple reflection $s_i$,
acting on the set
of the {\em almost positive roots}
$\Phi_{\geq -1}=\Phi_{+}\sqcup (-\Pi)$,
by
\begin{align}
\label{eq:pl}
\begin{split}
\sigma_i(\alpha)&=s_i(\alpha),\quad  \alpha\in \Phi_+,\\
\sigma_i(-\alpha_j)&=
\begin{cases}
\alpha_j& j=i,\\
-\alpha_j&\mbox{otherwise}.\\
\end{cases}
\end{split}
\end{align}
Let
\begin{align}
\sigma_+=\prod_{i\in J_+} \sigma_i,\quad
\sigma_-=\prod_{i\in J_-} \sigma_i,
\end{align}
where $J_{\pm}$ is the set of the
vertices of $D_{r+1}$
 with property $\pm$. We define $\sigma$ as the composition
\begin{align}
\label{eq:Csigma}
\sigma=\sigma_- \sigma_+\sigma_{r+1}
\sigma_- \sigma_+\sigma_{r}.
\end{align}

\begin{lemma}
\label{lem:Corbit}
 The following facts hold.
\par\noindent
(I) Let $r$ be even.
\par
 (i) For $i\leq r-1$,
$\sigma^{k}(-\alpha_i)\in\Phi_+$, $(1\leq k\leq r/2)$,
$\sigma^{r/2+1}(-\alpha_i)=-\alpha_{i}$.
\par
 (ii) For $i\leq r-1$,
$\sigma^{k}(\alpha_i)\in\Phi_+$, $(0\leq k\leq r/2)$,
$\sigma^{r/2+1}(\alpha_i)=\alpha_{i}$.
\par
(iii) 
$\sigma^{k}(-\alpha_{r})\in\Phi_+$, $(1\leq k\leq r/2)$,
$\sigma^{r/2+1}(-\alpha_{r})=-\alpha_{r+1}$.
\par
(iv) 
$\sigma^{k}(-\alpha_{r+1})\in\Phi_+$, $(1\leq k\leq r/2+1)$,
$\sigma^{r/2+2}(-\alpha_{r+1})=-\alpha_{r}$.
\par
(v) The elements in $\Phi_+$ in (i)--(iv) exhaust the set $\Phi_+$,
thereby providing the orbit decomposition
of $\Phi_+$ by $\sigma$.
\par\noindent
(II) Let $r$ be odd.
\par
 (i) For $i\in J_+$,
$\sigma^{k}(-\alpha_i)\in\Phi_+$, $(1\leq k\leq r+1)$,
$\sigma^{r+2}(-\alpha_i)=-\alpha_{i}$,
$\sigma^{(r+1)/2}(-\alpha_i)=\alpha_{i}$.
\par
 (ii) For $i\in J_-$,
$\sigma^{k}(-\alpha_i)\in\Phi_+$, $(1\leq k\leq r+1)$,
$\sigma^{r+2}(-\alpha_i)=-\alpha_{i}$,
$\sigma^{(r+3)/2}(-\alpha_i)=\alpha_{i}$.
\par
(iii) 
$\sigma^{k}(-\alpha_r)\in\Phi_+$, $(1\leq k\leq (r+1)/2)$,
$\sigma^{(r+3)/2}(-\alpha_r)=-\alpha_{r}$.
\par
(iv) 
$\sigma^{k}(-\alpha_{r+1})\in\Phi_+$, $(1\leq k\leq (r+1)/2)$,
$\sigma^{(r+3)/2}(-\alpha_{r+1})=-\alpha_{r+1}$.\par
(v) The elements in $\Phi_+$ in (i)--(iv) exhaust the set $\Phi_+$,
thereby providing the orbit decomposition
of $\Phi_+$ by $\sigma$.
\end{lemma}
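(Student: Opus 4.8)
The plan is to reduce the piecewise-linear dynamics of $\sigma$ on $\Phi_{\geq -1}$ to honest linear algebra, using the dictionary of \cite{FZ1}: each $\sigma_i$ preserves $\Phi_{\geq -1}$, and on $\Phi_{+}$ it coincides with the genuine reflection $s_i$. Consequently, along any $\sigma$-orbit the map $\sigma$ agrees with the linear element $w=s_-s_+s_{r+1}s_-s_+s_r$ for as long as no negative simple root is produced, either at the end of a step or at one of the intermediate factors. The content of (i)--(iv) is therefore to pin down, for each seed $-\alpha_i$ ($i\le r-1$), $\alpha_i$ ($i\le r-1$), $-\alpha_r$ and $-\alpha_{r+1}$, the first step at which its orbit leaves $\Phi_{+}$ and the element of $-\Pi$ produced there; the exponents $r/2$, $r/2+1$, $(r+1)/2$, $(r+3)/2$, $r+1$ in the statement are exactly these crossing times, and the return identities such as $\sigma^{r/2+1}(-\alpha_i)=-\alpha_i$ record the subsequent behaviour.

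Concretely, I would work in the standard coordinates $\alpha_i=e_i-e_{i+1}$ ($1\le i\le r$), $\alpha_{r+1}=e_r+e_{r+1}$, in which $\sigma_{\pm}$ are products of commuting transpositions among $e_1,\dots,e_r$ and $\sigma_r,\sigma_{r+1}$ act only on $e_r,e_{r+1}$, so that $w$ is an explicit signed permutation whose cycle decomposition can be written down once and for all, separately for $r$ even and $r$ odd. To make the computation uniform in $r$, I would then invoke the reformulation announced before the lemma: the sequences of roots obtained by iterating $\sigma$ from the seeds above are governed by the orbit of a Coxeter transformation of type $A_{2r+1}$, whose action on the almost positive roots is completely explicit (the rotation model of a $(2r+4)$-gon, in the spirit of \cite{FZ1}). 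With that picture in hand one checks, for even $r$, that the orbits of $-\alpha_i$ and of $\alpha_i$ ($i\le r-1$) are disjoint of length $r/2+1$, that the orbits of $-\alpha_r$ and $-\alpha_{r+1}$ are linked into a single orbit by the prong swap after half a period, and, for odd $r$, that the orbit of $-\alpha_i$ already contains $\alpha_i$ (so they merge into one orbit of length $r+2$) while $-\alpha_r$ and $-\alpha_{r+1}$ now close up separately. Part (v) is then a counting argument: the positive roots exhibited in (i)--(iv) are pairwise distinct (a direct check in the $e_i$-coordinates, the seed and step count being recoverable from each root), and their number is $\tfrac{1}{2}(r-1)r+(r-1)(\tfrac{r}{2}+1)+\tfrac{r}{2}+(\tfrac{r}{2}+1)=r(r+1)$ for even $r$, with the analogous count for odd $r$, which equals $|\Phi_{+}(D_{r+1})|$; hence the listed orbits exhaust $\Phi_{+}$.

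The main obstacle is the bookkeeping around the fork together with the split by parity of $r$: keeping the piecewise-linear corrections correct through the two copies of $\sigma_-\sigma_+$ that straddle $\sigma_{r+1}$ and $\sigma_r$, and explaining structurally why for even $r$ the orbits of $\alpha_i$ and $-\alpha_i$ remain separate while for odd $r$ they coalesce, and why the two prongs $\alpha_r,\alpha_{r+1}$ are interchanged by $\sigma$ for even $r$ but not for odd $r$. This is precisely what forces the two-case formulation (I)/(II) and the separate treatment of the prongs in (iii)--(iv); everything else reduces to the textbook orbit combinatorics of a type-$A$ Coxeter element once the coordinate form of $w$ and the type-$A_{2r+1}$ reformulation are in place.
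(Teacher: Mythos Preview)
Your proposal is sound but takes a different route from the paper. The paper's proof is minimal: it verifies the statements by explicit computation of $\sigma^{k}(-\alpha_i)$ and $\sigma^{k}(\alpha_i)$, presents the worked examples $r=10$ and $r=9$ in tables (using the notations $[i,j]$ and $\{i,j\}$ of \eqref{eq:Cij}), and asserts that the general rule can be read off from these. No structural explanation is offered.

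Your approach is more conceptual: you reduce the piecewise-linear dynamics to the linear element $w$ wherever possible, pass to $e_i$-coordinates, and then invoke the type-$A_{2r+1}$ reformulation to identify the orbit combinatorics with those of a Coxeter element. This is exactly the content of Lemma~\ref{lem:Ccox}, which in the paper appears \emph{after} Lemma~\ref{lem:Corbit} as a further observation rather than as a tool. So your argument implicitly reorders the logic: you would need to establish the intertwining $\rho\circ\sigma=(s')^2\circ\rho$ of Lemma~\ref{lem:Ccox} independently, which is itself a case check over the root types $[i,j]$, $\{i,j\}$, $\{r+1\}$---comparable in effort to the direct orbit computation the paper performs. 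What your route buys is a uniform explanation of the parity dichotomy (why $-\alpha_i$ and $\alpha_i$ merge or stay separate, why the prongs swap or not), which the paper leaves implicit in the tables.

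Your counting argument for (v) is correct and makes explicit something the paper only asserts. One caution: your description of $w$ as ``an explicit signed permutation whose cycle decomposition can be written down once and for all'' glosses over the fact that the relevant tracking is of the piecewise-linear $\sigma$, not of $w$, precisely at the moments the orbit touches $-\Pi$; the bookkeeping there (which you acknowledge as the main obstacle) is where the actual content lives, and the $A_{2r+1}$ model is what makes it tractable rather than the signed-permutation form of $w$ alone.
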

\begin{proof}
They are verified by explicitly calculating
$\sigma^{k}(-\alpha_i)$ and $\sigma^{k}(\alpha_i)$.
The examples for $r=10$ (for even $r$) and 9 (for odd $r$)
are given in
Tables \ref{tab:orbit10} and \ref{tab:orbit9},
respectively,
where we use the notations
\begin{align}
\label{eq:Cij}
\begin{split}
[i,j]&=\alpha_i+\cdots +\alpha_j\quad (1\leq i<j \leq r),
\quad [i]=\alpha_i\quad (1\leq i \leq  r),
\\
\{i,j\}&=(\alpha_i+\cdots +\alpha_{r-1})
+(\alpha_j+\cdots +\alpha_{r+1})
\quad (1\leq i<j \leq r+1, i\leq r-1),
\end{split}
\end{align}
and $\{r+1\}=\alpha_{r+1}$.
In fact, it is not difficult to read off the general rule from
these examples.
\end{proof}

\begin{table}
\rotatebox{90}
{
\begin{minipage}{\textheight}
\setlength{\unitlength}{0.95pt}
\begin{picture}(310,230)(-5,-30)
\put(100,200){ $-1$}
\put(140,200){ $-2$}
\put(180,200){ $-3$}
\put(220,200){ $-4$}
\put(260,200){ $-5$}
\put(300,200){ $-6$}
\put(340,200){ $-7$}
\put(380,200){ $-8$}
\put(420,200){ $-9$}
\put(457,200){ $-10$}
\put(497,200){ $-11$}
\put(-2,180){ 1\ $-$}
\put(36,180){  $-\alpha_{1}$}
\put(80,180){ [1]}
\put(117,180){ [2,3]}
\put(157,180){ [4,5]}
\put(197,180){ [6,7]}
\put(237,180){ [8,9]}
\put(277,180){ $\{11\}$}
\put(317,180){ $[9,10]$}
\put(357,180){ [7,8]}
\put(397,180){ $[5,6]$}
\put(437,180){ $[3,4]$}
\put(477,180){ $[1,2]$}
\put(518,180){ $-\alpha_{1}$}
\put(560,180){ $\alpha_{1}$}
\put(-2,160){ 2\ $+$}
\put(25,160){  $\alpha_2$}
\put(53,160){  $-\alpha_2$}
\put(97,160){ [1,3]}
\put(137,160){ [2,5]}
\put(177,160){ [4,7]}
\put(217,160){ [6,9]}
\put(257,160){ $\{8,11\}$}
\put(297,160){ $\{9,10\}$}
\put(337,160){ $[7,10]$}
\put(377,160){ [5,8]}
\put(417,160){ [3,6]}
\put(457,160){ [1,4]}
\put(500,160){ [2]}
\put(536,160){ $-\alpha_{2}$}
\put(-2,140){ 3\ $-$}
\put(36,140){  $-\alpha_{3}$}
\put(80,140){ [3]}
\put(117,140){ [1,5]}
\put(157,140){ [2,7]}
\put(197,140){ [4,9]}
\put(237,140){ \{6,11\}}
\put(277,140){ $\{8,9\}$}
\put(317,140){ $\{7,10\}$}
\put(357,140){ $[5,10]$}
\put(397,140){ [3,8]}
\put(437,140){ $[1,6]$}
\put(477,140){ $[2,4]$}
\put(518,140){ $-\alpha_{3}$}
\put(560,140){ $\alpha_{3}$}
\put(-2,120){ 4\ $+$}
\put(25,120){  $\alpha_4$}
\put(53,120){  $-\alpha_4$}
\put(97,120){ [3,5]}
\put(137,120){ [1,7]}
\put(177,120){ [2,9]}
\put(217,120){ \{4,11\}}
\put(257,120){ $\{6,9\}$}
\put(297,120){ $\{7,8\}$}
\put(337,120){ $\{5,10\}$}
\put(377,120){ $[3,10]$}
\put(417,120){ [1,8]}
\put(457,120){ [2,6]}
\put(500,120){ [4]}
\put(536,120){ $-\alpha_{4}$}
\put(-2,100){ 5 $-$}
\put(36,100){  $-\alpha_{5}$}
\put(80,100){ [5]}
\put(117,100){ [3,7]}
\put(157,100){ [1,9]}
\put(197,100){ \{2,11\}}
\put(237,100){ $\{4,9\}$}
\put(277,100){ $\{6,7\}$}
\put(317,100){ $\{5,8\}$}
\put(357,100){ $\{3,10\}$}
\put(397,100){ [1,10]}
\put(437,100){ [2,8]}
\put(477,100){ $[4,6]$}
\put(518,100){ $-\alpha_{5}$}
\put(560,100){ $\alpha_{5}$}
\put(-2,80){ 6\ $+$}
\put(25,80){  $\alpha_6$}
\put(53,80){  $-\alpha_6$}
\put(97,80){ [5,7]}
\put(137,80){ [3,9]}
\put(177,80){ \{1,11\}}
\put(217,80){ $\{2,9\}$}
\put(257,80){ $\{4,7\}$}
\put(297,80){ $\{5,6\}$}
\put(337,80){ $\{3,8\}$}
\put(377,80){ $\{1,10\}$}
\put(417,80){ [2,10]}
\put(457,80){ [4,8]}
\put(500,80){ [6]}
\put(536,80){ $-\alpha_{6}$}
\put(-2,60){ 7\ $-$}
\put(36,60){  $-\alpha_{7}$}
\put(80,60){ [7]}
\put(117,60){ [5,9]}
\put(157,60){ \{3,11\}}
\put(197,60){ $\{1,9\}$}
\put(237,60){ $\{2,7\}$}
\put(277,60){ $\{4,5\}$}
\put(317,60){ $\{3,6\}$}
\put(357,60){ $\{1,8\}$}
\put(397,60){ $\{2,10\}$}
\put(437,60){ [4,10]}
\put(477,60){ $[6,8]$}
\put(518,60){ $-\alpha_{7}$}
\put(560,60){ $\alpha_{7}$}
\put(-2,40){ 8\ $+$}
\put(25,40){  $\alpha_8$}
\put(53,40){  $-\alpha_8$}
\put(97,40){ [7,9]}
\put(137,40){ \{5,11\}}
\put(177,40){ $\{3,9\}$}
\put(217,40){ $\{1,7\}$}
\put(257,40){ $\{2,5\}$}
\put(297,40){ $\{3,4\}$}
\put(337,40){ $\{1,6\}$}
\put(377,40){ $\{2,8\}$}
\put(417,40){ $\{4,10\}$}
\put(457,40){ [6,10]}
\put(500,40){ [8]}
\put(536,40){ $-\alpha_{8}$}
\put(-2,20){ 9\ $-$}
\put(36,20){  $-\alpha_{9}$}
\put(80,20){ [9]}
\put(117,20){ \{7,11\}}
\put(157,20){ $\{5,9\}$}
\put(197,20){ $\{3,7\}$}
\put(237,20){ $\{1,5\}$}
\put(277,20){ $\{2,3\}$}
\put(317,20){ $\{1,4\}$}
\put(357,20){ $\{2,6\}$}
\put(397,20){ $\{4,8\}$}
\put(437,20){ $\{6,10\}$}
\put(477,20){ [8,10]}
\put(518,20){ $-\alpha_{9}$}
\put(560,20){ $\alpha_{9}$}
%
\put(18,0){  $-\alpha_{11}$}
\put(52,0){  $-\alpha_{10}$}
\put(97,0){ \{9,11\}}
\put(137,0){ $\{7,9\}$}
\put(177,0){ $\{5,7\}$}
\put(217,0){ $\{3,5\}$}
\put(257,0){ $\{1,3\}$}
\put(297,0){ $\{1,2\}$}
\put(337,0){ $\{2,4\}$}
\put(377,0){ $\{4,6\}$}
\put(417,0){ $\{6,8\}$}
\put(457,0){ $\{8,10\}$}
\put(498,0){ [10]}
\put(533,0){ $-\alpha_{11}$}
\put(570,0){ $-\alpha_{10}$}
\put(-5,212){\line(1,0){605}}
\put(-5,193){\line(1,0){605}}
\put(-5,-7){\line(1,0){605}}
\put(-5,-7){\line(0,1){219}}
\put(20,-7){\line(0,1){219}}
\put(80,-7){\line(0,1){219}}
\put(520,-7){\line(0,1){219}}
\put(600,-7){\line(0,1){219}}
%
\end{picture}
\caption{The orbits  $\sigma^k(-\alpha_i)$
and $\sigma^k(\alpha_i)$
 in $\Phi_+$
by $\sigma$ of $\eqref{eq:Csigma}$
 for $r=10$.
The orbits of $-\alpha_i$
and $\alpha_i$ ($i\leq 8$),
for example, $-\alpha_1\rightarrow [2,3]
\rightarrow [6,7]\rightarrow \cdots \rightarrow -\alpha_1$
and
$\alpha_1
\rightarrow [4,5]
\rightarrow [8,9]\rightarrow \cdots \rightarrow \alpha_1 $,
 are aligned alternatively.
The orbits of $-\alpha_{10}$ and $-\alpha_{11}$,
namely,
 $-\alpha_{10}\rightarrow \{7,9\}
\rightarrow \{3,5\}\rightarrow \cdots \rightarrow -\alpha_{11}$,
and 
 $-\alpha_{11}\rightarrow \{9,11\}
\rightarrow \{5,7\}\rightarrow \cdots \rightarrow -\alpha_{10}$,
are aligned alternatively.
The numbers $-1, -2, \cdots$ in the head line
will be identified with the parameter $u$
in \eqref{eq:Calpha}.
}
\label{tab:orbit10}
\end{minipage}
}
\end{table}


\begin{table}
\rotatebox{90}
{
\begin{minipage}{\textheight}
\setlength{\unitlength}{0.95pt}
\begin{picture}(300,230)(-25,-30)
\put(100,180){ $-1$}
\put(140,180){ $-2$}
\put(180,180){ $-3$}
\put(220,180){ $-4$}
\put(260,180){ $-5$}
\put(300,180){ $-6$}
\put(340,180){ $-7$}
\put(380,180){ $-8$}
\put(420,180){ $-9$}
\put(456,180){ $-10$}
\put(-2,160){ 1\ $+$}
\put(25,160){  $\alpha_1$}
\put(53,160){  $-\alpha_1$}
\put(97,160){ [1,2]}
\put(137,160){ [3,4]}
\put(177,160){ [5,6]}
\put(217,160){ [7,8]}
\put(260,160){ \{10\}}
\put(297,160){ [8,9]}
\put(337,160){ [6,7]}
\put(377,160){ [4,5]}
\put(417,160){ [2,3]}
\put(460,160){ [1]}
\put(496,160){ $-\alpha_{1}$}
\put(-2,140){ 2\ $-$}
\put(36,140){  $-\alpha_{2}$}
\put(80,140){ [2]}
\put(117,140){ [1,4]}
\put(157,140){ [3,6]}
\put(197,140){ [5,8]}
\put(237,140){ \{7,10\}}
\put(277,140){ $\{8,9\}$}
\put(317,140){ [6,9]}
\put(357,140){ [4,7]}
\put(397,140){ [2,5]}
\put(437,140){ $[1,3]$}
\put(478,140){ $-\alpha_{2}$}
\put(522,140){ $\alpha_{2}$}
\put(-2,120){ 3\ $+$}
\put(25,120){  $\alpha_3$}
\put(53,120){  $-\alpha_3$}
\put(97,120){ [2,4]}
\put(137,120){ [1,6]}
\put(177,120){ [3,8]}
\put(217,120){ \{5,10\}}
\put(257,120){ $\{7,8\}$}
\put(297,120){ $\{6,9\}$}
\put(337,120){ [4,9]}
\put(377,120){ [2,7]}
\put(417,120){ [1,5]}
\put(460,120){ [3]}
\put(496,120){ $-\alpha_{3}$}
\put(-2,100){ 4 $-$}
\put(36,100){  $-\alpha_{4}$}
\put(80,100){ [4]}
\put(117,100){ [2,6]}
\put(157,100){ [1,8]}
\put(197,100){ \{3,10\}}
\put(237,100){ $\{5,8\}$}
\put(277,100){ $\{6,7\}$}
\put(317,100){ $\{4,9\}$}
\put(357,100){ [2,9]}
\put(397,100){ [1,7]}
\put(437,100){ $[3,5]$}
\put(478,100){ $-\alpha_{4}$}
\put(522,100){ $\alpha_{4}$}
\put(-2,80){ 5\ $+$}
\put(25,80){  $\alpha_5$}
\put(53,80){  $-\alpha_5$}
\put(97,80){ [4,6]}
\put(137,80){ [2,8]}
\put(177,80){ \{1,10\}}
\put(217,80){ $\{3,8\}$}
\put(257,80){ $\{5,6\}$}
\put(297,80){ $\{4,7\}$}
\put(337,80){ $\{2,9\}$}
\put(377,80){ [1,9]}
\put(417,80){ [3,7]}
\put(460,80){ [5]}
\put(496,80){ $-\alpha_{5}$}
\put(-2,60){ 6\ $-$}
\put(36,60){  $-\alpha_{6}$}
\put(80,60){ [6]}
\put(117,60){ [4,8]}
\put(157,60){ \{2,10\}}
\put(197,60){ $\{1,8\}$}
\put(237,60){ $\{3,6\}$}
\put(277,60){ $\{4,5\}$}
\put(317,60){ $\{2,7\}$}
\put(357,60){ $\{1,9\}$}
\put(397,60){ [3,9]}
\put(437,60){ $[5,7]$}
\put(478,60){ $-\alpha_{6}$}
\put(522,60){ $\alpha_{6}$}
\put(-2,40){ 7\ $+$}
\put(25,40){  $\alpha_7$}
\put(53,40){  $-\alpha_7$}
\put(97,40){ [6,8]}
\put(137,40){ \{4,10\}}
\put(177,40){ $\{2,8\}$}
\put(217,40){ $\{1,6\}$}
\put(257,40){ $\{3,4\}$}
\put(297,40){ $\{2,5\}$}
\put(337,40){ $\{1,7\}$}
\put(377,40){ $\{3,9\}$}
\put(417,40){ [5,9]}
\put(460,40){ [7]}
\put(496,40){ $-\alpha_{7}$}
\put(-2,20){ 8\ $-$}
\put(36,20){  $-\alpha_{8}$}
\put(80,20){ [8]}
\put(117,20){ \{6,10\}}
\put(157,20){ $\{4,8\}$}
\put(197,20){ $\{2,6\}$}
\put(237,20){ $\{1,4\}$}
\put(277,20){ $\{2,3\}$}
\put(317,20){ $\{1,5\}$}
\put(357,20){ $\{3,7\}$}
\put(397,20){ $\{5,9\}$}
\put(437,20){ [7,9]}
\put(478,20){ $-\alpha_{8}$}
\put(522,20){ $\alpha_{8}$}
\put(17,0){  $-\alpha_{10}$}
\put(53,0){  $-\alpha_{9}$}
\put(97,0){ \{8,10\}}
\put(137,0){ $\{6,8\}$}
\put(177,0){ $\{4,6\}$}
\put(217,0){ $\{2,4\}$}
\put(257,0){ $\{1,2\}$}
\put(297,0){ $\{1,3\}$}
\put(337,0){ $\{3,5\}$}
\put(377,0){ $\{5,7\}$}
\put(417,0){ $\{7,9\}$}
\put(460,0){ [9]}
\put(494,0){ $-\alpha_{10}$}
\put(532,0){ $-\alpha_{9}$}
\put(-5,192){\line(1,0){565}}
\put(-5,173){\line(1,0){565}}
\put(-5,-7){\line(1,0){565}}
\put(-5,-7){\line(0,1){199}}
\put(20,-7){\line(0,1){199}}
\put(80,-7){\line(0,1){199}}
\put(480,-7){\line(0,1){199}}
\put(560,-7){\line(0,1){199}}
%
\end{picture}
\caption{The orbit of $\sigma^k(-\alpha_i)$ in $\Phi_+$
by $\sigma$ of $\eqref{eq:Csigma}$
 for $r=9$.
The orbit of $-\alpha_i$ ($i\leq 8$),
for example, $-\alpha_1\rightarrow [3,4]
\rightarrow [7,8]\rightarrow \cdots \rightarrow \alpha_1 
\rightarrow [1,2]
\rightarrow [5,6]\rightarrow \cdots \rightarrow -\alpha_1 $,
is aligned in a cyclic and alternative way.
The orbits of $-\alpha_9$ and $-\alpha_{10}$,
namely,
 $-\alpha_9\rightarrow \{6,8\}
\rightarrow \{2,4\}\rightarrow \cdots \rightarrow -\alpha_9$,
and 
 $-\alpha_{10}\rightarrow \{8,10\}
\rightarrow \{4,6\}\rightarrow \cdots \rightarrow -\alpha_{10}$,
are aligned alternatively.
The numbers $-1, -2, \cdots$ in the head line
will be identified with the parameter $u$
in \eqref{eq:Calpha}.
}
\label{tab:orbit9}
\end{minipage}
}
\end{table}

The orbits  $\sigma(-\alpha_i)$ and $\sigma(\alpha_i)$
are further described by (a subset of)
 the  {\em root system of type $A_{2r+1}$}.
Let $\Pi'=\{\alpha'_1,\dots,\alpha'_{2r+1}\}$
and $\Phi'_+$
 be the sets of the simple roots
and the positive roots
of type $A_{2r+1}$, respectively,
 with standard index set $J'=\{1,\dots,
2r+1\}$. Define,
$J'_+=\{i\in J'\mid \mbox{$i-r$ is even}\}$,
$J'_-=\{i\in J'\mid \mbox{$i-r$ is odd}\}$.
We introduce the notations $[i,j]'=\alpha'_i+\cdots
+\alpha'_j$ ($1\leq i< j\leq 2r+1$) and  $[i]'=\alpha'_i$
as parallel to \eqref{eq:Cij}.
Let
 $O'_i=\{ (\sigma')^k (-\alpha'_i)
\mid 1\leq k\leq r+1\}$
 be the orbit of $-\alpha'_i$
in $\Phi'_+$ by  $\sigma'=\sigma'_-
\sigma'_+$,
$\sigma'_{\pm}=\prod_{i\in J'_{\pm}} \sigma'_i$,
where $\sigma'_i$ is the piecewise-linear analogue
of the simple reflection $s'_i$ as \eqref{eq:pl}.

\begin{lemma}
\label{lem:Ccox}
Let
\begin{align}
\rho:\Phi_+\rightarrow
\bigsqcup_{i=1}^rO'_i
\end{align}
be the map defined by
\begin{align}
\begin{split}
[i,j]&\mapsto
\begin{cases}
[i,j]'& \mbox{$j-r$: odd}\\
[2r+2-j,2r+2-i]'& \mbox{$j-r$: even},\\
\end{cases}\\
\{i,j\}&\mapsto
\begin{cases}
[i,2r+2-j]'& \mbox{$j-r$: odd}\\
[j,2r+2-i]'& \mbox{$j-r$: even},\\
\end{cases}\\
\{ r+1 \} &\mapsto [r,r+1]',
\end{split}
\end{align}
where  $[i]=[i,i]$.
Then, $\rho$ is a bijection.
Furthermore, under the bijection $\rho$,
the action of $\sigma$ is translated into the
one of the square of the Coxeter element
$s'=s'_- s'_+$ of type $A_{2r+1}$
acting on $\Phi'_+$,
where $s'_{\pm}=\prod_{i\in J'_{\pm}} s'_i$.

\end{lemma}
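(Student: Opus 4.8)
# Proof Proposal for Lemma~\ref{lem:Ccox}

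\emph{The plan is to verify the bijectivity of $\rho$ directly by a counting argument combined with an explicit check that the listed images are distinct, and then to check the intertwining property $\rho\circ\sigma = (s')^2\circ\rho$ by comparing the combinatorial descriptions of both sides.}

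\medskip

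\noindent\textbf{Step 1: Bijectivity of $\rho$.}
First I would confirm that $\rho$ is well-defined, i.e.\ that each image actually lies in $\bigsqcup_{i=1}^r O'_i$. By Lemma~\ref{lem:Corbit}(v), the set $\Phi_+$ of type $D_{r+1}$ decomposes into the $\sigma$-orbits described in that lemma; dually, the sets $O'_i$ ($1\le i\le r$) are the $\sigma'$-orbits of $-\alpha'_1,\dots,-\alpha'_r$ in $\Phi'_+$ of type $A_{2r+1}$, each of size $r+1$ by the standard theory of the Coxeter element in type $A$ (the orbit of $-\alpha'_i$ under $\sigma'$ has length equal to the Coxeter number divided by $2$, which is $r+1$ for $A_{2r+1}$). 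A simple cardinality count gives $|\Phi_+| = (r+1)r$ for $D_{r+1}$, while $\bigl|\bigsqcup_{i=1}^r O'_i\bigr| = r(r+1)$, so the two sides have the same size. It then suffices to show $\rho$ is injective. The roots of $\Phi_+$ split into the ``$[i,j]$'' family (roots of type $A$ supported on $\{1,\dots,r-1\}$, together with $[i]$), the ``$\{i,j\}$'' family (the genuinely $D$-type roots involving the fork), and $\{r+1\}$. Each clause of the definition of $\rho$ lands in a syntactically recognizable subclass of positive roots of $A_{2r+1}$: the ``$j-r$ odd'' branch of the $[i,j]$-rule gives intervals $[i,j]'$ with $j\le r$ or so, the ``$j-r$ even'' branch gives intervals $[2r+2-j, 2r+2-i]'$ which are ``reflected'' to the right half, etc. I would tabulate these ranges and observe that distinct inputs under distinct clauses land in disjoint ranges, and within a single clause $\rho$ is manifestly injective since $(i,j)\mapsto(i,j)$ or $(i,j)\mapsto(2r+2-j,2r+2-i)$ are injective. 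This is routine bookkeeping with the index ranges in \eqref{eq:Cij}.

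\medskip

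\noindent\textbf{Step 2: Intertwining $\sigma$ with $(s')^2$.}
The key point is to show $\rho(\sigma(\beta)) = (s')^2(\rho(\beta))$ for all $\beta\in\Phi_+$, where $s' = s'_- s'_+$ is the \emph{linear} Coxeter element of $A_{2r+1}$ (note: $s'$, not the piecewise-linear $\sigma'$, but on the positive part of an orbit the two agree except at the very endpoints, and we are told to compare with $(s')^2$). Here I would use the fact that both $\sigma$ on $\Phi_+$ and $(s')^2$ on $\bigsqcup O'_i$ act as ``shift by one step along each orbit.'' More precisely, from Lemma~\ref{lem:Corbit} the action of $\sigma$ sends the $k$-th element of an orbit to the $(k+1)$-st (with wraparound through the negative-simple-root endpoints, which sit outside $\Phi_+$ but whose role is bookkept by the period statements $\sigma^{r/2+1}(-\alpha_i)=-\alpha_i$ etc.). Similarly $(s')^2$ advances $O'_i$ by one step. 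Since $\rho$ is defined precisely so that the orbit structure matches — the alternating-orbit picture of Tables~\ref{tab:orbit10} and~\ref{tab:orbit9} is engineered to line up with the type-$A_{2r+1}$ Coxeter orbits — the identity $\rho\circ\sigma = (s')^2\circ\rho$ reduces to checking it once per orbit, which in turn reduces (by the period relations) to checking a single transition per orbit, e.g.\ $\rho(\sigma(-\alpha_i)) = (s')^2(\rho(-\alpha_i))$. Concretely one computes $\sigma(-\alpha_i)$ using \eqref{eq:Csigma} and \eqref{eq:pl}, applies $\rho$, and independently computes $(s')^2$ on $\rho(-\alpha_i) = -\alpha'_{\text{(something)}}$ or the appropriate first positive root; the formulas in \eqref{eq:Cij} and the $[i,j]'$ notation make these comparisons mechanical. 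One must do this for the even-$r$ and odd-$r$ cases separately, mirroring the case split in Lemma~\ref{lem:Corbit}.

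\medskip

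\noindent\textbf{Main obstacle.}
\emph{The hard part will be the bookkeeping around the ``fork'' roots $\{i,j\}$ and the special root $\{r+1\}$, and the parity-dependent reflection $j\mapsto 2r+2-j$ in the definition of $\rho$.} The $[i,j]$-part of $\Phi_+$ maps to $A_{2r+1}$-roots in a way that is essentially the folding $D_{r+1}\rightsquigarrow A$ one expects, but the two-pronged roots $\{i,j\}$ are where $D$ genuinely differs from $A$, and the rule ``$[i,2r+2-j]'$ if $j-r$ odd, $[j,2r+2-i]'$ if $j-r$ even'' has to be reconciled with how $\sigma$ (which involves $\sigma_r$ and $\sigma_{r+1}$ separately, cf.\ \eqref{eq:Csigma}) permutes these roots versus how $(s')^2$ acts near the ``center'' index $r+1$ of $A_{2r+1}$. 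I would handle this by explicitly writing out, for small $r$ of each parity (say $r=4$ and $r=5$, or reading off Tables~\ref{tab:orbit10}, \ref{tab:orbit9}), the full orbit-by-orbit correspondence, extracting the general pattern, and then giving a uniform argument; but since the authors explicitly say ``it is not difficult to read off the general rule from these examples,'' I expect the write-up to defer the most tedious verifications to inspection of those tables. The structural content — that $\sigma$ on type $D_{r+1}$ is ``the same as'' $(s')^2$ on a piece of type $A_{2r+1}$ — is exactly the $C$-analogue of the $B_r$ computation in \cite{IIKKN}, so the proof should closely follow that template.
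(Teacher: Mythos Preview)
The paper does not actually supply a proof of Lemma~\ref{lem:Ccox}; it is stated without proof, in keeping with the authors' remark at the start of Section~3 that most proofs parallel the $B_r$ case in \cite{IIKKN} and are therefore omitted. So there is no argument in the paper to compare your proposal against: the authors simply leave the lemma as a direct verification (the explicit orbit data in Tables~\ref{tab:orbit10} and~\ref{tab:orbit9} making it routine). Your plan is essentially the natural way to carry out that verification, and Step~1 (the cardinality and injectivity check) is fine.

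One point in Step~2 deserves tightening: your reduction ``check one transition per orbit'' is not self-evidently sufficient. Knowing $\rho(\sigma(\beta_0)) = (s')^2(\rho(\beta_0))$ for a single $\beta_0$ in an orbit does not, by itself, propagate to $\rho(\sigma^k(\beta_0)) = (s')^{2k}(\rho(\beta_0))$ for all $k$ unless you already know $\rho$ sends the full $\sigma$-orbit of $\beta_0$ bijectively onto a single $(s')^2$-orbit (in the correct cyclic order). Either you check the intertwining at every $\beta\in\Phi_+$ directly (still mechanical, using the closed formulas for $\sigma$ on $[i,j]$ and $\{i,j\}$ and for $(s')^2$ on $[i,j]'$), or you first verify orbit-to-orbit, match lengths, and then a single transition pins down the cyclic alignment. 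In practice the cleanest route is the former: write down what $\sigma$ does to a generic $[i,j]$, $\{i,j\}$, and $\{r+1\}$ (splitting on the parity of $j-r$), apply $\rho$, and compare with $(s')^2$ applied to $\rho([i,j])$ etc. This is exactly the kind of case-by-case check the paper is silently deferring.
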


For $-h^{\vee}\leq u< 0$, define
\begin{align}
\label{eq:Calpha}
&\alpha_{i}(u)=
\begin{cases}
\sigma^{-u/2}(-\alpha_i)
& \mbox{\rm $i\in J_+$, $u\equiv 0$,}\\
\sigma^{-(u-1)/2}(\alpha_i)
& \mbox{\rm $i\in J_+$, $u\equiv -1$,}\\
\sigma^{-(2u-1)/4}(-\alpha_i)
& \mbox{\rm $i\in J_-$, $u\equiv -\frac{3}{2}$,}\\
\sigma^{-(2u+1)/4}(\alpha_i)
& \mbox{\rm $i\in J_-$, $u\equiv -\frac{1}{2}$,}\\
\sigma^{-u/2}(-\alpha_r)
& \mbox{\rm $i=r$, $u\equiv 0$,}\\
\sigma^{-(u-1)/2}(-\alpha_{r+1})
& \mbox{\rm $i=r+1$, $u\equiv -1$,}\\
\end{cases}
\end{align}
where $\equiv$ is  modulo $2\mathbb{Z}$.
Note that they correspond to the positive roots
in Tables \ref{tab:orbit10} and \ref{tab:orbit9}
with $u$ being the parameter in the head lines.
By Lemma \ref{lem:Corbit} they are all the positive roots
of $D_{r+1}$.

\begin{lemma}
\label{lem:Ctrec}
The family in \eqref{eq:Calpha} satisfies the recurrence relations
\begin{align}
\label{eq:Calpha1}
\begin{split}
\textstyle
\alpha_{i}(u-\frac{1}{2})+\alpha_{i}(u+\frac{1}{2})
&=\alpha_{i-1}(u)+\alpha_{i+1}(u)
\quad (1\leq i\leq r-2),\\
\textstyle
\alpha_{r-1}(u-\frac{1}{2})+\alpha_{r-1}(u+\frac{1}{2})
&=
\begin{cases}
\alpha_{r-2}(u)+\alpha_{r}(u) &\mbox{\rm ($u$: even)}\\
\alpha_{r-2}(u)+\alpha_{r+1}(u) &\mbox{\rm ($u$: odd)},\\
\end{cases}\\
\alpha_{r}(u-1)+\alpha_{r}(u+1)
&
\textstyle
=\alpha_{r-1}(u-\frac{1}{2})+\alpha_{r-1}(u+\frac{1}{2})
 \quad \mbox{\rm ($u$: odd)},\\
\alpha_{r+1}(u-1)+\alpha_{r+1}(u+1)
&
\textstyle
=\alpha_{r-1}(u-\frac{1}{2})+\alpha_{r-1}(u+\frac{1}{2})
 \quad \mbox{\rm ($u$: even)},\\
\end{split}
\end{align}
where $\alpha_0(u)=0$.
\end{lemma}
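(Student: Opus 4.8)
The plan is to regard the recurrences \eqref{eq:Calpha1} as a finite list of additive identities in the root lattice $\bigoplus_{i=1}^{r+1}\mathbb{Z}\alpha_i$ of $D_{r+1}$, and to verify them using the explicit orbit structure of Lemma~\ref{lem:Corbit} together with the transport to type $A_{2r+1}$ supplied by Lemma~\ref{lem:Ccox}. The cleanest route is to push everything through the bijection $\rho$: since $\rho\circ\sigma=(s')^2\circ\rho$ wherever both sides are defined, each member $\alpha_i(u)$ of \eqref{eq:Calpha} that lies strictly inside its $\sigma$-orbit is carried by $\rho$ to a positive root of the form $(s')^{2k}(-\alpha'_j)$ for the linear Coxeter element $s'=s'_-s'_+$ of $A_{2r+1}$; moreover a single step $u\mapsto u+\frac{1}{2}$ in \eqref{eq:Calpha} corresponds, on the $A_{2r+1}$ side, to the application of one factor $s'_+$ or $s'_-$ (the branch reflections $\sigma_r,\sigma_{r+1}$ being absorbed by $\rho$, exactly as in the proof of Lemma~\ref{lem:Ccox}). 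This is consistent with the mutation sequence \eqref{eq:Cmutseq} at level $2$, taken one mutation layer at a time.

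Granting this, the first three lines of \eqref{eq:Calpha1} --- the interior relations $1\le i\le r-2$ and the fork-neighbour relation $i=r-1$ --- become, after applying $\rho$, the standard additive mesh relations for the alternating-orientation Coxeter element of type $A_{2r+1}$: whenever $\beta$ and $s'_{\varepsilon}\beta$ are both positive roots, $\beta+s'_{\varepsilon}\beta$ equals the sum of the ``incoming'' positive roots at the relevant node, and specializing $\varepsilon$ and the node to the bipartite class of $(i,u)$ reproduces exactly the three forms in \eqref{eq:Calpha1} (the case $i=r-1$ splitting into the even/odd $u$ subcases because the fork node of $D_{r+1}$ maps under $\rho$ to the middle vertex of $A_{2r+1}$). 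The last two lines of \eqref{eq:Calpha1}, governing the fork nodes $i=r,r+1$ with the doubled parameter $u\mapsto u\pm 1$, then follow by combining two consecutive half-steps and invoking Lemma~\ref{lem:Corbit}(iii)--(iv), which records how the orbits of $-\alpha_r$ and $-\alpha_{r+1}$ interleave in the even and odd $r$ cases. Equivalently, the whole of \eqref{eq:Calpha1} can be checked directly from the explicit description of $\sigma^k(-\alpha_i)$ and $\sigma^k(\alpha_i)$: in Tables~\ref{tab:orbit10} and \ref{tab:orbit9} the identity \eqref{eq:Calpha1} says precisely that the sum of two horizontally adjacent entries in one row equals the sum of the two entries immediately above and below the slot between them, and the general pattern is read off from these $r=10$ and $r=9$ examples just as in Lemma~\ref{lem:Corbit}.

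The one point needing genuine care is the boundary of the orbits --- the values $u=-1,-2$ near $u=0$, and the values near $u=-h^{\vee}$ --- where one term of \eqref{eq:Calpha1} is a simple root $\pm\alpha_j$ rather than an honest positive root, so that the linear-reflection computation above is invalid and one must instead use the piecewise-linear rule \eqref{eq:pl} (i.e.\ $\sigma_i(-\alpha_i)=\alpha_i$ and $\sigma_i(-\alpha_j)=-\alpha_j$ for $j\ne i$). Here the precise ``turn-around'' data of Lemma~\ref{lem:Corbit} --- when and where each orbit passes through $\pm\alpha_i$, and how the two fork orbits are exchanged near $u=-h^{\vee}$, which depends on the parity of $r$ --- does all the work, and the verification reduces to a short finite check. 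This boundary analysis, which is the only step where the $C_r$ fork structure genuinely differs from the $B_r$ case, is the main obstacle, and I would carry it out last, once the interior identities are in place.
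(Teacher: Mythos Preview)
Your proposal is correct and follows essentially the same route as the paper's own proof: direct verification from the explicit orbit data (Tables~\ref{tab:orbit10} and \ref{tab:orbit9}), supplemented by the observation that the first two relations can be read off from the transport to $A_{2r+1}$ via Lemma~\ref{lem:Ccox} together with the standard Coxeter mesh relation \cite[Eq.~(10.9)]{FZ2}. Your discussion of the half-step $u\mapsto u\pm\frac{1}{2}$ corresponding to a single factor $s'_\pm$ and your separate treatment of the fork relations $i=r,r+1$ are useful elaborations, but the paper compresses all of this into ``easily verified by the explicit expressions''; in particular, the boundary analysis you flag as the main obstacle is absorbed there into the same finite check, so you need not isolate it as a separate step.
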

\begin{proof}
These relations are easily verified by the 
explicit expressions of $\alpha_i(u)$.
See Tables \ref{tab:orbit10} and \ref{tab:orbit9}.
The first two relations are
also obtained from Lemma \ref{lem:Ccox} and \cite[Eq.~(10.9)]{FZ2}.
\end{proof}

Let us return to
 prove  (ii) of Proposition \ref{prop:Clev2}
for the $D$ part.
For a monomial $m$ in $y=(y_{\mathbf{i}})_{\mathbf{i}\in
\mathbf{I}}$,
let $\pi_D(m)$ denote the specialization
with $y_{i,1}=y_{i,3}=1$ ($i\leq r-1$).
For simplicity, we set
$y_{i2}=y_i$ ($i\leq  r-1$), $y_{r1}=y_r$,
$y_{r+1,1}=y_{r+1}$,
and also,
$y_{i2}(u)=y_{i}(u)$
($i\leq r-1$), $y_{r1}(u)=y_r(u)$, $y_{r+1,1}(u)=y_r(u)$.
We define the vectors $\mathbf{t}_{i}(u)
=(t_{i}(u)_k)_{k=1}^{r+1}$
by
\begin{align}
\pi_D([y_{i}(u)]_{\mathbf{T}})
=
\prod_{k=1}^{r+1}
 y_{k}^{t_{i}(u)_k}.
\end{align}
We also identify each vector $\mathbf{t}_{i}(u)$
with $\alpha=\sum_{k=1}^{r+1}
t_{i}(u)_k \alpha_k \in \mathbb{Z}\Pi$.

\begin{proposition}
\label{prop:Ctvec}
Let $-h^{\vee}\leq u< 0$.
Then, we have
\begin{align}
\label{eq:Ctvec1}
\mathbf{t}_{i}(u)=-\alpha_i(u)
\end{align}
for $(i,u)$ in \eqref{eq:Calpha},
and 
\begin{align}
\label{eq:CpiA}
&\pi_D([y_{i1}(u)]_{\mathbf{T}})
=\pi_D([y_{i3}(u)]_{\mathbf{T}})=1
\quad 
\mbox{\rm ($i\leq r-1$, $r+i+2u$: even)}.
\end{align}
\end{proposition}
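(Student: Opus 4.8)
The plan is to argue by induction on $u$, going downward from $u=0$, using the tropical mutation rule together with the recurrence relations of Lemma \ref{lem:Ctrec}. First I would record the base case: by part (iii) of Proposition \ref{prop:Clev2} we know $y_{i2}(2)=y_{i,2}^{-1}$ (since $4-2=2$) and $y_{r1}(2)=y_{r1}^{-1}$, $y_{r+1,1}(2)=y_{r+1,1}^{-1}$, and by the half-periodicity built into the mutation sequence \eqref{eq:Cmutseq} together with part (i) one translates this into the statement that at $u=0$ (equivalently at the top of the range $-h^\vee\le u<0$, read off from the $u=-1,-2$ columns of Tables \ref{tab:orbit10} and \ref{tab:orbit9}) the vectors $\mathbf{t}_i(u)$ equal $-\alpha_i(u)$ where $\alpha_i(u)$ is a negative simple root or the first few positive roots in the orbit. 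Concretely, I would check directly that for the largest relevant values of $u$ the monomial $[y_{\mathbf{i}}(u)]_{\mathbf{T}}$, after applying $\pi_D$, is $\prod_k y_k^{-(\alpha_i(u))_k}$, matching the head of each orbit in the tables.

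Next, the inductive step. In the tropical semifield the mutation of a coefficient $y_{\mathbf j}(u-\tfrac12)\mapsto y_{\mathbf j}(u+\tfrac12)$ at a mutation point is governed by the exchange relation; because all the exponent data in $B=B_2(C_r)$ is in $\{-1,0,1\}$ and—by the very sign-coherence we are proving inductively—each of the monomials entering the $\oplus$ is either positive or negative, the tropical addition $\oplus$ picks out a single monomial, and the tropical Y-system relation collapses to an \emph{additive} relation among the exponent vectors. Applying $\pi_D$ kills the $y_{i1},y_{i3}$ directions, and what survives is exactly the relation
\begin{align*}
\mathbf{t}_i\!\left(u-\tfrac12\right)+\mathbf{t}_i\!\left(u+\tfrac12\right)
= \mathbf{t}_{i-1}(u)+\mathbf{t}_{i+1}(u)
\end{align*}
for $1\le i\le r-2$, and the corresponding branch relations at $i=r-1,r,r+1$. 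These are precisely the recurrences of Lemma \ref{lem:Ctrec} (up to the overall sign), so if $\mathbf{t}_i(v)=-\alpha_i(v)$ holds for all $v>u$ then it holds at $u$ as well. One must also verify \eqref{eq:CpiA}, i.e.\ that the $A$-part variables $y_{i1},y_{i3}$ do not feed into the $D$-part monomials: this follows from the structure of the quiver $Q_2(C_r)$ (the arrows between the two rows) and is checked once and for all, again using the sign-coherence to see that in each mutation the contribution of $y_{i1}$, $y_{i3}$ to $\pi_D([y_{i'}(u)]_{\mathbf T})$ cancels or is absent.

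The main obstacle is bookkeeping the interaction between the two parity branches and the two "exotic" nodes $r,r+1$: the recurrence for $\alpha_{r-1}(u)$ couples to $\alpha_r(u)$ for even $u$ and to $\alpha_{r+1}(u)$ for odd $u$, and $\alpha_r,\alpha_{r+1}$ themselves satisfy second-difference relations with step $2$ rather than $1$; matching this to the mutation sequence \eqref{eq:Cmutseq}, which alternates $\mu^\bullet_\pm$ with $\mu^\bullet_\pm\mu^\circ_\pm$, requires care about which vertices are actually mutated at each half-integer step. I expect the cleanest route is to invoke Lemma \ref{lem:Ccox}, which identifies $\sigma$ with the square of the $A_{2r+1}$ Coxeter element, and to borrow the corresponding tropical-sign statement for type $A$ from \cite[Section 3]{IIKKN} (the $B_r$ companion), transporting it along the bijection $\rho$; this reduces the $D$-part claim to the already-established $A_{2r+1}$ case and sidesteps a direct case analysis of the branch node. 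The remaining work is then to confirm that the initial conditions match under $\rho$ and that $\pi_D$ is compatible with the tropical mutation at every mutation point in the window $-h^\vee\le u<0$, which is routine given Tables \ref{tab:orbit10} and \ref{tab:orbit9}.
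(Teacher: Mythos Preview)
Your core plan---downward induction on $u$, using sign-coherence to collapse the tropical $\oplus$ into additive recurrences on exponent vectors, and then matching these against Lemma~\ref{lem:Ctrec}---is exactly what the paper does. A few points, however, need correction.

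\emph{Base case.} Invoking part~(iii) of Proposition~\ref{prop:Clev2} together with ``half-periodicity built into the mutation sequence'' is circular: the periodicity of the \emph{quiver} under \eqref{eq:Cmutseq} has period $2$, not $h^\vee+\ell$, and the half-periodicity of the \emph{coefficients} (Theorem~\ref{thm:CtYperiod}) is a consequence of Proposition~\ref{prop:Clev2}, not an input to it. The paper simply verifies the claim by direct computation for $-2\le u\le -\tfrac12$; this supplies enough initial data for the second-order recurrences \eqref{eq:Ctrec}. Your fallback ``concretely, I would check directly'' is the right move; the preamble about (iii) and half-periodicity should be dropped.

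\emph{Use of Lemma~\ref{lem:Ccox}.} The paper does \emph{not} use the $A_{2r+1}$ embedding in the proof of Proposition~\ref{prop:Ctvec}. It works directly with the recurrences \eqref{eq:Ctrec}, deriving them from the tropical mutation rule (or the tropicalized $\mathbb{Y}_2(C_r)$) together with the sign information from the induction hypothesis; since \eqref{eq:Ctrec} coincides with \eqref{eq:Calpha1} under \eqref{eq:Ctvec1}, the induction closes. The role of Lemma~\ref{lem:Ccox} in the paper is only to organize the proof of Lemma~\ref{lem:Corbit}, not to import a tropical-sign result for $A_{2r+1}$. Your proposed shortcut via \cite{IIKKN} might be made to work, but transporting the tropical $Y$-system along $\rho$ is not automatic and would itself need justification; the paper's direct route avoids this.

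\emph{The claim \eqref{eq:CpiA}.} This is not ``checked once and for all'' from the quiver structure alone; it is established simultaneously with \eqref{eq:Ctvec1} inside the same induction. At each step the vanishing of $\pi_D([y_{i1}(u)]_{\mathbf T})$ and $\pi_D([y_{i3}(u)]_{\mathbf T})$ depends on the signs of the $D$-part monomials already established, and conversely that vanishing is needed to obtain the clean additive form of \eqref{eq:Ctrec} at the next step.
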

Note that these formulas
determine
$\pi_D([y_{\mathbf{i}}(u)]_{\mathbf{T}})$
 for any $(\mathbf{i},u):\mathbf{p}_+$. 

\begin{proof} 
We can verify the claim
for $-2\leq u \leq -\frac{1}{2}$ by direct
computation.
Then, by induction on $u$ in the backward direction,
one can  establish  the claim,
 together with
the recurrence relations among
$\mathbf{t}_{i}(u)$'s with $(i,u)$ in \eqref{eq:Calpha},
\begin{align}
\label{eq:Ctrec}
\begin{split}
\textstyle
\mathbf{t}_{i}(u-\frac{1}{2})+\mathbf{t}_{i}(u+\frac{1}{2})
&=\mathbf{t}_{i-1}(u)+\mathbf{t}_{i+1}(u)
\quad (1\leq i\leq r-2),\\
\textstyle
\mathbf{t}_{r-1}(u-\frac{1}{2})+\mathbf{t}_{r-1}(u+\frac{1}{2})
&=
\begin{cases}
\mathbf{t}_{r-2}(u)+\mathbf{t}_{r}(u) &\mbox{\rm ($u$: even)}\\
\mathbf{t}_{r-2}(u)+\mathbf{t}_{r+1}(u) &\mbox{\rm ($u$: odd)},\\
\end{cases}\\
\mathbf{t}_{r}(u-1)+\mathbf{t}_{r}(u+1)
&
\textstyle
=\mathbf{t}_{r-1}(u-\frac{1}{2})+\mathbf{t}_{r-1}(u+\frac{1}{2})
 \quad \mbox{\rm ($u$: odd)},\\
\mathbf{t}_{r+1}(u-1)+\mathbf{t}_{r+1}(u+1)
&
\textstyle
=\mathbf{t}_{r-1}(u-\frac{1}{2})+\mathbf{t}_{r-1}(u+\frac{1}{2})
 \quad \mbox{\rm ($u$: even)}.
\end{split}
\end{align}
Note that \eqref{eq:Ctrec} coincides with
\eqref{eq:Calpha1} under \eqref{eq:Ctvec1}.
To derive \eqref{eq:Ctrec}, one uses
 the mutations as in \cite[Figure 6]{IIKKN}
(or  the tropical version of the Y-system
$\mathbb{Y}_2(C_r)$ directly)
 and the positivity/negativity
of $\pi_D([y_{\mathbf{i}}(u)]_{\mathbf{T}})$
resulting from \eqref{eq:Ctvec1} and \eqref{eq:CpiA}
by induction hypothesis.
\end{proof}

Now (ii) and (iv)  in Proposition \ref{prop:Clev2}
for the $D$ part
 follow from  Proposition \ref{prop:Ctvec}.

\subsubsection{A part} The $A$ part can be
studied in a similar way to the $D$ part.
Thus, we  present only the result.

First we note that the quiver $Q_2(C_r)$
is symmetric under the exchange
$y_{i,1}\leftrightarrow y_{i,3}$ ($i\leq r-1$).
Thus, one can concentrate on
the powers of $[y_{\mathbf{i}}(u)]_{\mathbf{T}}$
in  the variables
$y_{i,1}$ ($i\leq r-1$).

Let $A_{r-1}$ be the Dynkin diagram of type $A$
with index set $J=\{1,\dots,r-1\}$.
We assign 
the sign +/$-$ to vertices
(except  for $r$)
 of $A_{r-1}$ as inherited from
$Q_2(C_r)$.
\begin{align*}
\begin{picture}(280,68)(30,20)
\put(45,70)
{
\put(0,0){\circle{5}}
\put(30,0){\circle{5}}
\put(60,0){\circle{5}}
\put(90,0){\circle{5}}
\put(120,0){\circle{5}}
\put(150,0){\circle{5}}
\put(180,0){\circle{5}}
\put(27,0){\line(-1,0){24}}
\put(57,0){\line(-1,0){24}}
\put(87,0){\line(-1,0){24}}
\put(93,0){\line(1,0){24}}
\put(147,0){\line(-1,0){24}}
\put(177,0){\line(-1,0){24}}
\put(-4,8)
{
\put(2,2){\small $1$}
\put(32,2){\small $2$}
\put(62,2){\small }
\put(122,2){\small }
\put(142,2){\small $r-2$}
\put(172,2){\small $r-1$}
}
\put(-5,-14)
{
\put(2,2){\small $+$}
\put(32,2){\small $-$}
\put(122,2){\small $+$}
\put(152,2){\small $-$}
\put(182,2){\small $+$}
}
\put(230,-2){$r$: even}
}
%
%
\put(45,30)
{
\put(30,0){\circle{5}}
\put(60,0){\circle{5}}
\put(90,0){\circle{5}}
\put(120,0){\circle{5}}
\put(150,0){\circle{5}}
\put(180,0){\circle{5}}
\put(57,0){\line(-1,0){24}}
\put(87,0){\line(-1,0){24}}
\put(93,0){\line(1,0){24}}
\put(147,0){\line(-1,0){24}}
\put(177,0){\line(-1,0){24}}
\put(-4,8)
{
\put(32,2){\small $1$}
\put(62,2){\small $2$}
\put(122,2){\small }
\put(142,2){\small $r-2$}
\put(172,2){\small $r-1$}
}
\put(-5,-14)
{
\put(32,2){\small $-$}
\put(62,2){\small $+$}
\put(122,2){\small $+$}
\put(152,2){\small $-$}
\put(182,2){\small $+$}
}
\put(230,-2){$r$: odd}
}
\end{picture}
\end{align*}

Let $\Pi=\{ \alpha_1,\dots,\alpha_{r-1}\}$, $-\Pi$,
 $\Phi_+$ be the set of the simple roots,
the negative simple roots, the positive roots, respectively,
of type $A_{r-1}$.
Again, we introduce the piecewise-linear
analogue $\sigma_i$ of the simple reflection $s_i$,
acting on $\Phi_{\geq -1}=\Phi_{+}\sqcup (-\Pi)$ as 
\eqref{eq:pl}.
Let
\begin{align}
\sigma_+=\prod_{i\in J_+} \sigma_i,\quad
\sigma_-=\prod_{i\in J_-} \sigma_i,
\end{align}
where $J_{\pm}$ is the set of the
vertices of $A_{r-1}$
 with property $\pm$. We define $\sigma$ as the composition
\begin{align}
\label{eq:CsigmaA}
\sigma=\sigma_- \sigma_+.
\end{align}

For a monomial $m$ in $y=(y_{\mathbf{i}})_{\mathbf{i}\in
\mathbf{I}}$,
let $\pi_A(m)$ denote the specialization
with $y_{i,2}=y_{i,3}=1$ ($i\leq r-1$) and $y_{r1}=y_{r+1,1}=1$.
We set
$y_{i1}=y_i$ ($i\leq  r-1$).

We define the vectors $\mathbf{t}_{\mathbf{i}}(u)
=(t_{\mathbf{i}}(u)_k)_{k=1}^{r-1}$
by
\begin{align}
\pi_A([y_{\mathbf{i}}(u)]_{\mathbf{T}})
=
\prod_{k=1}^{r-1}
 y_{k}^{t_{\mathbf{i}}(u)_k}.
\end{align}
We also identify each vector $\mathbf{t}_{\mathbf{i}}(u)$
with $\alpha=\sum_{k=1}^{r-1}
t_{\mathbf{i}}(u)_k \alpha_k \in \mathbb{Z}\Pi$.

With these notations,
 the result for the $A$ part is summarized as follows.
\begin{proposition}
\label{prop:CtvecA}
Let $-h^{\vee}\leq u< 0$.
For $(\mathbf{i},u):\mathbf{p}_+$,
$\mathbf{t}_{\mathbf{i}}(u)$ is given by,
for $i\leq r-1$,
\begin{align}
\label{eq:CtA1}
\begin{split}
\mathbf{t}_{i1}(u)&=
\begin{cases}
-\sigma^{-u}(-\alpha_i)& i\in J_+\\
-\sigma^{-(2u-1)/2}(-\alpha_i)& i\in J_-,\\
\end{cases}
\\
\mathbf{t}_{i2}(u)&=
\begin{cases}
-[
2r+2-i+ 2u
,r-1]& -\frac{1}{2}h^{\vee}\leq u <0 \\
-[-1-i- 2u 
,r-1]& -h^{\vee}\leq u < -\frac{1}{2}h^{\vee}, \\
\end{cases}\\
\mathbf{t}_{i3}(u)&=0,\\
\end{split}
\end{align}
and
\begin{align}
\label{eq:CtA2}
\begin{split}
\mathbf{t}_{r1}(u)&=
\begin{cases}
-[r+2+ 2u ,r-1]& -\frac{1}{2}h^{\vee}\leq u <0 \\
-[-1-r- 2u,r-1]& -h^{\vee}\leq u < -\frac{1}{2}h^{\vee}, \\
\end{cases}\\
\mathbf{t}_{r+1,1}(u)&=
\begin{cases}
-[r+2+ 2u,r-1]& -\frac{1}{2}h^{\vee}\leq u <0 \\
-[-1-r- 2u,r-1]& -h^{\vee}\leq u < -\frac{1}{2}h^{\vee}, \\
\end{cases}\\
\end{split}
\end{align}
where $[i,j]=\alpha_i+\cdots +\alpha_j$ if $i\leq j$
and $0$ if $i> j$.
\end{proposition}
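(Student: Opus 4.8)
The plan is to run the argument in complete parallel with the $D$ part (Proposition~\ref{prop:Ctvec}) and with the $B_r$ case of \cite{IIKKN}. As noted just before the statement, $Q_2(C_r)$ is invariant under the exchange $y_{i,1}\leftrightarrow y_{i,3}$ ($i\le r-1$), so I would only track the $y_{i,1}$-exponents of the tropical evaluations; the $y_{i,3}$-exponents are then read off from the image of the quiver under this exchange, and in particular the vanishing $\mathbf{t}_{i3}(u)=0$ is immediate once one knows that, in the region $-h^{\vee}\le u<0$, the variable $y_{i3}(u)$ contains no $y_{j,1}$. I would prove \eqref{eq:CtA1}--\eqref{eq:CtA2} by backward induction on $u\in\frac{1}{2}\mathbb{Z}$, $-h^{\vee}\le u<0$, conducted simultaneously with the induction behind Proposition~\ref{prop:Ctvec}: the two are coupled only through the sign (positive or negative) of the \emph{full} monomials $[y_{\mathbf{i}}(u)]_{\mathbf{T}}$, which is precisely what dictates how the factors $1+Y$ tropicalize and which is determined by the $\pi_A$- and $\pi_D$-projections together.

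The combinatorial input is classical. Orienting $A_{r-1}$ by the bipartite sign pattern inherited from $Q_2(C_r)$, let $\sigma=\sigma_-\sigma_+$ be the piecewise-linear Coxeter element of \eqref{eq:pl}, \eqref{eq:CsigmaA} acting on $\Phi_{\ge -1}$. Define $\alpha_i(u)$ to be the shifted $\sigma$-power appearing on the right-hand side of \eqref{eq:CtA1} (namely $\sigma^{-u}(-\alpha_i)$ for $i\in J_+$ and $\sigma^{-(2u-1)/2}(-\alpha_i)$ for $i\in J_-$). As in Lemma~\ref{lem:Corbit}, these exhaust $\Phi_+(A_{r-1})$, and --- this being the type-$A$ specialization of \cite[Eq.~(10.9)]{FZ2} together with the description in \cite{FZ1} --- they satisfy three-term recurrences of the same shape as \eqref{eq:Calpha1}/\eqref{eq:Ctrec}, i.e. $\alpha_i(u-\frac{1}{2})+\alpha_i(u+\frac{1}{2})=\alpha_{i-1}(u)+\alpha_{i+1}(u)$ for $1\le i\le r-2$ with $\alpha_0=0$. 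The remaining components $\mathbf{t}_{i2}$, $\mathbf{t}_{r1}$, $\mathbf{t}_{r+1,1}$ are \emph{not} roots of $A_{r-1}$ but the ``interval tails'' $[j,r-1]=\alpha_j+\cdots+\alpha_{r-1}$ of \eqref{eq:CtA1}--\eqref{eq:CtA2}; I would record the elementary shift rules they obey (including the branch change at $u=-\frac{1}{2}h^{\vee}$) and check that these glue consistently onto the $i=r-1$ end of the type-$A$ recurrence.

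Next I would settle the base case: verify \eqref{eq:CtA1}--\eqref{eq:CtA2} directly on a short initial window, say $-2\le u\le -\frac{1}{2}$, by running the mutation sequence \eqref{eq:Cmutseq} in $\mathrm{Trop}(y)$ and applying $\pi_A$. For the inductive step, at a mutation point $(\mathbf{i},u):\mathbf{p}_+$ corresponding under $g'$ to $(a,m,u)\in\mathcal{I}'_{\ell+}$, the tropical $\mathbb{Y}_2(C_r)$-relation \eqref{eq:YC1} (equivalently, the mutations pictured in \cite[Figure~6]{IIKKN}) expresses $[y_{\mathbf{i}}(u-\frac{1}{t_a})]_{\mathbf{T}}$ as a ratio, in $\mathrm{Trop}(y)$, of $[y_{\mathbf{i}}(u+\frac{1}{t_a})]_{\mathbf{T}}$ and of factors $1\oplus[y_{\mathbf{j}}(v)]_{\mathbf{T}}^{\pm 1}$ with $v\ge u$. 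By the joint inductive hypothesis, each $\pi_A([y_{\mathbf{j}}(v)]_{\mathbf{T}})$ is a positive monomial, a negative monomial, or $1$; since in $\mathrm{Trop}(y)$ one has $1\oplus(\text{positive monomial})=1\oplus 1=1$ and $1\oplus(\text{negative monomial})=(\text{that monomial})$, the right-hand side collapses to a single monomial whose exponents give exactly the recurrences of the previous paragraph for $\mathbf{t}_{i1},\mathbf{t}_{i2},\mathbf{t}_{r1},\mathbf{t}_{r+1,1}$ (and $\mathbf{t}_{i3}\equiv 0$). Together with the base case, these recurrences propagate the claimed closed forms through the whole region, and \eqref{eq:CpiA}-type vanishing statements for the $D$-projection are carried along in the same induction.

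The hard part will be the sign bookkeeping near the turning point $u=-\frac{1}{2}h^{\vee}=-\frac{r+1}{2}$: there the closed forms for $\mathbf{t}_{i2},\mathbf{t}_{r1},\mathbf{t}_{r+1,1}$ switch branches, and --- in the $D$-part picture --- some of the $[y_{\mathbf{i}}(u)]_{\mathbf{T}}$ are momentarily positive rather than negative (Proposition~\ref{prop:Clev2}(ii)(b)). One must be sure that the sign pattern fed into each $1\oplus Y$ at every step is exactly the predicted one, so that no spurious monomial survives the $\oplus$ and the recurrences stay linear with the right coefficients. This is where the lockstep with Proposition~\ref{prop:Ctvec} and the explicit orbit data (the type-$A_{r-1}$ analogues of Tables~\ref{tab:orbit10}--\ref{tab:orbit9}) is essential; apart from this point, everything proceeds exactly as in the $B_r$ case.
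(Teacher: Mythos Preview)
Your approach is correct and is precisely what the paper intends: it gives no separate argument for Proposition~\ref{prop:CtvecA}, stating only that ``the $A$ part can be studied in a similar way to the $D$ part,'' i.e.\ by the backward induction of Proposition~\ref{prop:Ctvec} with the tropical $\mathbb{Y}_2(C_r)$-relations and the sign information supplied by the induction hypothesis. One minor simplification you can make: since the specialization $\pi_A$ is a semifield homomorphism $\mathrm{Trop}(y)\to\mathrm{Trop}(y_{1,1},\dots,y_{r-1,1})$, it commutes with $\oplus$ and hence with tropical mutation, so the $A$-part induction is in fact self-contained and need not be run jointly with the $D$ part---the sign of $\pi_A([y_{\mathbf{j}}(v)]_{\mathbf{T}})$ alone governs how each factor $1\oplus(\cdot)$ collapses after projection.
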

Note that
$\mathbf{t}_{i1}(-\frac{h^{\vee}}{2})=\alpha_{r-i}$
($i\in J_-$ for even $r$ and $i\in J_+$ for odd $r$)
and 
$\mathbf{t}_{i1}(-\frac{h^{\vee}}{2}
-\frac{1}{2})=\alpha_{r-i}$
($i\in J_+$ for even $r$ and $i\in J_-$ for odd $r$),
and that they are the only positive monomials
in \eqref{eq:CtA1} and \eqref{eq:CtA2}.
Now (ii) and (iv)  in Proposition \ref{prop:Clev2}
for the $A$ part
 follow from  Proposition \ref{prop:CtvecA}.

This completes the proof of Proposition \ref{prop:Clev2}.

\subsection{Tropical Y-systems at higher levels}

By the same method for the $B_r$ case \cite[Proposition 4.1]{IIKKN},
one can establish the `factorization property' of the
tropical Y-system at higher levels.
As a result, we obtain a generalization of Proposition
\ref{prop:Clev2}.

\begin{proposition}
\label{prop:Clevh}
 For 
$[\mathcal{G}_Y(B,y)]_{\mathbf{T}}$
with $B=B_{\ell}(C_r)$, the following facts hold.
\par
(i) Let $u$ be in the region $0\le u < \ell$.
For any $(\mathbf{i},u):\mathbf{p}_+$,
the  monomial $[y_{\mathbf{i}}(u)]_{\mathbf{T}}$
is positive.
\par
(ii) Let $u$ be in the region $-h^{\vee}\le u < 0$.
\begin{itemize}
\item[\em (a)]
 Let $\mathbf{i}\in \mathbf{I}^{\circ}$
or $\mathbf{i}=(i,i')$ $(i\leq r-1, i'\in 2\mathbb{N})$.
For any $(\mathbf{i},u):\mathbf{p}_+$,
the  monomial $[y_{\mathbf{i}}(u)]_{\mathbf{T}}$
is negative.
\item[\em (b)]
Let $\mathbf{i}=(i,i')$  $(i\leq r-1, i'\not\in
2\mathbb{N})$.
For any $(\mathbf{i},u):\mathbf{p}_+$,
the  monomial $[y_{\mathbf{i}}(u)]_{\mathbf{T}}$
is positive for $u=-\frac{1}{2}h^{\vee},
-\frac{1}{2}h^{\vee}-\frac{1}{2}$
and negative otherwise.
\end{itemize}
\par
(iii)
$y_{ii'}(\ell)=y_{i,2\ell-i'}^{-1}$ if $i\leq r-1$ and
$y_{i,\ell-i'}^{-1}$ if $i=r,r+1$.
\par
(iv) For even $r$,  $y_{ii'}(-h^{\vee})=
y_{ii'}^{-1}$ if $i\leq r-1$ and 
$y_{2r+1-i,i'}^{-1}$ if  $i=r, r+1$.
 For odd $r$,  $y_{ii'}(-h^{\vee})=
y_{ii'}^{-1}$.
\end{proposition}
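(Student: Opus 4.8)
The plan is to reduce everything to the level-$2$ statement, Proposition~\ref{prop:Clev2}, following the argument used for type $B_r$ in \cite[Proposition~4.1]{IIKKN}. The key structural input is the \emph{factorization property} of the tropical Y-system at arbitrary level: one shows that, in the fundamental domains $0\le u<\ell$ and $-h^\vee\le u<0$, the tropical mutation sequence obtained by tropicalising \eqref{eq:Cmutseq} for $B=B_\ell(C_r)$ decouples into a ``level part'' and a ``$D/A$ part''. The level part is governed by the linear $A_{\ell-1}$- and $A_{2\ell-1}$-sub-quivers sitting in the ``row'' direction of $Q_\ell(C_r)$ (see Figure~\ref{fig:quiverC}), whose tropical Y-system is the standard type-$A$ one; the $D/A$ part is governed by exactly the root-theoretic data used for $\ell=2$ (the element $\sigma$ and the root systems of types $D_{r+1}$ and $A_{r-1}$, together with Propositions~\ref{prop:Ctvec} and~\ref{prop:CtvecA}). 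Concretely, I would first state and prove the factorization as a product formula expressing $[y_{\mathbf i}(u)]_{\mathbf T}$ at level $\ell$ in terms of the level-$2$ tropical monomials, following \cite{IIKKN} step by step but with the composite mutations $\mu^{\circ}_\pm$, $\mu^{\bullet}_\pm$, the involutions $\boldsymbol r$, $\boldsymbol\omega$, and the $\diamond$-identification of $Q_\ell(C_r)$ in place of the $B_r$ data.

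Granting the factorization property, part (i) is immediate: on $0\le u<\ell$ the product formula exhibits $[y_{\mathbf i}(u)]_{\mathbf T}$ as a product of positive monomials, positivity of the factors being Proposition~\ref{prop:Clev2}(i) for the $D/A$ directions together with the positivity of the type-$A$ tropical Y-system on its own fundamental domain. For parts (ii)--(iv) one uses that $h^\vee$ is independent of $\ell$: in the region $-h^\vee\le u<0$ the level part of the factorization contributes no new variables, so each $[y_{\mathbf i}(u)]_{\mathbf T}$ coincides, up to the evident relabelling of the row coordinate $i'$, with the corresponding level-$2$ monomial. Under this relabelling the $\circ$-vertices and the even-row vertices $(i,i')$ with $i\le r-1$ correspond to the ``$D$ part'' and the odd-row vertices to the ``$A$ part''; hence the sign pattern of Proposition~\ref{prop:Clev2}(ii) (negative everywhere except positive at $u=-\tfrac12h^\vee$ and $u=-\tfrac12h^\vee-\tfrac12$ on the odd rows) gives (ii)(a) and (ii)(b), while the closed expressions of Proposition~\ref{prop:Clev2}(iii),(iv) become (iii) and (iv) after the substitution of the level-$\ell$ row indices. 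As in the level-$2$ case one also verifies that none of the monomials so produced equals $1$; this is routine and we omit it.

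The main obstacle is the precise formulation and proof of the factorization property itself, which is where essentially all the work lies. One must carry the mutation sequence \eqref{eq:Cmutseq} for $B_\ell(C_r)$ through its tropicalisation, control the positivity/negativity of the tropical monomials at each intermediate step, and show that the row-direction mutations reproduce the type-$A$ tropical Y-system while the remaining mutations reproduce the level-$2$ $C_r$ dynamics. For $C_r$ this bookkeeping is more delicate than for $B_r$ because of the long/short-root asymmetry of the Dynkin diagram, which is responsible for the intricate shape of $Q_\ell(C_r)$ near the right quiver --- in particular for the two columns $i=r,r+1$ and for the identification of the rightmost column of the left quiver with the middle ($\diamond$) column of the right quiver --- and for the way the numbers $t_a$ and the $\circ/\bullet$-colouring enter the parity conditions $\mathbf P_\pm$, $\mathbf P'_\pm$ and $\mathbf p_\pm$. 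Tracking this identification correctly through every composite mutation is the step requiring the most care; once it is done the argument is formally identical to \cite{IIKKN}, and Proposition~\ref{prop:Clevh} follows.
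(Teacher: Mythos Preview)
Your proposal is correct and follows essentially the same approach as the paper: the paper itself gives no detailed argument here, simply stating that ``by the same method for the $B_r$ case \cite[Proposition 4.1]{IIKKN}, one can establish the `factorization property' of the tropical Y-system at higher levels'' and that Proposition~\ref{prop:Clevh} then follows from Proposition~\ref{prop:Clev2}. Your outline --- establish the factorization into a type-$A$ level part and a level-$2$ $D/A$ part, then read off (i)--(iv) from the corresponding level-$2$ facts --- is exactly this, with an accurate identification of where the $C_r$-specific bookkeeping (the $\diamond$-identification and the two columns $i=r,r+1$) requires extra care.
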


We obtain two important corollaries of
Propositions \ref{prop:Clev2} and  \ref{prop:Clevh}.

\begin{theorem}
\label{thm:CtYperiod}
For $[\mathcal{G}_Y(B,y)]_{\mathbf{T}}$
the following relations hold.
\par
(i) Half periodicity: 
$[y_{\mathbf{i}}(u+h^{\vee}+\ell)]_{\mathbf{T}}
=[y_{\boldsymbol{\omega}(\mathbf{i})}(u)]_{\mathbf{T}}$.
\par
(ii) 
 Full periodicity: 
$[y_{\mathbf{i}}(u+2(h^{\vee}+\ell))]_{\mathbf{T}}
=[y_{\mathbf{i}}(u)]_{\mathbf{T}}$.
\end{theorem}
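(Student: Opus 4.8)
The full periodicity~(ii) follows formally from the half periodicity~(i). Indeed, since $\boldsymbol{\omega}$ is an involution, applying~(i) twice yields
$[y_{\mathbf{i}}(u+2(h^{\vee}+\ell))]_{\mathbf{T}}=[y_{\boldsymbol{\omega}(\mathbf{i})}(u+h^{\vee}+\ell)]_{\mathbf{T}}=[y_{\boldsymbol{\omega}^2(\mathbf{i})}(u)]_{\mathbf{T}}=[y_{\mathbf{i}}(u)]_{\mathbf{T}}$. So the whole content is~(i), and the plan is to carry over the $B_r$ argument of \cite{IIKKN} in structure.

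The key inputs are parts~(iii) and~(iv) of Proposition~\ref{prop:Clevh} (which for $\ell=2$ is Proposition~\ref{prop:Clev2}): they pin down the tropical seed at the two ``boundary'' parameters $u=\ell$ and $u=-h^{\vee}$. In both cases the tuple $([y_{\mathbf{i}}(u)]_{\mathbf{T}})_{\mathbf{i}\in\mathbf{I}}$ consists exactly of the inverses $y_{\mathbf{j}}^{-1}$ of the free generators of $\mathrm{Trop}(y)$, re-indexed by an explicit involution of $\mathbf{I}$ --- the columnwise up-down reflection at $u=\ell$, and $\boldsymbol{r}$ (for even $r$) or the identity (for odd $r$) at $u=-h^{\vee}$. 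Reading off from the sequence~\eqref{eq:Cmutseq} together with Lemma~\ref{lem:CQmut}(i) that the exchange matrices $B(u)$ run through $B,-B,\boldsymbol{r}(B),-\boldsymbol{r}(B)$ with period $2$ in $u$, one identifies the \emph{full} seed at $u=\ell$ (resp.\ at $u=-h^{\vee}$) with a relabelling of the initial seed $(B,y)$ (resp.\ of its mirror $(-B,y^{-1})$). Hence the tropical $Y$-system beyond $u=\ell$ is a relabelled (and, where the mirror intervenes, inverted) copy of the one near $u=0$: concretely one glues the data of Proposition~\ref{prop:Clevh} on $-h^{\vee}\le u<0$ to the data on $0\le u<\ell$, and checks that the net re-indexing produced over this span of length $h^{\vee}+\ell$ is precisely $\boldsymbol{\omega}$. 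Because the composite mutations $\mu^{\circ}_{\pm},\mu^{\bullet}_{\pm}$ are equivariant under $\boldsymbol{\omega}$ --- which underlies Lemma~\ref{lem:CQmut}(ii) --- the identity $[y_{\mathbf{i}}(u+h^{\vee}+\ell)]_{\mathbf{T}}=[y_{\boldsymbol{\omega}(\mathbf{i})}(u)]_{\mathbf{T}}$, once verified at a single value of $u$, propagates along the whole sequence.

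The step I expect to be the main obstacle is the parity bookkeeping. One must track three parities at once: that of $r$, which fixes the form of $\boldsymbol{\omega}$ and of Proposition~\ref{prop:Clevh}(iv); that of $\ell$, which decides whether $B(\ell)$ equals $B$ or $\boldsymbol{r}(B)$ and hence whether an extra $\boldsymbol{r}$-twist enters the ``reset'' at $u=\ell$; and that of $h^{\vee}+\ell=r+1+\ell$, which governs whether $\boldsymbol{\omega}(Q)=Q$ or $\boldsymbol{r}(Q)$ in Lemma~\ref{lem:CQmut}(ii). Verifying that the up-down reflection at $u=\ell$, the sign alternation of the $B(u)$, and the $\boldsymbol{r}$/identity twist at $u=-h^{\vee}$ compose to exactly $\boldsymbol{\omega}$ in every one of these cases --- together with the harmless check, noted after Proposition~\ref{prop:Clev2}, that none of the monomials degenerates to $1$ --- is the genuinely delicate point; the remainder is the formal calculus of $y$-seed mutations in $\mathrm{Trop}(y)$.
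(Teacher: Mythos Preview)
Your proposal is correct and follows essentially the same route as the paper, which treats Theorem~\ref{thm:CtYperiod} as a direct corollary of Propositions~\ref{prop:Clev2} and~\ref{prop:Clevh} and defers the mechanics to the $B_r$ argument in \cite{IIKKN}. You have accurately isolated the two inputs (iii) and (iv) of Proposition~\ref{prop:Clevh}, the role of Lemma~\ref{lem:CQmut} in matching $B$-matrices, and the parity case analysis in $r$, $\ell$, and $h^{\vee}+\ell$ as the only substantive verification; this is exactly the structure of the omitted proof.
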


\begin{theorem}
\label{thm:Clevhd}
For $[\mathcal{G}_Y(B,y)]_{\mathbf{T}}$,
let $N_+$ and $N_-$ denote the
total numbers of the positive and negative monomials,
respectively,
among $[y_{\mathbf{i}}(u)]_{\mathbf{T}}$
for $(\mathbf{i},u):\mathbf{p}_+$
in the region $0\leq u < 2(h^{\vee}+\ell)$.
Then, we have
\begin{align}
N_+=2\ell(2r\ell-\ell-1),
\quad
N_-=2r(2\ell r-r-1).
\end{align}
\end{theorem}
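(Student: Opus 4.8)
The plan is to determine the sign of each tropical monomial $[y_{\mathbf{i}}(u)]_{\mathbf{T}}$ attached to a $\mathbf{p}_+$-point $(\mathbf{i},u)$ in the window $0\le u<2(h^{\vee}+\ell)$, using Proposition~\ref{prop:Clevh} and the periodicities of Theorem~\ref{thm:CtYperiod}, and then to carry out the ensuing lattice-point count. First I would reduce to one half-period: by the full periodicity $[y_{\mathbf{i}}(u+2(h^{\vee}+\ell))]_{\mathbf{T}}=[y_{\mathbf{i}}(u)]_{\mathbf{T}}$ together with the half periodicity $[y_{\mathbf{i}}(u+h^{\vee}+\ell)]_{\mathbf{T}}=[y_{\boldsymbol{\omega}(\mathbf{i})}(u)]_{\mathbf{T}}$, and using that $\boldsymbol{\omega}$ merely permutes the free generators $y_{\mathbf{i}}$ of $\mathrm{Trop}(y)$ and so preserves positivity and negativity of monomials, the two half-windows $0\le u<h^{\vee}+\ell$ and $h^{\vee}+\ell\le u<2(h^{\vee}+\ell)$ carry the same numbers of positive and of negative monomials. (Here one checks, via Lemma~\ref{lem:CQmut}(ii) in its two cases according to the parity of $h^{\vee}+\ell$, that $(\mathbf{i},u):\mathbf{p}_+$ if and only if $(\boldsymbol{\omega}(\mathbf{i}),u+h^{\vee}+\ell):\mathbf{p}_+$.) Thus $N_{\pm}=2N'_{\pm}$, where $N'_{\pm}$ is the number of positive (resp.\ negative) $\mathbf{p}_+$-monomials in $0\le u<h^{\vee}+\ell$.

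Next I would split $0\le u<h^{\vee}+\ell$ as $[0,\ell)\sqcup[\ell,h^{\vee}+\ell)$. On $[0,\ell)$ every $\mathbf{p}_+$-monomial is positive by Proposition~\ref{prop:Clevh}(i); write $A$ for their number. A further application of the half periodicity identifies $[\ell,h^{\vee}+\ell)$ with $[-h^{\vee},0)$, again sign-preservingly, and on $[-h^{\vee},0)$ Proposition~\ref{prop:Clevh}(ii) says a $\mathbf{p}_+$-monomial is negative unless $\mathbf{i}=(i,i')$ with $i\le r-1$, $i'$ odd, and $u\in\{-\frac{1}{2}h^{\vee},\ -\frac{1}{2}h^{\vee}-\frac{1}{2}\}$. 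For each such $\mathbf{i}$ exactly one of those two values of $u$ is a $\mathbf{p}_+$-point (the integer one if $\mathbf{i}\in\mathbf{I}^{\bullet}_+$, the half-integer one if $\mathbf{i}\in\mathbf{I}^{\bullet}_-$), and since each of the $r-1$ columns $i\le r-1$ of $Q_{\ell}(C_r)$ has $\ell$ odd rows among $1,\dots,2\ell-1$, this block contributes exactly $B:=\ell(r-1)$ positive monomials, all its other $\mathbf{p}_+$-monomials being negative. Hence $N'_+=A+B$, while $N_++N_-$ equals the total number $T$ of $\mathbf{p}_+$-points in $0\le u<2(h^{\vee}+\ell)$, so $N_-=T-N_+$.

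It then remains to evaluate $A$ and $T$. From $Q_{\ell}(C_r)$ one reads off $|\mathbf{I}^{\circ}|=2(\ell-1)$ and $|\mathbf{I}^{\bullet}|=(r-1)(2\ell-1)$, and the parity rule \eqref{eq:Cpp} shows that each $\mathbf{i}\in\mathbf{I}^{\circ}$ is a $\mathbf{p}_+$-point at one value of $u$ per interval of length $2$ while each $\mathbf{i}\in\mathbf{I}^{\bullet}$ is so at two. A direct count then gives $T=(h^{\vee}+\ell)\bigl(|\mathbf{I}^{\circ}|+2|\mathbf{I}^{\bullet}|\bigr)=2(h^{\vee}+\ell)(2r\ell-r-\ell)$ and $A=\ell(\ell-1)+\ell(r-1)(2\ell-1)$. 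Substituting $h^{\vee}=r+1$ yields $N_+=2(A+B)=2\ell(2r\ell-\ell-1)$ and $N_-=T-N_+=2r(2r\ell-r-1)$, which is the assertion.

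The hard part will be the parity bookkeeping in the last step. The formulas for $A$, $B$ and $T$ hold verbatim for even $\ell$, but for odd $\ell$ — and, through Lemma~\ref{lem:CQmut}(ii), depending also on the parity of $r$ — the splitting of $[0,\ell)$ and of $[-h^{\vee},0)$ into integer and half-integer values of $u$, together with the residue classes of the boundary value $u=\ell$ and of the exceptional values $-\frac{1}{2}h^{\vee}$ and $-\frac{1}{2}h^{\vee}-\frac{1}{2}$, shifts in a few places, and one must verify case by case that $A$, $B$ and $T$ are unchanged. This is exactly the type of computation already carried out for $B_r$ in \cite{IIKKN}, so I expect it to go through without incident.
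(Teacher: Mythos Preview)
Your proposal is correct and follows exactly the intended route: the paper presents Theorem~\ref{thm:Clevhd} as a direct corollary of Propositions~\ref{prop:Clev2} and~\ref{prop:Clevh} together with the periodicity in Theorem~\ref{thm:CtYperiod}, deferring the explicit count to the parallel $B_r$ computation in \cite[Section~6]{IIKKN}, and your half-period reduction, the split $[0,\ell)\sqcup[\ell,h^{\vee}+\ell)$, and the evaluation of $A$, $B$, $T$ reproduce that argument precisely. The parity bookkeeping you flag (the compatibility of $\boldsymbol{\omega}$ with the $\mathbf{p}_+$-condition in the four cases of $(r\bmod 2,\ell\bmod 2)$, and the stability of $A$ for odd $\ell$ via $|\mathbf{I}^{\circ}_+|=|\mathbf{I}^{\circ}_-|=\ell-1$) does go through without incident.
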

We observe  the symmetry (the {\em level-rank duality})
 for the numbers $N_+$ and $N_-$
under the exchange of $r$ and $\ell$.

\subsection{Periodicities and dilogarithm identities}

Applying \cite[Theorem 5.1]{IIKKN} to
Theorem  \ref{thm:CtYperiod},
we obtain the periodicities:

\begin{theorem}
\label{thm:Cxperiod}
For $\mathcal{A}(B,x,y)$,
the following relations hold.
\par
(i) Half periodicity: 
$x_{\mathbf{i}}(u+h^{\vee}+\ell)
=x_{\boldsymbol{\omega}(\mathbf{i})}(u)
$.
\par
(ii) 
 Full periodicity: 
$x_{\mathbf{i}}(u+2(h^{\vee}+\ell))
=x_{\mathbf{i}}(u)
$.
\end{theorem}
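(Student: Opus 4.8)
The plan is to deduce Theorem~\ref{thm:Cxperiod} from the tropical periodicities already established in Theorem~\ref{thm:CtYperiod}, by invoking the general transfer result \cite[Theorem~5.1]{IIKKN}, which upgrades periodicity of the tropical $Y$-system to periodicity of the cluster variables. That result says, roughly, that along a periodic sequence of mutations for which (a) the exchange matrices recur up to a fixed permutation $\nu$ after $N$ steps and (b) the tropical $y$-variables recur up to the same permutation $\nu$ after $N$ steps, the cluster variables $x_{\mathbf{i}}(u)$ (and the coefficient tuples $y_{\mathbf{i}}(u)$) also recur up to $\nu$ after $N$ steps. The nontrivial content packaged inside that theorem --- sign-coherence of $c$-vectors and the fact that the seed pattern of $\mathcal{A}(B,x,y)$ is determined by its $c$- and $g$-vectors together with its $F$-polynomials --- is supplied by Plamondon's categorification of cluster algebras attached to arbitrary skew-symmetric matrices, exactly as in the $B_r$ case; so nothing of that sort needs to be redone here.

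Thus the first step is to verify hypothesis (a) with $N=h^{\vee}+\ell$ and $\nu=\boldsymbol{\omega}$. Along the sequence \eqref{eq:Cmutseq} the exchange matrix has period $2$ in $u$ by Lemma~\ref{lem:CQmut}(i), taking the value $B$ at $u\equiv 0$ and $\boldsymbol{r}(B)$ at $u\equiv 1$ modulo $2\mathbb{Z}$. Combining this with Lemma~\ref{lem:CQmut}(ii) --- which gives $\boldsymbol{\omega}(Q)=Q$ when $h^{\vee}+\ell$ is even and $\boldsymbol{\omega}(Q)=\boldsymbol{r}(Q)$ when $h^{\vee}+\ell$ is odd --- one checks, separately in the two parity cases and using $\boldsymbol{r}^2=\boldsymbol{\omega}^2=\mathrm{id}$, that $B(u+h^{\vee}+\ell)=\boldsymbol{\omega}(B(u))$ for all $u\in\frac{1}{2}\mathbb{Z}$. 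Hypothesis (b) is then exactly the half-periodicity clause of Theorem~\ref{thm:CtYperiod}(i), namely $[y_{\mathbf{i}}(u+h^{\vee}+\ell)]_{\mathbf{T}}=[y_{\boldsymbol{\omega}(\mathbf{i})}(u)]_{\mathbf{T}}$.

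With both hypotheses in place, \cite[Theorem~5.1]{IIKKN} yields the half-periodicity $x_{\mathbf{i}}(u+h^{\vee}+\ell)=x_{\boldsymbol{\omega}(\mathbf{i})}(u)$ of part~(i). Part~(ii), the full periodicity $x_{\mathbf{i}}(u+2(h^{\vee}+\ell))=x_{\mathbf{i}}(u)$, then follows by applying part~(i) twice and using that $\boldsymbol{\omega}$ is an involution on $\mathbf{I}$. Note that the same argument, run with the coefficient tuples instead of the cluster variables, simultaneously produces the analogous periodicities of $y_{\mathbf{i}}(u)$, which is what will eventually feed Theorems~\ref{thm:Yperiod} and~\ref{thm:DI2}.

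The hard part here is \emph{not} in this argument: all the genuinely case-specific input --- the tropical $Y$-system at level $2$ (Proposition~\ref{prop:Clev2}), its proof via the $D$-part and $A$-part root-system analysis, and its higher-level consequence Proposition~\ref{prop:Clevh}, hence Theorem~\ref{thm:CtYperiod} --- has already been carried out. The only thing requiring care in the present step is bookkeeping: one must match the permutation occurring in \cite[Theorem~5.1]{IIKKN} with $\boldsymbol{\omega}$ (rather than $\boldsymbol{r}$ or a composite), and align the half-integer parameter $u$ in \eqref{eq:Cmutseq} with the integer mutation-step count used in \cite{IIKKN}; both are handled precisely as in the type-$B_r$ treatment.
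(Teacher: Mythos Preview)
Your proposal is correct and follows essentially the same approach as the paper, which simply states that the result is obtained by applying \cite[Theorem~5.1]{IIKKN} to Theorem~\ref{thm:CtYperiod}. You have merely expanded the bookkeeping details (verifying via Lemma~\ref{lem:CQmut} that the exchange matrices and tropical $y$-variables recur up to $\boldsymbol{\omega}$, and deducing full periodicity from half periodicity via $\boldsymbol{\omega}^2=\mathrm{id}$) that the paper leaves implicit.
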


\begin{theorem}
\label{thm:Cyperiod}
For $\mathcal{G}(B,y)$,
the following relations hold.
\par
(i) Half periodicity: 
$y_{\mathbf{i}}(u+h^{\vee}+\ell)
=y_{\boldsymbol{\omega}(\mathbf{i})}(u)
$.
\par
(ii) 
 Full periodicity: 
$y_{\mathbf{i}}(u+2(h^{\vee}+\ell))
=y_{\mathbf{i}}(u)
$.
\end{theorem}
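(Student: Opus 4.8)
The plan is to deduce Theorem~\ref{thm:Cyperiod} in exactly the same way as Theorem~\ref{thm:Cxperiod}, by feeding the tropical periodicity of Theorem~\ref{thm:CtYperiod} into the general principle of \cite[Theorem~5.1]{IIKKN}. That principle --- which ultimately rests on Plamondon's categorification of the cluster algebra attached to a skew-symmetric matrix --- says that a periodicity visible only after tropical evaluation, namely a periodicity of the $C$-matrices (equivalently of the monomials $[y_{\mathbf{i}}(u)]_{\mathbf{T}}$) along the mutation sequence \eqref{eq:Cmutseq}, lifts to a genuine periodicity of the entire seed pattern $(B(u),x(u),y(u))$, and in particular of the coefficient tuples $y(u)$.

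Concretely, I would first record that by Lemma~\ref{lem:CQmut} the exchange matrices $B(u)$ along \eqref{eq:Cmutseq} are periodic up to the relabeling $\boldsymbol{\omega}$: part~(i) gives $B(u+2(h^{\vee}+\ell))=B(u)$, and part~(ii) --- matching $\boldsymbol{\omega}(Q)$ with $Q$ or $\boldsymbol{r}(Q)$ according to the parity of $h^{\vee}+\ell$ --- gives $B(u+h^{\vee}+\ell)=\boldsymbol{\omega}(B(u))$. Next, Theorem~\ref{thm:CtYperiod} supplies the corresponding periodicities for the tropical $y$-variables. Since the $g$-vectors and the $F$-polynomials of the seeds occurring in \eqref{eq:Cmutseq} are determined by the $C$-matrices together with the exchange matrices, and since by the separation formula of Fomin--Zelevinsky \cite{FZ2} each coefficient $y_{\mathbf{i}}(u)\in\mathbb{Q}_{\mathrm{sf}}(y)$ is expressed through the $C$-matrix at $u$, the $F$-polynomials, and $B(u)$, the periodicity of these inputs forces $y_{\mathbf{i}}(u+h^{\vee}+\ell)=y_{\boldsymbol{\omega}(\mathbf{i})}(u)$ as an identity in $\mathbb{Q}_{\mathrm{sf}}(y)$, hence in $\mathcal{G}(B,y)$; applying it twice and using $\boldsymbol{\omega}^2=\mathrm{id}$ yields the full periodicity.

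I do not expect a genuine obstacle at this final stage: the substantive work lies upstream, in Proposition~\ref{prop:Clev2}, its higher-level form Proposition~\ref{prop:Clevh}, and thence Theorem~\ref{thm:CtYperiod}. The only points needing care are bookkeeping ones: verifying that the permutation appearing in the tropical half-periodicity is exactly $\boldsymbol{\omega}$ (which is where Lemma~\ref{lem:CQmut}(ii) enters, and why the statement is phrased with $\boldsymbol{\omega}$ rather than the identity), and checking that the hypotheses of \cite[Theorem~5.1]{IIKKN} are literally in force --- in particular that $B=B_{\ell}(C_r)$ is skew-symmetric, so that Plamondon's categorification applies --- exactly as in the $B_r$ case of \cite{IIKKN}.
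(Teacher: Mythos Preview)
Your proposal is correct and follows exactly the paper's approach: the paper simply states that applying \cite[Theorem~5.1]{IIKKN} to Theorem~\ref{thm:CtYperiod} yields the periodicities, and your elaboration of this step (via the $C$-matrices, $F$-polynomials, separation formula, and Plamondon's categorification, together with the bookkeeping from Lemma~\ref{lem:CQmut}) is precisely what lies behind that one-line reference.
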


Then,
 Theorems \ref{thm:Tperiod} and \ref{thm:Yperiod}
 for $C_r$ follow from
Theorems
\ref{thm:CTiso}, \ref{thm:CYiso},
\ref{thm:Cxperiod}, and \ref{thm:Cyperiod}.
Furthermore, Theorem \ref{thm:DI2}  for $C_r$ is obtained from
the above periodicities  and Theorem \ref{thm:Clevhd} as
in the $B_r$ case \cite[Section 6]{IIKKN}.

\section{Type $F_4$}

The $F_4$ case is quite parallel to the $B_r$ and $C_r$ case.
We do not repeat the same definitions
unless otherwise mentioned.
Again, the properties of the tropical Y-system at level 2
(Proposition \ref{prop:Flev2}) are crucial and
specific to $F_4$.


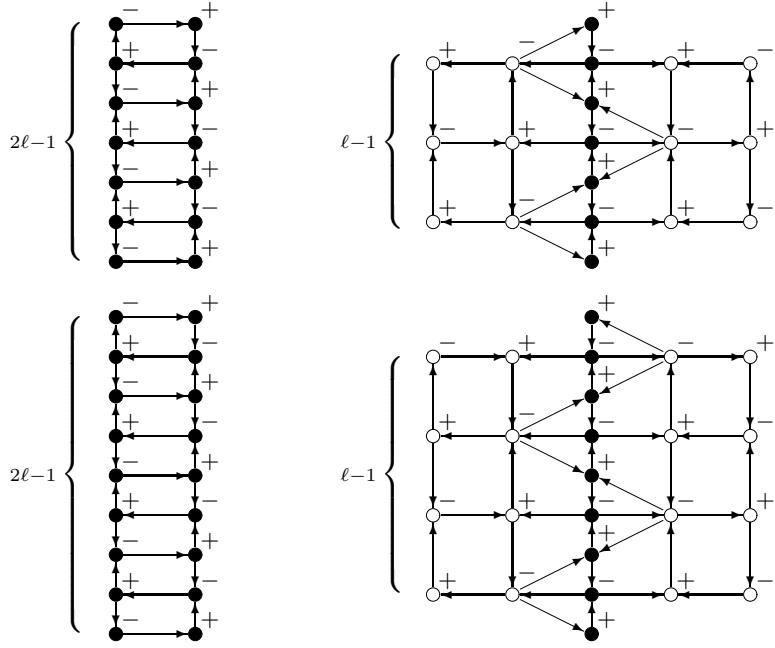
\begin{figure}
\begin{picture}(285,100)(-65,-20)
\put(0,-15)
{
\put(-30,0){\circle*{5}}
\put(0,0){\circle*{5}}
\put(-27,0){\vector(1,0){24}}
}
\put(0,0)
{
\put(-30,0){\circle*{5}}
\put(0,0){\circle*{5}}
\put(90,0){\circle{5}}
\put(120,0){\circle{5}}
\put(150,0){\circle*{5}}
\put(150,15){\circle*{5}}
\put(180,0){\circle{5}}
\put(210,0){\circle{5}}
\put(-3,0){\vector(-1,0){24}}
\put(117,0){\vector(-1,0){24}}
\put(147,0){\vector(-1,0){24}}
\put(153,0){\vector(1,0){24}}
\put(207,0){\vector(-1,0){24}}
}
\put(0,15)
{
\put(-30,0){\circle*{5}}
\put(0,0){\circle*{5}}
\put(-27,0){\vector(1,0){24}}
}
\put(0,30)
{
\put(-30,0){\circle*{5}}
\put(0,0){\circle*{5}}
\put(90,0){\circle{5}}
\put(120,0){\circle{5}}
\put(150,0){\circle*{5}}
\put(150,15){\circle*{5}}
\put(180,0){\circle{5}}
\put(210,0){\circle{5}}
\put(-3,0){\vector(-1,0){24}}
\put(93,0){\vector(1,0){24}}
\put(147,0){\vector(-1,0){24}}
\put(153,0){\vector(1,0){24}}
\put(183,0){\vector(1,0){24}}
}
\put(0,45)
{
\put(-30,0){\circle*{5}}
\put(0,0){\circle*{5}}
\put(-27,0){\vector(1,0){24}}
}
\put(0,60)
{
\put(-30,0){\circle*{5}}
\put(0,0){\circle*{5}}
\put(90,0){\circle{5}}
\put(120,0){\circle{5}}
\put(150,0){\circle*{5}}
\put(180,0){\circle{5}}
\put(210,0){\circle{5}}
\put(150,15){\circle*{5}}
\put(-3,0){\vector(-1,0){24}}
\put(117,0){\vector(-1,0){24}}
\put(147,0){\vector(-1,0){24}}
\put(153,0){\vector(1,0){24}}
\put(207,0){\vector(-1,0){24}}
}
\put(0,75)
{
\put(-30,0){\circle*{5}}
\put(0,0){\circle*{5}}
\put(-27,0){\vector(1,0){24}}
}
\put(150,-15){\circle*{5}}
%
\put(-30,0)
{
\put(0,-3){\vector(0,-1){9}}
\put(0,3){\vector(0,1){9}}
\put(0,27){\vector(0,-1){9}}
\put(0,33){\vector(0,1){9}}
\put(0,57){\vector(0,-1){9}}
\put(0,63){\vector(0,1){9}}
}
\put(0,0)
{
\put(0,-12){\vector(0,1){9}}
\put(0,12){\vector(0,-1){9}}
\put(0,18){\vector(0,1){9}}
\put(0,42){\vector(0,-1){9}}
\put(0,48){\vector(0,1){9}}
\put(0,72){\vector(0,-1){9}}
}
\put(150,0)
{
\put(0,-12){\vector(0,1){9}}
\put(0,12){\vector(0,-1){9}}
\put(0,18){\vector(0,1){9}}
\put(0,42){\vector(0,-1){9}}
\put(0,48){\vector(0,1){9}}
\put(0,72){\vector(0,-1){9}}
}
\put(90,3){\vector(0,1){24}}
\put(90,57){\vector(0,-1){24}}
\put(120,27){\vector(0,-1){24}}
\put(120,33){\vector(0,1){24}}
\put(180,3){\vector(0,1){24}}
\put(180,57){\vector(0,-1){24}}
\put(210,27){\vector(0,-1){24}}
\put(210,33){\vector(0,1){24}}
\put(123,-2){\vector(2,-1){24}}
\put(123,2){\vector(2,1){24}}
\put(123,58){\vector(2,-1){24}}
\put(123,62){\vector(2,1){24}}
\put(177,28){\vector(-2,-1){24}}
\put(177,32){\vector(-2,1){24}}
\put(0,-14)
{
\put(-28,2){\small $ -$}
\put(2,2){\small $+$}
}
\put(0,1)
{
\put(-28,2){\small $ +$}
\put(2,2){\small $-$}
}
\put(0,16)
{
\put(-28,2){\small $ -$}
\put(2,2){\small $+$}
}
\put(0,31)
{
\put(-28,2){\small $ +$}
\put(2,2){\small $-$}
}
\put(0,46)
{
\put(-28,2){\small $ -$}
\put(2,2){\small $+$}
}
\put(0,61)
{
\put(-28,2){\small $ +$}
\put(2,2){\small $-$}
}
\put(0,76)
{
\put(-28,2){\small $ -$}
\put(2,2){\small $+$}
}
\put(0,1)
{
\put(92,2){\small $+$}
\put(92,32){\small $-$}
\put(92,62){\small $+$}
\put(122,5){\small $-$}
\put(122,32){\small $+$}
\put(122,65){\small $-$}
\put(152,-13){\small $+$}
\put(152,2){\small $-$}
\put(152,20){\small $+$}
\put(152,32){\small $-$}
\put(152,47){\small $+$}
\put(152,62){\small $-$}
\put(152,77){\small $+$}
\put(182,2){\small $+$}
\put(182,32){\small $-$}
\put(182,62){\small $+$}
\put(212,2){\small $-$}
\put(212,32){\small $+$}
\put(212,62){\small $-$}
}
\put(55,28){${\scriptstyle\ell -1}\left\{ \makebox(0,37){}\right.$}
\put(-70,28){${\scriptstyle2\ell -1}\left\{ \makebox(0,50){}\right.$}
\end{picture}
%
%
\begin{picture}(285,140)(-65,-20)
\put(0,-15)
{
\put(-30,0){\circle*{5}}
\put(0,0){\circle*{5}}
\put(-27,0){\vector(1,0){24}}
}
\put(0,0)
{
\put(-30,0){\circle*{5}}
\put(0,0){\circle*{5}}
\put(90,0){\circle{5}}
\put(120,0){\circle{5}}
\put(150,0){\circle*{5}}
\put(150,15){\circle*{5}}
\put(180,0){\circle{5}}
\put(210,0){\circle{5}}
\put(-3,0){\vector(-1,0){24}}
\put(117,0){\vector(-1,0){24}}
\put(147,0){\vector(-1,0){24}}
\put(153,0){\vector(1,0){24}}
\put(207,0){\vector(-1,0){24}}
}
\put(0,15)
{
\put(-30,0){\circle*{5}}
\put(0,0){\circle*{5}}
\put(-27,0){\vector(1,0){24}}
}
\put(0,30)
{
\put(-30,0){\circle*{5}}
\put(0,0){\circle*{5}}
\put(90,0){\circle{5}}
\put(120,0){\circle{5}}
\put(150,0){\circle*{5}}
\put(150,15){\circle*{5}}
\put(180,0){\circle{5}}
\put(210,0){\circle{5}}
\put(-3,0){\vector(-1,0){24}}
\put(93,0){\vector(1,0){24}}
\put(147,0){\vector(-1,0){24}}
\put(153,0){\vector(1,0){24}}
\put(183,0){\vector(1,0){24}}
}
\put(0,45)
{
\put(-30,0){\circle*{5}}
\put(0,0){\circle*{5}}
\put(-27,0){\vector(1,0){24}}
}
\put(0,60)
{
\put(-30,0){\circle*{5}}
\put(0,0){\circle*{5}}
\put(90,0){\circle{5}}
\put(120,0){\circle{5}}
\put(150,0){\circle*{5}}
\put(180,0){\circle{5}}
\put(150,15){\circle*{5}}
\put(210,0){\circle{5}}
\put(-3,0){\vector(-1,0){24}}
\put(117,0){\vector(-1,0){24}}
\put(147,0){\vector(-1,0){24}}
\put(153,0){\vector(1,0){24}}
\put(207,0){\vector(-1,0){24}}
}
\put(0,75)
{
\put(-30,0){\circle*{5}}
\put(0,0){\circle*{5}}
\put(-27,0){\vector(1,0){24}}
}
\put(0,90)
{
\put(-30,0){\circle*{5}}
\put(0,0){\circle*{5}}
\put(90,0){\circle{5}}
\put(120,0){\circle{5}}
\put(150,0){\circle*{5}}
\put(180,0){\circle{5}}
\put(210,0){\circle{5}}
\put(150,15){\circle*{5}}
\put(-3,0){\vector(-1,0){24}}
\put(93,0){\vector(1,0){24}}
\put(147,0){\vector(-1,0){24}}
\put(153,0){\vector(1,0){24}}
\put(183,0){\vector(1,0){24}}
}
\put(0,105)
{
\put(-30,0){\circle*{5}}
\put(0,0){\circle*{5}}
\put(-27,0){\vector(1,0){24}}
}
\put(150,-15){\circle*{5}}
%
\put(-30,0)
{
\put(0,-3){\vector(0,-1){9}}
\put(0,3){\vector(0,1){9}}
\put(0,27){\vector(0,-1){9}}
\put(0,33){\vector(0,1){9}}
\put(0,57){\vector(0,-1){9}}
\put(0,63){\vector(0,1){9}}
\put(0,87){\vector(0,-1){9}}
\put(0,93){\vector(0,1){9}}
}
\put(0,0)
{
\put(0,-12){\vector(0,1){9}}
\put(0,12){\vector(0,-1){9}}
\put(0,18){\vector(0,1){9}}
\put(0,42){\vector(0,-1){9}}
\put(0,48){\vector(0,1){9}}
\put(0,72){\vector(0,-1){9}}
\put(0,78){\vector(0,1){9}}
\put(0,102){\vector(0,-1){9}}
}
\put(150,0)
{
\put(0,-12){\vector(0,1){9}}
\put(0,12){\vector(0,-1){9}}
\put(0,18){\vector(0,1){9}}
\put(0,42){\vector(0,-1){9}}
\put(0,48){\vector(0,1){9}}
\put(0,72){\vector(0,-1){9}}
\put(0,78){\vector(0,1){9}}
\put(0,102){\vector(0,-1){9}}
}
\put(90,3){\vector(0,1){24}}
\put(90,57){\vector(0,-1){24}}
\put(90,63){\vector(0,1){24}}
\put(120,27){\vector(0,-1){24}}
\put(120,33){\vector(0,1){24}}
\put(120,87){\vector(0,-1){24}}
\put(180,3){\vector(0,1){24}}
\put(180,57){\vector(0,-1){24}}
\put(180,63){\vector(0,1){24}}
\put(210,27){\vector(0,-1){24}}
\put(210,33){\vector(0,1){24}}
\put(210,87){\vector(0,-1){24}}
\put(123,-2){\vector(2,-1){24}}
\put(123,2){\vector(2,1){24}}
\put(123,58){\vector(2,-1){24}}
\put(123,62){\vector(2,1){24}}
\put(177,28){\vector(-2,-1){24}}
\put(177,32){\vector(-2,1){24}}
\put(177,88){\vector(-2,-1){24}}
\put(177,92){\vector(-2,1){24}}
\put(0,-14)
{
\put(-28,2){\small $ -$}
\put(2,2){\small $+$}
}
\put(0,1)
{
\put(-28,2){\small $ +$}
\put(2,2){\small $-$}
}
\put(0,16)
{
\put(-28,2){\small $ -$}
\put(2,2){\small $+$}
}
\put(0,31)
{
\put(-28,2){\small $ +$}
\put(2,2){\small $-$}
}
\put(0,46)
{
\put(-28,2){\small $ -$}
\put(2,2){\small $+$}
}
\put(0,61)
{
\put(-28,2){\small $ +$}
\put(2,2){\small $-$}
}
\put(0,76)
{
\put(-28,2){\small $ -$}
\put(2,2){\small $+$}
}
\put(0,91)
{
\put(-28,2){\small $ +$}
\put(2,2){\small $-$}
}
\put(0,106)
{
\put(-28,2){\small $ -$}
\put(2,2){\small $+$}
}
\put(0,1)
{
\put(92,2){\small $+$}
\put(92,32){\small $-$}
\put(92,62){\small $+$}
\put(92,92){\small $-$}
\put(122,5){\small $-$}
\put(122,32){\small $+$}
\put(122,65){\small $-$}
\put(122,92){\small $+$}
\put(152,-13){\small $+$}
\put(152,2){\small $-$}
\put(152,20){\small $+$}
\put(152,32){\small $-$}
\put(152,47){\small $+$}
\put(152,62){\small $-$}
\put(152,80){\small $+$}
\put(152,92){\small $-$}
\put(152,107){\small $+$}
\put(182,2){\small $+$}
\put(182,32){\small $-$}
\put(182,62){\small $+$}
\put(182,92){\small $-$}
\put(212,2){\small $-$}
\put(212,32){\small $+$}
\put(212,62){\small $-$}
\put(212,92){\small $+$}
}
\put(55,43){${\scriptstyle\ell -1}\left\{ \makebox(0,50){}\right.$}
\put(-70,43){${\scriptstyle2\ell -1}\left\{ \makebox(0,65){}\right.$}
\end{picture}
\caption{The quiver $Q_{\ell}(F_4)$  for even $\ell$
(upper) and for odd $\ell$ (lower),
where we identify the
right column in the left quiver
with the middle column in the right quiver.}
\label{fig:quiverF}
\end{figure}

\subsection{Parity decompositions of T and Y-systems}

For a triplet $(a,m,u)\in \mathcal{I}_{\ell}$,
we reset the `parity conditions' $\mathbf{P}_{+}$ and
$\mathbf{P}_{-}$ by
\begin{align}
\label{eq:FPcond}
\mathbf{P}_{+}:& \ \mbox{
$2u$ is even if
$a=1,2$; $a+m+2u$ is odd if $a=3,4$},\\
\mathbf{P}_{-}:& \ \mbox{
$2u$ is odd if
$a=1,2$; $a+m+2u$ is even if $a=3,4$}.
\end{align}
Then, we have
$\EuScript{T}^{\circ}_{\ell}(F_4)_+
\simeq
\EuScript{T}^{\circ}_{\ell}(F_4)_-
$
by $T^{(a)}_m(u)\mapsto T^{(a)}_m(u+\frac{1}{2})$ and
\begin{align}
\EuScript{T}^{\circ}_{\ell}(F_4)
\simeq
\EuScript{T}^{\circ}_{\ell}(F_4)_+
\otimes_{\mathbb{Z}}
\EuScript{T}^{\circ}_{\ell}(F_4)_-.
\end{align}

For a triplet $(a,m,u)\in \mathcal{I}_{\ell}$ ,
we reset the parity condition $\mathbf{P}'_{+}$ and
$\mathbf{P}'_{-}$ by
\begin{align}
\label{eq:FPcond2}
\mathbf{P}'_{+}:&\ \mbox{
$2u$ is even if
$a=1,2$;
 $a+m+2u$ is even if $a=3,4$},\\
\mathbf{P}'_{-}:&\ \mbox{
$2u$ is odd if
$a=1,2$;
 $a+m+2u$ is odd if $a=3,4$}.
\end{align}
We have
\begin{align}
(a,m,u):\mathbf{P}'_+
\ \Longleftrightarrow \
\textstyle (a,m,u\pm \frac{1}{t_a}):\mathbf{P}_+.
\end{align}
Also, we have
$\EuScript{Y}^{\circ}_{\ell}(F_4)_+
\simeq
\EuScript{Y}^{\circ}_{\ell}(F_4)_-
$
by $Y^{(a)}_m(u)\mapsto Y^{(a)}_m(u+\frac{1}{2})$,
$1+Y^{(a)}_m(u)\mapsto 1+Y^{(a)}_m(u+\frac{1}{2})$,
 and
\begin{align}
\EuScript{Y}^{\circ}_{\ell}(F_4)
\simeq
\EuScript{Y}^{\circ}_{\ell}(F_4)_+
\times
\EuScript{Y}^{\circ}_{\ell}(F_4)_-.
\end{align}

\subsection{Quiver $Q_{\ell}(F_4)$}

With type $F_4$ and $\ell\geq 2$ we associate
 the quiver $Q_{\ell}(F_4)$
by Figure
\ref{fig:quiverF},
where the right column in the left quiver
and the middle column in the right quiver
are identified.
Also, we assign the empty or filled circle $\circ$/$\bullet$ and
the sign +/$-$ to each vertex.

Let us choose  the index set $\mathbf{I}$
of the vertices of $Q_{\ell}(F_4)$
so that $\mathbf{i}=(i,i')\in \mathbf{I}$ represents
the vertex 
at the $i'$th row (from the bottom)
of the $i$th column (from the left)
in the right quiver for $i=1,2,3$,
the one of the $i-1$th column in the right quiver
for $i=5,6$,
and the one of the left column in the left quiver for $i=4$.
Thus, $i=1,\dots,6$, and $i'=1,\dots,\ell-1$ if $i=1,2,5,6$
and $i'=1,\dots,2\ell-1$ if $i=3,4$.

Let $\boldsymbol{r}$ be the involution acting on $\mathbf{I}$
by the left-right reflection of the right quiver.
Let $\boldsymbol{\omega}$ be the involution acting on $\mathbf{I}$
by the up-down reflection of the left quiver
and the $180^{\circ}$ rotation of the right quiver.

\begin{lemma}
\label{lem:FQmut}
Let $Q=Q_{\ell}(F_4)$.
\par
(i)
We have the same periodic sequence of mutations of quivers
as \eqref{eq:CB2}.
\par
(ii)
 $\boldsymbol{\omega}(Q)=Q$ if $h^{\vee}+\ell$ is even,
and  $\boldsymbol{\omega}(Q)=\boldsymbol{r}(Q)$ if $h^{\vee}+\ell$ is odd.
\end{lemma}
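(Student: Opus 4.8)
The plan is to prove both parts by direct inspection of the explicit quiver $Q_\ell(F_4)$ of Figure~\ref{fig:quiverF}, running exactly the argument already used for type $C_r$ in Lemma~\ref{lem:CQmut} (and, before that, for type $B_r$ in \cite{IIKKN}). Two preliminary observations make the composite mutations \eqref{eq:Cmupm2} meaningful here: within each of $\mathbf{I}^\circ_+,\mathbf{I}^\circ_-,\mathbf{I}^\bullet_+,\mathbf{I}^\bullet_-$ no two vertices of $Q_\ell(F_4)$ are joined by an arrow, so each $\mu^\circ_\pm,\mu^\bullet_\pm$ is a product of mutations at mutually non-adjacent vertices and is order-independent; and every entry of the exchange matrix $B_\ell(F_4)$ lies in $\{-1,0,1\}$, so the mutation rule never produces multiple arrows and each intermediate quiver keeps this property. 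I would also note that in $\mu^\bullet_+\mu^\circ_+$ the two factors commute, since after performing $\mu^\circ_+$ the vertices of $\mathbf{I}^\bullet_+$ are still pairwise non-adjacent.

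For part~(i) I would then check step by step that the four composite mutations of \eqref{eq:CB2} transform $Q_\ell(F_4)$ as claimed: $\mu^\bullet_+\mu^\circ_+$ sends $Q$ to $Q^{\mathrm{op}}$, the next $\mu^\bullet_-$ sends $Q^{\mathrm{op}}$ to the left--right reflection $\boldsymbol{r}(Q)$, then $\mu^\bullet_+\mu^\circ_-$ sends $\boldsymbol{r}(Q)$ to $\boldsymbol{r}(Q)^{\mathrm{op}}$, and a final $\mu^\bullet_-$ returns to $Q$. Away from the columns adjacent to the double bond of $F_4$ the quiver is locally the same as $Q_\ell(B_r)$ or $Q_\ell(C_r)$, so the computation there is already available; what is genuinely new is the local calculation around the branching vertices, where a vertex of a height-$(2\ell-1)$ column is joined to two consecutive vertices of a height-$(\ell-1)$ column, and the mirror situation. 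This must be done separately for even and odd $\ell$ (the two pictures of Figure~\ref{fig:quiverF}), but in each case it is a finite check independent of $\ell$ since the configuration repeats along the columns.

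For part~(ii), recall that $h^{\vee}=9$ for $F_4$, so $h^{\vee}+\ell$ is even exactly when $\ell$ is odd. The involution $\boldsymbol{\omega}$ is the up--down reflection of the left quiver composed with the $180^{\circ}$ rotation of the right quiver; reading its effect off Figure~\ref{fig:quiverF}, for odd $\ell$ it maps each vertex to the one in the same column with reversed row index and preserves every arrow orientation as well as the $\circ/\bullet$ and $+/-$ labels, so $\boldsymbol{\omega}(Q)=Q$; for even $\ell$ the same operation in addition interchanges the outer columns of the right quiver and flips the signs, which is precisely the action of $\boldsymbol{r}$, whence $\boldsymbol{\omega}(Q)=\boldsymbol{r}(Q)$. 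This is the same parity bookkeeping as in Lemma~\ref{lem:CQmut}(ii), specialized to a fixed rank.

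The step I expect to be the main obstacle is the local mutation calculation around the branching vertices in part~(i): one must verify that the arrows created when mutating a tall-column vertex adjacent to two short-column vertices (and vice versa) cancel correctly against the pre-existing arrows, leaving no $2$-cycles and no doubled arrows, so that the output is exactly $Q^{\mathrm{op}}$, $\boldsymbol{r}(Q)$, and so on. Once this finite local check is done for both parities of $\ell$, the rest propagates along the columns just as in the $B_r$ and $C_r$ cases.
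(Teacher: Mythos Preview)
Your overall approach is exactly what the paper intends: like Lemma~\ref{lem:CQmut}, this lemma is stated without proof and is meant to be checked by direct inspection of Figure~\ref{fig:quiverF}, parallel to the $B_r$ case in \cite{IIKKN}. Your remarks about the commutativity of the composite mutations and about the local computation at the branching columns are accurate, and part~(i) goes through as you describe.

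There is, however, a slip in your treatment of part~(ii). For $F_4$ the involution $\boldsymbol{\omega}$ is defined uniformly (independently of $\ell$) as the up--down reflection of the left quiver together with the $180^{\circ}$ rotation of the right quiver. The $180^{\circ}$ rotation always interchanges the outer columns of the right quiver (index $1\leftrightarrow 6$, $2\leftrightarrow 5$) while reversing row indices; it never sends a vertex to one ``in the same column.'' What changes with the parity of $\ell$ is not the action of $\boldsymbol{\omega}$ but whether that action happens to coincide with a symmetry of $Q$ itself or only with a symmetry of $\boldsymbol{r}(Q)$. Concretely, since $h^{\vee}=9$, for odd $\ell$ the sign/arrow pattern of the right quiver in Figure~\ref{fig:quiverF} is invariant under $180^{\circ}$ rotation, giving $\boldsymbol{\omega}(Q)=Q$; for even $\ell$ the $180^{\circ}$ rotation of the right quiver matches $Q$ only after the additional left--right reflection $\boldsymbol{r}$, giving $\boldsymbol{\omega}(Q)=\boldsymbol{r}(Q)$. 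Once you correct the description of $\boldsymbol{\omega}$, the verification is the straightforward bookkeeping you indicate.
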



\begin{figure}
\setlength{\unitlength}{0.94pt}
\begin{picture}(370,370)(-150,-230)
\put(-150,46.5){$x(0)$}
\put(-19,-7.5)
{
\put(0,0){\framebox(38,15){\small $x^{(1)}_1(-1)$}}
\put(50,0){\makebox(38,15){\small $*$}}
\put(150,0){\framebox(38,15){\small $x^{(2)}_1(-1)$}}
\put(200,0){\makebox(38,15){\small $*$}}
\put(0,50){\makebox(38,15){\small $*$}}
\put(50,50){\framebox(38,15){\small $x^{(2)}_2(-1)$}}
\put(150,50){\makebox(38,15){\small $*$}}
\put(200,50){\framebox(38,15){\small $x^{(1)}_2(-1)$}}
\put(0,100){\framebox(38,15){\small $x^{(1)}_3(-1)$}}
\put(50,100){\makebox(38,15){\small $*$}}
\put(150,100){\framebox(38,15){\small $x^{(2)}_3(-1)$}}
\put(200,100){\makebox(38,15){\small $*$}}
\put(100,-25){\framebox(38,15){\small $\diamond$}}
\put(100,0){\dashbox{3}(38,15){\small $\diamond$}}
\put(100,25){\framebox(38,15){\small $\diamond$}}
\put(100,50){\dashbox{3}(38,15){\small $\diamond$}}
\put(100,75){\framebox(38,15){\small $\diamond$}}
\put(100,100){\dashbox{3}(38,15){\small $\diamond$}}
\put(100,125){\framebox(38,15){\small $\diamond$}}
}
%
\put(30,0){$\vector(-1,0){10}$}
\put(80,0){$\vector(-1,0){10}$}
\put(120,0){$\vector(1,0){10}$}
\put(180,0){$\vector(-1,0){10}$}
\put(20,50){$\vector(1,0){10}$}
\put(80,50){$\vector(-1,0){10}$}
\put(120,50){$\vector(1,0){10}$}
\put(170,50){$\vector(1,0){10}$}
\put(30,100){$\vector(-1,0){10}$}
\put(80,100){$\vector(-1,0){10}$}
\put(120,100){$\vector(1,0){10}$}
\put(180,100){$\vector(-1,0){10}$}
\put(0,18){\vector(0,1){14}}
\put(0,82){\vector(0,-1){14}}
\put(50,32){\vector(0,-1){14}}
\put(50,68){\vector(0,1){14}}
\put(100,-17){\vector(0,1){9}}
\put(100,17){\vector(0,-1){9}}
\put(100,33){\vector(0,1){9}}
\put(100,67){\vector(0,-1){9}}
\put(100,83){\vector(0,1){9}}
\put(100,117){\vector(0,-1){9}}
\put(150,18){\vector(0,1){14}}
\put(150,82){\vector(0,-1){14}}
\put(200,32){\vector(0,-1){14}}
\put(200,68){\vector(0,1){14}}
\put(70,-10){\vector(2,-1){10}}
\put(70,10){\vector(2,1){10}}
\put(70,90){\vector(2,-1){10}}
\put(70,110){\vector(2,1){10}}
\put(130,40){\vector(-2,-1){10}}
\put(130,60){\vector(-2,1){10}}

\put(-169,-7.5)
{
\put(50,-25){\dashbox{3}(38,15){\small $*$}}
\put(50,0){\framebox(38,15){\small $x^{(4)}_2(-\frac{1}{2})$}}
\put(50,25){\dashbox{3}(38,15){\small $*$}}
\put(50,50){\framebox(38,15){\small $x^{(4)}_4(-\frac{1}{2})$}}
\put(50,75){\dashbox{3}(38,15){\small $*$}}
\put(50,100){\framebox(38,15){\small $x^{(4)}_6(-\frac{1}{2})$}}
\put(50,125){\dashbox{3}(38,15){\small $*$}}
\put(100,-25){\framebox(38,15){\small $x^{(3)}_1(-\frac{1}{2})$}}
\put(100,0){\dashbox{3}(38,15){\small $*$}}
\put(100,25){\framebox(38,15){\small $x^{(3)}_3(-\frac{1}{2})$}}
\put(100,50){\dashbox{3}(38,15){\small $*$}}
\put(100,75){\framebox(38,15){\small $x^{(3)}_5(-\frac{1}{2})$}}
\put(100,100){\dashbox{3}(38,15){\small $*$}}
\put(100,125){\framebox(38,15){\small $x^{(3)}_7(-\frac{1}{2})$}}
}
\put(-50,-17){\vector(0,1){9}}
\put(-50,17){\vector(0,-1){9}}
\put(-50,33){\vector(0,1){9}}
\put(-50,67){\vector(0,-1){9}}
\put(-50,83){\vector(0,1){9}}
\put(-50,117){\vector(0,-1){9}}
\put(-100,-8){\vector(0,-1){9}}
\put(-100,8){\vector(0,1){9}}
\put(-100,42){\vector(0,-1){9}}
\put(-100,58){\vector(0,1){9}}
\put(-100,92){\vector(0,-1){9}}
\put(-100,108){\vector(0,1){9}}
\put(-80,-25){$\vector(1,0){10}$}
\put(-70,0){$\vector(-1,0){10}$}
\put(-80,25){$\vector(1,0){10}$}
\put(-70,50){$\vector(-1,0){10}$}
\put(-80,75){$\vector(1,0){10}$}
\put(-70,100){$\vector(-1,0){10}$}
\put(-80,125){$\vector(1,0){10}$}
\dottedline(-120,-40)(220,-40)
\put(-130,-42){$\updownarrow$}
\put(-155,-42){$\mu^{\bullet}_+\mu^{\circ}_+$}
\put(0,-180)
{
\put(-150,46.5){$x(\frac{1}{2})$}
\put(-19,-7.5)
{
\put(0,0){\dashbox{3}(38,15){\small $*$}}
\put(50,0){\makebox(38,15){\small $*$}}
\put(150,0){\dashbox{3}(38,15){\small $*$}}
\put(200,0){\makebox(38,15){\small $*$}}
\put(0,50){\makebox(38,15){\small $*$}}
\put(50,50){\dashbox{3}(38,15){\small $*$}}
\put(150,50){\makebox(38,15){\small $*$}}
\put(200,50){\dashbox{3}(38,15){\small $*$}}
\put(0,100){\dashbox{3}(38,15){\small $*$}}
\put(50,100){\makebox(38,15){\small $*$}}
\put(150,100){\dashbox{3}(38,15){\small $*$}}
\put(200,100){\makebox(38,15){\small $*$}}
\put(100,-25){\dashbox{3}(38,15){\small $\diamond$}}
\put(100,0){\framebox(38,15){\small $\diamond$}}
\put(100,25){\dashbox{3}(38,15){\small $\diamond$}}
\put(100,50){\framebox(38,15){\small $\diamond$}}
\put(100,75){\dashbox{3}(38,15){\small $\diamond$}}
\put(100,100){\framebox(38,15){\small $\diamond$}}
\put(100,125){\dashbox{3}(38,15){\small $\diamond$}}
}
%
\put(20,0){$\vector(1,0){10}$}
\put(70,0){$\vector(1,0){10}$}
\put(130,0){$\vector(-1,0){10}$}
\put(170,0){$\vector(1,0){10}$}
\put(30,50){$\vector(-1,0){10}$}
\put(70,50){$\vector(1,0){10}$}
\put(130,50){$\vector(-1,0){10}$}
\put(180,50){$\vector(-1,0){10}$}
\put(20,100){$\vector(1,0){10}$}
\put(70,100){$\vector(1,0){10}$}
\put(130,100){$\vector(-1,0){10}$}
\put(170,100){$\vector(1,0){10}$}
\put(0,32){\vector(0,-1){14}}
\put(0,68){\vector(0,1){14}}
\put(50,18){\vector(0,1){14}}
\put(50,82){\vector(0,-1){14}}
\put(100,-8){\vector(0,-1){9}}
\put(100,8){\vector(0,1){9}}
\put(100,42){\vector(0,-1){9}}
\put(100,58){\vector(0,1){9}}
\put(100,92){\vector(0,-1){9}}
\put(100,108){\vector(0,1){9}}
\put(150,32){\vector(0,-1){14}}
\put(150,68){\vector(0,1){14}}
\put(200,18){\vector(0,1){14}}
\put(200,82){\vector(0,-1){14}}

\put(80,-15){\vector(-2,1){10}}
\put(80,15){\vector(-2,-1){10}}
\put(80,85){\vector(-2,1){10}}
\put(80,115){\vector(-2,-1){10}}
\put(120,35){\vector(2,1){10}}
\put(120,65){\vector(2,-1){10}}
\put(-169,-7.5)
{
\put(50,-25){\framebox(38,15){\small $x^{(4)}_1(0)$}}
\put(50,0){\dashbox{3}(38,15){\small $*$}}
\put(50,25){\framebox(38,15){\small $x^{(4)}_3(0)$}}
\put(50,50){\dashbox{3}(38,15){\small $*$}}
\put(50,75){\framebox(38,15){\small $x^{(4)}_5(0)$}}
\put(50,100){\dashbox{3}(38,15){\small $*$}}
\put(50,125){\framebox(38,15){\small $x^{(4)}_7(0)$}}
\put(100,-25){\dashbox{3}(38,15){\small $*$}}
\put(100,0){\framebox(38,15){\small $x^{(3)}_2(0)$}}
\put(100,25){\dashbox{3}(38,15){\small $*$}}
\put(100,50){\framebox(38,15){\small $x^{(3)}_4(0)$}}
\put(100,75){\dashbox{3}(38,15){\small $*$}}
\put(100,100){\framebox(38,15){\small $x^{(3)}_6(0)$}}
\put(100,125){\dashbox{3}(38,15){\small $*$}}
}
\put(-50,-8){\vector(0,-1){9}}
\put(-50,8){\vector(0,1){9}}
\put(-50,42){\vector(0,-1){9}}
\put(-50,58){\vector(0,1){9}}
\put(-50,92){\vector(0,-1){9}}
\put(-50,108){\vector(0,1){9}}
\put(-100,-17){\vector(0,1){9}}
\put(-100,17){\vector(0,-1){9}}
\put(-100,33){\vector(0,1){9}}
\put(-100,67){\vector(0,-1){9}}
\put(-100,83){\vector(0,1){9}}
\put(-100,117){\vector(0,-1){9}}
\put(-70,-25){$\vector(-1,0){10}$}
\put(-80,0){$\vector(1,0){10}$}
\put(-70,25){$\vector(-1,0){10}$}
\put(-80,50){$\vector(1,0){10}$}
\put(-70,75){$\vector(-1,0){10}$}
\put(-80,100){$\vector(1,0){10}$}
\put(-70,125){$\vector(-1,0){10}$}
\dottedline(-120,-40)(220,-40)
\put(-135,-42){$\updownarrow$}
\put(-150,-42){$\mu^{\bullet}_-$}
}
\end{picture}
\caption{(Continues to Figure \ref{fig:labelxF2})
Label of cluster variables $x_{\mathbf{i}}(u)$
by $\mathcal{I}_{\ell+}$  for
$F_4$, $\ell=4$.
The middle column in the right quiver (marked by $\diamond$)
is identified with the right column in the left quiver.
}
\label{fig:labelxF1}
\end{figure}
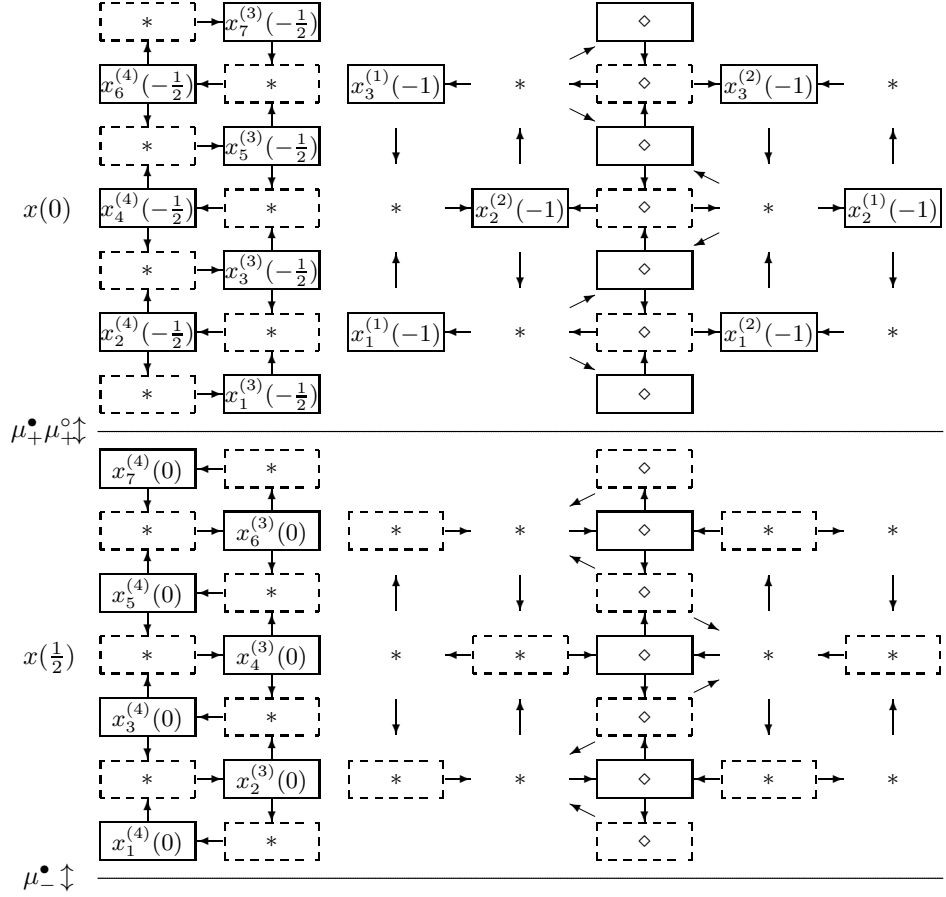

\begin{figure}
\setlength{\unitlength}{0.94pt}
\begin{picture}(370,370)(-150,-230)
\put(-150,46.5){$x(1)$}
\put(-19,-7.5)
{
\put(0,0){\makebox(38,15){\small $*$}}
\put(50,0){\framebox(38,15){\small $x^{(2)}_1(0)$}}
\put(150,0){\makebox(38,15){\small $*$}}
\put(200,0){\framebox(38,15){\small $x^{(1)}_1(0)$}}
\put(0,50){\framebox(38,15){\small $x^{(1)}_2(0)$}}
\put(50,50){\makebox(38,15){\small $*$}}
\put(150,50){\framebox(38,15){\small $x^{(2)}_2(0)$}}
\put(200,50){\makebox(38,15){\small $*$}}
\put(0,100){\makebox(38,15){\small $*$}}
\put(50,100){\framebox(38,15){\small $x^{(2)}_3(0)$}}
\put(150,100){\makebox(38,15){\small $*$}}
\put(200,100){\framebox(38,15){\small $x^{(1)}_3(0)$}}
\put(100,-25){\framebox(38,15){\small $\diamond$}}
\put(100,0){\dashbox{3}(38,15){\small $\diamond$}}
\put(100,25){\framebox(38,15){\small $\diamond$}}
\put(100,50){\dashbox{3}(38,15){\small $\diamond$}}
\put(100,75){\framebox(38,15){\small $\diamond$}}
\put(100,100){\dashbox{3}(38,15){\small $\diamond$}}
\put(100,125){\framebox(38,15){\small $\diamond$}}
}
%
\put(20,0){$\vector(1,0){10}$}
\put(80,0){$\vector(-1,0){10}$}
\put(120,0){$\vector(1,0){10}$}
\put(170,0){$\vector(1,0){10}$}
\put(30,50){$\vector(-1,0){10}$}
\put(80,50){$\vector(-1,0){10}$}
\put(120,50){$\vector(1,0){10}$}
\put(180,50){$\vector(-1,0){10}$}
\put(20,100){$\vector(1,0){10}$}
\put(80,100){$\vector(-1,0){10}$}
\put(120,100){$\vector(1,0){10}$}
\put(170,100){$\vector(1,0){10}$}
\put(0,32){\vector(0,-1){14}}
\put(0,68){\vector(0,1){14}}
\put(50,18){\vector(0,1){14}}
\put(50,82){\vector(0,-1){14}}
\put(100,-17){\vector(0,1){9}}
\put(100,17){\vector(0,-1){9}}
\put(100,33){\vector(0,1){9}}
\put(100,67){\vector(0,-1){9}}
\put(100,83){\vector(0,1){9}}
\put(100,117){\vector(0,-1){9}}
\put(150,32){\vector(0,-1){14}}
\put(150,68){\vector(0,1){14}}
\put(200,18){\vector(0,1){14}}
\put(200,82){\vector(0,-1){14}}
\put(130,-10){\vector(-2,-1){10}}
\put(130,10){\vector(-2,1){10}}
\put(130,90){\vector(-2,-1){10}}
\put(130,110){\vector(-2,1){10}}
\put(70,40){\vector(2,-1){10}}
\put(70,60){\vector(2,1){10}}

\put(-169,-7.5)
{
\put(50,-25){\dashbox{3}(38,15){\small $*$}}
\put(50,0){\framebox(38,15){\small $x^{(4)}_2(\frac{1}{2})$}}
\put(50,25){\dashbox{3}(38,15){\small $*$}}
\put(50,50){\framebox(38,15){\small $x^{(4)}_4(\frac{1}{2})$}}
\put(50,75){\dashbox{3}(38,15){\small $*$}}
\put(50,100){\framebox(38,15){\small $x^{(4)}_6(\frac{1}{2})$}}
\put(50,125){\dashbox{3}(38,15){\small $*$}}
\put(100,-25){\framebox(38,15){\small $x^{(3)}_1(\frac{1}{2})$}}
\put(100,0){\dashbox{3}(38,15){\small $*$}}
\put(100,25){\framebox(38,15){\small $x^{(3)}_3(\frac{1}{2})$}}
\put(100,50){\dashbox{3}(38,15){\small $*$}}
\put(100,75){\framebox(38,15){\small $x^{(3)}_5(\frac{1}{2})$}}
\put(100,100){\dashbox{3}(38,15){\small $*$}}
\put(100,125){\framebox(38,15){\small $x^{(3)}_7(\frac{1}{2})$}}
}
\put(-50,-17){\vector(0,1){9}}
\put(-50,17){\vector(0,-1){9}}
\put(-50,33){\vector(0,1){9}}
\put(-50,67){\vector(0,-1){9}}
\put(-50,83){\vector(0,1){9}}
\put(-50,117){\vector(0,-1){9}}
\put(-100,-8){\vector(0,-1){9}}
\put(-100,8){\vector(0,1){9}}
\put(-100,42){\vector(0,-1){9}}
\put(-100,58){\vector(0,1){9}}
\put(-100,92){\vector(0,-1){9}}
\put(-100,108){\vector(0,1){9}}
\put(-80,-25){$\vector(1,0){10}$}
\put(-70,0){$\vector(-1,0){10}$}
\put(-80,25){$\vector(1,0){10}$}
\put(-70,50){$\vector(-1,0){10}$}
\put(-80,75){$\vector(1,0){10}$}
\put(-70,100){$\vector(-1,0){10}$}
\put(-80,125){$\vector(1,0){10}$}
\dottedline(-120,-40)(220,-40)
\put(-130,-42){$\updownarrow$}
\put(-155,-42){$\mu^{\bullet}_+\mu^{\circ}_-$}
\put(0,-180)
{
\put(-150,46.5){$x(\frac{3}{2})$}
\put(-19,-7.5)
{
\put(0,0){\makebox(38,15){\small $*$}}
\put(50,0){\dashbox{3}(38,15){\small $*$}}
\put(150,0){\makebox(38,15){\small $*$}}
\put(200,0){\dashbox{3}(38,15){\small $*$}}
\put(0,50){\dashbox{3}(38,15){\small $*$}}
\put(50,50){\makebox(38,15){\small $*$}}
\put(150,50){\dashbox{3}(38,15){\small $*$}}
\put(200,50){\makebox(38,15){\small $*$}}
\put(0,100){\makebox(38,15){\small $*$}}
\put(50,100){\dashbox{3}(38,15){\small $*$}}
\put(150,100){\makebox(38,15){\small $*$}}
\put(200,100){\dashbox{3}(38,15){\small $*$}}
\put(100,-25){\dashbox{3}(38,15){\small $\diamond$}}
\put(100,0){\framebox(38,15){\small $\diamond$}}
\put(100,25){\dashbox{3}(38,15){\small $\diamond$}}
\put(100,50){\framebox(38,15){\small $\diamond$}}
\put(100,75){\dashbox{3}(38,15){\small $\diamond$}}
\put(100,100){\framebox(38,15){\small $\diamond$}}
\put(100,125){\dashbox{3}(38,15){\small $\diamond$}}
}
%
\put(30,0){$\vector(-1,0){10}$}
\put(70,0){$\vector(1,0){10}$}
\put(130,0){$\vector(-1,0){10}$}
\put(180,0){$\vector(-1,0){10}$}
\put(20,50){$\vector(1,0){10}$}
\put(70,50){$\vector(1,0){10}$}
\put(130,50){$\vector(-1,0){10}$}
\put(170,50){$\vector(1,0){10}$}
\put(30,100){$\vector(-1,0){10}$}
\put(70,100){$\vector(1,0){10}$}
\put(130,100){$\vector(-1,0){10}$}
\put(180,100){$\vector(-1,0){10}$}
\put(0,18){\vector(0,1){14}}
\put(0,82){\vector(0,-1){14}}
\put(50,32){\vector(0,-1){14}}
\put(50,68){\vector(0,1){14}}
\put(100,-8){\vector(0,-1){9}}
\put(100,8){\vector(0,1){9}}
\put(100,42){\vector(0,-1){9}}
\put(100,58){\vector(0,1){9}}
\put(100,92){\vector(0,-1){9}}
\put(100,108){\vector(0,1){9}}
\put(150,18){\vector(0,1){14}}
\put(150,82){\vector(0,-1){14}}
\put(200,32){\vector(0,-1){14}}
\put(200,68){\vector(0,1){14}}

\put(120,-15){\vector(2,1){10}}
\put(120,15){\vector(2,-1){10}}
\put(120,85){\vector(2,1){10}}
\put(120,115){\vector(2,-1){10}}
\put(80,35){\vector(-2,1){10}}
\put(80,65){\vector(-2,-1){10}}
\put(-169,-7.5)
{
\put(50,-25){\framebox(38,15){\small $x^{(4)}_1(1)$}}
\put(50,0){\dashbox{3}(38,15){\small $*$}}
\put(50,25){\framebox(38,15){\small $x^{(4)}_3(1)$}}
\put(50,50){\dashbox{3}(38,15){\small $*$}}
\put(50,75){\framebox(38,15){\small $x^{(4)}_5(1)$}}
\put(50,100){\dashbox{3}(38,15){\small $*$}}
\put(50,125){\framebox(38,15){\small $x^{(4)}_7(1)$}}
\put(100,-25){\dashbox{3}(38,15){\small $*$}}
\put(100,0){\framebox(38,15){\small $x^{(3)}_2(1)$}}
\put(100,25){\dashbox{3}(38,15){\small $*$}}
\put(100,50){\framebox(38,15){\small $x^{(3)}_4(1)$}}
\put(100,75){\dashbox{3}(38,15){\small $*$}}
\put(100,100){\framebox(38,15){\small $x^{(3)}_6(1)$}}
\put(100,125){\dashbox{3}(38,15){\small $*$}}
}
\put(-50,-8){\vector(0,-1){9}}
\put(-50,8){\vector(0,1){9}}
\put(-50,42){\vector(0,-1){9}}
\put(-50,58){\vector(0,1){9}}
\put(-50,92){\vector(0,-1){9}}
\put(-50,108){\vector(0,1){9}}
\put(-100,-17){\vector(0,1){9}}
\put(-100,17){\vector(0,-1){9}}
\put(-100,33){\vector(0,1){9}}
\put(-100,67){\vector(0,-1){9}}
\put(-100,83){\vector(0,1){9}}
\put(-100,117){\vector(0,-1){9}}
\put(-70,-25){$\vector(-1,0){10}$}
\put(-80,0){$\vector(1,0){10}$}
\put(-70,25){$\vector(-1,0){10}$}
\put(-80,50){$\vector(1,0){10}$}
\put(-70,75){$\vector(-1,0){10}$}
\put(-80,100){$\vector(1,0){10}$}
\put(-70,125){$\vector(-1,0){10}$}
\dottedline(-120,-40)(220,-40)
\put(-135,-42){$\updownarrow$}
\put(-150,-42){$\mu^{\bullet}_-$}
}
\end{picture}
\caption{(Continues from Figure \ref{fig:labelxF1}).
}
\label{fig:labelxF2}
\end{figure}

\subsection{Cluster algebra and alternative labels}

Let $B_{\ell}(F_4)$
be the corresponding skew-symmetric matrix 
to the quiver $Q_{\ell}(F_4)$.
In the rest of the section,
we set the matrix $B=(B_{\mathbf{i}\mathbf{j}})_{\mathbf{i},
\mathbf{j}\in \mathbf{I}}=B_{\ell}(F_4)$
unless otherwise mentioned.

Let $\mathcal{A}(B,x,y)$ 
be the cluster algebra
 with coefficients
in the universal
semifield
$\mathbb{Q}_{\mathrm{sf}}(y)$,
and the  coefficient group $\mathcal{G}(B,y)$
associated with $\mathcal{A}(B,x,y)$.

In view of Lemma \ref{lem:FQmut}
we set $x(0)=x$, $y(0)=y$ and define 
clusters $x(u)=(x_{\mathbf{i}}(u))_{\mathbf{i}\in \mathbf{I}}$
 ($u\in \frac{1}{2}\mathbb{Z}$)
 and coefficient tuples $y(u)=(y_\mathbf{i}(u))_{\mathbf{i}\in \mathbf{I}}$
 ($u\in \frac{1}{2}\mathbb{Z}$)
by the sequence of mutations  \eqref{eq:Cmutseq}.

For a pair $(\mathbf{i},u)\in
 \mathbf{I}\times \frac{1}{2}\mathbb{Z}$,
we set the same parity condition $\mathbf{p}_+$
and $\mathbf{p}_-$ as \eqref{eq:Cpp}.
We have \eqref{eq:Cppm},
and each $(\mathbf{i},u):\mathbf{p}_+$
is a mutation point of \eqref{eq:Cmutseq} in the forward
direction of $u$,
and each  $(\mathbf{i},u):\mathbf{p}_-$
is so in the backward direction of $u$ as before.

\begin{lemma}
 Below $\equiv$ means the equivalence modulo
$2\mathbb{Z}$.
\par
(i)
The map
\begin{align}
\begin{matrix}
g: &\mathcal{I}_{\ell+}&\rightarrow & \{ (\mathbf{i},u): \mathbf{p}_+
 \}\hfill \\
&(a,m,u-\frac{1}{t_a})&\mapsto &
\begin{cases}
((a,m),u)& \mbox{\rm $a=1,2$;
$a+m+u\equiv 0$}\\
((7-a,m),u)& \mbox{\rm  $a=1,2$;
$a+m+u\equiv 1$}\\
((a,m),u)& \mbox{\rm  $a=3,4$}\\
\end{cases}
\end{matrix}
\end{align}
is a bijection.
\par
(ii)
The map
\begin{align}
\begin{matrix}
g': &\mathcal{I}'_{\ell+}&\rightarrow & \{ (\mathbf{i},u): \mathbf{p}_+
\}\hfill \\
&(a,m,u)&\mapsto &
\begin{cases}
((a,m),u)& \mbox{\rm $a=1,2$;
$a+m+u\equiv 0$}\\
((7-a,m),u)& \mbox{\rm  $a=1,2$;
$a+m+u\equiv 1$}\\
((a,m),u)& a=3,4\\
\end{cases}
\end{matrix}
\end{align}
is a bijection.
\end{lemma}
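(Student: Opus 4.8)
The argument is the $F_4$ analogue of the corresponding lemma for $B_r$ in \cite{IIKKN}, and the plan is to follow the same strategy, reducing everything to two combinatorial facts about $Q_{\ell}(F_4)$ that one reads off Figure \ref{fig:quiverF} (verifying both parities of $\ell$): (a) a vertex $(i,i')$ is $\bullet$ iff $i\in\{3,4\}$ and $\circ$ iff $i\in\{1,2,5,6\}$, so that $\mathbf{I}^{\bullet}$ is the union of the two ``short-root'' columns, with $2\ell-1$ rows each, and $\mathbf{I}^{\circ}$ the union of the four ``long-root'' columns, with $\ell-1$ rows each; and (b) the sign attached to $(i,i')$ is $+$ iff $i+i'$ is even.

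Granting (a) and (b), I would first rewrite the parity condition $\mathbf{p}_+$ of \eqref{eq:Cpp} in the present setting: for $i\in\{1,2,5,6\}$ one has $((i,i'),u):\mathbf{p}_+$ iff $u\in\mathbb{Z}$ and $i+i'+u$ is even, while for $i\in\{3,4\}$ one has $((i,i'),u):\mathbf{p}_+$ iff $i+i'+2u$ is even. Then I would check that $g$ is well defined, i.e.\ that its value always satisfies $\mathbf{p}_+$, by going through the defining cases. For $a\in\{3,4\}$ the output is $((a,m),v+\tfrac12)$ with $v$ the third entry of the input, so $2(v+\tfrac12)=2v+1$ and $a+m+2(v+\tfrac12)\equiv(a+m+2v)+1$, which is even precisely because $(a,m,v):\mathbf{P}_+$ forces $a+m+2v$ odd by \eqref{eq:FPcond}. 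For $a\in\{1,2\}$ the output $u$-coordinate is $v+1\in\mathbb{Z}$; in the branch $a+m+u\equiv 0$ the output is $((a,m),u)$ with $i+i'+u$ even by hypothesis, and in the branch $a+m+u\equiv 1$ the output is $((7-a,m),u)$ with $(7-a)+m+u\equiv(a+m+u)+1\equiv 0$, using $7-a\equiv a+1\pmod 2$. Bijectivity I would then obtain by writing down the inverse explicitly: send $((i,i'),u):\mathbf{p}_+$ to $(i,i',u-1)$ if $i\in\{1,2\}$, to $(7-i,i',u-1)$ if $i\in\{5,6\}$, and to $(i,i',u-\tfrac12)$ if $i\in\{3,4\}$; the admissible row ranges match by (a), membership of the result in $\mathcal{I}_{\ell+}$ follows from the reformulations of $\mathbf{p}_+$ and $\mathbf{P}_+$ above, and these same reformulations tell us which branch of $g$ is triggered, so that $g$ and this assignment are mutually inverse.

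For part (ii) I would deduce the statement for $g'$ from part (i) rather than redo the case analysis. The equivalence $(a,m,u):\mathbf{P}'_+\Leftrightarrow(a,m,u\pm\tfrac{1}{t_a}):\mathbf{P}_+$ from \eqref{eq:FPcond2} shows that $\phi\colon\mathcal{I}'_{\ell+}\to\mathcal{I}_{\ell+}$, $(a,m,u)\mapsto(a,m,u-\tfrac{1}{t_a})$, is a well-defined bijection, with inverse $(a,m,v)\mapsto(a,m,v+\tfrac{1}{t_a})$, and a direct comparison of the two displayed formulas shows $g'=g\circ\phi$; hence $g'$ is a bijection onto the image of $g$, which by part (i) is $\{(\mathbf{i},u):\mathbf{p}_+\}$.

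The only point I expect to require real care is the folded branch $a\in\{1,2\}$, $a+m+u\equiv 1$: one must check that the parity shift caused by $a\mapsto 7-a$ is compensated exactly by the sign-flip across the identification of the two copies of column $i=3$ in Figure \ref{fig:quiverF}. Apart from that, the proof is a routine unwinding of the definitions, entirely parallel to the $B_r$ case.
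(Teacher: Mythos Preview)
Your proof is correct and follows exactly the direct-verification approach the paper intends (the paper omits the proof, referring to the parallel $B_r$ case in \cite{IIKKN}). Your reading of Figure~\ref{fig:quiverF} into facts (a) and (b), the resulting reformulation of $\mathbf{p}_+$, the well-definedness check, the explicit inverse, and the deduction of (ii) from (i) via $g'=g\circ\phi$ are all sound. The only imprecision is the closing expository remark about ``the identification of the two copies of column $i=3$'': there is a single column $i=3$, and the parity compensation in the folded branch $a\mapsto 7-a$ is entirely about the sign pattern on columns $1,2$ versus $5,6$ (namely $7-a\equiv a+1\bmod 2$), which you already handled correctly earlier.
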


We introduce alternative labels
$x_{\mathbf{i}}(u)=x^{(a)}_m(u-1/t_a)$
($(a,m,u-1/t_a)\in \mathcal{I}_{\ell+}$)
for $(\mathbf{i},u)=g((a,m,u-1/t_a))$
and
$y_{\mathbf{i}}(u)=y^{(a)}_m(u)$
($(a,m,u)\in \mathcal{I}'_{\ell+}$)
for $(\mathbf{i},u)=g'((a,m,u))$, respectively.
See Figures \ref{fig:labelxF1}--\ref{fig:labelxF2}.

\subsection{T-system and cluster algebra}

The result in this subsection is completely parallel
to the $B_r$ and $C_r$ cases.

\begin{lemma}
\label{lem:Fx2}
The family $\{x^{(a)}_m(u)
\mid (a,m,u)\in \mathcal{I}_{\ell+}\}$
satisfies the system of relations
 \eqref{eq:Cx2}
 with $G(b,k,v;a,m,u)$
for $\mathbb{T}_{\ell}(F_4)$.
In particular,
the family $\{ [x^{(a)}_m(u)]_{\mathbf{1}}
\mid (a,m,u)\in \mathcal{I}_{\ell+}\}$
satisfies the T-system $\mathbb{T}_{\ell}(F_4)$
in $\mathcal{A}(B,x)$
by replacing $T^{(a)}_m(u)$ with $[x^{(a)}_m(u)]_{\mathbf{1}}$.
\end{lemma}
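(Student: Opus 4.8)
The plan is to proceed exactly as in the $B_r$ and $C_r$ cases, reading the relations \eqref{eq:Cx2} off the mutation sequence \eqref{eq:Cmutseq} for $B=B_{\ell}(F_4)$ and then specializing the coefficients. Recall that for a cluster algebra with coefficients in the universal semifield, mutation at a vertex $\mathbf{k}$ of a seed $(B,x,y)$ produces a new cluster variable with
\begin{align*}
x_{\mathbf{k}}\,x_{\mathbf{k}}'
=\frac{y_{\mathbf{k}}}{1+y_{\mathbf{k}}}\prod_{\mathbf{j}}x_{\mathbf{j}}^{[B_{\mathbf{j}\mathbf{k}}]_+}
+\frac{1}{1+y_{\mathbf{k}}}\prod_{\mathbf{j}}x_{\mathbf{j}}^{[-B_{\mathbf{j}\mathbf{k}}]_+},
\qquad [b]_+=\max(b,0)
\end{align*}
(see \cite{FZ2} and \cite[Section 2.1]{IIKKN}). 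Here $G(b,k,v;a,m,u)$ for $F_4$ is the exponent matrix read off \eqref{eq:TF1} through \eqref{eq:Tu}. So it suffices to check, for every mutation point $(\mathbf{i},u):\mathbf{p}_+$ with $(\mathbf{i},u)=g'((a,m,u))$, that in the relevant seed along \eqref{eq:Cmutseq} --- which is one of $(B,\cdot)$, $(-B,\cdot)$, $(\boldsymbol{r}(B),\cdot)$, $(-\boldsymbol{r}(B),\cdot)$ according to $u \bmod 2\mathbb{Z}$ --- the $\mathbf{i}$-th column of the exchange matrix has positive part $\prod_{(b,k,v)\in\mathcal{I}_{\ell+}}x^{(b)}_k(v)^{G(b,k,v;a,m,u)}$ and negative part $x^{(a)}_{m-1}(u)x^{(a)}_{m+1}(u)$ (or the two interchanged), with attached coefficient $y_{\mathbf{i}}(u)=y^{(a)}_m(u)$.

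First I would record, using Lemma \ref{lem:FQmut}, the cluster variable sitting at each vertex at each half-integer time; this is precisely the content of the labels in Figures \ref{fig:labelxF1}--\ref{fig:labelxF2}, organized by the bijections $g$, $g'$ of the preceding lemma. Then I would go through the five families of relations in \eqref{eq:TF1} one at a time. For the two long-root relations ($a=1,2$) and the relations for $T^{(3)}_{2m+1}$ and $T^{(4)}_m$ the local picture around the mutated vertex is the same as in type $B_r$, so the verification carries over. The only genuinely new local configurations are the ones producing the weight-two monomials $T^{(3)}_{2m}\!\left(u-\tfrac12\right)T^{(3)}_{2m}\!\left(u+\tfrac12\right)$ in the $T^{(2)}_m$-relation and $T^{(2)}_m\!\left(u-\tfrac12\right)T^{(2)}_m\!\left(u+\tfrac12\right)$ in the $T^{(3)}_{2m}$-relation; these come from the branch region of $Q_{\ell}(F_4)$ (the column through the vertex at $(150,15)$ and the double arrows into it in Figure \ref{fig:quiverF}), and one checks that after the composite mutation $\mu^{\bullet}_+\mu^{\circ}_+$ (resp.\ $\mu^{\bullet}_-$) the vertex in question acquires exactly two incoming (resp.\ outgoing) arrows from the appropriate $*$/$\diamond$ vertices at the two adjacent times, which yields the squared factor.

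Finally, applying the trivial evaluation $\pi_{\mathbf{1}}$ (so $y_{\mathbf{i}}(u)\mapsto 1$ and $1+y_{\mathbf{i}}(u)\mapsto 1$ in the trivial semifield) collapses \eqref{eq:Cx2} to
\begin{align*}
[x^{(a)}_m(u-\tfrac1{t_a})]_{\mathbf{1}}\,[x^{(a)}_m(u+\tfrac1{t_a})]_{\mathbf{1}}
=\prod_{(b,k,v)}[x^{(b)}_k(v)]_{\mathbf{1}}^{G(b,k,v;a,m,u)}
+[x^{(a)}_{m-1}(u)]_{\mathbf{1}}\,[x^{(a)}_{m+1}(u)]_{\mathbf{1}},
\end{align*}
which is exactly $\mathbb{T}_{\ell}(F_4)$ under $T^{(a)}_m(u)\mapsto[x^{(a)}_m(u)]_{\mathbf{1}}$, using that $T^{(0)}_m=T^{(a)}_0=T^{(a)}_{t_a\ell}=1$ matches the boundary behavior of the labels in Figures \ref{fig:labelxF1}--\ref{fig:labelxF2}. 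The main obstacle is purely bookkeeping: tracking the arrow directions --- the sign of $B$ versus $-B$ versus $\pm\boldsymbol{r}(B)$ --- consistently through the four-step periodic sequence, and confirming that the composite mutations never create a spurious arrow between two active vertices that would corrupt an exchange relation; the decoration of $Q_{\ell}(F_4)$ by $\circ/\bullet$ and $\pm$ together with the parity conditions $\mathbf{p}_\pm$ and \eqref{eq:Cppm} is exactly what guarantees this, just as for $B_r$ and $C_r$.
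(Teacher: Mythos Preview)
Your overall strategy---read each exchange relation off the mutation sequence \eqref{eq:Cmutseq} and match the incoming/outgoing monomials at a mutated vertex with the two terms of \eqref{eq:Cx2}---is correct and is precisely what the paper intends by ``completely parallel to the $B_r$ and $C_r$ cases'' (deferring to \cite{IIKKN}). The trivial-evaluation step at the end is also fine.

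However, your account of the $F_4$-specific local structure is inaccurate in two places, and these would derail the verification if carried through as written. First, the $T^{(2)}_m$-relation in \eqref{eq:TF1} does \emph{not} contain a factor $T^{(3)}_{2m}(u-\tfrac12)T^{(3)}_{2m}(u+\tfrac12)$; its second term is simply $T^{(1)}_m(u)\,T^{(3)}_{2m}(u)$, with a single $T^{(3)}$ factor at a single time. Only the $T^{(3)}_{2m}$-relation carries the two-time product $T^{(2)}_m(u-\tfrac12)T^{(2)}_m(u+\tfrac12)$. Second, there are no double arrows in $Q_\ell(F_4)$: the paper states that all $B_{\mathbf{i}\mathbf{j}}\in\{-1,0,1\}$. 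The product $T^{(2)}_m(u-\tfrac12)T^{(2)}_m(u+\tfrac12)$ does not come from a multiplicity-two edge but from the fact that these two cluster variables sit at two \emph{distinct} $\circ$-vertices of the right quiver (in columns $i=2$ and $i=5$ under the chosen indexing), each joined to the mutated $\bullet$-vertex in column $3$ by a single arrow; this is exactly what Figures \ref{fig:labelxF1}--\ref{fig:labelxF2} display. Once you correct these two points, the case-by-case check goes through as you outlined.
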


The T-subalgebra $\mathcal{A}_T(B,x)$ is defined as
Definition \ref{def:Tsub}.

\begin{theorem}
\label{thm:FTiso}
The ring $\EuScript{T}^{\circ}_{\ell}(F_4)_+$ is isomorphic to
$\mathcal{A}_T(B,x)$ by the correspondence
$T^{(a)}_m(u)\mapsto [x^{(a)}_m(u)]_{\mathbf{1}}$.
\end{theorem}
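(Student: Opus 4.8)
The plan is to follow the $B_r$ argument of \cite{IIKKN} verbatim, exactly as in the proof of Theorem \ref{thm:CTiso} for $C_r$; I describe the steps.

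First I would construct the homomorphism. By Lemma \ref{lem:Fx2} the family $\{[x^{(a)}_m(u)]_{\mathbf 1}\}$ satisfies $\mathbb{T}_\ell(F_4)$, so $T^{(a)}_m(u)\mapsto [x^{(a)}_m(u)]_{\mathbf 1}$ extends to a ring homomorphism $\phi\colon \EuScript{T}^\circ_\ell(F_4)_+\to\mathcal A_T(B,x)$. Using the bijection $g$ of the previous lemma, the reindexed family $\{x^{(a)}_m(u)\mid(a,m,u)\in\mathcal I_{\ell+}\}$ coincides with the full family $\{x_{\mathbf i}(u)\mid(\mathbf i,u)\in\mathbf I\times\frac12\mathbb Z\}$ of cluster variables produced along \eqref{eq:Cmutseq}; since the latter generate $\mathcal A_T(B,x)$ (Definition \ref{def:Tsub}), $\phi$ is surjective.

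The substance is injectivity, which I would obtain by passing to localizations. On the cluster side, $\mathcal A_T(B,x)\subseteq\mathcal A(B,x)\subseteq\mathbb Z[x_{\mathbf i}^{\pm1}\mid\mathbf i\in\mathbf I]$ by the Laurent phenomenon, and since the initial cluster $x=x(0)$ occurs along \eqref{eq:Cmutseq}, localizing $\mathcal A_T(B,x)$ at all the elements $[x_{\mathbf i}(u)]_{\mathbf 1}$ gives exactly $\mathbb Z[x_{\mathbf i}^{\pm1}\mid\mathbf i\in\mathbf I]$. On the T side, inside the localization $\EuScript{T}_\ell(F_4)_+\supseteq\EuScript{T}^\circ_\ell(F_4)_+$ the relations \eqref{eq:TF1} can be solved for $T^{(a)}_m(u\pm\frac1{t_a})$, so running the recursion forward and backward in $u$ one expresses every $T^{(a)}_m(u)$ in terms of the T-variables attached, via $g$, to a single cluster of \eqref{eq:Cmutseq}; these sit at staggered values of $u$ and thus form admissible initial data for the second-order system $\mathbb{T}_\ell(F_4)_+$. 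I would then check, as in \cite{IIKKN}, that this initial data is free --- i.e.\ a generic solution of $\mathbb{T}_\ell(F_4)_+$ exists over the corresponding Laurent polynomial ring --- so that $\EuScript{T}_\ell(F_4)_+$ is a Laurent polynomial ring in $|\mathbf I|$ indeterminates. Under $\phi$ the two systems of free generators match via $g$, whence $\phi$ extends to an isomorphism $\EuScript{T}_\ell(F_4)_+\to\mathbb Z[x_{\mathbf i}^{\pm1}]$; restricting to the subring $\EuScript{T}^\circ_\ell(F_4)_+$ yields injectivity and the theorem.

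The hardest step will be the freeness statement in the previous paragraph: that one cluster's worth of T-variables furnishes free initial data for $\mathbb{T}_\ell(F_4)_+$ with no hidden relation. This is where the specific shape of $Q_\ell(F_4)$ (Figure \ref{fig:quiverF}) and of the $F_4$ T-system enter --- in particular the coupling of the long nodes $a=1,2$ with the short nodes $a=3,4$ through the terms $T^{(1)}_mT^{(3)}_{2m}$ and $T^{(2)}_m(u\mp\frac12)T^{(4)}_{2m}$ in \eqref{eq:TF1} --- but the verification is formally identical to the $B_r$ and $C_r$ cases, so I expect no new idea is needed beyond careful bookkeeping.
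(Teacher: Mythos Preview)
Your proposal is correct and follows precisely the approach the paper intends: the paper gives no separate proof for $F_4$ but declares the result ``completely parallel to the $B_r$ and $C_r$ cases,'' i.e.\ the argument of \cite{IIKKN} that you have sketched (homomorphism via Lemma~\ref{lem:Fx2}, surjectivity from the bijection $g$, and injectivity by showing that one cluster's worth of $T$-variables are free initial data so that $\EuScript{T}_\ell(F_4)_+$ is a Laurent polynomial ring matching $\mathbb{Z}[x_{\mathbf i}^{\pm1}]$). One small wording fix: the labeled family $\{x^{(a)}_m(u)\}$ is literally in bijection only with the mutation points $\{(\mathbf i,u):\mathbf p_+\}$, not all of $\mathbf I\times\frac12\mathbb Z$, but since the non-mutation variables are unchanged from neighboring mutation points this does not affect surjectivity.
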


\subsection{Y-system and cluster algebra}

The result in this subsection is completely parallel
to the $B_r$ and $C_r$ cases.

\begin{lemma}
\label{lem:Fy2}
The family $\{ y^{(a)}_m(u)
\mid (a,m,u)\in \mathcal{I}'_{\ell+}\}$
satisfies the Y-system $\mathbb{Y}_{\ell}(F_4)$
by replacing $Y^{(a)}_m(u)$ with $y^{(a)}_m(u)$.
\end{lemma}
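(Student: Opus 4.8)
The plan is to deduce the relations $\mathbb{Y}_{\ell}(F_4)$ directly from the coefficient mutation rule of the cluster algebra $\mathcal{A}(B,x,y)$, exactly as in the $B_r$ case \cite{IIKKN} and the $C_r$ case (Lemma \ref{lem:Cy2}); only the combinatorial bookkeeping differs, the input being the quiver $Q_{\ell}(F_4)$ of Figure \ref{fig:quiverF} together with Lemma \ref{lem:FQmut}. Recall that for a seed with skew-symmetric matrix $B$ and coefficient tuple $(y_{\mathbf{j}})$ in $\mathbb{Q}_{\mathrm{sf}}(y)$, the mutation $\mu_{\mathbf{k}}$ produces $(y'_{\mathbf{j}})$ with $y'_{\mathbf{k}}=y_{\mathbf{k}}^{-1}$ and $y'_{\mathbf{j}}=y_{\mathbf{j}}\,y_{\mathbf{k}}^{[B_{\mathbf{k}\mathbf{j}}]_+}(1+y_{\mathbf{k}})^{-B_{\mathbf{k}\mathbf{j}}}$ for $\mathbf{j}\neq\mathbf{k}$ (here $[\,\cdot\,]_+$ is the positive part, and in our situation $B_{\mathbf{k}\mathbf{j}}\in\{-1,0,1\}$). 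Since each composite mutation $\mu^{\circ}_{\pm}$, $\mu^{\bullet}_{\pm}$ appearing in \eqref{eq:Cmutseq} is a product of mutations at a set of pairwise non-adjacent vertices of the quiver at that stage, these mutations commute and may be applied simultaneously; in particular, for an unmutated vertex $\mathbf{j}$ the new coefficient is obtained from $y_{\mathbf{j}}$ by multiplying, over all mutated neighbours $\mathbf{k}$ of $\mathbf{j}$, by the corresponding factors above.

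Next I would fix $(a,m,u)\in\mathcal{I}'_{\ell+}$, set $(\mathbf{i},u)=g'((a,m,u))$ so that $(\mathbf{i},u):\mathbf{p}_+$, and compute $y_{\mathbf{i}}(u-\tfrac{1}{t_a})\,y_{\mathbf{i}}(u+\tfrac{1}{t_a})$ by tracking $y_{\mathbf{i}}$ through the portion of \eqref{eq:Cmutseq} running from $u-\tfrac{1}{t_a}$ to $u+\tfrac{1}{t_a}$. At the time $u$ the vertex $\mathbf{i}$ is a forward mutation point, so its coefficient is inverted there; this accounts for the denominator factors $(1+Y^{(a)}_{m-1}(u)^{-1})(1+Y^{(a)}_{m+1}(u)^{-1})$ of \eqref{eq:Yu} coming from the two neighbours of $\mathbf{i}$ in the ``$m\pm1$'' (row) direction, under the identifications $g'$ and the convention of Definition \ref{defn:RY} (absent vertices, i.e.\ the boundary values $Y^{(a)}_0=Y^{(a)}_{t_a\ell}{}^{-1}=0$). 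The neighbours of $\mathbf{i}$ in the ``$a\pm1$'' (column) direction contribute the numerator: when these lie in a column of the same height as that of $\mathbf{i}$ the contribution is a single $(1+y)$-factor, but when $\mathbf{i}$ sits on a long-root column ($a=1,2$) adjacent to a short-root column ($a=3,4$) which in $Q_{\ell}(F_4)$ has been stretched by the factor $t=2$, the relevant short-column vertices are mutated at two consecutive half-integer times, producing the ``doubled'' factors $(1+Y^{(3)}_{2m}(u-\tfrac12))(1+Y^{(3)}_{2m}(u+\tfrac12))$ and $(1+Y^{(2)}_m(u-\tfrac12))(1+Y^{(2)}_m(u+\tfrac12))$ on the right-hand sides of the $F_4$ relations. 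One then checks that all monomial factors $y_{\mathbf{k}}^{[B_{\mathbf{k}\mathbf{j}}]_+}$ telescope, leaving precisely the product of $(1+y)$ and $(1+y^{-1})$ terms with exponents equal to ${}^{t}\!G(\,\cdot\,;a,m,u)$ as read off from the arrows of $Q_{\ell}(F_4)$ and of its image $\boldsymbol{r}(Q_{\ell}(F_4))$.

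The routine but lengthy step is this last one: a case analysis over $a\in\{1,2,3,4\}$ and over the residue of $u$ modulo $2$, matching the local arrow configuration around $\mathbf{i}$ in the four quivers $B,-B,\boldsymbol{r}(B),-\boldsymbol{r}(B)$ against the five relation types of $\mathbb{Y}_{\ell}(F_4)$. The main obstacle — the only place where $F_4$ is not a verbatim transcription of $B_r$ — is keeping track of the trivalent branch node of $F_4$ (the vertex of type $a=3$, seen simultaneously from the long side $a=2$ and the short side $a=4$) under the stretching by $t=2$, so that the ``doubled'' factors above and the single undoubled factor $1+Y^{(4)}_{2m+1}(u)$ (respectively $1+Y^{(3)}_m(u)$, $1+Y^{(2)}_m(u)$) emerge with the correct multiplicities; this is handled by the same analysis of stretched columns already used for the $C_r$ case. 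Once all five relations are matched, the family $\{y^{(a)}_m(u)\mid(a,m,u)\in\mathcal{I}'_{\ell+}\}$ satisfies $\mathbb{Y}_{\ell}(F_4)$, which is the assertion.
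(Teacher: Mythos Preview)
Your approach is correct and is exactly what the paper intends: the lemma is stated as ``completely parallel to the $B_r$ and $C_r$ cases'' and is proved by tracking the coefficient mutation rule along \eqref{eq:Cmutseq} for the quiver $Q_{\ell}(F_4)$, precisely as you outline.

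Two small inaccuracies in your write-up, however, should be fixed before carrying out the case analysis. First, the factor $(1+Y^{(2)}_m(u-\tfrac12))(1+Y^{(2)}_m(u+\tfrac12))$ does not occur anywhere in $\mathbb{Y}_{\ell}(F_4)$: in the $Y^{(3)}_{2m}$ relation the contribution from the long side is the single factor $(1+Y^{(2)}_m(u))$, and the only time-doubled factors are the $(1+Y^{(3)}_{2m}(u\pm\tfrac12))$ appearing in the $Y^{(2)}_m$ relation. The asymmetry is that from the long-root side one sees several short-root neighbours at staggered half-integer times, while from the short-root side one sees a single long-root neighbour. Second, the Dynkin diagram of $F_4$ is a path and has no trivalent branch; what you are presumably referring to is that in the unfolded quiver $Q_{\ell}(F_4)$ the column $i=3$ (corresponding to $a=3$) is adjacent to three columns $2,4,5$, with $2$ and $5$ both carrying the label $a=2$ at alternating values of $u$ under the bijection $g'$. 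With these corrections the bookkeeping goes through and your proof matches the paper's.
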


The Y-subgroup $\mathcal{G}_Y(B,y)$ is defined as
Definition \ref{def:Ysub}.

\begin{theorem}
\label{thm:FYiso}
The group $\EuScript{Y}^{\circ}_{\ell}(F_4)_+$ is isomorphic to
$\mathcal{G}_Y(B,y)$ by the correspondence
$Y^{(a)}_m(u)\mapsto y^{(a)}_m(u)$
and $1+Y^{(a)}_m(u)\mapsto 1+y^{(a)}_m(u)$.
\end{theorem}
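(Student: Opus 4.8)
The plan is to follow verbatim the strategy used for type $B_r$ in \cite{IIKKN}, since by the authors' own admission the $F_4$ case is ``completely parallel''. The statement to be proved is that the group $\EuScript{Y}^{\circ}_{\ell}(F_4)_+$ is isomorphic to the Y-subgroup $\mathcal{G}_Y(B,y)$ via $Y^{(a)}_m(u)\mapsto y^{(a)}_m(u)$ and $1+Y^{(a)}_m(u)\mapsto 1+y^{(a)}_m(u)$. First I would check that the correspondence is well defined: by Lemma \ref{lem:Fy2}, the family $\{y^{(a)}_m(u)\}$ together with the elements $1+y^{(a)}_m(u)$ satisfies all the defining relations $\mathbb{Y}_{\ell}(F_4)$ of $\EuScript{Y}^{\circ}_{\ell}(F_4)$ inside $\mathcal{G}_Y(B,y)$, and hence inside the semifield $\mathbb{Q}_{\mathrm{sf}}(y)$. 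Since $\EuScript{Y}^{\circ}_{\ell}(F_4)_+$ is by Definition \ref{def:YB} the multiplicative group \emph{presented} by the generators $Y^{(a)}_m(u)$, $1+Y^{(a)}_m(u)$ (with $(a,m,u)$ ranging over $\mathcal{I}'_{\ell+}$, via the parity decomposition of Section 4.1) and the relations $\mathbb{Y}_{\ell}(F_4)$, the assignment extends uniquely to a surjective group homomorphism $\varphi:\EuScript{Y}^{\circ}_{\ell}(F_4)_+\to\mathcal{G}_Y(B,y)$; surjectivity holds because the generators $y_{\mathbf{i}}(u)$ and $1+y_{\mathbf{i}}(u)$ of $\mathcal{G}_Y(B,y)$ are exactly the images of the $y^{(a)}_m(u)$ and $1+y^{(a)}_m(u)$ under the label bijections $g'$ of the preceding lemma (and the intermediate non-mutation-point coefficients are accounted for by the same bookkeeping as in \cite{IIKKN}).

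The nontrivial direction is injectivity of $\varphi$, equivalently: every relation holding among the $y^{(a)}_m(u)$ and $1+y^{(a)}_m(u)$ in $\mathbb{Q}_{\mathrm{sf}}(y)$ is a consequence of the Y-system relations. I would prove this exactly as in the $B_r$ case by exhibiting an inverse. The key input is the \emph{separation formula} of Fomin--Zelevinsky for coefficients in cluster algebras with coefficients in a universal semifield: each coefficient $y_{\mathbf{i}}(u)$ is a subtraction-free rational expression in the initial $y$, and the correspondence between the cluster-algebra coefficient dynamics along the mutation sequence \eqref{eq:Cmutseq} and the Y-system relations (Lemma \ref{lem:Fy2}) is a \emph{bijective} dictionary. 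Concretely, one builds a homomorphism $\mathcal{G}_Y(B,y)\to\EuScript{Y}^{\circ}_{\ell}(F_4)_+$ by sending $y_{\mathbf{i}}(u)\mapsto Y^{(a)}_m(u)$ and $1+y_{\mathbf{i}}(u)\mapsto 1+Y^{(a)}_m(u)$ under $g'{}^{-1}$, and one checks this is well defined because the defining relations of $\mathcal{G}_Y(B,y)$ are generated (as relations among the $y_{\mathbf{i}}(u)$ and $1+y_{\mathbf{i}}(u)$) precisely by the Y-system relations — this is the content carried over from \cite{IIKKN}, where it is established that in the universal semifield the coefficient group has \emph{no} relations beyond those forced by the exchange relations, and those translate under the labelling into $\mathbb{Y}_{\ell}(F_4)$. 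The two homomorphisms are mutually inverse on generators, hence $\varphi$ is an isomorphism.

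The one place where genuine $F_4$-specific verification is needed, and hence the main obstacle, is \textbf{Lemma \ref{lem:Fy2}} itself — that the cluster coefficients $y^{(a)}_m(u)$ really do obey the $F_4$ Y-system. This reduces to a local computation: at each point $(\mathbf{i},u):\mathbf{p}_+$ one applies the coefficient mutation rule twice (for the two-step composite $u\to u+\tfrac12\to u+1$ governed by $\mu^{\bullet}_{\pm},\mu^{\circ}_{\pm}$) and matches the resulting monomial-times-$(1+\cdot)$ expression against the corresponding relation in \eqref{eq:YF1}. Because the quiver $Q_{\ell}(F_4)$ (Figure \ref{fig:quiverF}) has the folded ``double-bond'' structure at the node $a=3$, the vertices of the form $(3,i')$ split into the $2m$ and $2m{+}1$ patterns and produce the five distinct relation types of $\mathbb{Y}_{\ell}(F_4)$; one must check the arrow multiplicities around the $(2,i')$–$(3,i')$ and $(3,i')$–$(4,i')$ interfaces reproduce exactly the numerator products $(1+Y^{(3)}_{2m-1})(1+Y^{(3)}_{2m+1})(1+Y^{(3)}_{2m}(u-\tfrac12))(1+Y^{(3)}_{2m}(u+\tfrac12))$ appearing in the $a=2$ relation, and similarly for the $a=3$ relations. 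Once Lemma \ref{lem:Fy2} is in hand, Theorem \ref{thm:FYiso} follows formally by the argument above, word for word as in \cite{IIKKN}; accordingly I would state the theorem and refer to \cite{IIKKN} for the (purely formal) remainder of the proof, as the authors do for the parallel statements.
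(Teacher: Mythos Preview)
Your proposal is correct and matches the paper's approach: the paper gives no explicit proof for this theorem, merely noting that the result is ``completely parallel to the $B_r$ and $C_r$ cases'' and implicitly deferring to \cite{IIKKN}. Your outline --- using Lemma~\ref{lem:Fy2} for well-definedness and surjectivity, then constructing the inverse via the universal-semifield property exactly as in \cite{IIKKN}, with the only $F_4$-specific content being the local quiver check underlying Lemma~\ref{lem:Fy2} --- is precisely what that deferred argument amounts to.
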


\subsection{Tropical Y-system at level 2}

By direct computations, the following properties
are verified.

\begin{proposition}
\label{prop:Flev2}
 For 
$[\mathcal{G}_Y(B,y)]_{\mathbf{T}}$
with $B=B_{2}(F_4)$, the following facts hold.
\par
(i) Let $u$ be in the region $0\le u < 2$.
For any $(\mathbf{i},u):\mathbf{p}_+$,
the  monomial $[y_{\mathbf{i}}(u)]_{\mathbf{T}}$
is positive.
\par
(ii) Let $u$ be in the region $-h^{\vee}\le u < 0$.
\begin{itemize}
\item[\em (a)]
 Let $\mathbf{i}=(1,1),(2,1),(5,1),(6,1),(3,2)$,
or $(4,2)$.
For any $(\mathbf{i},u):\mathbf{p}_+$,
the  monomial $[y_{\mathbf{i}}(u)]_{\mathbf{T}}$
is negative.
\item[\em (b)]
Let $\mathbf{i}=(3,1), (3,3), (4,1)$, or $(4,3)$.
For any $(\mathbf{i},u):\mathbf{p}_+$,
the  monomial $[y_{\mathbf{i}}(u)]_{\mathbf{T}}$
is negative for
 $
u=-\frac{1}{2},-1,-\frac{3}{2},-3,-\frac{7}{2},
-4,
-\frac{11}{2},
-6,
 -\frac{13}{2}, -8 ,-\frac{17}{2},
\\ 
-9$
and positive for $u=-2,-\frac{5}{2},-\frac{9}{2},
-5, -7, -\frac{15}{2}$.
\end{itemize}
\par
(iii)
$y_{ii'}(2)=y_{ii'}^{-1}$ if $i=1,2,5,6$ and
$y_{i,4-i'}^{-1}$ if $i=3,4$.
\par
(iv) $y_{ii'}(-h^{\vee})=
y_{7-i,i'}^{-1}$ if $i=1,2,5,6$
and $y_{ii'}^{-1}$ if $i=3,4$.
\end{proposition}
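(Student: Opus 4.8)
The plan is to run the same argument as in Section~3.6 for $C_r$, the essential simplification being that $F_4$ has fixed rank, so the whole computation is finite. Since $Q=Q_2(F_4)$ is the explicit quiver of Figure~\ref{fig:quiverF} with $\ell=2$, the tropical Y-system $[\mathcal G_Y(B,y)]_{\mathbf T}$ with $B=B_2(F_4)$ is obtained by applying the mutation sequence \eqref{eq:Cmutseq} to the initial coefficient tuple inside $\mathrm{Trop}(y)$; equivalently, each $[y_{\mathbf i}(u)]_{\mathbf T}$ is recorded by its integer exponent vector, and these evolve by the piecewise-linear $c$-vector mutation rule attached to the matrices $\pm B$, $\pm\boldsymbol r(B)$ (cf.\ Lemma~\ref{lem:FQmut}). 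As in the $C_r$ case, positivity/negativity of the individual monomials $[y_{\mathbf i}(u)]_{\mathbf T}$ is propagated inductively along \eqref{eq:Cmutseq}, so that the only dichotomy to track is "positive monomial / negative monomial''. Thus the proposition becomes a bookkeeping statement about a finite, explicit orbit of sign vectors, and one may argue each component $y_{\mathbf i}$ separately exactly as noted after Proposition~\ref{prop:Clev2}.

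First I would dispose of (i) and (iii). For $0\le u<2$ one checks directly, in the same way as the $B_r$ case \cite[Proposition~3.2]{IIKKN}, that starting from the initial unit exponent vectors at $u=0$ and applying $\mu^{\bullet}_+\mu^{\circ}_+$, then $\mu^{\bullet}_-$, then $\mu^{\bullet}_+\mu^{\circ}_-$, then $\mu^{\bullet}_-$, the vectors $[y_{\mathbf i}(u)]_{\mathbf T}$ with $(\mathbf i,u):\mathbf p_+$ stay nonnegative; this uses only the local shape of $Q_2(F_4)$ near the relevant vertices and is insensitive to the long/short asymmetry. Part (iii), $y_{ii'}(2)=y_{ii'}^{-1}$ for $i=1,2,5,6$ and $y_{i,4-i'}^{-1}$ for $i=3,4$, is then read off from the same short computation and records the level-$2$ half-periodicity with the $\boldsymbol\omega$-twist.

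The heart of the matter is (ii) and (iv), on the window $-h^{\vee}\le u<0$ (here $-9\le u<0$). Following Section~3.6 I would group the coordinates of the exponent vectors and match $-[y_{\mathbf i}(u)]_{\mathbf T}$ against the orbits of a bipartite piecewise-linear Coxeter-type transformation $\sigma=\sigma_-\sigma_+$ in the sense of \eqref{eq:pl}, acting on the almost positive roots of an auxiliary root system of rank $\le 6$ --- in the spirit of $D_{r+1}$ and $A_{r-1}$ for $C_r$, the natural candidates for $F_4$ being $\Phi(F_4)$ itself and $\Phi(E_6)$, with $\sigma$ a (possibly folded) bipartite Coxeter element. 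One proves, as in Lemmas~\ref{lem:Corbit}--\ref{lem:Ctrec} and Propositions~\ref{prop:Ctvec}--\ref{prop:CtvecA}, that the vectors $-[y_{\mathbf i}(u)]_{\mathbf T}$ for $(\mathbf i,u):\mathbf p_+$, $-9\le u<0$, run through the positive roots of that system together with a recursion of the shape \eqref{eq:Calpha1}, and one tabulates them as in Tables~\ref{tab:orbit10}--\ref{tab:orbit9}. Then (a) and (b) are precisely the statement of where each orbit sits: the monomial is negative when the vector equals $-\alpha$ with $\alpha\in\Phi_+$ (the generic situation) and positive exactly at the finitely many $u$ where the orbit passes through a simple root $+\alpha_i$. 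Assertion (iv), $y_{ii'}(-h^{\vee})=y_{7-i,i'}^{-1}$ for $i=1,2,5,6$ and $y_{ii'}^{-1}$ for $i=3,4$, is the value of these orbits at their endpoint $u=-9$, composed with the diagram involution $i\mapsto 7-i$ on $\mathbf I^{\bullet}$.

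The main obstacle is not conceptual depth but the combinatorial bookkeeping, and the warning sign is the irregular list in (ii)(b): unlike the $C_r$ case, where the positive monomials occurred only at the two symmetric points $u=-\tfrac12 h^{\vee},-\tfrac12 h^{\vee}-\tfrac12$, here the positive values are scattered ($u=-2,-\tfrac52,-\tfrac92,-5,-7,-\tfrac{15}{2}$). This reflects that the $F_4$ orbits are non-uniform: the long/short asymmetry, the shift by $1/t_a$ in the labels $g,g'$ of Figures~\ref{fig:labelxF1}--\ref{fig:labelxF2}, and the bipartite $\sigma$ interact so that the sign changes do not occur at a single pair of points. The real work, therefore, is to set up the label and orbit tables carefully enough to see this pattern, to handle the identified "$\diamond$'' column correctly, and to verify --- as flagged after Proposition~\ref{prop:Clev2} --- that no monomial is trivially $1$ where it should not be. Since the rank is fixed this is a bounded verification, which is exactly why the proposition can be asserted "by direct computations''; the root-system/Coxeter-orbit picture above is the conceptual skeleton that makes that verification transparent.
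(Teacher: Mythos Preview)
Your proposal is essentially correct and matches the paper's approach: the proposition is asserted in the paper ``by direct computations'', and the root-system organization you anticipate is exactly what the paper supplies immediately afterward (Lemma~\ref{lem:Forbit}, Proposition~\ref{prop:Ftvec}), using the root system of type $E_6$ for the core part $\pi_A$ analogous to the $D$ part in the $C_r$ case. Two small calibrations: the paper does not split into two parts (no separate $A$-type piece or $\Phi(F_4)$ analysis --- only the $E_6$ core is described), and the transformation $\sigma$ is not a bipartite Coxeter element but the specific product $\sigma=\sigma_3(\sigma_4\sigma_2\sigma_6)\sigma_3(\sigma_4\sigma_1\sigma_5)$ of \eqref{eq:Fsigma}, tailored to the mutation sequence \eqref{eq:Cmutseq}; this is why the positive values in (ii)(b) are scattered rather than concentrated at two points as in $C_r$.
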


Also we have a description of the `core part'
of  $[y_{\mathbf{i}}(u)]_{\mathbf{T}}$
in the region $-h^{\vee}\leq  u <0$,
corresponding to the $D$ part for $C_r$,
in terms of the root system of $E_6$.
We use the following
index of the Dynkin diagram $E_6$.
\begin{align*}
\begin{picture}(120,55)(-30,0)
\put(0,15)
{
\put(-30,0){\circle{5}}
\put(0,0){\circle{5}}
\put(30,0){\circle{5}}
\put(60,0){\circle{5}}
\put(90,0){\circle{5}}
\put(30,30){\circle{5}}
\put(-3,0){\line(-1,0){24}}
\put(27,0){\line(-1,0){24}}
\put(57,0){\line(-1,0){24}}
\put(87,0){\line(-1,0){24}}
\put(30,3){\line(0,1){24}}
\put(-4,8)
{
\put(-28,-20){\small $1$}
\put(2,-20){\small $2$}
\put(32,-20){\small $3$}
\put(62,-20){\small $5$}
\put(92,-20){\small $6$}
\put(42,20){\small $4$}
}
}
\end{picture}
\end{align*}
Let 
$\Pi=\{\alpha_1,\dots,\alpha_6\}$, $-\Pi$,
 $\Phi_+$ be  the set of the simple roots,
the negative simple roots, the positive roots, respectively,
of type $E_{6}$.
Let $\sigma_i$ be the piecewise-linear
analogue  of the simple reflection $s_i$,
acting on the set
of the almost positive roots
$\Phi_{\geq -1}=\Phi_{+}\sqcup (-\Pi)$.
We write $\sum_{i} m_i \alpha_i \in \Phi_+$
as $[1^{m_1}, 2^{m_2}, \ldots, 6^{m_6}]$;
furthermore, $[1^0,2^1,3^1,4^1,5^0,6^0]$,
for example, is abbreviated  as $[2,3,4]$.

We define $\sigma$ as the composition
\begin{align}
\label{eq:Fsigma}
\sigma=\sigma_3(\sigma_4\sigma_2\sigma_6)
\sigma_3(\sigma_4\sigma_1\sigma_5).
\end{align}

\begin{lemma}
\label{lem:Forbit}
We have the orbits  by $\sigma$
\begin{align}
\begin{split}
&
-\alpha_1
\ \rightarrow\ 
[1,2,3]
\ \rightarrow\ 
[2,3,4,5,6]
\ \rightarrow\ 
[1,2,3^2,4,5]
\ \rightarrow\ 
[5,6]
\ \rightarrow\ 
-\alpha_6,
\\
&
-\alpha_2
\ \rightarrow\ 
[2,3]
\ \rightarrow\ 
[1,2^2,3^2,4,5,6]
\ \rightarrow\ 
[1,2^2,3^3,4^2,5^2,6]\\
&
\hskip150pt
\ \rightarrow\ 
[1,2,3^2,4,5^2,6]
\ \rightarrow\ 
[5]
\ \rightarrow\ 
-\alpha_5,
\\
&
-\alpha_3
\ \rightarrow\ 
[2,3,4]
\ \rightarrow\ 
[1,2,3^2,4,5,6]
\ \rightarrow\ 
[2,3^2,4,5^2,6]
\ \rightarrow\ 
[1,2,3,5]
\ \rightarrow\ 
-\alpha_3,
\\
&
\phantom{-}
\hskip4.5pt
\alpha_3
\ \rightarrow\ 
[2,3,5,6]
\ \rightarrow\ 
[1,2^2,3^2,4,5]
\ \rightarrow\ 
[1,2,3,4,5,6]
\ \rightarrow\ 
[3,4,5]
\ \rightarrow\ 
\alpha_3,
\\
&
-\alpha_4
\ \rightarrow\ 
[2]
\ \rightarrow\ 
[1,2,3,4]
\ \rightarrow\ 
[3,4,5,6]
\ \rightarrow\ 
[3,5]
\ \rightarrow\ 
-\alpha_4,
\\
&
\phantom{-}
\hskip4.5pt
\alpha_4
\ \rightarrow\ 
[3,4]
\ \rightarrow\ 
[3,5,6]
\ \rightarrow\ 
[2,3,5]
\ \rightarrow\ 
[1,2]
\ \rightarrow\ 
\alpha_4,
\\
&
-\alpha_5
\ \rightarrow\ 
[2,3^2,4,5,6]
\ \rightarrow\ 
[1,2^2,3^3,4,5^2,6]
\ \rightarrow\ 
[1,2^2,3^2,4,5^2,6]\\
&
\hskip200pt
\ \rightarrow\ 
[1,2,3,4,5]
\ \rightarrow\ 
-\alpha_2,
\\
&
-\alpha_6
\ \rightarrow\ 
[6]
\ \rightarrow\ 
[2,3^2,4,5]
\ \rightarrow\ 
[1,2,3,5,6]
\ \rightarrow\ 
[2,3,4,5]
\ \rightarrow\ 
[1]
\ \rightarrow\ 
-\alpha_1.
\end{split}
\end{align}
In particular, these
elements in $\Phi_+$ exhaust the set $\Phi_+$,
thereby providing the orbit decomposition
of $\Phi_+$ by $\sigma$.
\end{lemma}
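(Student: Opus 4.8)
The plan is to prove Lemma~\ref{lem:Forbit} by direct calculation, in the same spirit as Lemma~\ref{lem:Corbit} for type $C_r$: starting from each of the eight almost positive roots $-\alpha_1,-\alpha_2,-\alpha_3,\alpha_3,-\alpha_4,\alpha_4,-\alpha_5,-\alpha_6$, one iterates the piecewise-linear map $\sigma$ of \eqref{eq:Fsigma} and checks that the sequences so obtained are exactly the ones displayed. To organize the iteration we observe first that, in each of the two parenthesized groups $\sigma_4\sigma_2\sigma_6$ and $\sigma_4\sigma_1\sigma_5$ occurring in \eqref{eq:Fsigma}, the three nodes are pairwise non-adjacent in $E_6$; hence, following \cite{FZ1}, these partial products are well defined and independent of the order of their factors, and $\sigma$ is computed by applying, right to left, the four blocks $\sigma_4\sigma_1\sigma_5$, $\sigma_3$, $\sigma_4\sigma_2\sigma_6$, $\sigma_3$.

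The feature that keeps the computation short is that, as long as the root produced by the previous step lies in $\Phi_+$ and stays in $\Phi_+$ under the elementary reflections being applied, each $\sigma_i$ acts simply as the ordinary reflection $s_i$ on the root lattice $\mathbb{Z}\Pi$, so one step of $\sigma$ reduces to a short chain of linear substitutions $\alpha_j\mapsto s_i(\alpha_j)$ on the coefficient vector. The genuinely piecewise-linear rule \eqref{eq:pl} intervenes only when an intermediate result is a negative simple root $-\alpha_j$, in which case $\sigma_j$ sends it to $\alpha_j$ while the $\sigma_i$ with $i\neq j$ fix it; this happens at the two ends of each displayed orbit and, in a few steps, transiently inside a single application of $\sigma$. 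Running through this reproduces the eight orbits as stated. I expect the only mildly delicate bookkeeping to be these passages through $\pm\alpha_j$: concretely, one must verify that $\sigma^5(-\alpha_1)=-\alpha_6$ and $\sigma^6(-\alpha_6)=-\alpha_1$, so that the orbits of $-\alpha_1$ and $-\alpha_6$ merge into a single $\sigma$-orbit of period $11=h^{\vee}+2$, and likewise for $-\alpha_2$ and $-\alpha_5$, whereas the orbits through $\pm\alpha_3$ and $\pm\alpha_4$ are pure $5$-cycles inside $\Phi_+$.

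The exhaustion assertion then requires no further work. The positive roots listed in the eight orbits are pairwise distinct by inspection, and, counted orbit by orbit, their number is $4+5+4+5+4+5+4+5=36=|\Phi_+|$ for type $E_6$; moreover all six negative simple roots appear among these orbits. Since $\sigma$ is a bijection of $\Phi_{\geq-1}$, distinct $\sigma$-orbits are disjoint, so the eight orbits partition the $42$-element set $\Phi_{\geq-1}=\Phi_+\sqcup(-\Pi)$ and in particular exhaust $\Phi_+$, as claimed. The main obstacle, such as it is, is purely the volume of the elementary reflection computations in the second step; everything else is formal.
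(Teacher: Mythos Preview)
Your approach is correct and is exactly what the paper intends: as with the analogous Lemma~\ref{lem:Corbit} for $C_r$, the result is obtained by explicitly iterating $\sigma$ and then counting, and the paper offers no further argument. Two small points of phrasing: the eight displayed sequences actually represent only six distinct $\sigma$-orbits in $\Phi_{\geq -1}$ (since, as you note, the $-\alpha_1$ and $-\alpha_6$ sequences concatenate into one orbit of length $11$, and likewise for $-\alpha_2$ and $-\alpha_5$), so your final sentence should speak of six orbits partitioning the $42$-element set; and the orbits through $-\alpha_3$ and $-\alpha_4$ are $5$-cycles in $\Phi_{\geq -1}$, not in $\Phi_+$, since each contains its negative simple root.
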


For $-h^{\vee}\leq u< 0$, define
\begin{align}
\label{eq:Falpha}
&\alpha_{i}(u)=
\begin{cases}
\sigma^{-u/2}(-\alpha_i)
& \mbox{\rm $i=1,4,5$; $u\equiv 0$},\\
\sigma^{-(u-1)/2}(-\alpha_i)
& \mbox{\rm $i=2,6$; $u\equiv -1$},\\
\sigma^{-(u-1)/2}(\alpha_4)
& \mbox{\rm $i=4$; $u\equiv -1$},\\
\sigma^{-(2u-1)/4}(-\alpha_3)
& \mbox{\rm $i=3$; $u\equiv-\frac{3}{2}$},\\
\sigma^{-(2u+1)/4}(\alpha_3)
& \mbox{\rm $i=3$; $u\equiv-\frac{1}{2}$},\\
\end{cases}
\end{align}
where $\equiv$ is mod $2\mathbb{Z}$.
By Lemma \ref{lem:Forbit} they are (all the) positive roots
of $E_{6}$.

For a monomial $m$ in $y=(y_{\mathbf{i}})_{\mathbf{i}\in
\mathbf{I}}$,
let $\pi_A(m)$ denote the specialization
with $y_{31}=y_{33}=y_{41}=y_{43}=1$.
For simplicity, we set
$y_{i1}=y_i$ ($i=1,2,5,6$), $y_{i2}=y_i$ ($i=3,4$),
and also,
$y_{i1}(u)=y_{i}(u)$
($i=1,2,5,6$), $y_{i2}(u)=y_i(u)$ ($i=3,4$).
We define the vectors $\mathbf{t}_{i}(u)
=(t_{i}(u)_k)_{k=1}^{6}$
by
\begin{align}
\pi_A([y_{i}(u)]_{\mathbf{T}})
=
\prod_{k=1}^{6}
 y_{k}^{t_{i}(u)_k}.
\end{align}
We also identify each vector $\mathbf{t}_{i}(u)$
with $\alpha=\sum_{k=1}^{6}
t_{i}(u)_k \alpha_k \in \mathbb{Z}\Pi$.

\begin{proposition}
\label{prop:Ftvec}
Let $-h^{\vee}\leq u< 0$.
Then, we have
\begin{align}
\label{eq:Ftvec1}
\mathbf{t}_{i}(u)=-\alpha_i(u)
\end{align}
for $(i,u)$ in \eqref{eq:Falpha}.
\end{proposition}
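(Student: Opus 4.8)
The argument runs in close parallel with the proof of Proposition~\ref{prop:Ctvec} (the ``$D$ part'' for type $C_r$), with the root system $D_{r+1}$ and the transformation $\sigma$ of \eqref{eq:Csigma} replaced by $E_6$ and the transformation $\sigma$ of \eqref{eq:Fsigma}. Together with \eqref{eq:Ftvec1} I would prove simultaneously the vanishing
\begin{align*}
\pi_A\bigl([y_{3,m'}(u)]_{\mathbf{T}}\bigr)=\pi_A\bigl([y_{4,m'}(u)]_{\mathbf{T}}\bigr)=1
\end{align*}
whenever $m'$ is odd and $(\mathbf{i},u):\mathbf{p}_+$, which is the $F_4$-counterpart of \eqref{eq:CpiA}. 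These two families of identities together determine $\pi_A([y_{\mathbf{i}}(u)]_{\mathbf{T}})$ for every $(\mathbf{i},u):\mathbf{p}_+$ in the range $-h^{\vee}\le u<0$; the vanishing part also drops out of the separate ``$A$ part'' analysis, carried out exactly as for $C_r$.

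The first step is to record the linear recurrences among the $E_6$-roots $\alpha_i(u)$ of \eqref{eq:Falpha}. These are read off from the explicit orbit decomposition of $\Phi_+$ in Lemma~\ref{lem:Forbit} together with the grouping of the piecewise-linear reflections in \eqref{eq:Fsigma}; they form the $F_4$-counterpart of Lemma~\ref{lem:Ctrec}, dictated by the adjacencies of \eqref{eq:TF1} and the single-argument/double-argument pattern at the short node $3$ meeting the long node $2$. Representatively, $\alpha_1(u-1)+\alpha_1(u+1)=\alpha_2(u)$ and, at the long--short junction, $\alpha_2(u-1)+\alpha_2(u+1)=\alpha_1(u)+\alpha_3(u-\frac12)+\alpha_3(u+\frac12)$, while $\alpha_3(u-\frac12)+\alpha_3(u+\frac12)=\alpha_2(u)+\alpha_4(u)$ and $\alpha_4(u-\frac12)+\alpha_4(u+\frac12)=\alpha_3(u)$, each taken for the parity of $u$ for which both sides are defined (and with companion relations obtained from the $E_6$ diagram automorphism $1\!\leftrightarrow\!6$, $2\!\leftrightarrow\!5$); as for $C_r$, the relations not involving node $3$ also follow from \cite[Eq.~(10.9)]{FZ2}.

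The second step is the verification. One first checks $\mathbf{t}_i(u)=-\alpha_i(u)$ and the vanishing above by direct tropical computation on a short initial window of $u$ near $0$, large enough to seed a backward induction. Then, inducting on $u$ in the decreasing direction, one applies the tropical $\mathbb{Y}_2(F_4)$ (equivalently the mutations as in \cite[Figure~6]{IIKKN}) and then $\pi_A$: the numerator factors $1+Y^{(3)}_{\mathrm{odd}}$, $1+Y^{(4)}_{\mathrm{odd}}$ and the denominator factors of the form $1+Y^{(3)}_{\mathrm{odd}}{}^{-1}$ collapse to $1$ by the inductive vanishing hypothesis, while every surviving factor $1+[y_{\mathbf{j}}(v)]_{\mathbf{T}}$ collapses to $[y_{\mathbf{j}}(v)]_{\mathbf{T}}$ because, by the induction hypothesis, that monomial equals $-\alpha_j(v)$ and is in particular negative (consistent with Proposition~\ref{prop:Flev2}(ii)). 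What remains are exactly the recurrences of the first step written additively, so the induction closes; the subsidiary fact that each $[y_{\mathbf{i}}(u)]_{\mathbf{T}}\neq 1$ is tracked along the way as in \cite{IIKKN}.

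The step I expect to be the main obstacle is the bookkeeping inside this induction. The $F_4$ $Y$-system has the bulky relations for $Y^{(2)}$ and $Y^{(3)}$ carrying factors $1+Y^{(3)}$, $1+Y^{(2)}$ at several shifted arguments $u$, $u\pm\frac12$, and — because the relabelling reflects the $\mathbb{Z}/2$ folding $E_6\to F_4$ — the vertex $(i,i')$ carrying a given $Y^{(a)}_m(u)$ depends on the parity of $u$ (for instance $Y^{(1)}_1$ and $Y^{(2)}_1$ sit in columns $\{1,6\}$ and $\{5,2\}$ for the two parities). Consequently the $\sigma$-orbits of \eqref{eq:Fsigma} have irregular lengths (the two values $5$ and $6$ visible in Lemma~\ref{lem:Forbit}), and one must match each $\mathbf{t}_i(u)$ with the correct step of the correct orbit; in contrast with the $C_r$ ``$D$ part'' there is no analogue of Lemma~\ref{lem:Ccox} routing the computation through a simpler type-$A$ root system, so this matching is done directly against Lemma~\ref{lem:Forbit}. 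Carrying this through for all six root indices and all residues of $u$ is the bulk of the work.
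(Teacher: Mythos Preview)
Your proposal is correct and follows exactly the template the paper uses for the $C_r$ case (Proposition~\ref{prop:Ctvec}): seed a backward induction by a short direct check, then propagate via the tropical $\mathbb{Y}_2(F_4)$, using the vanishing of $\pi_A$ on the odd-height vertices of columns $3$ and $4$ to reduce the mutation relations to the linear recurrences among the $\alpha_i(u)$ furnished by Lemma~\ref{lem:Forbit}. For $F_4$ the paper itself gives no separate argument---it simply relies on the blanket remark that the level~$2$ tropical facts are ``verified by direct computations''---so your inductive framing is in fact more explicit than what appears there, but it is the same method, and since $h^\vee=9$ is fixed your induction and the paper's direct check amount to the same finite verification.
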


\subsection{Tropical Y-systems at higher levels}

Due to the factorization property,
we obtain the following.
\begin{proposition}
\label{prop:Flevh}
Let $\ell> 2$ be an integer.
 For 
$[\mathcal{G}_Y(B,y)]_{\mathbf{T}}$
with $B=B_{\ell}(F_4)$, the following facts hold.
\par
(i) Let $u$ be in the region $0\le u < \ell$.
For any $(\mathbf{i},u):\mathbf{p}_+$,
the  monomial $[y_{\mathbf{i}}(u)]_{\mathbf{T}}$
is positive.
\par
(ii) Let $u$ be in the region $-h^{\vee}\le u < 0$.
\begin{itemize}
\item[\em (a)]
 Let $\mathbf{i}\in \mathbf{I}^{\circ}$
or $\mathbf{i}=(3,i'),(4,i')$ $(i'\in 2\mathbb{N})$.
For any $(\mathbf{i},u):\mathbf{p}_+$,
the  monomial $[y_{\mathbf{i}}(u)]_{\mathbf{T}}$
is negative.
\item[\em (b)]
 Let $\mathbf{i}=(3,i'), (4,i')$
 $(i'\not\in 2\mathbb{N})$.
For any $(\mathbf{i},u):\mathbf{p}_+$,
the  monomial $[y_{\mathbf{i}}(u)]_{\mathbf{T}}$
is negative for
 $
u=-\frac{1}{2},-1,-\frac{3}{2},-3,-\frac{7}{2},
-4,
-\frac{11}{2},
-6,
 -\frac{13}{2}, -8 ,-\frac{17}{2},
-9$
and positive for $u=-2,-\frac{5}{2},-\frac{9}{2},
-5, -7, -\frac{15}{2}$.
\end{itemize}
\par
(iii)
$y_{ii'}(\ell)=y_{i,\ell-i'}^{-1}$ if $i=1,2,5,6$ and
$y_{i,2\ell-i'}^{-1}$ if $i=3,4$.
\par
(iv) $y_{ii'}(-h^{\vee})=
y_{7-i,i'}^{-1}$ if $i=1,2,5,6$
and $y_{ii'}^{-1}$ if $i=3,4$.
\end{proposition}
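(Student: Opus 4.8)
The plan is to deduce Proposition \ref{prop:Flevh} from the level-$2$ case, Proposition \ref{prop:Flev2}, by the \emph{factorization property} of the tropical Y-system, in exactly the way \cite[Proposition 4.1]{IIKKN} treats $B_r$ and Proposition \ref{prop:Clevh} treats $C_r$. Recall the shape of $Q_{\ell}(F_4)$ from Figure \ref{fig:quiverF}: in the ``level'' (vertical) direction it consists of columns of two heights, namely height $\ell-1$ for the columns indexed by $i=1,2,5,6$ and height $2\ell-1$ for the columns $i=3,4$, and within each column the arrows between consecutive rows form a quiver of type $A_{\ell-1}$, resp.\ $A_{2\ell-1}$, with alternating orientation. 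The template underlying $Q_{\ell}(F_4)$ is thus the same one used for $B_r$ and $C_r$, with the ``horizontal'' data encoded by the $\circ/\bullet$ vertices, the identification of the rightmost column of the left quiver with the middle column of the right quiver, and the folded double-arrow pattern inherited from the $F_4$ Dynkin diagram.

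The first step is to set up the factorization. Restricting the periodic mutation sequence \eqref{eq:Cmutseq} to a single column of $Q_{\ell}(F_4)$, the composite mutations $\mu^{\circ}_{\pm},\mu^{\bullet}_{\pm}$ act on the tropical $y$-variables of that column as the tropical Y-system dynamics of type $A_{\ell-1}$ (resp.\ $A_{2\ell-1}$), i.e.\ by the piecewise-linear Coxeter transformation of that type. The factorization lemma then expresses each $[y_{\mathbf{i}}(u)]_{\mathbf{T}}$ for $B=B_{\ell}(F_4)$ as a ``product'' of the level-$2$ datum $[y_{\mathbf{i}}(u)]_{\mathbf{T}}$ for $B=B_{2}(F_4)$ with $A$-type Coxeter-orbit data: concretely, one proves by induction on $\ell$ that the exponent vectors of $[y_{\mathbf{i}}(u)]_{\mathbf{T}}$ satisfy the same recurrences as in Lemma \ref{lem:Ctrec} and Proposition \ref{prop:CtvecA}, with the $A_{\ell-1}$-, $A_{2\ell-1}$-structure in the level direction replacing the degenerate $A_1$-structure present at $\ell=2$. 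This is the $F_4$ counterpart of the ``$D$ part'' / ``$A$ part'' split: the core rows $\mathbf{i}=(3,i'),(4,i')$ with $i'$ odd are governed by the $E_6$-orbit data of Lemma \ref{lem:Forbit} and Proposition \ref{prop:Ftvec}, independently of $\ell$, while the remaining rows are governed by $A$-type orbits.

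Granting the factorization, the four assertions follow. For (i), the level-$2$ positivity on $0\le u<2$ propagates to positivity on $0\le u<\ell$, because along each column the $A_{\ell-1}$-orbit stays in $\Phi_+$ for the correspondingly longer range of $u$. For (iii), the identity at $u=\ell$ is the composition of the level-$2$ relation at $u=2$ with the longest element of the relevant $A$-type Weyl group, which sends $i'\mapsto \ell-i'$ for $i=1,2,5,6$ and $i'\mapsto 2\ell-i'$ for $i=3,4$. Since $h^{\vee}=9$ does not depend on $\ell$, the region $-h^{\vee}\le u<0$ is unchanged; there the factorization shows $[y_{\mathbf{i}}(u)]_{\mathbf{T}}$ is negative for $\mathbf{i}\in\mathbf{I}^{\circ}$ and for $\mathbf{i}=(3,i'),(4,i')$ with $i'$ even (the $A$-type orbit never returns to $-\Pi$ inside this range), and for $\mathbf{i}=(3,i'),(4,i')$ with $i'$ odd it coincides with the level-$2$ value, hence is positive exactly for the listed $u=-2,-\tfrac52,-\tfrac92,-5,-7,-\tfrac{15}{2}$ and negative otherwise; this is (ii). Taking $u=-h^{\vee}$ gives (iv), where the left--right flip $\boldsymbol{r}$ acting on $i=1,2,5,6$ produces the index $7-i$ and the columns $i=3,4$ are fixed.

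The main obstacle is establishing the factorization property itself for $Q_{\ell}(F_4)$, i.e.\ verifying the inductive step ``level $\ell-1\to\ell$'' (equivalently, that level $2$ generates all higher levels) for this particular arrow configuration. One must check that the composite mutations $\mu^{\circ}_{\pm},\mu^{\bullet}_{\pm}$ commute with ``inserting a new level'' and interact with the folded double arrows between the $\bullet$- and $\circ$-columns exactly as they do in the $B_r$ case, so that the argument of \cite[Proposition 4.1]{IIKKN} applies essentially verbatim. Because $Q_{\ell}(F_4)$ is built on the same template as $Q_{\ell}(B_r)$ and $Q_{\ell}(C_r)$ and no new local configuration of arrows appears, this reduces to a finite and routine (if somewhat tedious) verification, with no genuinely new phenomenon; once it is in place, Proposition \ref{prop:Flevh} is immediate.
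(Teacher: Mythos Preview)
Your proposal is correct and takes essentially the same approach as the paper: the paper proves Proposition \ref{prop:Flevh} by the one-line remark ``Due to the factorization property, we obtain the following,'' referring to the same reduction to level $2$ via \cite[Proposition 4.1]{IIKKN} that you spell out in detail. Your sketch of how each of (i)--(iv) follows from the level-$2$ data and the $A$-type column dynamics is an accurate elaboration of what the paper leaves implicit.
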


We obtain corollaries of
Propositions \ref{prop:Flev2} and  \ref{prop:Flevh}.

\begin{theorem}
\label{thm:FtYperiod}
For $[\mathcal{G}_Y(B,y)]_{\mathbf{T}}$
the following relations hold.
\par
(i) Half periodicity: 
$[y_{\mathbf{i}}(u+h^{\vee}+\ell)]_{\mathbf{T}}
=[y_{\boldsymbol{\omega}(\mathbf{i})}(u)]_{\mathbf{T}}$.
\par
(ii) 
 Full periodicity: 
$[y_{\mathbf{i}}(u+2(h^{\vee}+\ell))]_{\mathbf{T}}
=[y_{\mathbf{i}}(u)]_{\mathbf{T}}$.
\end{theorem}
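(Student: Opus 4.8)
\textbf{Proof plan for Theorem \ref{thm:FtYperiod}.}

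The plan is to deduce the periodicities of the tropical Y-system at arbitrary level $\ell$ directly from the parts (i)--(iv) of Propositions \ref{prop:Flev2} and \ref{prop:Flevh}, following verbatim the argument for type $B_r$ in \cite{IIKKN} (where the analogous statement is Theorem 4.3 there, deduced from Propositions 3.2 and 4.1). The key point is that the positivity/negativity dichotomy for the tropical monomials $[y_{\mathbf{i}}(u)]_{\mathbf{T}}$, together with the explicit ``turning point'' values (iii) and (iv), pins down completely how the mutation sequence \eqref{eq:Cmutseq} acts on the tropical sign data, and the periodicity then follows from a sign-coherence/separation type argument.

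First I would recall the general principle (from \cite{FZ2} and as used in \cite[Section 4]{IIKKN}): along the mutation sequence \eqref{eq:Cmutseq}, each tropical $y$-variable $[y_{\mathbf{i}}(u)]_{\mathbf{T}}$ is a Laurent monomial in the initial $y_{\mathbf{j}}$'s, and at each mutation point the exponent vector transforms by the tropical mutation rule, whose form is governed by whether the relevant monomial is positive or negative. Parts (i), (ii)(a), (ii)(b) of Proposition \ref{prop:Flevh} supply exactly this sign information on the fundamental domain $-h^{\vee}\le u<\ell$ (equivalently, after shifting, on $0\le u<h^{\vee}+\ell$): the monomials are positive for $0\le u<\ell$, and negative (with the listed exceptions concentrated at $u=-\frac12 h^{\vee},-\frac12 h^{\vee}-\frac12$ for the odd-row $(3,\cdot),(4,\cdot)$ vertices) for $-h^{\vee}\le u<0$. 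Part (iii) gives $[y_{\mathbf{i}}(\ell)]_{\mathbf{T}}=y_{\boldsymbol{\omega}(\mathbf{i})}^{-1}$ up to the relabeling induced by $\boldsymbol{\omega}$ (using that $\boldsymbol{\omega}$ acts as $i'\mapsto\ell-i'$ on the columns $i=1,2,5,6$ and $i'\mapsto 2\ell-i'$ on $i=3,4$), and part (iv) gives $[y_{\mathbf{i}}(-h^{\vee})]_{\mathbf{T}}=y_{\boldsymbol{\omega}(\mathbf{i})}^{-1}$ likewise, since for $F_4$ we have $\boldsymbol{r}=\boldsymbol{\omega}$ on the relevant vertices and $h^{\vee}=9$ is odd so $\boldsymbol{\omega}(Q)=\boldsymbol{r}(Q)$ by Lemma \ref{lem:FQmut}(ii).

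The half-periodicity (i) is then obtained as follows: combine (iii) and (iv), which together assert that after the composite sequence of mutations carrying $u=-h^{\vee}$ to $u=\ell$ (a span of $h^{\vee}+\ell$), every tropical $y$-variable is sent to $y_{\boldsymbol{\omega}(\mathbf{i})}^{-1}$; but an inspection of the mutation sequence shows that the ``sign reversal'' $y_{\mathbf{i}}\mapsto y_{\mathbf{i}}^{-1}$ is precisely the effect of the half-period shift composed with the sign of the $B$-matrix flips $B\leftrightarrow -B$ built into \eqref{eq:Cmutseq}, so that $[y_{\mathbf{i}}(u+h^{\vee}+\ell)]_{\mathbf{T}}=[y_{\boldsymbol{\omega}(\mathbf{i})}(u)]_{\mathbf{T}}$ holds first for $u$ in the initial stretch and then, by translating the whole mutation sequence (which is periodic of period $2$ in $u$ by construction in \eqref{eq:Cmutseq} and Lemma \ref{lem:FQmut}(i)), for all $u\in\frac12\mathbb{Z}$. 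The full periodicity (ii) is immediate from (i): applying the half-periodicity twice gives $[y_{\mathbf{i}}(u+2(h^{\vee}+\ell))]_{\mathbf{T}}=[y_{\boldsymbol{\omega}^2(\mathbf{i})}(u)]_{\mathbf{T}}=[y_{\mathbf{i}}(u)]_{\mathbf{T}}$, since $\boldsymbol{\omega}$ is an involution.

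The main obstacle is the bookkeeping in the half-periodicity step: one must check that the composite mutation sequence realizing the shift by $h^{\vee}+\ell$ genuinely matches up with the involution $\boldsymbol{\omega}$ on the level of quivers and labels (not merely on the tropical sign data), i.e.\ that the quiver returns to $\boldsymbol{\omega}(Q)$ and not to some other mutation-equivalent quiver, and that the parity conditions $\mathbf{p}_\pm$ are transported correctly under the shift. This is where Lemma \ref{lem:FQmut} and the bijections $g$, $g'$ of the preceding lemma are used, and it is the only place requiring genuine care; the rest is a formal consequence of the sign dichotomy. Since this is entirely parallel to \cite[Section 4]{IIKKN}, I would simply invoke that argument, indicating the three substitutions ($D_{r+1}\rightsquigarrow E_6$ for the core part, the explicit turning points from Proposition \ref{prop:Flev2}(ii)(b), and $h^{\vee}=9$), and not reproduce the details.
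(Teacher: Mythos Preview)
Your overall strategy is exactly what the paper intends: Theorem \ref{thm:FtYperiod} is presented there simply as a corollary of Propositions \ref{prop:Flev2} and \ref{prop:Flevh}, with the argument imported verbatim from \cite[Section 4]{IIKKN}. The deduction of (ii) from (i) via $\boldsymbol{\omega}^2=\mathrm{id}$ is also correct.

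However, your bookkeeping of the involutions is wrong, and this is precisely the part you flag as the ``main obstacle''. Part (iii) of Proposition \ref{prop:Flevh} does \emph{not} give $[y_{\mathbf{i}}(\ell)]_{\mathbf{T}}=y_{\boldsymbol{\omega}(\mathbf{i})}^{-1}$: it gives the pure up-down reflection $(i,i')\mapsto(i,\ell-i')$ for $i=1,2,5,6$ and $(i,i')\mapsto(i,2\ell-i')$ for $i=3,4$, with no action on the column index $i$. Part (iv) gives $[y_{\mathbf{i}}(-h^{\vee})]_{\mathbf{T}}=y_{\boldsymbol{r}(\mathbf{i})}^{-1}$, the left-right reflection of the right quiver, with no action on $i'$. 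Your claim that $\boldsymbol{r}=\boldsymbol{\omega}$ is false: Lemma \ref{lem:FQmut}(ii) only asserts $\boldsymbol{\omega}(Q)=\boldsymbol{r}(Q)$ as quivers when $h^{\vee}+\ell$ is odd, not that the two permutations of $\mathbf{I}$ coincide. The correct mechanism is that the \emph{composition} of the two involutions in (iii) and (iv) equals $\boldsymbol{\omega}$ (they act on disjoint coordinates and commute), and it is this composition that yields the half-period shift by $h^{\vee}+\ell$. Once you fix this, the rest of your argument goes through.

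A minor point: the exceptional positive values in (ii)(b) for $F_4$ are the six specific values $u=-2,-\tfrac{5}{2},-\tfrac{9}{2},-5,-7,-\tfrac{15}{2}$ listed in Proposition \ref{prop:Flevh}, not the single pair $-\tfrac{1}{2}h^{\vee},-\tfrac{1}{2}h^{\vee}-\tfrac{1}{2}$; you have imported the $C_r$ pattern here. This does not affect the periodicity argument, since only the statements (iii) and (iv) are actually used, but it is worth correcting.
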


\begin{theorem}
\label{thm:Flevhd}
For $[\mathcal{G}_Y(B,y)]_{\mathbf{T}}$,
let $N_+$ and $N_-$ denote the
total numbers of the positive and negative monomials,
respectively,
among $[y_{\mathbf{i}}(u)]_{\mathbf{T}}$
for $(\mathbf{i},u):\mathbf{p}_+$
in the region $0\leq u < 2(h^{\vee}+\ell)$.
Then, we have
\begin{align}
N_+=4\ell(3\ell+1),
\quad
N_-=24(4\ell-3).
\end{align}
\end{theorem}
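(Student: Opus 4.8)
The plan is to convert the statement into a finite combinatorial count, using the periodicities of Theorem~\ref{thm:FtYperiod} to cut down the range of $u$ and then reading off the sign of each monomial from Propositions~\ref{prop:Flev2} (case $\ell=2$) and~\ref{prop:Flevh} (case $\ell>2$), which between them classify $[y_{\mathbf{i}}(u)]_{\mathbf{T}}$ as positive or negative on the two ``fundamental'' ranges $0\le u<\ell$ and $-h^{\vee}\le u<0$. First I would observe that the window $0\le u<2(h^{\vee}+\ell)$ is one full period, and that the half periodicity $[y_{\mathbf{i}}(u+h^{\vee}+\ell)]_{\mathbf{T}}=[y_{\boldsymbol{\omega}(\mathbf{i})}(u)]_{\mathbf{T}}$, combined with the fact that $\boldsymbol{\omega}$ is a bijection of $\mathbf{I}$ that carries $\mathbf{p}_+$-points to $\mathbf{p}_+$-points (this being built into the periodicity of the mutation sequence of Lemma~\ref{lem:FQmut}), makes the positive/negative counts over $[h^{\vee}+\ell,\,2(h^{\vee}+\ell))$ equal to those over $[0,h^{\vee}+\ell)$, and the counts over $[\ell,\,h^{\vee}+\ell)$ equal to those over $[-h^{\vee},0)$. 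Consequently $N_{\pm}$ equals twice the number of $\mathbf{p}_+$-points in $0\le u<\ell$ whose monomial is positive (resp.\ negative), plus twice the analogous number over $-h^{\vee}\le u<0$.

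On $0\le u<\ell$, Proposition~\ref{prop:Flev2}(i)/\ref{prop:Flevh}(i) says every monomial is positive, so the positive count is simply the number of $\mathbf{p}_+$-points there and the negative count is $0$. That number I would read off from the shape of $\mathbf{I}$ together with the parity rule~\eqref{eq:Cpp}: each of the $2(2\ell-1)$ vertices in the tall columns $i=3,4$ is a $\mathbf{p}_+$-point for exactly $\ell$ values of $u$ in any length-$\ell$ window (the integer $u$ if the vertex is $\bullet_+$, the half-integer $u$ if $\bullet_-$), while the $4(\ell-1)$ vertices in the short columns $i=1,2,5,6$, among which $|\mathbf{I}^{\circ}_+|=|\mathbf{I}^{\circ}_-|=2(\ell-1)$, contribute at the even integers ($\circ_+$) respectively odd integers ($\circ_-$) of the window, whose numbers again sum to $\ell$ per sign class. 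This gives $2(2\ell-1)\ell+2(\ell-1)\ell=\ell(6\ell-4)$ positive monomials. On $-h^{\vee}\le u<0$, i.e.\ $u\in[-9,0)$, Proposition~\ref{prop:Flev2}(ii)/\ref{prop:Flevh}(ii) splits the vertices into a ``type~(a)'' part --- all of $\mathbf{I}^{\circ}$ together with $(3,i'),(4,i')$ for even $i'$, that is $6(\ell-1)$ vertices, on which every monomial is negative --- and a ``type~(b)'' part --- $(3,i'),(4,i')$ for odd $i'$, that is $2\ell$ vertices, on which exactly $6$ of the $9$ relevant $u$ give a negative monomial and $3$ a positive one (the $12$ ``negative'' and $6$ ``positive'' exceptional values listed in the proposition split as $6{+}6$ and $3{+}3$ between integer and half-integer $u$, according to whether the vertex is $\bullet_+$ or $\bullet_-$). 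Counting the relevant values in $[-9,0)$ --- $9$ for a $\bullet$-vertex, $4$ for a $\circ_+$-vertex, $5$ for a $\circ_-$-vertex --- the negative monomials number $18(\ell-1)+18(\ell-1)+12\ell=12(4\ell-3)$ and the positive ones $2\ell\cdot 3=6\ell$.

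Assembling the pieces gives $N_+=2\bigl(\ell(6\ell-4)+6\ell\bigr)=4\ell(3\ell+1)$ and $N_-=2\bigl(0+12(4\ell-3)\bigr)=24(4\ell-3)$, which is the claim. As a built-in check, $N_++N_-=12\ell^2+100\ell-72$ is exactly half of $2t(h^{\vee}+\ell)\bigl((\sum_a t_a)\ell-r\bigr)=4(9+\ell)(6\ell-4)$, the total number of triplets in $0\le u<2(h^{\vee}+\ell)$ noted after Theorem~\ref{thm:DI2}, so the positive and negative counts together account for all $\mathbf{p}_+$-points.

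I expect the only genuine effort to lie in the bookkeeping of the middle paragraph: checking that a $\bullet$-vertex is a $\mathbf{p}_+$-point for exactly $\ell$ (respectively $h^{\vee}=9$) values of $u$ in a length-$\ell$ (respectively length-$9$) window irrespective of its $\pm$-label; that the unequal $\circ_+$ and $\circ_-$ counts in $[-9,0)$ cancel because $|\mathbf{I}^{\circ}_+|=|\mathbf{I}^{\circ}_-|$; and that the exceptional list of Proposition~\ref{prop:Flev2}(ii)(b)/\ref{prop:Flevh}(ii)(b) distributes so that each individual type-(b) vertex sees precisely $6$ negatives and $3$ positives in $[-9,0)$. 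None of these is difficult, and the case $\ell=2$ requires no separate treatment since Propositions~\ref{prop:Flev2} and~\ref{prop:Flevh} supply identical structural data.
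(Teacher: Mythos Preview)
Your proof is correct and follows exactly the approach the paper intends: the theorem is stated there as a corollary of Propositions~\ref{prop:Flev2} and~\ref{prop:Flevh} (via Theorem~\ref{thm:FtYperiod}), and you have carried out the implied combinatorial count in full detail, including the parity bookkeeping for $\mathbf{I}^{\circ}_{\pm}$ and the $6{+}3$ split of the exceptional list in part~(ii)(b), which the paper leaves to the reader. The only point I would tighten is your justification that $\boldsymbol{\omega}$ carries $\mathbf{p}_+$-points to $\mathbf{p}_+$-points after the shift by $h^{\vee}+\ell$: this follows from Lemma~\ref{lem:FQmut}(ii) together with the $2$-periodicity of the mutation sequence, but you might state explicitly that $\boldsymbol{\omega}$ swaps $\mathbf{I}^{\circ}_{+}\leftrightarrow\mathbf{I}^{\circ}_{-}$ and preserves $\mathbf{I}^{\bullet}_{\pm}$ (or vice versa), matching the parity shift of $u$ by $h^{\vee}+\ell$.
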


\subsection{Periodicities and dilogarithm identities}

Applying \cite[Theorem 5.1]{IIKKN} to
Theorem  \ref{thm:FtYperiod},
we obtain the periodicities:

\begin{theorem}
\label{thm:Fxperiod}
For $\mathcal{A}(B,x,y)$,
the following relations hold.
\par
(i) Half periodicity: 
$x_{\mathbf{i}}(u+h^{\vee}+\ell)
=x_{\boldsymbol{\omega}(\mathbf{i})}(u)
$.
\par
(ii) 
 Full periodicity: 
$x_{\mathbf{i}}(u+2(h^{\vee}+\ell))
=x_{\mathbf{i}}(u)
$.
\end{theorem}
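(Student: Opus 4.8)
The plan is to deduce Theorem \ref{thm:Fxperiod} from the tropical periodicity already established in Theorem \ref{thm:FtYperiod}, exactly as in the $B_r$ and $C_r$ cases, by invoking the general transfer principle \cite[Theorem 5.1]{IIKKN}. That theorem says, roughly, that for the cluster algebra with coefficients in the universal semifield attached to a mutation sequence which is periodic at the level of quivers (up to a permutation of the index set), a periodicity of the associated tropical Y-pattern — i.e.\ of the tropical evaluations $[y_{\mathbf{i}}(u)]_{\mathbf{T}}$, which encode the $c$-vectors and which control, via Plamondon's categorification, the $g$-vectors and hence the cluster monomials — forces the corresponding periodicity of the cluster variables $x_{\mathbf{i}}(u)$. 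The ingredients needed are therefore threefold: (a) the mutation sequence \eqref{eq:Cmutseq} for $B=B_\ell(F_4)$ is periodic at the quiver level with the prescribed permutation, which is Lemma \ref{lem:FQmut}; (b) the tropical Y-system is half/full periodic with the permutation $\boldsymbol{\omega}$, which is Theorem \ref{thm:FtYperiod}; and (c) the bookkeeping identifying the shift $u\mapsto u+h^{\vee}+\ell$ with the appropriate number of steps in \eqref{eq:Cmutseq}.

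First I would recall precisely the statement of \cite[Theorem 5.1]{IIKKN} and check that its hypotheses are literally met here. The crucial hypothesis is the quiver periodicity of Lemma \ref{lem:FQmut}: after the composite mutations realizing one period of \eqref{eq:Cmutseq} (the shift $u\mapsto u+h^{\vee}+\ell$) the quiver returns to $Q=Q_\ell(F_4)$ when $h^{\vee}+\ell$ is even and to $\boldsymbol{r}(Q)$ when $h^{\vee}+\ell$ is odd, and in the latter case the residual reflection $\boldsymbol{r}$ is absorbed into $\boldsymbol{\omega}$ so that $\boldsymbol{\omega}(Q)=Q$ or $\boldsymbol{r}(Q)$ accordingly. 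Granting this, Theorem 5.1 of \cite{IIKKN} upgrades the tropical half periodicity $[y_{\mathbf{i}}(u+h^{\vee}+\ell)]_{\mathbf{T}}=[y_{\boldsymbol{\omega}(\mathbf{i})}(u)]_{\mathbf{T}}$ of Theorem \ref{thm:FtYperiod}(i) to $x_{\mathbf{i}}(u+h^{\vee}+\ell)=x_{\boldsymbol{\omega}(\mathbf{i})}(u)$, which is part (i). For part (ii) I would simply iterate part (i): applying it twice and using that $\boldsymbol{\omega}$ is an involution gives $x_{\mathbf{i}}(u+2(h^{\vee}+\ell))=x_{\boldsymbol{\omega}(\mathbf{i})}(u+h^{\vee}+\ell)=x_{\boldsymbol{\omega}^2(\mathbf{i})}(u)=x_{\mathbf{i}}(u)$; alternatively one feeds the full tropical periodicity of Theorem \ref{thm:FtYperiod}(ii) directly into \cite[Theorem 5.1]{IIKKN}.

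I do not expect a genuine obstacle here: the entire mathematical content has been pushed into Theorem \ref{thm:FtYperiod}, whose proof in turn rests on Propositions \ref{prop:Flev2} and \ref{prop:Flevh}. The only points requiring care are the verification that the $F_4$ data — the quiver $Q_\ell(F_4)$ of Figure \ref{fig:quiverF}, the involutions $\boldsymbol{r}$ and $\boldsymbol{\omega}$, and the labelling bijections $g,g'$ — match the hypotheses of \cite[Theorem 5.1]{IIKKN} verbatim, and that the parity case distinction ($h^{\vee}+\ell$ even vs.\ odd) is handled consistently: since $h^{\vee}=9$ for $F_4$, the parity of $h^{\vee}+\ell$ is opposite to that of $\ell$, so the even/odd roles are swapped relative to the naive expectation. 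This is exactly the point at which the $C_r$ and $F_4$ discussions differ in detail from $B_r$, but it is a finite check rather than a new idea, and it has already been prepared by Lemma \ref{lem:FQmut} and the remarks following the labelling lemma.
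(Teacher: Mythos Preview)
Your proposal is correct and matches the paper's own argument essentially verbatim: the paper simply states ``Applying \cite[Theorem 5.1]{IIKKN} to Theorem \ref{thm:FtYperiod}, we obtain the periodicities,'' which is precisely your plan, and your additional remarks about checking the hypotheses via Lemma \ref{lem:FQmut} and deducing (ii) from (i) by the involution property of $\boldsymbol{\omega}$ are the natural elaborations the paper leaves implicit.
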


\begin{theorem}
\label{thm:Fyperiod}
For $\mathcal{G}(B,y)$,
the following relations hold.
\par
(i) Half periodicity: 
$y_{\mathbf{i}}(u+h^{\vee}+\ell)
=y_{\boldsymbol{\omega}(\mathbf{i})}(u)
$.
\par
(ii) 
 Full periodicity: 
$y_{\mathbf{i}}(u+2(h^{\vee}+\ell))
=y_{\mathbf{i}}(u)
$.
\end{theorem}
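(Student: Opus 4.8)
The plan is to deduce Theorem~\ref{thm:Fyperiod} from the periodicity of the tropical Y-system, Theorem~\ref{thm:FtYperiod}, in exactly the same manner in which the corresponding statement for $C_r$ (Theorem~\ref{thm:Cyperiod}) was obtained from Theorem~\ref{thm:CtYperiod}. Concretely, one applies \cite[Theorem~5.1]{IIKKN}, whose proof rests on Plamondon's categorification \cite{P1,P2} of the cluster algebra attached to an arbitrary skew-symmetric matrix, and which asserts that a periodicity visible after the tropical evaluation $\pi_{\mathbf{T}}$ lifts to a periodicity in the coefficient group $\mathcal{G}(B,y)$ itself (and, simultaneously, in the cluster variables $x_{\mathbf{i}}(u)$, which is what gives the companion Theorem~\ref{thm:Fxperiod}).

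First I would verify that the hypotheses of \cite[Theorem~5.1]{IIKKN} are met for $B=B_{\ell}(F_4)$. The mutation sequence \eqref{eq:Cmutseq} is formally identical to the one used in the $C_r$ case, and Lemma~\ref{lem:FQmut} supplies precisely the quiver identities that are needed: part (i) gives the periodic sequence of quivers $Q\leftrightarrow Q^{\mathrm{op}}\leftrightarrow\boldsymbol{r}(Q)\leftrightarrow\boldsymbol{r}(Q)^{\mathrm{op}}\leftrightarrow Q$ under the composite mutations $\mu^{\bullet}_{\pm}\mu^{\circ}_{\pm}$, and part (ii) identifies $\boldsymbol{\omega}(Q)$ with $Q$ or $\boldsymbol{r}(Q)$ according to the parity of $h^{\vee}+\ell$. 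Hence the sequence of exchange matrices along \eqref{eq:Cmutseq} is $2(h^{\vee}+\ell)$-periodic, and $(h^{\vee}+\ell)$-periodic up to the involution $\boldsymbol{\omega}$, which is the structural input the theorem requires.

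Next I would invoke Theorem~\ref{thm:FtYperiod}: under the tropical evaluation the $y$-variables satisfy $[y_{\mathbf{i}}(u+h^{\vee}+\ell)]_{\mathbf{T}}=[y_{\boldsymbol{\omega}(\mathbf{i})}(u)]_{\mathbf{T}}$ and therefore also the full periodicity with period $2(h^{\vee}+\ell)$. The content of \cite[Theorem~5.1]{IIKKN} is that, because the $c$-vectors are sign-coherent (a consequence of Plamondon's work), the tropical $y$-variables already determine the seeds along the sequence up to the relevant symmetry; consequently the half- and full-periodicities of the tropical $y$-variables force the equalities $y_{\mathbf{i}}(u+h^{\vee}+\ell)=y_{\boldsymbol{\omega}(\mathbf{i})}(u)$ and $y_{\mathbf{i}}(u+2(h^{\vee}+\ell))=y_{\mathbf{i}}(u)$ in $\mathcal{G}(B,y)$. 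Part (ii) also follows formally from part (i) applied twice together with $\boldsymbol{\omega}^{2}=\mathrm{id}$.

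I do not expect a genuine obstacle at this stage: the whole difficulty of the method has already been absorbed into the level-$2$ tropical computation (Proposition~\ref{prop:Flev2}), its propagation to higher levels (Proposition~\ref{prop:Flevh}), and the resulting tropical periodicity (Theorem~\ref{thm:FtYperiod}). The only point that requires care is to confirm that the abstract statement \cite[Theorem~5.1]{IIKKN} applies verbatim to the $F_4$ data, that is, that nothing in its proof exploited features special to type $B_r$; but since that theorem is phrased for an arbitrary skew-symmetric $B$ equipped with a mutation sequence of the shape \eqref{eq:Cmutseq} and an involution behaving as in Lemma~\ref{lem:FQmut}, this verification is routine, and Theorems~\ref{thm:Tperiod} and~\ref{thm:Yperiod} for $F_4$ then follow from Theorems~\ref{thm:FTiso}, \ref{thm:FYiso}, \ref{thm:Fxperiod}, and~\ref{thm:Fyperiod} exactly as in the $B_r$ and $C_r$ cases.
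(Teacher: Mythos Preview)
Your proposal is correct and follows essentially the same route as the paper: the paper simply states that Theorems~\ref{thm:Fxperiod} and~\ref{thm:Fyperiod} are obtained by applying \cite[Theorem~5.1]{IIKKN} to the tropical periodicity Theorem~\ref{thm:FtYperiod}. You have spelled out in more detail the verification of the hypotheses (via Lemma~\ref{lem:FQmut}) and the role of $\boldsymbol{\omega}$, but the underlying argument is the same.
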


Then,
 Theorems \ref{thm:Tperiod} and \ref{thm:Yperiod}
 for $F_4$ follow from
Theorems
\ref{thm:FTiso}, \ref{thm:FYiso},
\ref{thm:Fxperiod}, and \ref{thm:Fyperiod}.
Furthermore, Theorem \ref{thm:DI2}  for $F_4$ is  obtained from
the above periodicities  and Theorem \ref{thm:Flevhd} as
in the $B_r$ case \cite[Section 6]{IIKKN}.

\section{Type $G_2$}

The $G_2$ case is mostly parallel to the former cases,
but slightly different because the number $t$
in \eqref{eq:t1} is three.
Again, the properties of the tropical Y-system at level 2
(Proposition \ref{prop:Glev2}) are crucial and
specific to $G_2$.

\subsection{Parity decompositions of T and Y-systems}

For a triplet $(a,m,u)\in \mathcal{I}_{\ell}$,
we reset the parity conditions $\mathbf{P}_{+}$ and
$\mathbf{P}_{-}$ by
\begin{align}
\label{eq:GPcond}
\mathbf{P}_{+}:& \ \mbox{$a+m+3u$ is even},\\
\mathbf{P}_{-}:& \ \mbox{$a+m+3u$ is odd}.
\end{align}
Then, we have
$\EuScript{T}^{\circ}_{\ell}(G_2)_+
\simeq
\EuScript{T}^{\circ}_{\ell}(G_2)_-
$
by $T^{(a)}_m(u)\mapsto T^{(a)}_m(u+\frac{1}{3})$ and
\begin{align}
\EuScript{T}^{\circ}_{\ell}(G_2)
\simeq
\EuScript{T}^{\circ}_{\ell}(G_2)_+
\otimes_{\mathbb{Z}}
\EuScript{T}^{\circ}_{\ell}(G_2)_-.
\end{align}

For a triplet $(a,m,u)\in \mathcal{I}_{\ell}$ ,
we reset the parity conditions $\mathbf{P}'_{+}$ and
$\mathbf{P}'_{-}$ by
\begin{align}
\label{eq:GP'cond}
\mathbf{P}'_{+}:& \ \mbox{$a+m+3u$ is odd},\\
\mathbf{P}'_{-}:& \ \mbox{$a+m+3u$ is even}.
\end{align}
We have
\begin{align}
(a,m,u):\mathbf{P}'_+
\ \Longleftrightarrow \
\textstyle (a,m,u\pm \frac{1}{t_a}):\mathbf{P}_+.
\end{align}
Also, we have
$\EuScript{Y}^{\circ}_{\ell}(G_2)_+
\simeq
\EuScript{Y}^{\circ}_{\ell}(G_2)_-
$
by $Y^{(a)}_m(u)\mapsto Y^{(a)}_m(u+\frac{1}{3})$,
$1+Y^{(a)}_m(u)\mapsto 1+Y^{(a)}_m(u+\frac{1}{3})$,
 and
\begin{align}
\EuScript{Y}^{\circ}_{\ell}(G_2)
\simeq
\EuScript{Y}^{\circ}_{\ell}(G_2)_+
\times
\EuScript{Y}^{\circ}_{\ell}(G_2)_-.
\end{align}

\subsection{Quiver $Q_{\ell}(G_2)$}

\begin{figure}
\begin{picture}(275,155)(-50,0)
%
\put(0,0)
{
\put(0,30){\circle{5}}
\put(0,75){\circle{5}}
\put(0,120){\circle{5}}
\put(30,0){\circle*{5}}
\put(30,15){\circle*{5}}
\put(30,30){\circle*{5}}
\put(30,45){\circle*{5}}
\put(30,60){\circle*{5}}
\put(30,75){\circle*{5}}
\put(30,90){\circle*{5}}
\put(30,105){\circle*{5}}
\put(30,120){\circle*{5}}
\put(30,135){\circle*{5}}
\put(30,150){\circle*{5}}
%
\put(30,3){\vector(0,1){9}}
\put(30,27){\vector(0,-1){9}}
\put(30,33){\vector(0,1){9}}
\put(30,57){\vector(0,-1){9}}
\put(30,63){\vector(0,1){9}}
\put(30,87){\vector(0,-1){9}}
\put(30,93){\vector(0,1){9}}
\put(30,117){\vector(0,-1){9}}
\put(30,123){\vector(0,1){9}}
\put(30,147){\vector(0,-1){9}}
\put(0,72){\vector(0,-1){39}}
\put(0,78){\vector(0,1){39}}
\put(3,28){\vector(1,-1){24}}
\put(27,17){\vector(-2,1){23}}
\put(3,30){\vector(1,0){24}}
\put(27,43){\vector(-2,-1){23}}
\put(3,32){\vector(1,1){24}}
\put(27,75){\vector(-1,0){24}}
\put(3,118){\vector(1,-1){24}}
\put(27,107){\vector(-2,1){23}}
\put(3,120){\vector(1,0){24}}
\put(27,133){\vector(-2,-1){23}}
\put(3,122){\vector(1,1){24}}
\put(3,-2)
{
\put(-17,30){\small IV}
\put(-17,75){\small I}
\put(-17,120){\small IV}
}
\put(4,-2)
{
\put(30,0){\small $+$}
\put(30,15){\small $-$}
\put(30,30){\small $+$}
\put(30,45){\small $-$}
\put(30,60){\small $+$}
\put(30,75){\small $-$}
\put(30,90){\small $+$}
\put(30,105){\small $-$}
\put(30,120){\small $+$}
\put(30,135){\small $-$}
\put(30,150){\small $+$}
}
}
\put(80,0)
{
\put(0,30){\circle{5}}
\put(0,75){\circle{5}}
\put(0,120){\circle{5}}
\put(30,0){\circle*{5}}
\put(30,15){\circle*{5}}
\put(30,30){\circle*{5}}
\put(30,45){\circle*{5}}
\put(30,60){\circle*{5}}
\put(30,75){\circle*{5}}
\put(30,90){\circle*{5}}
\put(30,105){\circle*{5}}
\put(30,120){\circle*{5}}
\put(30,135){\circle*{5}}
\put(30,150){\circle*{5}}
%
\put(30,3){\vector(0,1){9}}
\put(30,27){\vector(0,-1){9}}
\put(30,33){\vector(0,1){9}}
\put(30,57){\vector(0,-1){9}}
\put(30,63){\vector(0,1){9}}
\put(30,87){\vector(0,-1){9}}
\put(30,93){\vector(0,1){9}}
\put(30,117){\vector(0,-1){9}}
\put(30,123){\vector(0,1){9}}
\put(30,147){\vector(0,-1){9}}
\put(0,33){\vector(0,1){39}}
\put(0,117){\vector(0,-1){39}}
\put(27,17){\vector(-2,1){24}}
\put(3,30){\vector(1,0){24}}
\put(27,43){\vector(-2,-1){24}}
\put(3,73){\vector(2,-1){24}}
\put(27,75){\vector(-1,0){23}}
\put(3,77){\vector(2,1){24}}
\put(27,107){\vector(-2,1){24}}
\put(3,120){\vector(1,0){24}}
\put(27,133){\vector(-2,-1){24}}
\put(3,-2)
{
\put(-17,30){\small II}
\put(-17,75){\small V}
\put(-17,120){\small II}
}
\put(4,-2)
{
\put(30,0){\small $+$}
\put(30,15){\small $-$}
\put(30,30){\small $+$}
\put(30,45){\small $-$}
\put(30,60){\small $+$}
\put(30,75){\small $-$}
\put(30,90){\small $+$}
\put(30,105){\small $-$}
\put(30,120){\small $+$}
\put(30,135){\small $-$}
\put(30,150){\small $+$}
}
}
\put(160,0)
{
\put(0,30){\circle{5}}
\put(0,75){\circle{5}}
\put(0,120){\circle{5}}
\put(30,0){\circle*{5}}
\put(30,15){\circle*{5}}
\put(30,30){\circle*{5}}
\put(30,45){\circle*{5}}
\put(30,60){\circle*{5}}
\put(30,75){\circle*{5}}
\put(30,90){\circle*{5}}
\put(30,105){\circle*{5}}
\put(30,120){\circle*{5}}
\put(30,135){\circle*{5}}
\put(30,150){\circle*{5}}
%
\put(30,3){\vector(0,1){9}}
\put(30,27){\vector(0,-1){9}}
\put(30,33){\vector(0,1){9}}
\put(30,57){\vector(0,-1){9}}
\put(30,63){\vector(0,1){9}}
\put(30,87){\vector(0,-1){9}}
\put(30,93){\vector(0,1){9}}
\put(30,117){\vector(0,-1){9}}
\put(30,123){\vector(0,1){9}}
\put(30,147){\vector(0,-1){9}}
\put(0,72){\vector(0,-1){39}}
\put(0,78){\vector(0,1){39}}
\put(3,30){\vector(1,0){24}}
\put(27,47){\vector(-1,1){24}}
\put(3,73){\vector(2,-1){24}}
\put(27,75){\vector(-1,0){23}}
\put(3,77){\vector(2,1){24}}
\put(27,103){\vector(-1,-1){24}}
\put(3,120){\vector(1,0){24}}
\put(3,-2)
{
\put(-17,30){\small VI}
\put(-17,75){\small III}
\put(-17,120){\small VI}
}
\put(4,-2)
{
\put(30,0){\small $+$}
\put(30,15){\small $-$}
\put(30,30){\small $+$}
\put(30,45){\small $-$}
\put(30,60){\small $+$}
\put(30,75){\small $-$}
\put(30,90){\small $+$}
\put(30,105){\small $-$}
\put(30,120){\small $+$}
\put(30,135){\small $-$}
\put(30,150){\small $+$}
}
}
\put(-50,73){${\scriptstyle\ell -1}\left\{ \makebox(0,50){}\right.$}
\put(210,73){$\left. \makebox(0,80){}\right\}{\scriptstyle3\ell -1}$}
\end{picture}
\begin{picture}(275,210)(-50,0)
%
\put(0,0)
{
\put(0,30){\circle{5}}
\put(0,75){\circle{5}}
\put(0,120){\circle{5}}
\put(0,165){\circle{5}}
\put(30,0){\circle*{5}}
\put(30,15){\circle*{5}}
\put(30,30){\circle*{5}}
\put(30,45){\circle*{5}}
\put(30,60){\circle*{5}}
\put(30,75){\circle*{5}}
\put(30,90){\circle*{5}}
\put(30,105){\circle*{5}}
\put(30,120){\circle*{5}}
\put(30,135){\circle*{5}}
\put(30,150){\circle*{5}}
\put(30,165){\circle*{5}}
\put(30,180){\circle*{5}}
\put(30,195){\circle*{5}}
%
\put(30,3){\vector(0,1){9}}
\put(30,27){\vector(0,-1){9}}
\put(30,33){\vector(0,1){9}}
\put(30,57){\vector(0,-1){9}}
\put(30,63){\vector(0,1){9}}
\put(30,87){\vector(0,-1){9}}
\put(30,93){\vector(0,1){9}}
\put(30,117){\vector(0,-1){9}}
\put(30,123){\vector(0,1){9}}
\put(30,147){\vector(0,-1){9}}
\put(30,153){\vector(0,1){9}}
\put(30,177){\vector(0,-1){9}}
\put(30,183){\vector(0,1){9}}
\put(0,72){\vector(0,-1){39}}
\put(0,78){\vector(0,1){39}}
\put(0,162){\vector(0,-1){39}}
\put(3,28){\vector(1,-1){24}}
\put(27,17){\vector(-2,1){23}}
\put(3,30){\vector(1,0){24}}
\put(27,43){\vector(-2,-1){23}}
\put(3,32){\vector(1,1){24}}
\put(27,75){\vector(-1,0){24}}
\put(3,118){\vector(1,-1){24}}
\put(27,107){\vector(-2,1){23}}
\put(3,120){\vector(1,0){24}}
\put(27,133){\vector(-2,-1){23}}
\put(3,122){\vector(1,1){24}}
\put(27,165){\vector(-1,0){24}}
\put(3,-2)
{
\put(-17,30){\small IV}
\put(-17,75){\small I}
\put(-17,120){\small IV}
\put(-17,165){\small I}
}
\put(4,-2)
{
\put(30,0){\small $+$}
\put(30,15){\small $-$}
\put(30,30){\small $+$}
\put(30,45){\small $-$}
\put(30,60){\small $+$}
\put(30,75){\small $-$}
\put(30,90){\small $+$}
\put(30,105){\small $-$}
\put(30,120){\small $+$}
\put(30,135){\small $-$}
\put(30,150){\small $+$}
\put(30,165){\small $-$}
\put(30,180){\small $+$}
\put(30,195){\small $-$}
}
}
\put(80,0)
{
\put(0,30){\circle{5}}
\put(0,75){\circle{5}}
\put(0,120){\circle{5}}
\put(0,165){\circle{5}}
\put(30,0){\circle*{5}}
\put(30,15){\circle*{5}}
\put(30,30){\circle*{5}}
\put(30,45){\circle*{5}}
\put(30,60){\circle*{5}}
\put(30,75){\circle*{5}}
\put(30,90){\circle*{5}}
\put(30,105){\circle*{5}}
\put(30,120){\circle*{5}}
\put(30,135){\circle*{5}}
\put(30,150){\circle*{5}}
\put(30,165){\circle*{5}}
\put(30,180){\circle*{5}}
\put(30,195){\circle*{5}}
%
\put(30,3){\vector(0,1){9}}
\put(30,27){\vector(0,-1){9}}
\put(30,33){\vector(0,1){9}}
\put(30,57){\vector(0,-1){9}}
\put(30,63){\vector(0,1){9}}
\put(30,87){\vector(0,-1){9}}
\put(30,93){\vector(0,1){9}}
\put(30,117){\vector(0,-1){9}}
\put(30,123){\vector(0,1){9}}
\put(30,147){\vector(0,-1){9}}
\put(30,153){\vector(0,1){9}}
\put(30,177){\vector(0,-1){9}}
\put(30,183){\vector(0,1){9}}
\put(0,33){\vector(0,1){39}}
\put(0,117){\vector(0,-1){39}}
\put(0,123){\vector(0,1){39}}
\put(27,17){\vector(-2,1){24}}
\put(3,30){\vector(1,0){24}}
\put(27,43){\vector(-2,-1){24}}
\put(3,73){\vector(2,-1){24}}
\put(27,75){\vector(-1,0){23}}
\put(3,77){\vector(2,1){24}}
\put(27,107){\vector(-2,1){24}}
\put(3,120){\vector(1,0){24}}
\put(27,133){\vector(-2,-1){24}}
\put(3,163){\vector(2,-1){24}}
\put(27,165){\vector(-1,0){23}}
\put(3,167){\vector(2,1){24}}
\put(3,-2)
{
\put(-17,30){\small II}
\put(-17,75){\small V}
\put(-17,120){\small II}
\put(-17,165){\small V}
}
\put(4,-2)
{
\put(30,0){\small $+$}
\put(30,15){\small $-$}
\put(30,30){\small $+$}
\put(30,45){\small $-$}
\put(30,60){\small $+$}
\put(30,75){\small $-$}
\put(30,90){\small $+$}
\put(30,105){\small $-$}
\put(30,120){\small $+$}
\put(30,135){\small $-$}
\put(30,150){\small $+$}
\put(30,165){\small $-$}
\put(30,180){\small $+$}
\put(30,195){\small $-$}
}
}
\put(160,0)
{
\put(0,30){\circle{5}}
\put(0,75){\circle{5}}
\put(0,120){\circle{5}}
\put(0,165){\circle{5}}
\put(30,0){\circle*{5}}
\put(30,15){\circle*{5}}
\put(30,30){\circle*{5}}
\put(30,45){\circle*{5}}
\put(30,60){\circle*{5}}
\put(30,75){\circle*{5}}
\put(30,90){\circle*{5}}
\put(30,105){\circle*{5}}
\put(30,120){\circle*{5}}
\put(30,135){\circle*{5}}
\put(30,150){\circle*{5}}
\put(30,165){\circle*{5}}
\put(30,180){\circle*{5}}
\put(30,195){\circle*{5}}
%
\put(30,3){\vector(0,1){9}}
\put(30,27){\vector(0,-1){9}}
\put(30,33){\vector(0,1){9}}
\put(30,57){\vector(0,-1){9}}
\put(30,63){\vector(0,1){9}}
\put(30,87){\vector(0,-1){9}}
\put(30,93){\vector(0,1){9}}
\put(30,117){\vector(0,-1){9}}
\put(30,123){\vector(0,1){9}}
\put(30,147){\vector(0,-1){9}}
\put(30,153){\vector(0,1){9}}
\put(30,177){\vector(0,-1){9}}
\put(30,183){\vector(0,1){9}}
\put(0,72){\vector(0,-1){39}}
\put(0,78){\vector(0,1){39}}
\put(0,162){\vector(0,-1){39}}
\put(3,30){\vector(1,0){24}}
\put(27,47){\vector(-1,1){24}}
\put(3,73){\vector(2,-1){24}}
\put(27,75){\vector(-1,0){23}}
\put(3,77){\vector(2,1){24}}
\put(27,103){\vector(-1,-1){24}}
\put(3,120){\vector(1,0){24}}
\put(27,137){\vector(-1,1){24}}
\put(3,163){\vector(2,-1){24}}
\put(27,165){\vector(-1,0){23}}
\put(3,167){\vector(2,1){24}}
\put(27,193){\vector(-1,-1){24}}
\put(3,-2)
{
\put(-17,30){\small VI}
\put(-17,75){\small III}
\put(-17,120){\small VI}
\put(-17,165){\small III}
}
\put(4,-2)
{
\put(30,0){\small $+$}
\put(30,15){\small $-$}
\put(30,30){\small $+$}
\put(30,45){\small $-$}
\put(30,60){\small $+$}
\put(30,75){\small $-$}
\put(30,90){\small $+$}
\put(30,105){\small $-$}
\put(30,120){\small $+$}
\put(30,135){\small $-$}
\put(30,150){\small $+$}
\put(30,165){\small $-$}
\put(30,180){\small $+$}
\put(30,195){\small $-$}
}
}

\put(-50,95.5){${\scriptstyle\ell -1}\left\{ \makebox(0,73){}\right.$}
\put(210,95.5){$\left. \makebox(0,103){}\right\}{\scriptstyle3\ell -1}$}
\end{picture}
\caption{The quiver $Q_{\ell}(G_2)$  for even $\ell$
(upper) and for odd $\ell$ (lower),
where we identify the right columns in the three quivers.}
\label{fig:quiverG}
\end{figure}
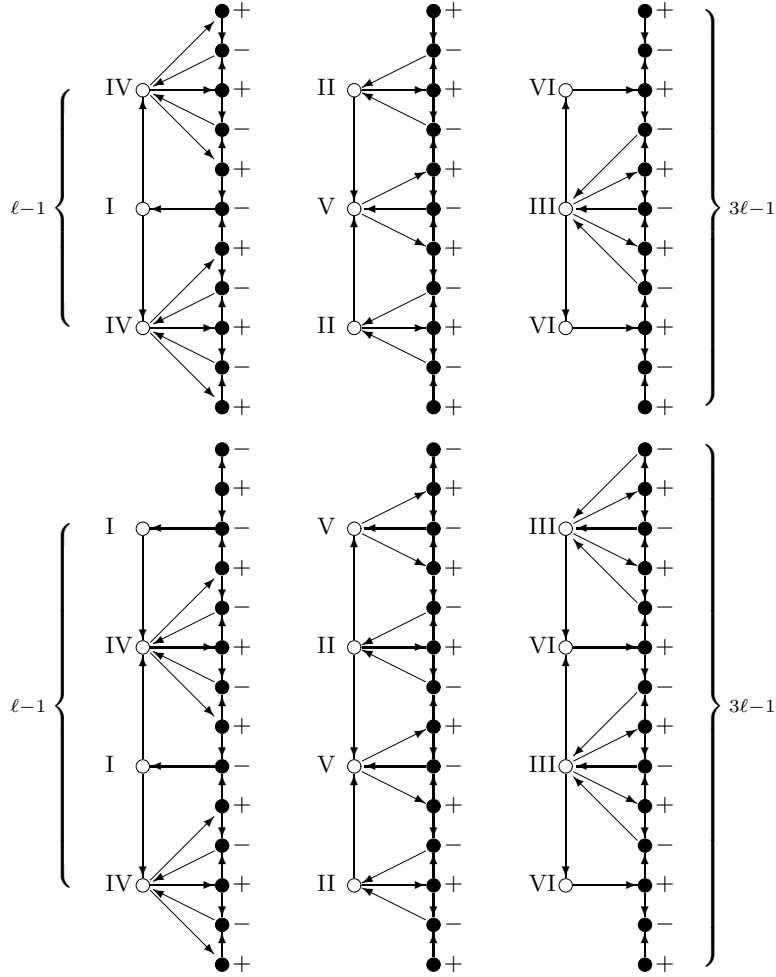

With type $G_2$ and $\ell\geq 2$ we associate
 the quiver $Q_{\ell}(G_2)$
by Figure
\ref{fig:quiverG},
where the right columns in the three quivers are
identified.
Also we assign the empty or filled circle $\circ$/$\bullet$
to each vertex;
furthermore, we assign the sign +/$-$ to each vertex
of property $\bullet$, and one of the numbers
I,\dots, VI to each vertex of property $\circ$.

Let us choose  the index set $\mathbf{I}$
of the vertices of $Q_{\ell}(G_2)$
so that $\mathbf{i}=(i,i')\in \mathbf{I}$ represents
the vertex 
at the $i'$th row (from the bottom)
of the left column in the $i$th quiver (from the left)
for $i=1,2,3$,
 and the one of the right column in any quiver
for $i=4$.
Thus, $i=1,\dots,4$, and $i'=1,\dots,\ell-1$ if $i\neq 4$
and $i'=1,\dots,3\ell-1$ if $i=4$.

For a permutation $s$ of $\{1,2,3\}$,
let $\boldsymbol\nu_{s}$ be the permutation of
$\mathbf{I}$ such that
$\boldsymbol\nu_{s}(i,i')=(s(i),i')$ for $i\neq 4$
and $(4,i')$ for $i=4$.
Let $\boldsymbol{\omega}$ be the involution acting on $\mathbf{I}$
by the up-down reflection.
Let $\boldsymbol\nu_{s}(Q_{\ell}(G_2))$
and $\boldsymbol\omega(Q_{\ell}(G_2))$ denote the
quivers induced from $Q_{\ell}(G_2)$
 by
$\boldsymbol{\nu}_s$ and $\boldsymbol{\omega}$,
respectively.

\begin{lemma}
\label{lem:GQmut}
Let $Q=Q_{\ell}(G_2)$.
\par
(i)
We have a periodic sequence of mutations of quivers
\begin{align}
\label{eq:GB2}
\begin{matrix}
Q
& 
\displaystyle
\mathop{\longleftrightarrow}^{\mu^{\bullet}_+
\mu^{\circ}_{\mathrm{I}}}
&\boldsymbol{\nu}_{(23)}(Q)^{\mathrm{op}}
&
\displaystyle
\mathop{\longleftrightarrow}^{\mu^{\bullet}_-
\mu^{\circ}_{\mathrm{II}}}
&
\boldsymbol{\nu}_{(312)}(Q)
&
\displaystyle
\mathop{\longleftrightarrow}^{\mu^{\bullet}_+
\mu^{\circ}_{\mathrm{III}}}
&
\boldsymbol{\nu}_{(13)}(Q)^{\mathrm{op}}
\\
&
\displaystyle
\mathop{\longleftrightarrow}^{\mu^{\bullet}_-
\mu^{\circ}_{\mathrm{IV}}}
&
\boldsymbol{\nu}_{(231)}(Q)
&
\displaystyle
\mathop{\longleftrightarrow}^{\mu^{\bullet}_+
\mu^{\circ}_{\mathrm{V}}}
&
\boldsymbol{\nu}_{(12)}(Q)^{\mathrm{op}}
&
\displaystyle
\mathop{\longleftrightarrow}^{\mu^{\bullet}_-
\mu^{\circ}_{\mathrm{VI}}}
&
Q.
\end{matrix}
\end{align}
\par
(ii)
 $\boldsymbol{\omega}(Q)=Q$ if $h^{\vee}+\ell$ is even,
and
$\boldsymbol{\omega}(Q)
=\boldsymbol{\nu}_{(13)}(Q)^{\mathrm{op}}$ if $h^{\vee}+\ell$ is odd.
\end{lemma}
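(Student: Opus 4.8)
The plan is to prove both parts by direct combinatorial inspection of the quiver $Q_\ell(G_2)$ displayed in Figure~\ref{fig:quiverG}, paralleling the treatment of Lemma~\ref{lem:CQmut} and \cite[Lemma~3.1]{IIKKN}, the only genuinely new feature being the three-fold (rather than two-fold) internal symmetry that reflects the value $t=3$ in \eqref{eq:t1}. The basic observation is that $Q_\ell(G_2)$ is a ``$D_4$-shaped'' quiver: it has a single $\bullet$-spine (the shared right column, with $3\ell-1$ vertices) to which three $\circ$-legs (the left columns of the three sub-quivers, each with $\ell-1$ vertices) are attached, with no arrows between distinct legs. Accordingly $Q_\ell(G_2)$ is to be viewed as a ``folding'' of the level-$\ell$ quiver of type $D_4$, and \eqref{eq:GB2} as the folded form of the corresponding period-$6$ mutation sequence of type $D_4$, in which the $S_3$ of outer symmetries of the $D_4$ Dynkin diagram is the source of the leg permutations $\boldsymbol{\nu}_s$.

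For part (i) I would first record, from the sign and label assignments in Figure~\ref{fig:quiverG}, that $\mathbf{I}^{\bullet}_{+}$, $\mathbf{I}^{\bullet}_{-}$ and each of $\mathbf{I}^{\circ}_{\mathrm{I}},\dots,\mathbf{I}^{\circ}_{\mathrm{VI}}$ is an independent set of vertices, so that every composite mutation in \eqref{eq:GB2} is well defined and independent of the order of its factors. Then I would apply the standard rule for a composite mutation at an independent set $S$: all arrows meeting $S$ are reversed, and for $j,k\notin S$ the arrow between $j$ and $k$ is modified by a correction summed over the common neighbours $i\in S$. In $Q_\ell(G_2)$ all arrow multiplicities are $1$, and each leg meets the spine in the two patterns visible in the figure --- a single horizontal arrow at the $\circ$-vertices labelled $\mathrm{I},\mathrm{III},\mathrm{V}$ and a ``triple'' configuration of arrows to three consecutive spine vertices at those labelled $\mathrm{II},\mathrm{IV},\mathrm{VI}$ --- so these corrections either cancel in opposite pairs or assemble into exactly the arrows present in the next quiver of \eqref{eq:GB2}. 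Carrying this out for the six steps (which form a cyclic family, so that the analysis reduces essentially to one step up to relabelling of the legs) and then composing the six induced leg permutations and orientation reversals shows that the sequence closes up on $Q$, which gives the period-$6$ statement. The step requiring the most care --- and the main obstacle --- is the local picture at the trivalent $\circ$-vertices labelled $\mathrm{II}$, $\mathrm{IV}$, $\mathrm{VI}$: such a vertex is joined to three consecutive spine vertices, so when it lies in the mutated set the correction terms run over three intermediate vertices, and one must verify that they recombine precisely into the triple pattern that reappears on the permuted leg and that no spurious arrow between spine vertices survives. Nothing of this kind occurs in the two-fold cases $B_r$, $C_r$, $F_4$, so this is the part of the verification for which the earlier papers provide no template.

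For part (ii), recall $h^{\vee}=4$ for $G_2$, so $h^{\vee}+\ell$ is even precisely when $\ell$ is even. I would argue directly on $Q_\ell(G_2)$, since $\boldsymbol{\omega}$ is just the up-down reflection. Each column of $Q_\ell(G_2)$ has an odd number of vertices when $\ell$ is even and an even number when $\ell$ is odd, so $\boldsymbol{\omega}$ either fixes the central row of every column ($\ell$ even) or has no fixed row and interchanges the two labels appearing on each leg ($\ell$ odd). When $\ell$ is even, the sign string $+,-,\dots,+$ on the spine, the label string on each leg, the leg--spine connection pattern, and the alternating directions of the in-column arrows are all palindromic about their centres, giving $\boldsymbol{\omega}(Q)=Q$. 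When $\ell$ is odd these same data are instead anti-palindromic: the reflection reverses all arrows up to the relabelling that exchanges the two outer legs, which is forced because the outer legs attach to the spine in mutually reversed connection patterns while the middle leg attaches symmetrically; hence $\boldsymbol{\omega}(Q)=\boldsymbol{\nu}_{(13)}(Q)^{\mathrm{op}}$. Alternatively, (ii) can be deduced from (i) by locating $\boldsymbol{\omega}$ inside the periodic sequence \eqref{eq:GB2}, exactly as for $C_r$ and $F_4$ in Lemmas~\ref{lem:CQmut}(ii) and \ref{lem:FQmut}(ii).
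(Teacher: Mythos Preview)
Your overall strategy---direct inspection of the quiver, checking that each composite mutation is at an independent set, and tracking the effect on arrows step by step---is exactly what the paper does; in fact the paper gives no written proof at all and simply points to Figures~\ref{fig:labelxG1}--\ref{fig:labelxG3} as an example, relying implicitly on the parallel \cite[Lemma~3.1]{IIKKN}. So at the level of approach there is nothing to distinguish your proposal from the paper's.

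There is, however, a concrete descriptive error in your account of the leg--spine attachments which would derail the actual verification if carried through as written. You claim that the $\circ$-vertices labelled $\mathrm{I},\mathrm{III},\mathrm{V}$ carry a single horizontal arrow and those labelled $\mathrm{II},\mathrm{IV},\mathrm{VI}$ carry a triple. A look at Figure~\ref{fig:quiverG} shows instead that the six labels carry $1,3,5,5,3,1$ arrows respectively: $\mathrm{I}$ and $\mathrm{VI}$ have a single horizontal arrow, $\mathrm{II}$ and $\mathrm{V}$ attach to three consecutive spine vertices, while $\mathrm{III}$ and $\mathrm{IV}$ attach to \emph{five} consecutive spine vertices (two above, one level, two below). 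This quintuple pattern is precisely the feature that has no analogue in the $t=2$ cases, and it is where the correction terms in the mutation rule are most delicate: a $\circ$-vertex of type $\mathrm{III}$ or $\mathrm{IV}$ has five spine neighbours, so mutating at it potentially creates or cancels arrows among $\binom{5}{2}=10$ pairs of spine vertices, and one has to check that these recombine into the next pattern in \eqref{eq:GB2} without residue. Your identification of the ``main obstacle'' is therefore right in spirit but locates it at the wrong labels and understates its size. Once the correct $1,3,5,5,3,1$ pattern is used, the rest of your argument (independence of the mutated sets, the $S_3$-cycle of leg permutations, and the parity analysis for part~(ii)) goes through as you outline.
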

See Figures \ref{fig:labelxG1}--\ref{fig:labelxG3}
for an example.

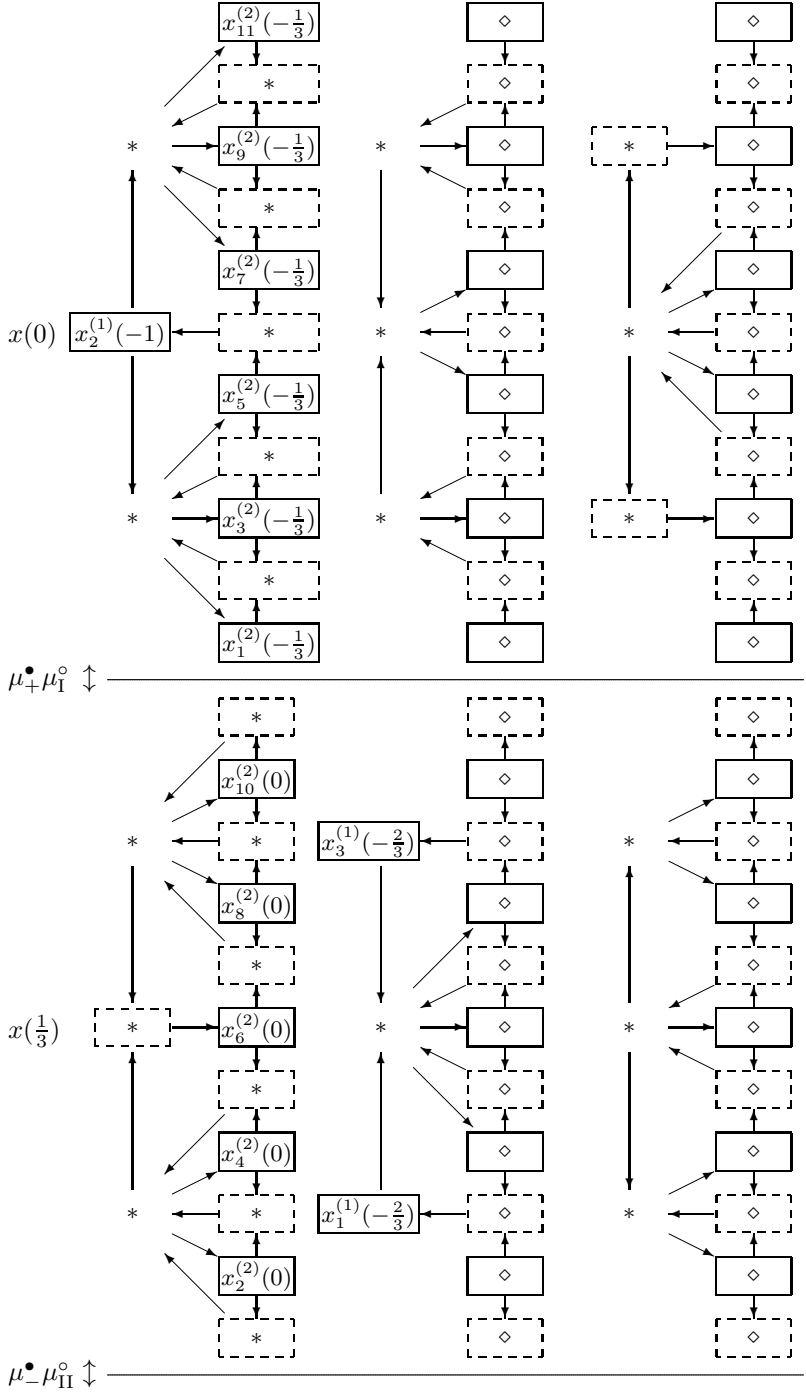
\begin{figure}
\setlength{\unitlength}{0.94pt}
\begin{picture}(500,570)(0,-305)
\put(0,121){$x(0)$}
%
%
\put(50,0)
{
\put(-15,-7.5)
{
\put(0,50){\makebox(30,15){\small $*$}}
\put(-10,125){\framebox(40,15){\small $x^{(1)}_2(-1)$}}
\put(0,200){\makebox(30,15){\small $*$}}
\put(50,0){\framebox(40,15){\small $x^{(2)}_1(-\frac{1}{3})$}}
\put(50,25){\dashbox{3}(40,15){\small $*$}}
\put(50,50){\framebox(40,15){\small $x^{(2)}_3(-\frac{1}{3})$}}
\put(50,75){\dashbox{3}(40,15){\small $*$}}
\put(50,100){\framebox(40,15){\small $x^{(2)}_5(-\frac{1}{3})$}}
\put(50,125){\dashbox{3}(40,15){\small $*$}}
\put(50,150){\framebox(40,15){\small $x^{(2)}_7(-\frac{1}{3})$}}
\put(50,175){\dashbox{3}(40,15){\small $*$}}
\put(50,200){\framebox(40,15){\small $x^{(2)}_9(-\frac{1}{3})$}}
\put(50,225){\dashbox{3}(40,15){\small $*$}}
\put(50,250){\framebox(40,15){\small $x^{(2)}_{11}(-\frac{1}{3})$}}
}
%
\put(13,34){$\vector(1,-1){24}$}
\put(34,33){$\vector(-2,1){18}$}
\put(16,50){$\vector(1,0){18}$}
\put(34,67){$\vector(-2,-1){18}$}
\put(13,66){$\vector(1,1){24}$}
\put(34,125){$\vector(-1,0){18}$}
\put(13,184){$\vector(1,-1){24}$}
\put(34,183){$\vector(-2,1){18}$}
\put(16,200){$\vector(1,0){18}$}
\put(34,217){$\vector(-2,-1){18}$}
\put(13,216){$\vector(1,1){24}$}
\put(0,115){\vector(0,-1){55}}
\put(0,135){\vector(0,1){55}}
\put(50,8){\vector(0,1){9}}
\put(50,42){\vector(0,-1){9}}
\put(50,58){\vector(0,1){9}}
\put(50,92){\vector(0,-1){9}}
\put(50,108){\vector(0,1){9}}
\put(50,142){\vector(0,-1){9}}
\put(50,158){\vector(0,1){9}}
\put(50,192){\vector(0,-1){9}}
\put(50,208){\vector(0,1){9}}
\put(50,242){\vector(0,-1){9}}
}
%
%
\put(150,0)
{
\put(-15,-7.5)
{
\put(0, 50){\makebox(30,15){\small $*$}}
\put(0,125){\makebox(30,15){\small $*$}}
\put(0,200){\makebox(30,15){\small $*$}}
\put(50, 0){\framebox(30,15){\small $\diamond$}}
\put(50,25){\dashbox{3}(30,15){\small $\diamond$}}
\put(50,50){\framebox(30,15){\small $\diamond$}}
\put(50,75){\dashbox{3}(30,15){\small $\diamond$}}
\put(50,100){\framebox(30,15){\small $\diamond$}}
\put(50,125){\dashbox{3}(30,15){\small $\diamond$}}
\put(50,150){\framebox(30,15){\small $\diamond$}}
\put(50,175){\dashbox{3}(30,15){\small $\diamond$}}
\put(50,200){\framebox(30,15){\small $\diamond$}}
\put(50,225){\dashbox{3}(30,15){\small $\diamond$}}
\put(50,250){\framebox(30,15){\small $\diamond$}}
}
%
\put(34,33){$\vector(-2,1){18}$}
\put(16,50){$\vector(1,0){18}$}
\put(34,67){$\vector(-2,-1){18}$}
%
\put(16,117){$\vector(2,-1){18}$}
\put(34,125){$\vector(-1,0){18}$}
\put(16,133){$\vector(2,1){18}$}
%
\put(34,183){$\vector(-2,1){18}$}
\put(16,200){$\vector(1,0){18}$}
\put(34,217){$\vector(-2,-1){18}$}
\put(0,60){\vector(0,1){55}}
\put(0,190){\vector(0,-1){55}}
\put(50,8){\vector(0,1){9}}
\put(50,42){\vector(0,-1){9}}
\put(50,58){\vector(0,1){9}}
\put(50,92){\vector(0,-1){9}}
\put(50,108){\vector(0,1){9}}
\put(50,142){\vector(0,-1){9}}
\put(50,158){\vector(0,1){9}}
\put(50,192){\vector(0,-1){9}}
\put(50,208){\vector(0,1){9}}
\put(50,242){\vector(0,-1){9}}
} 
%
%
\put(250,0)
{
\put(-15,-7.5)
{
\put(0, 50){\dashbox{3}(30,15){\small $*$}}
\put(0,125){\makebox(30,15){\small $*$}}
\put(0,200){\dashbox{3}(30,15){\small $*$}}
\put(50, 0){\framebox(30,15){\small $\diamond$}}
\put(50,25){\dashbox{3}(30,15){\small $\diamond$}}
\put(50,50){\framebox(30,15){\small $\diamond$}}
\put(50,75){\dashbox{3}(30,15){\small $\diamond$}}
\put(50,100){\framebox(30,15){\small $\diamond$}}
\put(50,125){\dashbox{3}(30,15){\small $\diamond$}}
\put(50,150){\framebox(30,15){\small $\diamond$}}
\put(50,175){\dashbox{3}(30,15){\small $\diamond$}}
\put(50,200){\framebox(30,15){\small $\diamond$}}
\put(50,225){\dashbox{3}(30,15){\small $\diamond$}}
\put(50,250){\framebox(30,15){\small $\diamond$}}
}
%
\put(16,50){$\vector(1,0){18}$}
%
\put(37,85){$\vector(-1,1){24}$}
\put(16,117){$\vector(2,-1){18}$}
\put(34,125){$\vector(-1,0){18}$}
\put(16,133){$\vector(2,1){18}$}
\put(37,165){$\vector(-1,-1){24}$}
%
\put(16,200){$\vector(1,0){18}$}
\put(0,115){\vector(0,-1){55}}
\put(0,135){\vector(0,1){55}}
\put(50,8){\vector(0,1){9}}
\put(50,42){\vector(0,-1){9}}
\put(50,58){\vector(0,1){9}}
\put(50,92){\vector(0,-1){9}}
\put(50,108){\vector(0,1){9}}
\put(50,142){\vector(0,-1){9}}
\put(50,158){\vector(0,1){9}}
\put(50,192){\vector(0,-1){9}}
\put(50,208){\vector(0,1){9}}
\put(50,242){\vector(0,-1){9}}
} 
%
\dottedline(40,-15)(320,-15)
\put(30,-17){$\updownarrow$}
\put(0,-17){$\mu^{\bullet}_+ \mu^{\circ}_{\mathrm{I}}$}
\put(0,-280)
{
\put(0,121){$x(\frac{1}{3})$}
%
%
\put(50,0)
{
\put(-15,-7.5)
{
\put(0,50){\makebox(30,15){\small $*$}}
\put(0,125){\dashbox{3}(30,15){\small $*$}}
\put(0,200){\makebox(30,15){\small $*$}}
\put(50,0){\dashbox{3}(30,15){\small $*$}}
\put(50,25){\framebox(30,15){\small $x^{(2)}_2(0)$}}
\put(50,50){\dashbox{3}(30,15){\small $*$}}
\put(50,75){\framebox(30,15){\small $x^{(2)}_4(0)$}}
\put(50,100){\dashbox{3}(30,15){\small $*$}}
\put(50,125){\framebox(30,15){\small $x^{(2)}_6(0)$}}
\put(50,150){\dashbox{3}(30,15){\small $*$}}
\put(50,175){\framebox(30,15){\small $x^{(2)}_8(0)$}}
\put(50,200){\dashbox{3}(30,15){\small $*$}}
\put(50,225){\framebox(30,15){\small $x^{(2)}_{10}(0)$}}
\put(50,250){\dashbox{3}(30,15){\small $*$}}
}
%
\put(37,10){$\vector(-1,1){24}$}
\put(16,42){$\vector(2,-1){18}$}
\put(34,50){$\vector(-1,0){18}$}
\put(16,58){$\vector(2,1){18}$}
\put(37,90){$\vector(-1,-1){24}$}
\put(16,125){$\vector(1,0){18}$}
\put(37,160){$\vector(-1,1){24}$}
\put(16,192){$\vector(2,-1){18}$}
\put(34,200){$\vector(-1,0){18}$}
\put(16,208){$\vector(2,1){18}$}
\put(37,240){$\vector(-1,-1){24}$}
\put(0,60){\vector(0,1){55}}
\put(0,190){\vector(0,-1){55}}
\put(50,17){\vector(0,-1){9}}
\put(50,33){\vector(0,1){9}}
\put(50,67){\vector(0,-1){9}}
\put(50,83){\vector(0,1){9}}
\put(50,117){\vector(0,-1){9}}
\put(50,133){\vector(0,1){9}}
\put(50,167){\vector(0,-1){9}}
\put(50,183){\vector(0,1){9}}
\put(50,217){\vector(0,-1){9}}
\put(50,233){\vector(0,1){9}}
}
%
%
\put(150,0)
{
\put(-15,-7.5)
{
\put(-10, 50){\framebox(40,15){\small $x^{(1)}_1(-\frac{2}{3})$}}
\put(0,125){\makebox(30,15){\small $*$}}
\put(-10,200){\framebox(40,15){\small $x^{(1)}_3(-\frac{2}{3})$}}
\put(50, 0){\dashbox{3}(30,15){\small $\diamond$}}
\put(50,25){\framebox(30,15){\small $\diamond$}}
\put(50,50){\dashbox{3}(30,15){\small $\diamond$}}
\put(50,75){\framebox(30,15){\small $\diamond$}}
\put(50,100){\dashbox{3}(30,15){\small $\diamond$}}
\put(50,125){\framebox(30,15){\small $\diamond$}}
\put(50,150){\dashbox{3}(30,15){\small $\diamond$}}
\put(50,175){\framebox(30,15){\small $\diamond$}}
\put(50,200){\dashbox{3}(30,15){\small $\diamond$}}
\put(50,225){\framebox(30,15){\small $\diamond$}}
\put(50,250){\dashbox{3}(30,15){\small $\diamond$}}
}
%
\put(34,50){$\vector(-1,0){18}$}
\put(13,109){$\vector(1,-1){24}$}
\put(34,108){$\vector(-2,1){18}$}
\put(16,125){$\vector(1,0){18}$}
\put(34,142){$\vector(-2,-1){18}$}
\put(13,141){$\vector(1,1){24}$}
\put(34,200){$\vector(-1,0){18}$}
\put(0,60){\vector(0,1){55}}
\put(0,190){\vector(0,-1){55}}
\put(50,17){\vector(0,-1){9}}
\put(50,33){\vector(0,1){9}}
\put(50,67){\vector(0,-1){9}}
\put(50,83){\vector(0,1){9}}
\put(50,117){\vector(0,-1){9}}
\put(50,133){\vector(0,1){9}}
\put(50,167){\vector(0,-1){9}}
\put(50,183){\vector(0,1){9}}
\put(50,217){\vector(0,-1){9}}
\put(50,233){\vector(0,1){9}}
} 
%
%
\put(250,0)
{
\put(-15,-7.5)
{
\put(0, 50){\makebox(30,15){\small $*$}}
\put(0,125){\makebox(30,15){\small $*$}}
\put(0,200){\makebox(30,15){\small $*$}}
\put(50, 0){\dashbox{3}(30,15){\small $\diamond$}}
\put(50,25){\framebox(30,15){\small $\diamond$}}
\put(50,50){\dashbox{3}(30,15){\small $\diamond$}}
\put(50,75){\framebox(30,15){\small $\diamond$}}
\put(50,100){\dashbox{3}(30,15){\small $\diamond$}}
\put(50,125){\framebox(30,15){\small $\diamond$}}
\put(50,150){\dashbox{3}(30,15){\small $\diamond$}}
\put(50,175){\framebox(30,15){\small $\diamond$}}
\put(50,200){\dashbox{3}(30,15){\small $\diamond$}}
\put(50,225){\framebox(30,15){\small $\diamond$}}
\put(50,250){\dashbox{3}(30,15){\small $\diamond$}}
}
%
\put(16,42){$\vector(2,-1){18}$}
\put(34,50){$\vector(-1,0){18}$}
\put(16,58){$\vector(2,1){18}$}
\put(34,108){$\vector(-2,1){18}$}
\put(16,125){$\vector(1,0){18}$}
\put(34,142){$\vector(-2,-1){18}$}
%
\put(16,192){$\vector(2,-1){18}$}
\put(34,200){$\vector(-1,0){18}$}
\put(16,208){$\vector(2,1){18}$}
\put(0,115){\vector(0,-1){55}}
\put(0,135){\vector(0,1){55}}
\put(50,17){\vector(0,-1){9}}
\put(50,33){\vector(0,1){9}}
\put(50,67){\vector(0,-1){9}}
\put(50,83){\vector(0,1){9}}
\put(50,117){\vector(0,-1){9}}
\put(50,133){\vector(0,1){9}}
\put(50,167){\vector(0,-1){9}}
\put(50,183){\vector(0,1){9}}
\put(50,217){\vector(0,-1){9}}
\put(50,233){\vector(0,1){9}}
} 
%
\dottedline(40,-15)(320,-15)
\put(30,-17){$\updownarrow$}
\put(0,-17){$\mu^{\bullet}_{-}\mu^{\circ}_{\mathrm{II}}$}
}
\end{picture}
\caption{(Continues to Figures \ref{fig:labelxG2}
and \ref{fig:labelxG3})
Label of cluster variables $x_{\mathbf{i}}(u)$
by $\mathcal{I}_{\ell+}$  for
$G_2$, $\ell=4$.
The right columns in the middle
and right quivers (marked by $\diamond$)
are identified with the right column in the left quiver.
}
\label{fig:labelxG1}
\end{figure}


\begin{figure}
\setlength{\unitlength}{0.94pt}
\begin{picture}(500,570)(0,-305)
\put(0,121){$x(\frac{2}{3})$}
%
%
\put(50,0)
{
\put(-15,-7.5)
{
\put(0, 50){\makebox(30,15){\small $*$}}
\put(0,125){\makebox(30,15){\small $*$}}
\put(0,200){\makebox(30,15){\small $*$}}
\put(50,0){\framebox(30,15){\small $x^{(2)}_1(\frac{1}{3})$}}
\put(50,25){\dashbox{3}(30,15){\small $*$}}
\put(50,50){\framebox(30,15){\small $x^{(2)}_3(\frac{1}{3})$}}
\put(50,75){\dashbox{3}(30,15){\small $*$}}
\put(50,100){\framebox(30,15){\small $x^{(2)}_5(\frac{1}{3})$}}
\put(50,125){\dashbox{3}(30,15){\small $*$}}
\put(50,150){\framebox(30,15){\small $x^{(2)}_7(\frac{1}{3})$}}
\put(50,175){\dashbox{3}(30,15){\small $*$}}
\put(50,200){\framebox(30,15){\small $x^{(2)}_9(\frac{1}{3})$}}
\put(50,225){\dashbox{3}(30,15){\small $*$}}
\put(50,250){\framebox(30,15){\small $x^{(2)}_{11}(\frac{1}{3})$}}
}
%
\put(34,33){$\vector(-2,1){18}$}
\put(16,50){$\vector(1,0){18}$}
\put(34,67){$\vector(-2,-1){18}$}
%
\put(16,117){$\vector(2,-1){18}$}
\put(34,125){$\vector(-1,0){18}$}
\put(16,133){$\vector(2,1){18}$}
%
\put(34,183){$\vector(-2,1){18}$}
\put(16,200){$\vector(1,0){18}$}
\put(34,217){$\vector(-2,-1){18}$}
\put(0,60){\vector(0,1){55}}
\put(0,190){\vector(0,-1){55}}
\put(50,8){\vector(0,1){9}}
\put(50,42){\vector(0,-1){9}}
\put(50,58){\vector(0,1){9}}
\put(50,92){\vector(0,-1){9}}
\put(50,108){\vector(0,1){9}}
\put(50,142){\vector(0,-1){9}}
\put(50,158){\vector(0,1){9}}
\put(50,192){\vector(0,-1){9}}
\put(50,208){\vector(0,1){9}}
\put(50,242){\vector(0,-1){9}}
} 
%
%
\put(150,0)
{
\put(-15,-7.5)
{
\put(0, 50){\dashbox{3}(30,15){\small $*$}}
\put(0,125){\makebox(30,15){\small $*$}}
\put(0,200){\dashbox{3}(30,15){\small $*$}}
\put(50, 0){\framebox(30,15){\small $\diamond$}}
\put(50,25){\dashbox{3}(30,15){\small $\diamond$}}
\put(50,50){\framebox(30,15){\small $\diamond$}}
\put(50,75){\dashbox{3}(30,15){\small $\diamond$}}
\put(50,100){\framebox(30,15){\small $\diamond$}}
\put(50,125){\dashbox{3}(30,15){\small $\diamond$}}
\put(50,150){\framebox(30,15){\small $\diamond$}}
\put(50,175){\dashbox{3}(30,15){\small $\diamond$}}
\put(50,200){\framebox(30,15){\small $\diamond$}}
\put(50,225){\dashbox{3}(30,15){\small $\diamond$}}
\put(50,250){\framebox(30,15){\small $\diamond$}}
}
%
\put(16,50){$\vector(1,0){18}$}
%
\put(37,85){$\vector(-1,1){24}$}
\put(16,117){$\vector(2,-1){18}$}
\put(34,125){$\vector(-1,0){18}$}
\put(16,133){$\vector(2,1){18}$}
\put(37,165){$\vector(-1,-1){24}$}
%
\put(16,200){$\vector(1,0){18}$}
\put(0,115){\vector(0,-1){55}}
\put(0,135){\vector(0,1){55}}
\put(50,8){\vector(0,1){9}}
\put(50,42){\vector(0,-1){9}}
\put(50,58){\vector(0,1){9}}
\put(50,92){\vector(0,-1){9}}
\put(50,108){\vector(0,1){9}}
\put(50,142){\vector(0,-1){9}}
\put(50,158){\vector(0,1){9}}
\put(50,192){\vector(0,-1){9}}
\put(50,208){\vector(0,1){9}}
\put(50,242){\vector(0,-1){9}}
} 
%
%
%
\put(250,0)
{
\put(-15,-7.5)
{
\put(0,50){\makebox(30,15){\small $*$}}
\put(-10,125){\framebox(40,15){\small $x^{(1)}_2(-\frac{1}{3})$}}
\put(0,200){\makebox(30,15){\small $*$}}
\put(50, 0){\framebox(30,15){\small $\diamond$}}
\put(50,25){\dashbox{3}(30,15){\small $\diamond$}}
\put(50,50){\framebox(30,15){\small $\diamond$}}
\put(50,75){\dashbox{3}(30,15){\small $\diamond$}}
\put(50,100){\framebox(30,15){\small $\diamond$}}
\put(50,125){\dashbox{3}(30,15){\small $\diamond$}}
\put(50,150){\framebox(30,15){\small $\diamond$}}
\put(50,175){\dashbox{3}(30,15){\small $\diamond$}}
\put(50,200){\framebox(30,15){\small $\diamond$}}
\put(50,225){\dashbox{3}(30,15){\small $\diamond$}}
\put(50,250){\framebox(30,15){\small $\diamond$}}
}
%
\put(13,34){$\vector(1,-1){24}$}
\put(34,33){$\vector(-2,1){18}$}
\put(16,50){$\vector(1,0){18}$}
\put(34,67){$\vector(-2,-1){18}$}
\put(13,66){$\vector(1,1){24}$}
\put(34,125){$\vector(-1,0){18}$}
\put(13,184){$\vector(1,-1){24}$}
\put(34,183){$\vector(-2,1){18}$}
\put(16,200){$\vector(1,0){18}$}
\put(34,217){$\vector(-2,-1){18}$}
\put(13,216){$\vector(1,1){24}$}
\put(0,115){\vector(0,-1){55}}
\put(0,135){\vector(0,1){55}}
\put(50,8){\vector(0,1){9}}
\put(50,42){\vector(0,-1){9}}
\put(50,58){\vector(0,1){9}}
\put(50,92){\vector(0,-1){9}}
\put(50,108){\vector(0,1){9}}
\put(50,142){\vector(0,-1){9}}
\put(50,158){\vector(0,1){9}}
\put(50,192){\vector(0,-1){9}}
\put(50,208){\vector(0,1){9}}
\put(50,242){\vector(0,-1){9}}
}

\dottedline(40,-15)(320,-15)
\put(30,-17){$\updownarrow$}
\put(0,-17){$\mu^{\bullet}_+ \mu^{\circ}_{\mathrm{III}}$}
\put(0,-280)
{
\put(0,121){$x(1)$}
%
%
\put(50,0)
{
\put(-15,-7.5)
{
\put(0, 50){\framebox(30,15){\small $x^{(1)}_1(0)$}}
\put(0,125){\makebox(30,15){\small $*$}}
\put(0,200){\framebox(30,15){\small $x^{(1)}_3(0)$}}
\put(50,0){\dashbox{3}(30,15){\small $*$}}
\put(50,25){\framebox(30,15){\small $x^{(2)}_2(\frac{2}{3})$}}
\put(50,50){\dashbox{3}(30,15){\small $*$}}
\put(50,75){\framebox(30,15){\small $x^{(2)}_4(\frac{2}{3})$}}
\put(50,100){\dashbox{3}(30,15){\small $*$}}
\put(50,125){\framebox(30,15){\small $x^{(2)}_6(\frac{2}{3})$}}
\put(50,150){\dashbox{3}(30,15){\small $*$}}
\put(50,175){\framebox(30,15){\small $x^{(2)}_8(\frac{2}{3})$}}
\put(50,200){\dashbox{3}(30,15){\small $*$}}
\put(50,225){\framebox(30,15){\small $x^{(2)}_{10}(\frac{2}{3})$}}
\put(50,250){\dashbox{3}(30,15){\small $*$}}
}
%
\put(34,50){$\vector(-1,0){18}$}
\put(13,109){$\vector(1,-1){24}$}
\put(34,108){$\vector(-2,1){18}$}
\put(16,125){$\vector(1,0){18}$}
\put(34,142){$\vector(-2,-1){18}$}
\put(13,141){$\vector(1,1){24}$}
\put(34,200){$\vector(-1,0){18}$}
\put(0,60){\vector(0,1){55}}
\put(0,190){\vector(0,-1){55}}
\put(50,17){\vector(0,-1){9}}
\put(50,33){\vector(0,1){9}}
\put(50,67){\vector(0,-1){9}}
\put(50,83){\vector(0,1){9}}
\put(50,117){\vector(0,-1){9}}
\put(50,133){\vector(0,1){9}}
\put(50,167){\vector(0,-1){9}}
\put(50,183){\vector(0,1){9}}
\put(50,217){\vector(0,-1){9}}
\put(50,233){\vector(0,1){9}}
} 
%
%
\put(150,0)
{
\put(-15,-7.5)
{
\put(0, 50){\makebox(30,15){\small $*$}}
\put(0,125){\makebox(30,15){\small $*$}}
\put(0,200){\makebox(30,15){\small $*$}}
\put(50, 0){\dashbox{3}(30,15){\small $\diamond$}}
\put(50,25){\framebox(30,15){\small $\diamond$}}
\put(50,50){\dashbox{3}(30,15){\small $\diamond$}}
\put(50,75){\framebox(30,15){\small $\diamond$}}
\put(50,100){\dashbox{3}(30,15){\small $\diamond$}}
\put(50,125){\framebox(30,15){\small $\diamond$}}
\put(50,150){\dashbox{3}(30,15){\small $\diamond$}}
\put(50,175){\framebox(30,15){\small $\diamond$}}
\put(50,200){\dashbox{3}(30,15){\small $\diamond$}}
\put(50,225){\framebox(30,15){\small $\diamond$}}
\put(50,250){\dashbox{3}(30,15){\small $\diamond$}}
}
%
\put(16,42){$\vector(2,-1){18}$}
\put(34,50){$\vector(-1,0){18}$}
\put(16,58){$\vector(2,1){18}$}
\put(34,108){$\vector(-2,1){18}$}
\put(16,125){$\vector(1,0){18}$}
\put(34,142){$\vector(-2,-1){18}$}
%
\put(16,192){$\vector(2,-1){18}$}
\put(34,200){$\vector(-1,0){18}$}
\put(16,208){$\vector(2,1){18}$}
\put(0,115){\vector(0,-1){55}}
\put(0,135){\vector(0,1){55}}
\put(50,17){\vector(0,-1){9}}
\put(50,33){\vector(0,1){9}}
\put(50,67){\vector(0,-1){9}}
\put(50,83){\vector(0,1){9}}
\put(50,117){\vector(0,-1){9}}
\put(50,133){\vector(0,1){9}}
\put(50,167){\vector(0,-1){9}}
\put(50,183){\vector(0,1){9}}
\put(50,217){\vector(0,-1){9}}
\put(50,233){\vector(0,1){9}}
} 
%
%
%
\put(250,0)
{
\put(-15,-7.5)
{
\put(0,50){\makebox(30,15){\small $*$}}
\put(0,125){\dashbox{3}(30,15){\small $*$}}
\put(0,200){\makebox(30,15){\small $*$}}
\put(50, 0){\dashbox{3}(30,15){\small $\diamond$}}
\put(50,25){\framebox(30,15){\small $\diamond$}}
\put(50,50){\dashbox{3}(30,15){\small $\diamond$}}
\put(50,75){\framebox(30,15){\small $\diamond$}}
\put(50,100){\dashbox{3}(30,15){\small $\diamond$}}
\put(50,125){\framebox(30,15){\small $\diamond$}}
\put(50,150){\dashbox{3}(30,15){\small $\diamond$}}
\put(50,175){\framebox(30,15){\small $\diamond$}}
\put(50,200){\dashbox{3}(30,15){\small $\diamond$}}
\put(50,225){\framebox(30,15){\small $\diamond$}}
\put(50,250){\dashbox{3}(30,15){\small $\diamond$}}
}
%
\put(37,10){$\vector(-1,1){24}$}
\put(16,42){$\vector(2,-1){18}$}
\put(34,50){$\vector(-1,0){18}$}
\put(16,58){$\vector(2,1){18}$}
\put(37,90){$\vector(-1,-1){24}$}
\put(16,125){$\vector(1,0){18}$}
\put(37,160){$\vector(-1,1){24}$}
\put(16,192){$\vector(2,-1){18}$}
\put(34,200){$\vector(-1,0){18}$}
\put(16,208){$\vector(2,1){18}$}
\put(37,240){$\vector(-1,-1){24}$}
\put(0,60){\vector(0,1){55}}
\put(0,190){\vector(0,-1){55}}
\put(50,17){\vector(0,-1){9}}
\put(50,33){\vector(0,1){9}}
\put(50,67){\vector(0,-1){9}}
\put(50,83){\vector(0,1){9}}
\put(50,117){\vector(0,-1){9}}
\put(50,133){\vector(0,1){9}}
\put(50,167){\vector(0,-1){9}}
\put(50,183){\vector(0,1){9}}
\put(50,217){\vector(0,-1){9}}
\put(50,233){\vector(0,1){9}}
}
\dottedline(40,-15)(320,-15)
\put(30,-17){$\updownarrow$}
\put(0,-17){$\mu^{\bullet}_{-}\mu^{\circ}_{\mathrm{IV}}$}
}
\end{picture}
\caption{(Continues from Figure \ref{fig:labelxG1})}
\label{fig:labelxG2}
\end{figure}


\begin{figure}
\setlength{\unitlength}{0.94pt}
\begin{picture}(500,570)(0,-305)
\put(0,121){$x(\frac{4}{3})$}
%
%
\put(50,0)
{
\put(-15,-7.5)
{
\put(0, 50){\dashbox{3}(30,15){\small $*$}}
\put(0,125){\makebox(30,15){\small $*$}}
\put(0,200){\dashbox{3}(30,15){\small $*$}}
\put(50,0){\framebox(30,15){\small $x^{(2)}_1(1)$}}
\put(50,25){\dashbox{3}(30,15){\small $*$}}
\put(50,50){\framebox(30,15){\small $x^{(2)}_3(1)$}}
\put(50,75){\dashbox{3}(30,15){\small $*$}}
\put(50,100){\framebox(30,15){\small $x^{(2)}_5(1)$}}
\put(50,125){\dashbox{3}(30,15){\small $*$}}
\put(50,150){\framebox(30,15){\small $x^{(2)}_7(1)$}}
\put(50,175){\dashbox{3}(30,15){\small $*$}}
\put(50,200){\framebox(30,15){\small $x^{(2)}_9(1)$}}
\put(50,225){\dashbox{3}(30,15){\small $*$}}
\put(50,250){\framebox(30,15){\small $x^{(2)}_{11}(1)$}}
}
%
\put(16,50){$\vector(1,0){18}$}
%
\put(37,85){$\vector(-1,1){24}$}
\put(16,117){$\vector(2,-1){18}$}
\put(34,125){$\vector(-1,0){18}$}
\put(16,133){$\vector(2,1){18}$}
\put(37,165){$\vector(-1,-1){24}$}
%
\put(16,200){$\vector(1,0){18}$}
\put(0,115){\vector(0,-1){55}}
\put(0,135){\vector(0,1){55}}
\put(50,8){\vector(0,1){9}}
\put(50,42){\vector(0,-1){9}}
\put(50,58){\vector(0,1){9}}
\put(50,92){\vector(0,-1){9}}
\put(50,108){\vector(0,1){9}}
\put(50,142){\vector(0,-1){9}}
\put(50,158){\vector(0,1){9}}
\put(50,192){\vector(0,-1){9}}
\put(50,208){\vector(0,1){9}}
\put(50,242){\vector(0,-1){9}}
} 
%
%
%
\put(150,0)
{
\put(-15,-7.5)
{
\put(0,50){\makebox(30,15){\small $*$}}
\put(0,125){\framebox(30,15){\small $x^{(1)}_2(\frac{1}{3})$}}
\put(0,200){\makebox(30,15){\small $*$}}
\put(50, 0){\framebox(30,15){\small $\diamond$}}
\put(50,25){\dashbox{3}(30,15){\small $\diamond$}}
\put(50,50){\framebox(30,15){\small $\diamond$}}
\put(50,75){\dashbox{3}(30,15){\small $\diamond$}}
\put(50,100){\framebox(30,15){\small $\diamond$}}
\put(50,125){\dashbox{3}(30,15){\small $\diamond$}}
\put(50,150){\framebox(30,15){\small $\diamond$}}
\put(50,175){\dashbox{3}(30,15){\small $\diamond$}}
\put(50,200){\framebox(30,15){\small $\diamond$}}
\put(50,225){\dashbox{3}(30,15){\small $\diamond$}}
\put(50,250){\framebox(30,15){\small $\diamond$}}
}
%
\put(13,34){$\vector(1,-1){24}$}
\put(34,33){$\vector(-2,1){18}$}
\put(16,50){$\vector(1,0){18}$}
\put(34,67){$\vector(-2,-1){18}$}
\put(13,66){$\vector(1,1){24}$}
\put(34,125){$\vector(-1,0){18}$}
\put(13,184){$\vector(1,-1){24}$}
\put(34,183){$\vector(-2,1){18}$}
\put(16,200){$\vector(1,0){18}$}
\put(34,217){$\vector(-2,-1){18}$}
\put(13,216){$\vector(1,1){24}$}
\put(0,115){\vector(0,-1){55}}
\put(0,135){\vector(0,1){55}}
\put(50,8){\vector(0,1){9}}
\put(50,42){\vector(0,-1){9}}
\put(50,58){\vector(0,1){9}}
\put(50,92){\vector(0,-1){9}}
\put(50,108){\vector(0,1){9}}
\put(50,142){\vector(0,-1){9}}
\put(50,158){\vector(0,1){9}}
\put(50,192){\vector(0,-1){9}}
\put(50,208){\vector(0,1){9}}
\put(50,242){\vector(0,-1){9}}
}
%
%
\put(250,0)
{
\put(-15,-7.5)
{
\put(0, 50){\makebox(30,15){\small $*$}}
\put(0,125){\makebox(30,15){\small $*$}}
\put(0,200){\makebox(30,15){\small $*$}}
\put(50, 0){\framebox(30,15){\small $\diamond$}}
\put(50,25){\dashbox{3}(30,15){\small $\diamond$}}
\put(50,50){\framebox(30,15){\small $\diamond$}}
\put(50,75){\dashbox{3}(30,15){\small $\diamond$}}
\put(50,100){\framebox(30,15){\small $\diamond$}}
\put(50,125){\dashbox{3}(30,15){\small $\diamond$}}
\put(50,150){\framebox(30,15){\small $\diamond$}}
\put(50,175){\dashbox{3}(30,15){\small $\diamond$}}
\put(50,200){\framebox(30,15){\small $\diamond$}}
\put(50,225){\dashbox{3}(30,15){\small $\diamond$}}
\put(50,250){\framebox(30,15){\small $\diamond$}}
}
%
\put(34,33){$\vector(-2,1){18}$}
\put(16,50){$\vector(1,0){18}$}
\put(34,67){$\vector(-2,-1){18}$}
%
\put(16,117){$\vector(2,-1){18}$}
\put(34,125){$\vector(-1,0){18}$}
\put(16,133){$\vector(2,1){18}$}
%
\put(34,183){$\vector(-2,1){18}$}
\put(16,200){$\vector(1,0){18}$}
\put(34,217){$\vector(-2,-1){18}$}
\put(0,60){\vector(0,1){55}}
\put(0,190){\vector(0,-1){55}}
\put(50,8){\vector(0,1){9}}
\put(50,42){\vector(0,-1){9}}
\put(50,58){\vector(0,1){9}}
\put(50,92){\vector(0,-1){9}}
\put(50,108){\vector(0,1){9}}
\put(50,142){\vector(0,-1){9}}
\put(50,158){\vector(0,1){9}}
\put(50,192){\vector(0,-1){9}}
\put(50,208){\vector(0,1){9}}
\put(50,242){\vector(0,-1){9}}
} 

\dottedline(40,-15)(320,-15)
\put(30,-17){$\updownarrow$}
\put(0,-17){$\mu^{\bullet}_+ \mu^{\circ}_{\mathrm{V}}$}
\put(0,-280)
{
\put(0,121){$x(\frac{5}{3})$}
%
%
\put(50,0)
{
\put(-15,-7.5)
{
\put(0, 50){\makebox(30,15){\small $*$}}
\put(0,125){\makebox(30,15){\small $*$}}
\put(0,200){\makebox(30,15){\small $*$}}
\put(50,0){\dashbox{3}(30,15){\small $*$}}
\put(50,25){\framebox(30,15){\small $x^{(2)}_2(\frac{4}{3})$}}
\put(50,50){\dashbox{3}(30,15){\small $*$}}
\put(50,75){\framebox(30,15){\small $x^{(2)}_4(\frac{4}{3})$}}
\put(50,100){\dashbox{3}(30,15){\small $*$}}
\put(50,125){\framebox(30,15){\small $x^{(2)}_6(\frac{4}{3})$}}
\put(50,150){\dashbox{3}(30,15){\small $*$}}
\put(50,175){\framebox(30,15){\small $x^{(2)}_8(\frac{4}{3})$}}
\put(50,200){\dashbox{3}(30,15){\small $*$}}
\put(50,225){\framebox(30,15){\small $x^{(2)}_{10}(\frac{4}{3})$}}
\put(50,250){\dashbox{3}(30,15){\small $*$}}
}
%
\put(16,42){$\vector(2,-1){18}$}
\put(34,50){$\vector(-1,0){18}$}
\put(16,58){$\vector(2,1){18}$}
\put(34,108){$\vector(-2,1){18}$}
\put(16,125){$\vector(1,0){18}$}
\put(34,142){$\vector(-2,-1){18}$}
%
\put(16,192){$\vector(2,-1){18}$}
\put(34,200){$\vector(-1,0){18}$}
\put(16,208){$\vector(2,1){18}$}
\put(0,115){\vector(0,-1){55}}
\put(0,135){\vector(0,1){55}}
\put(50,17){\vector(0,-1){9}}
\put(50,33){\vector(0,1){9}}
\put(50,67){\vector(0,-1){9}}
\put(50,83){\vector(0,1){9}}
\put(50,117){\vector(0,-1){9}}
\put(50,133){\vector(0,1){9}}
\put(50,167){\vector(0,-1){9}}
\put(50,183){\vector(0,1){9}}
\put(50,217){\vector(0,-1){9}}
\put(50,233){\vector(0,1){9}}
} 

%
%
\put(150,0)
{
\put(-15,-7.5)
{
\put(0,50){\makebox(30,15){\small $*$}}
\put(0,125){\dashbox{3}(30,15){\small $*$}}
\put(0,200){\makebox(30,15){\small $*$}}

\put(50, 0){\dashbox{3}(30,15){\small $\diamond$}}
\put(50,25){\framebox(30,15){\small $\diamond$}}
\put(50,50){\dashbox{3}(30,15){\small $\diamond$}}
\put(50,75){\framebox(30,15){\small $\diamond$}}
\put(50,100){\dashbox{3}(30,15){\small $\diamond$}}
\put(50,125){\framebox(30,15){\small $\diamond$}}
\put(50,150){\dashbox{3}(30,15){\small $\diamond$}}
\put(50,175){\framebox(30,15){\small $\diamond$}}
\put(50,200){\dashbox{3}(30,15){\small $\diamond$}}
\put(50,225){\framebox(30,15){\small $\diamond$}}
\put(50,250){\dashbox{3}(30,15){\small $\diamond$}}
}
%
\put(37,10){$\vector(-1,1){24}$}
\put(16,42){$\vector(2,-1){18}$}
\put(34,50){$\vector(-1,0){18}$}
\put(16,58){$\vector(2,1){18}$}
\put(37,90){$\vector(-1,-1){24}$}
\put(16,125){$\vector(1,0){18}$}
\put(37,160){$\vector(-1,1){24}$}
\put(16,192){$\vector(2,-1){18}$}
\put(34,200){$\vector(-1,0){18}$}
\put(16,208){$\vector(2,1){18}$}
\put(37,240){$\vector(-1,-1){24}$}
\put(0,60){\vector(0,1){55}}
\put(0,190){\vector(0,-1){55}}
\put(50,17){\vector(0,-1){9}}
\put(50,33){\vector(0,1){9}}
\put(50,67){\vector(0,-1){9}}
\put(50,83){\vector(0,1){9}}
\put(50,117){\vector(0,-1){9}}
\put(50,133){\vector(0,1){9}}
\put(50,167){\vector(0,-1){9}}
\put(50,183){\vector(0,1){9}}
\put(50,217){\vector(0,-1){9}}
\put(50,233){\vector(0,1){9}}
}
%
%
\put(250,0)
{
\put(-15,-7.5)
{
\put(0, 50){\framebox(30,15){\small $x^{(1)}_1(\frac{2}{3})$}}
\put(0,125){\makebox(30,15){\small $*$}}
\put(0,200){\framebox(30,15){\small $x^{(1)}_3(\frac{2}{3})$}}
\put(50, 0){\dashbox{3}(30,15){\small $\diamond$}}
\put(50,25){\framebox(30,15){\small $\diamond$}}
\put(50,50){\dashbox{3}(30,15){\small $\diamond$}}
\put(50,75){\framebox(30,15){\small $\diamond$}}
\put(50,100){\dashbox{3}(30,15){\small $\diamond$}}
\put(50,125){\framebox(30,15){\small $\diamond$}}
\put(50,150){\dashbox{3}(30,15){\small $\diamond$}}
\put(50,175){\framebox(30,15){\small $\diamond$}}
\put(50,200){\dashbox{3}(30,15){\small $\diamond$}}
\put(50,225){\framebox(30,15){\small $\diamond$}}
\put(50,250){\dashbox{3}(30,15){\small $\diamond$}}
}
%
\put(34,50){$\vector(-1,0){18}$}
\put(13,109){$\vector(1,-1){24}$}
\put(34,108){$\vector(-2,1){18}$}
\put(16,125){$\vector(1,0){18}$}
\put(34,142){$\vector(-2,-1){18}$}
\put(13,141){$\vector(1,1){24}$}
\put(34,200){$\vector(-1,0){18}$}
\put(0,60){\vector(0,1){55}}
\put(0,190){\vector(0,-1){55}}
\put(50,17){\vector(0,-1){9}}
\put(50,33){\vector(0,1){9}}
\put(50,67){\vector(0,-1){9}}
\put(50,83){\vector(0,1){9}}
\put(50,117){\vector(0,-1){9}}
\put(50,133){\vector(0,1){9}}
\put(50,167){\vector(0,-1){9}}
\put(50,183){\vector(0,1){9}}
\put(50,217){\vector(0,-1){9}}
\put(50,233){\vector(0,1){9}}
} 
%
\dottedline(40,-15)(320,-15)
\put(30,-17){$\updownarrow$}
\put(0,-17){$\mu^{\bullet}_{-}\mu^{\circ}_{\mathrm{VI}}$}
}
\end{picture}
\caption{(Continues from Figure \ref{fig:labelxG2})}
\label{fig:labelxG3}
\end{figure}

\subsection{Cluster algebra and alternative labels}
Let $B_{\ell}(G_2)$
be the corresponding skew-symmetric matrix 
to the quiver $Q_{\ell}(G_2)$.
In the rest of the section,
we set the matrix $B=(B_{\mathbf{i}\mathbf{j}})_{\mathbf{i},
\mathbf{j}\in \mathbf{I}}=B_{\ell}(G_2)$
unless otherwise mentioned.

Let $\mathcal{A}(B,x,y)$ 
be the cluster algebra
 with coefficients
in the universal
semifield
$\mathbb{Q}_{\mathrm{sf}}(y)$,
and $\mathcal{G}(B,y)$ be the  coefficient group 
associated with $\mathcal{A}(B,x,y)$.

In view of Lemma \ref{lem:GQmut}
we set $x(0)=x$, $y(0)=y$ and define 
clusters $x(u)=(x_{\mathbf{i}}(u))_{\mathbf{i}\in \mathbf{I}}$
 ($u\in \frac{1}{3}\mathbb{Z}$)
 and coefficient tuples $y(u)=(y_\mathbf{i}(u))_{\mathbf{i}\in \mathbf{I}}$
 ($u\in \frac{1}{3}\mathbb{Z}$)
by the sequence of mutations
\begin{align}
\label{eq:Gmutseq}
\begin{matrix}
\cdots
&
\displaystyle
\mathop{\longleftrightarrow}^{\mu^{\bullet}_-
\mu^{\circ}_{\mathrm{VI}}}
&
(B,x(0),y(0))
& 
\displaystyle
\mathop{\longleftrightarrow}^{\mu^{\bullet}_+
\mu^{\circ}_{\mathrm{I}}}
&(-\boldsymbol{\nu}_{(23)}(B),
x(\frac{1}{3}),
y(\frac{1}{3}))\\
&
\displaystyle
\mathop{\longleftrightarrow}^{\mu^{\bullet}_-
\mu^{\circ}_{\mathrm{II}}}
&
(\boldsymbol{\nu}_{(312)}(B),
x(\frac{2}{3}),
y(\frac{2}{3}))
&
\displaystyle
\mathop{\longleftrightarrow}^{\mu^{\bullet}_+
\mu^{\circ}_{\mathrm{III}}}
&
(-\boldsymbol{\nu}_{(13)}(B),
x(1),
y(1))
\\
&
\displaystyle
\mathop{\longleftrightarrow}^{\mu^{\bullet}_-
\mu^{\circ}_{\mathrm{IV}}}
&
(\boldsymbol{\nu}_{(231)}(B),
x(\frac{4}{3}),
y(\frac{4}{3}))
&
\displaystyle
\mathop{\longleftrightarrow}^{\mu^{\bullet}_+
\mu^{\circ}_{\mathrm{V}}}
&
(-\boldsymbol{\nu}_{(12)}(B),
x(\frac{5}{3}),
y(\frac{5}{3}))
&
\displaystyle
\mathop{\longleftrightarrow}^{\mu^{\bullet}_-
\mu^{\circ}_{\mathrm{VI}}}
&
\cdots.
\end{matrix}
\end{align}
where $\boldsymbol{\nu}_s(B)=B'$ is defined
by $B'_{\boldsymbol{\nu}_s(\mathbf{i})
\boldsymbol{\nu}_s(\mathbf{j})}=B_{\mathbf{i}\mathbf{j}}$.

For a pair $(\mathbf{i},u)\in
 \mathbf{I}\times \frac{1}{3}\mathbb{Z}$,
we set the parity condition $\mathbf{p}_+$
and $\mathbf{p}_-$ by
\begin{align}
\mathbf{p}_+:&
\begin{cases}
 \mathbf{i}\in \mathbf{I}^{\circ}_{\mathrm{I}}
\sqcup \mathbf{I}^{\bullet}_+
& u\equiv 0\\
 \mathbf{i}\in \mathbf{I}^{\circ}_{\mathrm{II}}
\sqcup \mathbf{I}^{\bullet}_-
& u\equiv \frac{1}{3}
\\
 \mathbf{i}\in \mathbf{I}^{\circ}_{\mathrm{III}}
\sqcup \mathbf{I}^{\bullet}_+
& u\equiv \frac{2}{3}\\
 \mathbf{i}\in \mathbf{I}^{\circ}_{\mathrm{IV}}
\sqcup \mathbf{I}^{\bullet}_-
& u\equiv 1
\\
 \mathbf{i}\in \mathbf{I}^{\circ}_{\mathrm{V}}
\sqcup \mathbf{I}^{\bullet}_+
& u\equiv \frac{4}{3}\\
 \mathbf{i}\in \mathbf{I}^{\circ}_{\mathrm{VI}}
\sqcup \mathbf{I}^{\bullet}_-
& u\equiv \frac{5}{3},
\\
\end{cases}
\qquad
\mathbf{p}_-:
\begin{cases}
 \mathbf{i}\in \mathbf{I}^{\circ}_{\mathrm{VI}}
\sqcup \mathbf{I}^{\bullet}_-
& u\equiv 0\\
 \mathbf{i}\in \mathbf{I}^{\circ}_{\mathrm{I}}
\sqcup \mathbf{I}^{\bullet}_+
& u\equiv \frac{1}{3}
\\
 \mathbf{i}\in \mathbf{I}^{\circ}_{\mathrm{II}}
\sqcup \mathbf{I}^{\bullet}_-
& u\equiv \frac{2}{3}\\
 \mathbf{i}\in \mathbf{I}^{\circ}_{\mathrm{III}}
\sqcup \mathbf{I}^{\bullet}_+
& u\equiv 1
\\
 \mathbf{i}\in \mathbf{I}^{\circ}_{\mathrm{IV}}
\sqcup \mathbf{I}^{\bullet}_-
& u\equiv \frac{4}{3}\\
 \mathbf{i}\in \mathbf{I}^{\circ}_{\mathrm{V}}
\sqcup \mathbf{I}^{\bullet}_+
& u\equiv \frac{5}{3},
\\
\end{cases}
\end{align}
where $\equiv$ is modulo $2\mathbb{Z}$.
We have
\begin{align}
(\mathbf{i},u):\mathbf{p}_+
\quad
\Longleftrightarrow
\quad
(\mathbf{i},u+\frac{1}{3}):\mathbf{p}_-.
\end{align}
Each $(\mathbf{i},u):\mathbf{p}_+$
is a mutation point of \eqref{eq:Gmutseq} in the forward
direction of $u$,
and each  $(\mathbf{i},u):\mathbf{p}_-$
is so in the backward direction of $u$.

\begin{lemma}
 Below $\equiv$ means the equivalence modulo
$2\mathbb{Z}$.
\par
(i)
The map
\begin{align}
\begin{matrix}
g: &\mathcal{I}_{\ell+}&\rightarrow & \{ (\mathbf{i},u): \mathbf{p}_+
\}\hfill \\
&(a,m,u-\frac{1}{t_a})&\mapsto &
\begin{cases}
((1,m),u)& \mbox{\rm $a= 1$;
$m+u\equiv 0$}\\
((2,m),u)
& \mbox{\rm $a= 1$;
$m+u\equiv \frac{4}{3}$}\\
((3,m),u)&
 \mbox{\rm $a= 1$;
$m+u\equiv \frac{2}{3}$}\\
((4,m),u)& \mbox{\rm  $a=2$}\\
\end{cases}
\end{matrix}
\end{align}
is a bijection.
\par
(ii)
The map
\begin{align}
\begin{matrix}
g': &\mathcal{I}'_{\ell+}&\rightarrow & \{ (\mathbf{i},u): \mathbf{p}_+
\}\hfill \\
&(a,m,u)&\mapsto &
\begin{cases}
((1,m),u)& \mbox{\rm $a= 1$;
$m+u\equiv 0$}\\
((2,m),u)
& \mbox{\rm $a= 1$;
$m+u\equiv \frac{4}{3}$}\\
((3,m),u)&
 \mbox{\rm $a= 1$;
$m+u\equiv \frac{2}{3}$}\\
((4,m),u)& a=2\\
\end{cases}
\end{matrix}
\end{align}
is a bijection.
\end{lemma}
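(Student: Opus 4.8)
The plan is to deduce (ii) from (i) and then prove (i) by a direct parity count, exactly as in the $B_r$, $C_r$ and $F_4$ cases. To make the reduction, I would compare the two four-case formulas defining $g$ and $g'$ and observe that $g'((a,m,u))$ equals $g((a,m,u-\tfrac1{t_a}))$ as an element of $\mathbf{I}\times\tfrac13\mathbb{Z}$ (both depend only on $a$, $m$, and the residue of $m+u$ modulo $2\mathbb{Z}$); that is, $g'=g\circ\tau$ where $\tau\colon\mathcal{I}'_{\ell+}\to\mathcal{I}_{\ell+}$ sends $(a,m,u)$ to $(a,m,u-\tfrac1{t_a})$. Since $(a,m,u):\mathbf{P}'_+$ is equivalent to $(a,m,u-\tfrac1{t_a}):\mathbf{P}_+$ — the shift changes $a+m+3u$ by $-3$ for $a=1$ and by $-1$ for $a=2$, hence flips its parity — the map $\tau$ is a bijection onto $\mathcal{I}_{\ell+}$, so $g'$ is a bijection as soon as $g$ is.

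For (i) itself, I would first read off from Figure~\ref{fig:quiverG} the data attached to a vertex $\mathbf{i}=(i,i')$: for $i=1,2,3$ one has $\mathbf{i}\in\mathbf{I}^\circ$, with circle label $\mathrm{IV}/\mathrm{I}$ ($i=1$), $\mathrm{II}/\mathrm{V}$ ($i=2$), $\mathrm{VI}/\mathrm{III}$ ($i=3$) according as $i'$ is odd/even; for $i=4$ one has $\mathbf{i}\in\mathbf{I}^\bullet_+$ or $\mathbf{I}^\bullet_-$ according as $i'$ is odd or even. The one arithmetic fact needed is that, for $u\in\tfrac13\mathbb{Z}$, the integer $3u$ is even precisely when $u\equiv 0,\tfrac23,\tfrac43$ and odd precisely when $u\equiv\tfrac13,1,\tfrac53$ modulo $2\mathbb{Z}$. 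For $a=1$ the condition $(1,m,u-1)\in\mathcal{I}_{\ell+}$ reads, via \eqref{eq:GPcond}, ``$m+3u$ even'', and combined with this arithmetic fact it forces the residue of $m+u$ to be one of the three values $0,\tfrac23,\tfrac43$ appearing in $g$; in each of these cases the circle label of the resulting vertex $(i,m)$ and the congruence imposed on $u$ match the corresponding clause of $\mathbf{p}_+$. For $a=2$ the condition $(2,m,u-\tfrac13)\in\mathcal{I}_{\ell+}$ reads ``$m+3u$ odd'', which is exactly the compatibility between the sign of $(4,m)$ and the $\mathbf{p}_+$-requirement at that value of $u$. This shows $g$ is well defined; it is injective because explicitly invertible ($a=1$ if $i\le3$, $a=2$ if $i=4$; $m=i'$; third coordinate $u-1$ resp.\ $u-\tfrac13$), the index ranges $m\in\{1,\dots,\ell-1\}$ and $m\in\{1,\dots,3\ell-1\}$ corresponding to the three $\circ$-columns of height $\ell-1$ and the one $\bullet$-column of height $3\ell-1$; and surjectivity is the same computation run backwards, checking that the candidate preimage of a given $(\mathbf{i},u):\mathbf{p}_+$ satisfies $\mathbf{P}_+$ and lands in the matching case of $g$.

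The main obstacle — the only nontrivial point — will be purely bookkeeping. Because $t=3$ for $G_2$ rather than $t=2$, the residue classes of $\tfrac13\mathbb{Z}$ modulo $2\mathbb{Z}$ come in six flavours, and one must pair them correctly with the six circle labels $\mathrm{I},\dots,\mathrm{VI}$ and with the two $\bullet$-signs; verifying this sixfold matching, in both directions, is the bulk of the proof. Conceptually, however, nothing new happens beyond what was done for $C_r$ and $F_4$.
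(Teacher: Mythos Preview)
Your proposal is correct and is exactly the routine verification the paper has in mind: the paper gives no proof for this lemma, treating it (like the analogous lemmas for $C_r$ and $F_4$) as a straightforward parity check parallel to the $B_r$ case in \cite{IIKKN}. Your reduction of (ii) to (i) via the shift $\tau$ and your six-residue bookkeeping for (i) are precisely what is needed; nothing is missing.
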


We introduce alternative labels
$x_{\mathbf{i}}(u)=x^{(a)}_m(u-1/t_a)$
($(a,m,u-1/t_a)\in \mathcal{I}_{\ell+}$)
for $(\mathbf{i},u)=g((a,m,u-1/t_a))$
and
$y_{\mathbf{i}}(u)=y^{(a)}_m(u)$
($(a,m,u)\in \mathcal{I}'_{\ell+}$)
for $(\mathbf{i},u)=g'((a,m,u))$,
respectively.
See Figures \ref{fig:labelxG1}--\ref{fig:labelxG3}
for an example.

\subsection{T-system and cluster algebra}


\begin{lemma}
\label{lem:Gx2}
The family $\{x^{(a)}_m(u)
\mid (a,m,u)\in \mathcal{I}_{\ell+}\}$
satisfies the system of relations
 \eqref{eq:Cx2}
 with $G(b,k,v;a,m,u)$
for $\mathbb{T}_{\ell}(G_2)$.
In particular,
the family $\{ [x^{(a)}_m(u)]_{\mathbf{1}}
\mid (a,m,u)\in \mathcal{I}_{\ell+}\}$
satisfies the T-system $\mathbb{T}_{\ell}(G_2)$
in $\mathcal{A}(B,x)$
by replacing $T^{(a)}_m(u)$ with $[x^{(a)}_m(u)]_{\mathbf{1}}$.
\end{lemma}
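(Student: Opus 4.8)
The plan is to follow the method used for type $B_r$ in \cite{IIKKN} and, verbatim in structure, for types $C_r$ and $F_4$ above (Lemmas \ref{lem:Cx2} and \ref{lem:Fx2}), the only genuinely new feature being that $t=3$, so that one period of the mutation sequence \eqref{eq:Gmutseq} now consists of the six sub-steps $u\equiv 0,\tfrac13,\dots,\tfrac53$ rather than two. First I would fix, along \eqref{eq:Gmutseq}, the identification of each cluster variable $x^{(a)}_m(v)$ with the vertex of the corresponding seed on which it sits; this is exactly the content of Figures \ref{fig:labelxG1}--\ref{fig:labelxG3}. Two consistency facts must be checked: the labels attached to the three identified right columns of the three quivers agree (built into the $\diamond$-markings of the figures), and the labelled seed at $u+\tfrac13$ is the one produced from that at $u$ by the prescribed composite mutation $\mu^\bullet_{\varepsilon}\mu^\circ_{\mathrm J}$ of Lemma \ref{lem:GQmut}. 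Since each composite mutation is a product of mutations at pairwise non-adjacent vertices of the same colour (as in the $C_r$ case), one may treat the mutation points one at a time, using the quiver in force immediately before the relevant single mutation, read off from Figures \ref{fig:quiverG} and \ref{fig:labelxG1}--\ref{fig:labelxG3}.

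Next, for each $(\mathbf i,u):\mathbf p_+$ — these are precisely the mutation points in the forward direction — the cluster exchange relation in $\mathcal A(B,x,y)$ has the form
\[
x_{\mathbf i}\!\left(u-\tfrac1{t_a}\right)x_{\mathbf i}\!\left(u+\tfrac1{t_a}\right)
=\frac{y_{\mathbf i}(u)}{1+y_{\mathbf i}(u)}\prod_{\mathbf j\to\mathbf i}x_{\mathbf j}
+\frac{1}{1+y_{\mathbf i}(u)}\prod_{\mathbf i\to\mathbf j}x_{\mathbf j},
\]
where the products run over the neighbours of $\mathbf i$ in the current quiver (each $x_{\mathbf j}$ with its current label; $B_{\mathbf j\mathbf i}\in\{-1,0,1\}$, so no multiplicities occur). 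The assertion of the lemma, after substituting the labels from the figures and translating through the bijections $g,g'$, is that the first product becomes $\prod_{(b,k,v)}x^{(b)}_k(v)^{G(b,k,v;a,m,u)}$ with the matrix $G$ read off the $G_2$ T-system \eqref{eq:TG1}, while the second becomes $x^{(a)}_{m-1}(u)x^{(a)}_{m+1}(u)$; that is, \eqref{eq:Cx2} holds with $G$ for $\mathbb T_\ell(G_2)$. Concretely, for a $\bullet$-vertex labelled $x^{(2)}_k(u)$ one checks that the neighbourhood reproduces the corresponding one of the three relations for $T^{(2)}$ in \eqref{eq:TG1}, including the cubic $T^{(1)}$-product with shifts $0,\pm\tfrac13$ or $0,\pm\tfrac23$; for a $\circ$-vertex labelled $x^{(1)}_m(u)$ one checks it reproduces $T^{(1)}_m(u-1)T^{(1)}_m(u+1)=T^{(1)}_{m-1}(u)T^{(1)}_{m+1}(u)+T^{(2)}_{3m}(u)$.

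The main obstacle is precisely this $t=3$ bookkeeping. In contrast with the earlier cases, the six types $\mathrm I,\dots,\mathrm{VI}$ of $\circ$-vertices are mutated at six distinct sub-steps, so a fixed $\bullet$-vertex communicates over one period with a single $\circ$-vertex that then carries $x^{(1)}_m$ at three different time-stamps $v$; tracing the arrows of $Q_\ell(G_2)$ in Figure \ref{fig:quiverG} through the six composite mutations of \eqref{eq:Gmutseq} and verifying that these $v$ come out as exactly the arguments $u-\tfrac23,u,u+\tfrac23$ (respectively $u-\tfrac13,u+\tfrac13,u$, etc.) occurring in \eqref{eq:TG1} is the delicate point, and is where the specific wiring of the three-column quiver enters. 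Once \eqref{eq:Cx2} is verified on one representative of each orbit of vertices under \eqref{eq:Gmutseq}, periodicity of the mutation sequence propagates it to all $(a,m,u)\in\mathcal I'_{\ell+}$. Finally, applying the trivial evaluation $\pi_{\mathbf 1}$ — under which every $y_{\mathbf i}$ and every $1+y_{\mathbf i}$ goes to $1$, so both prefactors $\tfrac{y}{1+y}$ and $\tfrac1{1+y}$ become $1$ — turns \eqref{eq:Cx2} into the T-system relation, so that $\{[x^{(a)}_m(u)]_{\mathbf 1}\}$ satisfies $\mathbb T_\ell(G_2)$.
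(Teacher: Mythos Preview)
Your proposal is correct and follows exactly the approach the paper intends: the paper omits the proof of this lemma entirely, referring back to the parallel argument for $B_r$ in \cite{IIKKN} (and the $C_r$, $F_4$ analogues \ref{lem:Cx2}, \ref{lem:Fx2}), and your outline spells out precisely that argument, correctly flagging the six-step mutation bookkeeping for $t=3$ as the only new wrinkle.
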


\begin{definition}
The {\em T-subalgebra
$\mathcal{A}_T(B,x)$
of ${\mathcal{A}}(B,x)$
associated with the sequence \eqref{eq:Gmutseq}}
is the subring of
${\mathcal{A}}(B,x)$
generated by
$[x_{\mathbf{i}}(u)]_{\mathbf{1}}$
($(\mathbf{i},u)\in \mathbf{I}\times \frac{1}{3}\mathbb{Z}$).
\end{definition}

\begin{theorem}
\label{thm:GTiso}
The ring $\EuScript{T}^{\circ}_{\ell}(G_2)_+$ is isomorphic to
$\mathcal{A}_T(B,x)$ by the correspondence
$T^{(a)}_m(u)\mapsto [x^{(a)}_m(u)]_{\mathbf{1}}$.
\end{theorem}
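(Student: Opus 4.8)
The plan is to establish the isomorphism along the lines of the proof of the corresponding statement for type $B_r$ in \cite{IIKKN} (reproduced as Theorem~\ref{thm:CTiso} for type $C_r$): the cluster-algebraic argument depends only on the combinatorial data---the quiver $Q_{\ell}(G_2)$, the periodic mutation sequence \eqref{eq:Gmutseq}, and the label bijections $g$, $g'$---and is otherwise formal. First I would define a ring homomorphism $\varphi\colon\EuScript{T}^{\circ}_{\ell}(G_2)_+\to\mathcal{A}_T(B,x)$ by $T^{(a)}_m(u)\mapsto[x^{(a)}_m(u)]_{\mathbf 1}$. By Lemma~\ref{lem:Gx2} the elements $[x^{(a)}_m(u)]_{\mathbf 1}$ satisfy the defining relations $\mathbb{T}_{\ell}(G_2)$, together with the boundary conventions $T^{(0)}_m=T^{(a)}_0=T^{(a)}_{t_a\ell}=1$ that are built into \eqref{eq:Cx2} and into the labelling $x^{(0)}_m=x^{(a)}_0=x^{(a)}_{t_a\ell}=1$; hence $\varphi$ is well defined. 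It is surjective because, through $g$ and the identification $x_{\mathbf i}(u)=x^{(a)}_m(u-1/t_a)$, every generator $[x_{\mathbf i}(u)]_{\mathbf 1}$ of $\mathcal{A}_T(B,x)$ equals $\varphi(T^{(a)}_m(u))$ for a suitable triplet.

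The real content is the injectivity of $\varphi$. For this I would pass to the localization: $\varphi$ extends to $\widetilde{\varphi}\colon\EuScript{T}_{\ell}(G_2)_+\to\mathbb{Q}(x)$, where $\mathbb{Q}(x)$ is the field of fractions of $\mathbb{Z}[x]$, containing $\mathcal{A}_T(B,x)$. Using Lemma~\ref{lem:GQmut} and \eqref{eq:Gmutseq} one fixes a single cluster $x(u_0)$ of $\mathcal{A}(B,x)$; via $g^{-1}$ its entries correspond to a finite ``slice'' $S\subset\mathcal{I}_{\ell+}$ of triplets, chosen so that no relation of $\mathbb{T}_{\ell}(G_2)$ involves only members of $S$. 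Reading the relations $\mathbb{T}_{\ell}(G_2)$ (indexed by $\mathcal{I}'_{\ell+}$, equivalently by the points $(\mathbf i,u)\colon\mathbf p_{+}$, equivalently by the mutations in \eqref{eq:Gmutseq}) as a recursion in $u$ in both directions---each relation being used exactly once to produce one new variable---shows that $\EuScript{T}_{\ell}(G_2)_+$ is a localization of the polynomial ring $\mathbb{Z}[T_{\mathbf j}:\mathbf j\in S]$, the point being the global consistency of this $u$-recursion with all of $\mathbb{T}_{\ell}(G_2)$. On the other side, $\widetilde{\varphi}$ sends $\{T_{\mathbf j}:\mathbf j\in S\}$ to the entries of the cluster $x(u_0)$, which are algebraically independent over $\mathbb{Q}$ \cite{FZ2}; hence $\widetilde{\varphi}$ is injective, and therefore so is $\varphi=\widetilde{\varphi}|_{\EuScript{T}^{\circ}_{\ell}(G_2)_+}$. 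Together with surjectivity this gives the asserted isomorphism.

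The only feature specific to $G_2$ is that $t=3$, so $u$ runs over $\frac{1}{3}\mathbb{Z}$ and the mutation period in \eqref{eq:Gmutseq} has six steps rather than four, involving the permutations $\boldsymbol\nu_s$ in place of $\boldsymbol r$; consequently the main point to verify carefully is the mutual consistency of the parity conditions $\mathbf p_{\pm}$, the bijections $g$, $g'$, and Lemma~\ref{lem:GQmut}---in particular that a slice $S$ as above exists and that the $u$-recursion is globally consistent with $\mathbb{T}_{\ell}(G_2)$. This bookkeeping is routine given Figures~\ref{fig:labelxG1}--\ref{fig:labelxG3}, and I expect it to be the only real obstacle; once it is in place, the $B_r$ proof transfers with no further change.
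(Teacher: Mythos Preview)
Your proposal is correct and follows precisely the approach the paper intends: the paper gives no explicit proof of this theorem, deferring instead to the parallel argument for type $B_r$ in \cite{IIKKN}, and your outline is exactly that argument adapted to the $G_2$ combinatorics (the six-step mutation sequence \eqref{eq:Gmutseq}, the bijections $g$, $g'$, and Lemma~\ref{lem:Gx2}). The slice/recursion argument for injectivity via algebraic independence of a single cluster is the standard one from \cite{IIKKN}, so nothing further is needed.
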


\subsection{Y-system and cluster algebra}

\begin{lemma}
\label{lem:Gy2}
The family $\{ y^{(a)}_m(u)
\mid (a,m,u)\in \mathcal{I}'_{\ell+}\}$
satisfies the Y-system $\mathbb{Y}_{\ell}(G_2)$
by replacing $Y^{(a)}_m(u)$ with $y^{(a)}_m(u)$.
\end{lemma}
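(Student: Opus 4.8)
The plan is to obtain the relations of $\mathbb{Y}_{\ell}(G_2)$ directly from the coefficient (Y-seed) mutations along the periodic sequence \eqref{eq:Gmutseq}, and then to translate them through the bijection $g'$ of the preceding lemma. The argument has exactly the same shape as in the $B_r$ case, which is carried out in full detail in \cite{IIKKN}, and as in the $C_r$ case (cf.\ Lemma \ref{lem:Cy2}); what is specific to $G_2$ is only the quiver $Q_{\ell}(G_2)$ of Figure \ref{fig:quiverG}, the presence of triple arrows, and the six-fold shape of \eqref{eq:Gmutseq}, which reflects the value $t=3$ in \eqref{eq:t1}.

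First I would recall the coefficient mutation rule in the universal semifield $\mathbb{Q}_{\mathrm{sf}}(y)$ (in which $\oplus$ is ordinary addition): under $\mu_{\mathbf{k}}$ one has $y_{\mathbf{k}}\mapsto y_{\mathbf{k}}^{-1}$, while $y_{\mathbf{j}}\mapsto y_{\mathbf{j}}(1+y_{\mathbf{k}})$ for each arrow $\mathbf{j}\to\mathbf{k}$ and $y_{\mathbf{j}}\mapsto y_{\mathbf{j}}/(1+y_{\mathbf{k}}^{-1})$ for each arrow $\mathbf{k}\to\mathbf{j}$ of the current quiver. Applying this to the composite mutations $\mu^{\bullet}_{\varepsilon}\mu^{\circ}_{\mathrm{X}}$ in \eqref{eq:Gmutseq}: a vertex $\mathbf{i}$ is mutated precisely at the steps where $(\mathbf{i},u):\mathbf{p}_+$, and at such a step the inversion of $y_{\mathbf{i}}$, combined with the $(1+y)$ factors collected from the incident arrows during the $\mu^{\circ}$ and $\mu^{\bullet}$ mutations, yields a relation of the form
\[
y_{\mathbf{i}}(u-\tfrac{1}{t_a})\,y_{\mathbf{i}}(u+\tfrac{1}{t_a})
=\frac{\prod_{(b,k,v)\in\mathcal{I}_{\ell}}(1+y^{(b)}_{k}(v))^{{}^{t}\!G(b,k,v;a,m,u)}}{(1+y^{(a)}_{m-1}(u)^{-1})(1+y^{(a)}_{m+1}(u)^{-1})},
\]
i.e.\ exactly \eqref{eq:Yu} for the triple $(a,m,u)=(g')^{-1}(\mathbf{i},u)$; reading off the explicit arrows of $Q_{\ell}(G_2)$ reproduces the concrete right-hand sides of \eqref{eq:YG1}.

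The main work, and the step I expect to be the principal obstacle, is the case analysis: for each residue class of $u$ modulo $2$ and each vertex type (white of types I--VI, black $\pm$) one must check that the arrows incident to the mutated vertex in the relevant quiver $\pm\boldsymbol{\nu}_s(B)$ of Lemma \ref{lem:GQmut}(i) are exactly the ones encoded by ${}^{t}\!G(b,k,v;a,m,u)$ in \eqref{eq:Yu}. This is where the $G_2$-specific features enter: the triple arrows at the interface between the $a=1$ and $a=2$ parts of $Q_{\ell}(G_2)$ must account for the exponents $(1+Y^{(2)}_{3m})^3$, $(1+Y^{(2)}_{3m\pm1})^2$ and the factors $(1+Y^{(1)}_{m}(u\pm\tfrac{2}{3}))$ appearing in \eqref{eq:YG1} (equivalently, via \eqref{eq:TG1} and the transposition of $G$), and the permutations $\boldsymbol{\nu}_s$ together with the identification of the three right-hand columns in Figure \ref{fig:quiverG} must be tracked so that the column index $a$ comes out consistently. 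The verification is routine but lengthy, and since it is formally identical to the one in \cite{IIKKN} we would only indicate it, illustrating the matching for $\ell=4$ by Figures \ref{fig:labelxG1}--\ref{fig:labelxG3}.
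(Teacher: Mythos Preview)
Your approach is correct and matches the paper's, which (as for $C_r$ and $F_4$) omits the proof and defers to the parallel $B_r$ computation in \cite{IIKKN}. One small correction worth flagging: there are no multiple arrows in $Q_{\ell}(G_2)$ (the paper notes $B_{\mathbf{ij}}\in\{-1,0,1\}$ throughout); the exponents $3$ and $2$ on $(1+Y^{(2)}_{3m})$ and $(1+Y^{(2)}_{3m\pm1})$ in \eqref{eq:YG1} arise not from triple arrows but from the fact that a white vertex is mutated only once per six-step period of \eqref{eq:Gmutseq} while each adjacent black vertex is mutated three times in between, so the $(1+y)$ factors accumulate across the intermediate $\mu^{\bullet}_{\pm}$ steps.
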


\begin{definition}
The {\em Y-subgroup
$\mathcal{G}_Y(B,y)$
of ${\mathcal{G}}(B,y)$
associated with the sequence \eqref{eq:Gmutseq}}
is the subgroup of
${\mathcal{G}}(B,y)$ 
generated by
$y_{\mathbf{i}}(u)$
($(\mathbf{i},u)\in \mathbf{I}\times \frac{1}{3}\mathbb{Z}$)
and $1+y_{\mathbf{i}}(u)$
($(\mathbf{i},u):\mathbf{p}_+$ or $\mathbf{p}_-$).
\end{definition}

\begin{theorem}
\label{thm:GYiso}
The group $\EuScript{Y}^{\circ}_{\ell}(G_2)_+$ is isomorphic to
$\mathcal{G}_Y(B,y)$ by the correspondence
$Y^{(a)}_m(u)\mapsto y^{(a)}_m(u)$
and $1+Y^{(a)}_m(u)\mapsto 1+y^{(a)}_m(u)$.
\end{theorem}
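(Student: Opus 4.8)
The plan is to adapt to $G_2$ the argument used for the $B_r$ case in \cite{IIKKN} and repeated for $C_r$ and $F_4$ in the proofs of Theorems \ref{thm:CYiso} and \ref{thm:FYiso}; the $t=3$ structure of $G_2$ changes only the bookkeeping, not the strategy. First, Lemma \ref{lem:Gy2} says that the family $\{y^{(a)}_m(u)\mid(a,m,u)\in\mathcal{I}'_{\ell+}\}$ satisfies the relations $\mathbb{Y}_{\ell}(G_2)$, and by the definition of $\mathcal{G}_Y(B,y)$ this group is generated by the $y_{\mathbf{i}}(u)$ together with the $1+y_{\mathbf{i}}(u)$; via the bijections $g,g'$ and the shift $u\mapsto u+\tfrac13$ interchanging $\mathbf{p}_+$ and $\mathbf{p}_-$, these generators are exactly the images of the generators $Y^{(a)}_m(u)$, $1+Y^{(a)}_m(u)$ of $\EuScript{Y}^{\circ}_{\ell}(G_2)_+$. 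Hence there is a well-defined surjective group homomorphism $\psi\colon\EuScript{Y}^{\circ}_{\ell}(G_2)_+\to\mathcal{G}_Y(B,y)$ with $Y^{(a)}_m(u)\mapsto y^{(a)}_m(u)$ and $1+Y^{(a)}_m(u)\mapsto 1+y^{(a)}_m(u)$. The whole of the theorem is the injectivity of $\psi$, which I would establish by producing an explicit inverse.

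To construct the inverse I would exploit the universal (free) property of the semifield $\mathbb{Q}_{\mathrm{sf}}(y)$: any assignment of values in a semifield to the free generators $y_{\mathbf{i}}=y_{\mathbf{i}}(0)$ extends uniquely to a semifield homomorphism out of $\mathbb{Q}_{\mathrm{sf}}(y)$. I would therefore send each initial coefficient $y_{\mathbf{i}}(0)$ to the Y-system variable $Y^{(a)}_m(u')$ it is identified with under $g'$ at the first $u'$ at which $(\mathbf{i},u')$ is a mutation point of \eqref{eq:Gmutseq}, obtaining a semifield homomorphism $\phi\colon\mathbb{Q}_{\mathrm{sf}}(y)\to\EuScript{Y}_{\ell}(G_2)$. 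One then checks, by induction on $u$ along \eqref{eq:Gmutseq} in both directions and one composite mutation $\mu^{\bullet}_{\pm}\mu^{\circ}_{\ast}$ at a time, that $\phi(y_{\mathbf{i}}(u))$ is the corresponding $Y^{(a)}_m(u)$; the inductive step is precisely Lemma \ref{lem:Gy2} read backwards, i.e.\ that transporting a coefficient mutation through $\phi$ yields one of the defining relations \eqref{eq:YG1} of $\EuScript{Y}_{\ell}(G_2)$. Being a semifield homomorphism, $\phi$ automatically sends $1+y_{\mathbf{i}}(u)$ to $1+Y^{(a)}_m(u)$, so it restricts to a group homomorphism $\mathcal{G}_Y(B,y)\to\EuScript{Y}^{\circ}_{\ell}(G_2)_+$ that inverts $\psi$ on generators, hence everywhere. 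Lemma \ref{lem:GQmut}(i) enters here only to guarantee that the sequence \eqref{eq:GB2}, and hence \eqref{eq:Gmutseq}, is genuinely periodic so that the induction is consistent; the periodicity statements of Theorem \ref{thm:Gyperiod} are not needed for this argument.

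The only genuinely $G_2$-specific input, and the step I expect to demand the most care, is the combinatorial verification underlying the inductive step: that the six-step cycle \eqref{eq:GB2}, with its composite mutations $\mu^{\circ}_{\mathrm{I}},\dots,\mu^{\circ}_{\mathrm{VI}}$, the permutations $\boldsymbol{\nu}_s$ of the three outer columns of $Q_{\ell}(G_2)$, and the shifts $u\mapsto u\pm\tfrac{1}{t_a}$ ($t_a\in\{1,3\}$), reproduces the four families of relations in \eqref{eq:YG1} with exactly the right $(1+Y)$-factors and multiplicities, including the cubic and quadratic multiplicities occurring in the first relation. This is the threefold analogue of the checks already performed for $C_r$ and $F_4$; since the assertion is local in the pair $(\mathbf{i},u)$ it reduces to a finite computation, which can be read off Figures \ref{fig:labelxG1}--\ref{fig:labelxG3}, and no idea beyond \cite{IIKKN} is required.
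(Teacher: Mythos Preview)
Your proposal is correct and follows exactly the route the paper intends: the paper omits the proof of Theorem~\ref{thm:GYiso} as parallel to the $B_r$ case in \cite{IIKKN} (just as for Theorems~\ref{thm:CYiso} and~\ref{thm:FYiso}), and your sketch---surjectivity from Lemma~\ref{lem:Gy2}, injectivity by constructing the inverse semifield homomorphism $\phi$ from the universal property of $\mathbb{Q}_{\mathrm{sf}}(y)$ and propagating it along the mutation sequence \eqref{eq:Gmutseq}---is precisely that argument, with the $t=3$ bookkeeping correctly identified as the only new ingredient.
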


\subsection{Tropical Y-system at level 2}

By direct computations, the following properties
are verified.

\begin{proposition}
\label{prop:Glev2}
 For 
$[\mathcal{G}_Y(B,y)]_{\mathbf{T}}$
with $B=B_{2}(G_2)$, the following facts hold.
\par
(i) Let $u$ be in the region $0\le u < 2$.
For any $(\mathbf{i},u):\mathbf{p}_+$,
the  monomial $[y_{\mathbf{i}}(u)]_{\mathbf{T}}$
is positive.
\par
(ii) Let $u$ be in the region $-h^{\vee}\le u < 0$.
\begin{itemize}
\item[\em (a)]
 Let $\mathbf{i}=(1,1),(2,1),(3,1)$, or $(4,3)$.
For any $(\mathbf{i},u):\mathbf{p}_+$,
the  monomial $[y_{\mathbf{i}}(u)]_{\mathbf{T}}$
is negative.
\item[\em (b)]
 Let $\mathbf{i}=(4,1),(4,2),(4,4)$, or $(4,5)$
For any $(\mathbf{i},u):\mathbf{p}_+$,
the  monomial $[y_{\mathbf{i}}(u)]_{\mathbf{T}}$
is negative for
 $u=-\frac{1}{3},-\frac{2}{3},-2,
-\frac{7}{3},-\frac{11}{3},-4$
and positive for $u=-1,-\frac{4}{3},-\frac{5}{3},
-\frac{8}{3},-3,
-\frac{10}{3}$.
\end{itemize}
\par
(iii)
$y_{ii'}(2)=y_{ii'}^{-1}$ if $i\neq 4$ and
$y_{4,6-i'}^{-1}$ if $i=4$.
\par
(iv) $y_{ii'}(-h^{\vee})=
y_{ii'}^{-1}$.
\end{proposition}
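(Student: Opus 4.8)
The plan is to follow verbatim the method used for Proposition \ref{prop:Clev2}: compute the tropical $y$-variables $[y_{\mathbf{i}}(u)]_{\mathbf{T}}$ explicitly along the mutation sequence \eqref{eq:Gmutseq}. Concretely, I would write $[y_{\mathbf{i}}(u)]_{\mathbf{T}}=\prod_{\mathbf{j}\in\mathbf{I}}y_{\mathbf{j}}^{g_{\mathbf{i}}(u)_{\mathbf{j}}}$ and track the integer vectors $g_{\mathbf{i}}(u)$ (the $c$-vectors along \eqref{eq:Gmutseq}), starting from the standard basis at $u=0$ and evolving by the tropical mutation rule, using Lemma \ref{lem:GQmut} and Figures \ref{fig:labelxG1}--\ref{fig:labelxG3} to keep track of the quiver and of which composite of $\mu^{\circ}_{\mathrm{I}},\dots,\mu^{\circ}_{\mathrm{VI}},\mu^{\bullet}_{\pm}$ is applied at each step. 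Since $\mathbf{I}$ has only $8$ elements when $\ell=2$ and $h^{\vee}=4$, the ranges of $u$ that matter, namely $0\le u<2$ for (i) and $-h^{\vee}\le u<0$ for (ii)--(iv), involve only a small bounded number of mutation rounds, so in principle the whole proposition can be settled by a finite direct computation.

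For (i) and (iii) I would argue exactly as in the $B_r$ case \cite[Proposition 3.2]{IIKKN}: in the forward region the $c$-vectors stay nonnegative, so the monomials are positive, and the half-turn of the mutation sequence at $u=\ell=2$ realizes the symmetry $\boldsymbol{\omega}$ of $Q_2(G_2)$, which yields $y_{ii'}(2)=y_{ii'}^{-1}$ for $i\neq 4$ and $y_{4,6-i'}^{-1}$ for $i=4$. For (ii) and (iv) I would proceed variable by variable, as for $C_r$, splitting the vertices into the ``core'' column $y_{4,i'}$ $(i'=1,\dots,5)$ and the three singleton columns $y_{1,1},y_{2,1},y_{3,1}$. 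Specializing $y_{4,i'}=1$ for all $i'$ and running the recursion backward should at once show $[y_{1,1}(u)]_{\mathbf{T}},[y_{2,1}(u)]_{\mathbf{T}},[y_{3,1}(u)]_{\mathbf{T}}$ are negative on $-h^{\vee}\le u<0$, giving part of (ii)(a). For the core part I expect a root-system description paralleling the $D_{r+1}$ description for $C_r$ and the $E_6$ one for $F_4$ --- presumably in terms of $D_4$, the triality cover of $G_2$: after specializing the singleton variables to $1$, the vectors $g_{4,i'}(u)$ should coincide, up to sign, with a $\sigma$-orbit of an almost positive root, where $\sigma$ is the appropriate product of piecewise-linear reflections incorporating the order-$3$ twist. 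Reading off these orbits (a $D_4$ analogue of Lemmas \ref{lem:Corbit} and \ref{lem:Forbit}) should then produce the precise sign list in (ii)(b) --- negative for $u=-\frac13,-\frac23,-2,-\frac73,-\frac{11}3,-4$, positive for $u=-1,-\frac43,-\frac53,-\frac83,-3,-\frac{10}3$ --- the negativity of $y_{4,3}$ in (ii)(a), and the terminal values $y_{ii'}(-h^{\vee})=y_{ii'}^{-1}$ in (iv). As usual I would also check that none of the $[y_{\mathbf{i}}(u)]_{\mathbf{T}}$ equals $1$, which is immediate from the explicit vectors.

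The main obstacle is essentially bookkeeping: fixing $Q_2(G_2)$ and the composite mutations correctly, and carrying the short but error-prone chain of tropical mutations through without sign mistakes, the $t=3$ feature making the arithmetic of $\frac13\mathbb{Z}$-shifts slightly more delicate than in the $C_r$ and $F_4$ cases. The one place where genuine structure rather than raw computation is needed is identifying the correct $D_4$ Coxeter-type element $\sigma$ --- this is where the triality twist, special to $G_2$, enters --- and verifying that the $g_{4,i'}(u)$ run through its orbits. Once that is in place, the remainder is routine verification over the finite range $-4\le u<2$, and (ii)/(iv) for the core part follow just as for the $D$ part of $C_r$.
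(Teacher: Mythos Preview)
Your overall plan --- direct computation of the tropical $c$-vectors over the finite range $-4\le u<2$, organized where possible by a $D_4$ root-system description --- is exactly what the paper does; the paper's stated proof is simply ``by direct computations,'' with the $D_4$ picture supplied afterwards as supplementary structure. Your guess that $D_4$ with its triality is the relevant root system is spot on.

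However, you have the split backwards. The ``$D_4$ core'' is not the column $i=4$ but the four vertices $(1,1),(2,1),(3,1),(4,3)$ --- precisely the list in (ii)(a). The paper specializes $y_{4,1}=y_{4,2}=y_{4,4}=y_{4,5}=1$ (not the singletons), sets $y_i=y_{i,1}$ for $i=1,2,3$ and $y_4=y_{4,3}$, and then the resulting tropical vectors run through (minus) the positive roots of $D_4$ under the composite $\sigma=\sigma_3\sigma_4\sigma_1\sigma_4\sigma_2\sigma_4$; the three ``circle'' vertices are the outer legs of $D_4$ and $(4,3)$ is the branch node, which is how the triality enters. The complement $(4,1),(4,2),(4,4),(4,5)$ --- the list in (ii)(b) --- plays the role of the $A$ part for $C_r$ and carries the mixed-sign behaviour. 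Your proposed specialization $y_{4,i'}=1$ for all $i'$ would kill the branch node and leave three disconnected vertices, so no $D_4$ structure would appear. This does not break the proof --- the raw computation over $8$ vertices and finitely many steps goes through regardless --- but with the correct split the verification of (ii) and (iv) becomes the clean orbit calculation you were hoping for rather than an ad hoc table.
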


Also we have a description of the core part
of  $[y_{\mathbf{i}}(u)]_{\mathbf{T}}$
in the region $-h^{\vee}\leq  u <0$
in terms of the root system of $D_4$.
We use the following
index of the Dynkin diagram $D_4$.
\begin{align*}
\begin{picture}(60,55)(0,0)
\put(0,15)
{
\put(0,0){\circle{5}}
\put(30,0){\circle{5}}
\put(60,0){\circle{5}}
\put(30,30){\circle{5}}
\put(27,0){\line(-1,0){24}}
\put(57,0){\line(-1,0){24}}
\put(30,3){\line(0,1){24}}
\put(-4,8)
{
\put(2,-20){\small $1$}
\put(32,-20){\small $4$}
\put(62,-20){\small $2$}
\put(42,20){\small $3$}
}
}
\end{picture}
\end{align*}
Let $\Pi=\{\alpha_1,\dots,\alpha_4\}$, $-\Pi$,
 $\Phi_+$ be the set of the simple roots,
the negative simple roots, the positive roots, respectively,
of type $D_{4}$.
Let $\sigma_i$ be the piecewise-linear
analogue  of the simple reflection $s_i$,
acting on the set
of the almost positive roots
$\Phi_{\geq -1}=\Phi_{+}\sqcup (-\Pi)$.
We define $\sigma$ as the composition
\begin{align}
\label{eq:Gsigma}
\sigma=\sigma_3\sigma_4\sigma_1\sigma_4\sigma_2\sigma_4.
\end{align}

\begin{lemma}
\label{lem:Gorbit}
We have the orbits by $\sigma$
\begin{align}
\begin{split}
&
-\alpha_1
\ \rightarrow\ 
\alpha_1+\alpha_3+\alpha_4
\ \rightarrow\ 
\alpha_1+\alpha_2+\alpha_4
\ \rightarrow\ 
-\alpha_1,
\\
&
-\alpha_2
\ \rightarrow\ 
\alpha_1+\alpha_2+\alpha_3+\alpha_4
\ \rightarrow\ 
\alpha_2+\alpha_4
\ \rightarrow\ 
-\alpha_2,
\\
&
-\alpha_3
\ \rightarrow\ 
\alpha_3
\ \rightarrow\ 
\alpha_1+\alpha_2+\alpha_3+2\alpha_4
\ \rightarrow\ 
-\alpha_3,
\\
&
-\alpha_4
\ \rightarrow\ 
\alpha_2+\alpha_3+\alpha_4
\ \rightarrow\ 
\alpha_4
\ \rightarrow\ 
\alpha_1
\\
&\phantom{-\alpha_3}\hskip5pt
\ \rightarrow\ 
\alpha_2
\ \rightarrow\ 
\alpha_3+\alpha_4
\ \rightarrow\ 
\alpha_1+\alpha_4
\ \rightarrow\ 
-\alpha_4.
\end{split}
\end{align}
In particular, these
elements in $\Phi_+$ exhaust the set $\Phi_+$,
thereby providing the orbit decomposition
of $\Phi_+$ by $\sigma$.
\end{lemma}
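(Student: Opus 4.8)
The plan is to verify the orbit structure by direct computation, exactly as was done for Lemma~\ref{lem:Corbit} and Lemma~\ref{lem:Forbit}. First I would record the action of each piecewise-linear reflection $\sigma_i$ ($i=1,\dots,4$) on $\Phi_{\geq -1}$ of type $D_4$ as prescribed by \eqref{eq:pl}: $\sigma_i$ fixes every $-\alpha_j$ with $j\neq i$, sends $-\alpha_i\mapsto\alpha_i$, and acts on each $\alpha\in\Phi_+$ by the ordinary linear reflection $s_i$ (whose value automatically lies in $\Phi_{\geq -1}$). Since the vertices $1,2,3$ of the $D_4$ diagram are pairwise non-adjacent, $\sigma_1,\sigma_2,\sigma_3$ commute pairwise, so in $\sigma=\sigma_3\sigma_4\sigma_1\sigma_4\sigma_2\sigma_4$ of \eqref{eq:Gsigma} only the positions of these factors relative to the three copies of $\sigma_4$ matter; in particular the expression is unambiguous.

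Next I would apply $\sigma$ step by step, reading the composition from right to left (first $\sigma_4$, then $\sigma_2$, $\sigma_4$, $\sigma_1$, $\sigma_4$, $\sigma_3$), to each of $-\alpha_1,-\alpha_2,-\alpha_3,\alpha_3,-\alpha_4,\alpha_4$, iterating until a negative simple root reappears. Each elementary step is a one-line root computation: applying $\sigma_4$ to a positive root $\alpha$ means computing $s_4(\alpha)$ from the $D_4$ Cartan matrix, and likewise for $\sigma_1,\sigma_2,\sigma_3$, using the exceptional rule only when the argument is a negative simple root. This run produces the four cycles displayed in the statement: three cycles of length $3$ through $-\alpha_1$, $-\alpha_2$, $-\alpha_3$, each containing two positive roots, and one cycle of length $7$ through $-\alpha_4$ containing six positive roots. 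Keeping the intermediate values in a small table, as in Tables~\ref{tab:orbit10} and~\ref{tab:orbit9}, makes the bookkeeping transparent; the one point to watch carefully is that at each stage the root to which $\sigma_i$ is applied really is \emph{positive} (so $\sigma_i$ acts by $s_i$), and symmetrically that a $-\alpha_j$ can be produced only by the $\sigma_j$ factor.

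For the exhaustion claim it then suffices to count: the four cycles have total length $3+3+3+7=16=|\Phi_{\geq -1}(D_4)|$, their entries are visibly distinct, and $\sigma$ is a bijection of the finite set $\Phi_{\geq -1}$; hence these four cycles are precisely the $\sigma$-orbits, and deleting the four negative simple roots yields the orbit decomposition of $\Phi_+$ into $2+2+2+6=12=|\Phi_+(D_4)|$ roots. I do not expect any genuine obstacle here beyond error-free bookkeeping; the only subtlety worth flagging is that $\sigma$ is \emph{not} bipartite (the factor $\sigma_4$ occurs three times), so the clean Fomin--Zelevinsky description of orbits of a bipartite Coxeter element does not apply verbatim and the computation must be run by hand. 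A conceptual alternative, which I would mention but not rely on, is that $\sigma$ is the lift through the triality folding of $D_4$ onto $G_2$ of the bipartite piecewise-linear Coxeter element attached to $Q_2(G_2)$ — the three copies of $\sigma_4$ reflecting the value $t=3$ in \eqref{eq:t1} — from which the orbit structure can alternatively be deduced.
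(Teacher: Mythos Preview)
Your proposal is correct and follows the same approach as the paper: the lemma is stated there without proof, exactly as Lemma~\ref{lem:Forbit} for $E_6$, and the analogous Lemma~\ref{lem:Corbit} is proved by the words ``verified by explicitly calculating $\sigma^k(-\alpha_i)$ and $\sigma^k(\alpha_i)$'', which is precisely the direct step-by-step computation you outline. Your counting argument for the exhaustion claim and your remark about the non-bipartite nature of $\sigma$ (the three occurrences of $\sigma_4$) are accurate additional commentary, but the paper simply treats the whole thing as a routine verification.
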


For $-h^{\vee}\leq u< 0$, define
\begin{align}
\label{eq:Galpha}
&\alpha_{i}(u)=
\begin{cases}
\sigma^{-(u-1)/2}(-\alpha_1)
& \mbox{\rm $i=1$; $u=-1,-3$},\\
\sigma^{-(3u-1)/6}(-\alpha_2)
& \mbox{\rm $i=2$; $u=-\frac{5}{3},-\frac{11}{3}$},\\
\sigma^{-(3u-5)/6}(-\alpha_3)
& \mbox{\rm $i=3$; $u=-\frac{1}{3},-\frac{7}{3}$},\\
\sigma^{-(3u+2)/6}(\alpha_3+\alpha_4)
& \mbox{\rm $i=4$; $u=-\frac{2}{3},-\frac{8}{3}$},\\
\sigma^{-(3u+4)/6}(\alpha_1)
& \mbox{\rm $i=4$; $u=-\frac{4}{3},-\frac{10}{3}$},\\
\sigma^{-u/2}(-\alpha_4)
& \mbox{\rm $i=4$; $u=-2,-4$}.\\
\end{cases}
\end{align}
By Lemma \ref{lem:Gorbit} they are (all the) positive roots
of $D_{4}$.

For a monomial $m$ in $y=(y_{\mathbf{i}})_{\mathbf{i}\in
\mathbf{I}}$,
let $\pi_D(m)$ denote the specialization
with $y_{41}=y_{42}=y_{44}=y_{45}=1$.
For simplicity, we set
$y_{i1}=y_i$ ($i\neq 4$), $y_{43}=y_4$,
and also,
$y_{i1}(u)=y_{i}(u)$
($i\neq 4$), $y_{43}(u)=y_4(u)$.
We define the vectors $\mathbf{t}_{i}(u)
=(t_{i}(u)_k)_{k=1}^{4}$
by
\begin{align}
\pi_D([y_{i}(u)]_{\mathbf{T}})
=
\prod_{k=1}^{4}
 y_{k}^{t_{i}(u)_k}.
\end{align}
We also identify each vector $\mathbf{t}_{i}(u)$
with $\alpha=\sum_{k=1}^{4}
t_{i}(u)_k \alpha_k \in \mathbb{Z}\Pi$.

\begin{proposition}
\label{prop:Gtvec}
Let $-h^{\vee}\leq u< 0$.
Then, we have
\begin{align}
\label{eq:Gtvec1}
\mathbf{t}_{i}(u)=-\alpha_i(u),
\end{align}
for $(i,u)$ in \eqref{eq:Galpha}.
\end{proposition}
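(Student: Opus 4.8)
The plan is to adapt, step for step, the argument given for the $D$ part of type $C_r$ (Proposition~\ref{prop:Ctvec}) to the $D_4$ combinatorics recorded in Lemma~\ref{lem:Gorbit}. First I would verify \eqref{eq:Gtvec1} directly for $u$ in an initial window just below $0$ — concretely, for the finitely many pairs $(i,u)$ appearing in \eqref{eq:Galpha} with $u\geq -2$ — by performing the relevant tropical mutations along one block of the periodic sequence \eqref{eq:Gmutseq} starting from the initial seed, using parts~(i) and (iii) of Proposition~\ref{prop:Glev2} to pin down the signs of the intermediate monomials. This supplies the base of a downward induction on $u$.

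Next I would run the induction. Assuming \eqref{eq:Gtvec1} holds for all larger values of $u$, together with the positivity/negativity statements of Proposition~\ref{prop:Glev2}(ii) for the $D$-part variables $y_i$ $(i\neq 4)$ and $y_{43}$, I would extract from the six elementary mutation steps of \eqref{eq:Gmutseq} (equivalently, from the tropical form of $\mathbb{Y}_2(G_2)$ applied after the specialization $\pi_D$) a closed system of linear recurrences among the vectors $\mathbf{t}_i(u)$ — the $G_2$ analogue of \eqref{eq:Ctrec}. The mechanism is identical to the $C_r$ case: in $\mathrm{Trop}(y)$ the addition $\oplus$ selects the exponentwise minimum, so every factor $1\oplus m$ occurring in the tropical Y-system collapses to $1$ when the monomial $m$ is positive and to $m$ when $m$ is negative, and the induction hypothesis tells us, after applying $\pi_D$, exactly which alternative occurs at each node. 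Collecting the surviving factors produces the recurrences; under the identification $\mathbf{t}_i(u)=-\alpha_i(u)$ these must then match the recurrences obeyed by the $\alpha_i(u)$, which follow from the orbit decomposition of $\sigma=\sigma_3\sigma_4\sigma_1\sigma_4\sigma_2\sigma_4$ in Lemma~\ref{lem:Gorbit} and the piecewise-linear Coxeter relation \cite[Eq.~(10.9)]{FZ2}, or can simply be read off from the explicit list of roots exhibited there. Since the recurrences and the initial data agree, the induction closes and \eqref{eq:Gtvec1} holds for all $(i,u)$ in \eqref{eq:Galpha}.

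I expect the principal difficulty to be bookkeeping rather than conceptual. One must make sure that at every stage of the backward induction the specialized monomial $\pi_D([y_{\mathbf{i}}(u)]_{\mathbf{T}})$ is genuinely positive or genuinely negative — never a mixed Laurent monomial — so that the factors $1\oplus m$ degenerate cleanly, and one must keep careful track of which of the six blocks of \eqref{eq:Gmutseq} a given $(\mathbf{i},u)$ is a mutation point of. Because $t=3$ for $G_2$, a full period consists of six elementary steps rather than four, and the permutations $\boldsymbol\nu_s$ cyclically shuffle the three copies of the left column, so the index management is heavier than in the $C_r$ or $F_4$ cases; but no genuinely new phenomenon arises. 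Finally, exactly as for $C_r$, once Proposition~\ref{prop:Gtvec} is established the statements~(ii) and~(iv) of Proposition~\ref{prop:Glev2} restricted to the $D$-part variables are immediate consequences, while the complementary variables $y_{4,i'}$ $(i'\in\{1,2,4,5\})$ are handled by the analogous but simpler argument, thereby completing the proof of Proposition~\ref{prop:Glev2}.
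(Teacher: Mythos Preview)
Your approach is correct and matches the paper's: the argument mirrors the inductive proof of Proposition~\ref{prop:Ctvec} for $C_r$, which the paper implicitly carries over to $G_2$ (and $F_4$) by omission, remarking only that Proposition~\ref{prop:Glev2} is ``verified by direct computations'' since $h^{\vee}=4$ makes the whole range $-4\le u<0$ a small finite check.

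One point to tighten. In your inductive step you invoke ``the positivity/negativity statements of Proposition~\ref{prop:Glev2}(ii)'', yet in your final paragraph you derive Proposition~\ref{prop:Glev2}(ii) for the $D$-part \emph{from} Proposition~\ref{prop:Gtvec}; as written that is circular. What you actually need --- and what the $C_r$ proof does --- is to draw the sign information from the \emph{induction hypothesis} itself, i.e.\ from the already-established instances of \eqref{eq:Gtvec1} at larger $u$, not from Proposition~\ref{prop:Glev2}(ii) as an external input. (Your later sentence ``the induction hypothesis tells us \dots\ exactly which alternative occurs'' shows you have the right mechanism in mind; just rephrase accordingly.) Also, parts~(i) and~(iii) of Proposition~\ref{prop:Glev2} concern the forward region $0\le u<2$ and do not bear on the base case at negative $u$; that base case is simply a direct backward computation from the initial seed $y(0)=y$.
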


\subsection{Tropical Y-systems of higher levels}

\begin{proposition}
\label{prop:Glevh}
Let $\ell> 2$ be an integer.
 For 
$[\mathcal{G}_Y(B,y)]_{\mathbf{T}}$
with $B=B_{\ell}(G_2)$, the following facts hold.
\par
\par
(i) Let $u$ be in the region $0\le u < \ell$.
For any $(\mathbf{i},u):\mathbf{p}_+$,
the  monomial $[y_{\mathbf{i}}(u)]_{\mathbf{T}}$
is positive.
\par
(ii) Let $u$ be in the region $-h^{\vee}\le u < 0$.
\begin{itemize}
\item[\em (a)]
 Let $\mathbf{i}\in \mathbf{I}^{\circ}$
or $(4,i')$ $(i'\in 3\mathbb{N})$.
For any $(\mathbf{i},u):\mathbf{p}_+$,
the  monomial $[y_{\mathbf{i}}(u)]_{\mathbf{T}}$
is negative.
\item[\em (b)]
 Let $\mathbf{i}=(4,i')$ $(i'\not\in 3\mathbb{N})$.
For any $(\mathbf{i},u):\mathbf{p}_+$,
the  monomial $[y_{\mathbf{i}}(u)]_{\mathbf{T}}$
is negative for
 $u=-\frac{1}{3},-\frac{2}{3},-2,
-\frac{7}{3},-\frac{11}{3},-4$
and positive for $u=-1,-\frac{4}{3},-\frac{5}{3},
-\frac{8}{3},
\\ 
-3,
-\frac{10}{3}$.
\end{itemize}
\par
(iii)
$y_{ii'}(\ell)=y_{i,\ell-i'}^{-1}$ if $i\neq 4$ and
$y_{4,3\ell-i'}^{-1}$ if $i=4$.
\par
(iv) $y_{ii'}(-h^{\vee})=
y_{ii'}^{-1}$.
\end{proposition}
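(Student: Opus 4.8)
The plan is to deduce Proposition \ref{prop:Glevh} from the level-$2$ case, Proposition \ref{prop:Glev2}, by means of the \emph{factorization property} of the tropical Y-system, exactly as in the $B_r$ case \cite[Proposition 4.1]{IIKKN} and as already carried out for $C_r$ (Proposition \ref{prop:Clevh}) and $F_4$ (Proposition \ref{prop:Flevh}). The underlying observation is that the quiver $Q_\ell(G_2)$ of Figure \ref{fig:quiverG} is obtained by stacking $\ell-1$ ``floors'' whose height matches that of the level-$2$ quiver, and that the mutation sequence \eqref{eq:Gmutseq} respects this stacking. First I would make the factorization precise: working in $\mathrm{Trop}(y)$, one shows that along \eqref{eq:Gmutseq} each tropical monomial $[y_{\mathbf{i}}(u)]_{\mathbf{T}}$ factors as a product of tropical $y$-variables of a family of level-$2$ type tropical Y-systems, with suitably reindexed initial seeds, together with a controlled contribution from the $\circ$-vertices. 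This is the step where the feature $t=3$ of $G_2$ demands the most care: the period of \eqref{eq:Gmutseq} has the six stages $\mu^\bullet_\pm\mu^\circ_{\mathrm{I}},\dots,\mu^\bullet_\pm\mu^\circ_{\mathrm{VI}}$ and $u$ runs over $\tfrac13\mathbb{Z}$, so the bookkeeping of which floor a vertex belongs to, and of the six labels $\mathrm{I},\dots,\mathrm{VI}$, is heavier than in the $C_r$ and $F_4$ cases; but the mechanism of \cite[Proposition 4.1]{IIKKN} is otherwise unchanged.

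Granting the factorization, parts (i)--(iv) follow by transporting the level-$2$ statements floor by floor. For (i), in the region $0\le u<\ell$ no floor has yet wrapped around, so each factor is one of the positive tropical monomials of Proposition \ref{prop:Glev2}(i), and a product of positive monomials, being again a nontrivial monomial with nonnegative exponents, is positive. For (ii), in the region $-h^\vee\le u<0$ the sign of $[y_{\mathbf{i}}(u)]_{\mathbf{T}}$ equals the common sign of its level-$2$ factors, so the dichotomy between the vertices $\mathbf{i}\in\mathbf{I}^\circ$ together with $(4,i')$, $i'\in 3\mathbb{N}$ (negative, case (a)) and the vertices $(4,i')$ with $i'\not\in 3\mathbb{N}$ (the sign pattern in $u$ listed in case (b)) is inherited verbatim from Proposition \ref{prop:Glev2}(ii); the explicit $D_4$ description through $\sigma$ of \eqref{eq:Gsigma}, Lemma \ref{lem:Gorbit} and Proposition \ref{prop:Gtvec} identifies the core part of each factor with a positive root of $D_4$, which pins down the exponents. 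Parts (iii) and (iv) are the tropical half- and full-period identities: (iii) reduces floorwise to Proposition \ref{prop:Glev2}(iii), the exponent $2\ell-i'$ (resp.\ $\ell-i'$) being the accumulated sum of the per-floor shifts, and (iv) reduces to Proposition \ref{prop:Glev2}(iv), with the $\boldsymbol{\omega}$-symmetry of Lemma \ref{lem:GQmut}(ii) absorbing the relabelling. As in the parenthetical remark after Proposition \ref{prop:Glev2}, one also verifies along the way that no monomial equals $1$, which is routine.

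The main obstacle is therefore purely organizational: establishing the factorization property in the $G_2$ setting with $t=3$, i.e.\ tracking carefully how the $\ell-1$ floors and the six $\circ$-labels interact under \eqref{eq:Gmutseq}. Once that is in place Proposition \ref{prop:Glevh} is immediate, and it then feeds, together with Proposition \ref{prop:Glev2}, into the tropical periodicity and counting results for $G_2$ (the analogues of Theorems \ref{thm:CtYperiod}, \ref{thm:Clevhd} and \ref{thm:FtYperiod}, \ref{thm:Flevhd}), exactly as in \cite[Sections 5 and 6]{IIKKN}.
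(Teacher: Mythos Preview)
Your approach is correct and coincides with the paper's: Proposition \ref{prop:Glevh} is stated without proof, relying (as for $C_r$ and $F_4$) on the factorization property of \cite[Proposition~4.1]{IIKKN} to lift Proposition \ref{prop:Glev2} to higher levels. One small slip: in your discussion of (iii) the index shifts for $G_2$ are $\ell-i'$ and $3\ell-i'$ (since $t=3$), not $2\ell-i'$.
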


We obtain corollaries of
Propositions \ref{prop:Glev2} and  \ref{prop:Glevh}.

\begin{theorem}
\label{thm:GtYperiod}
For $[\mathcal{G}_Y(B,y)]_{\mathbf{T}}$,
the following relations hold.
\par
(i) Half periodicity: 
$[y_{\mathbf{i}}(u+h^{\vee}+\ell)]_{\mathbf{T}}
=[y_{\boldsymbol{\omega}(\mathbf{i})}(u)]_{\mathbf{T}}$.
\par
(ii) 
 Full periodicity: 
$[y_{\mathbf{i}}(u+2(h^{\vee}+\ell))]_{\mathbf{T}}
=[y_{\mathbf{i}}(u)]_{\mathbf{T}}$.
\end{theorem}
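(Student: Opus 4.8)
The plan is to obtain Theorem~\ref{thm:GtYperiod} as a short corollary of Propositions~\ref{prop:Glev2} and~\ref{prop:Glevh}, following the method already used for types $B_r$ and $C_r$ in~\cite{IIKKN}; the only genuinely $G_2$-specific complication is that, since $t=3$, one period of the mutation sequence~\eqref{eq:Gmutseq} consists of six composite mutations and the symmetry group of the quiver intervening in the bookkeeping is $S_3$ (via the twists $\boldsymbol{\nu}_s$) rather than $S_2$.

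First I would record that forward evolution $[y_{\mathbf{i}}(u)]_{\mathbf{T}}\mapsto[y_{\mathbf{i}}(u+\tfrac13)]_{\mathbf{T}}$ along~\eqref{eq:Gmutseq}, and likewise backward evolution, is deterministic, so that the two families $u\mapsto[y_{\mathbf{i}}(u+h^{\vee}+\ell)]_{\mathbf{T}}$ and $u\mapsto[y_{\boldsymbol{\omega}(\mathbf{i})}(u)]_{\mathbf{T}}$ will coincide for all $u$ as soon as they coincide at a single value $u_0$, \emph{provided} that the shift $u\mapsto u+h^{\vee}+\ell$ composed with the relabeling $\boldsymbol{\omega}$ is a symmetry of the evolution. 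The latter is essentially Lemma~\ref{lem:GQmut}(ii): since $h^{\vee}+\ell\equiv\ell\pmod 2$, a shift by $h^{\vee}+\ell$ carries $Q$ to $Q$ when $\ell$ is even and to $\boldsymbol{\nu}_{(13)}(Q)^{\mathrm{op}}$ when $\ell$ is odd, which is exactly $\boldsymbol{\omega}(Q)$. One must still check that the up-down reflection $\boldsymbol{\omega}$ carries the vertex set mutated at time $u$ onto the one mutated at time $u+h^{\vee}+\ell$, i.e.\ that $\boldsymbol{\omega}$ does or does not exchange $\mathbf{I}^{\bullet}_{+}\leftrightarrow\mathbf{I}^{\bullet}_{-}$ and permutes the six classes $\mathbf{I}^{\circ}_{\mathrm{I}},\dots,\mathbf{I}^{\circ}_{\mathrm{VI}}$ in accordance with the parity of $\ell$; this is a finite combinatorial verification on Figure~\ref{fig:quiverG}, of the same nature as in the $B_r$ case. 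The positivity/negativity statements in Propositions~\ref{prop:Glev2}(i),(ii) and~\ref{prop:Glevh}(i),(ii) enter only to guarantee that each intermediate tropical monomial is $\neq 1$, so that the evolution is indeed a well-defined deterministic map (cf.\ the remark after Proposition~\ref{prop:Clev2}).

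For the single-point check I would take $u_0=-h^{\vee}$. By Proposition~\ref{prop:Glev2}(iii) when $\ell=2$ and Proposition~\ref{prop:Glevh}(iii) when $\ell>2$, one has $[y_{\mathbf{i}}(\ell)]_{\mathbf{T}}=y_{i,\ell-i'}^{-1}$ for $i\neq 4$ and $y_{4,3\ell-i'}^{-1}$ for $i=4$, that is $[y_{\mathbf{i}}(\ell)]_{\mathbf{T}}=y_{\boldsymbol{\omega}(\mathbf{i})}^{-1}$ because $\boldsymbol{\omega}$ is the up-down reflection of $\mathbf{I}$; and by Proposition~\ref{prop:Glev2}(iv), resp.\ Proposition~\ref{prop:Glevh}(iv), one has $[y_{\mathbf{j}}(-h^{\vee})]_{\mathbf{T}}=y_{\mathbf{j}}^{-1}$ for every $\mathbf{j}$, hence $[y_{\boldsymbol{\omega}(\mathbf{i})}(-h^{\vee})]_{\mathbf{T}}=y_{\boldsymbol{\omega}(\mathbf{i})}^{-1}$. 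Thus $[y_{\mathbf{i}}(-h^{\vee}+h^{\vee}+\ell)]_{\mathbf{T}}=[y_{\boldsymbol{\omega}(\mathbf{i})}(-h^{\vee})]_{\mathbf{T}}$, and by the previous paragraph this propagates to all $u$, which proves~(i). Statement~(ii) then follows by applying~(i) twice and using $\boldsymbol{\omega}^{2}=\mathrm{id}$:
\[
[y_{\mathbf{i}}(u+2(h^{\vee}+\ell))]_{\mathbf{T}}
=[y_{\boldsymbol{\omega}(\mathbf{i})}(u+h^{\vee}+\ell)]_{\mathbf{T}}
=[y_{\boldsymbol{\omega}^{2}(\mathbf{i})}(u)]_{\mathbf{T}}
=[y_{\mathbf{i}}(u)]_{\mathbf{T}}.
\]
The main obstacle in this plan lies not in the present theorem but upstream, in establishing Propositions~\ref{prop:Glev2} (the level-$2$ tropical Y-system, with its $D_4$ root-system description via Lemmas~\ref{lem:Gorbit} and Proposition~\ref{prop:Gtvec}) and~\ref{prop:Glevh} (the factorization at higher levels); once those are granted, the only care needed here is to pin down the $\boldsymbol{\omega}$-equivariance of the six-step sequence~\eqref{eq:Gmutseq} and its $S_3$-twists for both parities of $\ell$.
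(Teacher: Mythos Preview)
Your proposal is correct and follows essentially the same approach as the paper, which states Theorem~\ref{thm:GtYperiod} as a direct corollary of Propositions~\ref{prop:Glev2} and~\ref{prop:Glevh} without spelling out any further argument; you have in fact supplied more detail than the paper does, namely the single-point check at $u_0=-h^{\vee}$ combining parts~(iii) and~(iv), and the propagation via Lemma~\ref{lem:GQmut}(ii). One small inaccuracy: the positivity/negativity statements in parts~(i) and~(ii) of the propositions are not needed for determinism of the tropical evolution (tropical mutation is always well-defined); they are used elsewhere, for the counts $N_{\pm}$ in Theorem~\ref{thm:Glevhd}, so you can simply drop that parenthetical remark.
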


\begin{theorem}
\label{thm:Glevhd}
For $[\mathcal{G}_Y(B,y)]_{\mathbf{T}}$,
let $N_+$ and $N_-$ denote the
total numbers of the positive and negative monomials,
respectively,
among $[y_{\mathbf{i}}(u)]_{\mathbf{T}}$
for $(\mathbf{i},u):\mathbf{p}_+$
in the region $0\leq u < 2(h^{\vee}+\ell)$.
Then, we have
\begin{align}
N_+=6\ell(2\ell+1),
\quad
N_-=12(3\ell-2).
\end{align}
\end{theorem}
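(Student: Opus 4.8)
The plan is to mimic the argument used for type $B_r$ in \cite[Section~6]{IIKKN}, which was already invoked for types $C_r$ and $F_4$ (Theorems \ref{thm:Clevhd} and \ref{thm:Flevhd}): one combines the periodicities of the tropical Y-system with the explicit sign pattern recorded in Propositions \ref{prop:Glev2} and \ref{prop:Glevh}, and then counts. Throughout one uses $h^{\vee}=4$, $t=3$, and the elementary fact that whether a Laurent monomial in the free generators $y_{\mathbf{i}}$ is positive, negative, or neither is invariant under any permutation of the $y_{\mathbf{i}}$.

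First I would apply Theorem \ref{thm:GtYperiod}. The half periodicity $[y_{\mathbf{i}}(u+h^{\vee}+\ell)]_{\mathbf{T}}=[y_{\boldsymbol{\omega}(\mathbf{i})}(u)]_{\mathbf{T}}$, together with the fact that $\boldsymbol{\omega}$ only permutes the vertices of $\mathbf{I}$ (it fixes $\mathbf{I}^{\circ}$, $\mathbf{I}^{\bullet}$, and the set of rows $i'\in 3\mathbb{N}$ in column $4$), shows that the monomials at $\mathbf{p}_+$-points in the window $h^{\vee}+\ell\le u<2(h^{\vee}+\ell)$ have the same positive/negative breakdown as those in $0\le u<h^{\vee}+\ell$; hence $N_{\pm}=2\widetilde{N}_{\pm}$, where $\widetilde{N}_{\pm}$ counts the positive/negative monomials at $\mathbf{p}_+$-points with $0\le u<h^{\vee}+\ell$. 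I then split this interval as $[0,\ell)\sqcup[\ell,h^{\vee}+\ell)$. By Proposition \ref{prop:Glevh}(i) every monomial on $[0,\ell)$ is positive, so it contributes only to $\widetilde{N}_+$; and the full periodicity combined with the half periodicity carries the window $[\ell,h^{\vee}+\ell)$ by $\boldsymbol{\omega}$ onto $[-h^{\vee},0)$, where the signs are dictated by Proposition \ref{prop:Glevh}(ii) (or Proposition \ref{prop:Glev2}(ii) when $\ell=2$).

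What remains is bookkeeping. From the definition of $\mathbf{p}_+$: a vertex in $\mathbf{I}^{\circ}$ is a $\mathbf{p}_+$-point exactly once in every $u$-interval of length $2$ (at the residue prescribed by its label $\mathrm{I},\dots,\mathrm{VI}$), and a vertex in $\mathbf{I}^{\bullet}$ is a $\mathbf{p}_+$-point three times in every such interval (at $u\equiv 0,\frac13,\frac23$-shifted residues according to its sign). With $|\mathbf{I}^{\circ}|=3(\ell-1)$ and $|\mathbf{I}^{\bullet}|=3\ell-1$, this lets one count the $\mathbf{p}_+$-points in $[-h^{\vee},0)=[-4,0)$, which spans two full periods; in particular that window contains $24\ell-12$ such points. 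By Proposition \ref{prop:Glevh}(ii) the monomials there at $\circ$-vertices and at the $\ell-1$ vertices $(4,i')$ with $i'\in 3\mathbb{N}$ are all negative, while at each of the $2\ell$ vertices $(4,i')$ with $i'\notin 3\mathbb{N}$ exactly three of the six $\mathbf{p}_+$-occurrences are negative and three positive (the six listed ``negative'' $u$-values, resp.\ ``positive'' ones, distribute as $3+3$ over the $\bullet_+$ and $\bullet_-$ residue classes). Summing gives $6(3\ell-2)$ negative and $6\ell$ positive monomials on $[-h^{\vee},0)$, hence on $[\ell,h^{\vee}+\ell)$. Finally, the number of $\mathbf{p}_+$-points with $0\le u<\ell$ equals $3\ell(2\ell-1)$, most cleanly because the total number of $\mathbf{p}_+$-points on $[0,2(h^{\vee}+\ell))$ is $\frac{1}{2}\cdot 2t(h^{\vee}+\ell)((\sum_{a\in I}t_a)\ell-r)$, half the $\mathcal{I}_{\ell}$-count recalled after Theorem \ref{thm:DI2}, so the count on $[0,h^{\vee}+\ell)$ is half of that and subtracting the $24\ell-12$ points of $[\ell,h^{\vee}+\ell)$ leaves $3\ell(2\ell-1)$, all positive. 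Therefore $\widetilde{N}_+=3\ell(2\ell-1)+6\ell=3\ell(2\ell+1)$ and $\widetilde{N}_-=6(3\ell-2)$, and doubling yields $N_+=6\ell(2\ell+1)$, $N_-=12(3\ell-2)$.

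The step I expect to be the real obstacle is the one already flagged after Proposition \ref{prop:Clev2}: for the ``total minus negatives'' computation of $N_+$ to be legitimate one must know that \emph{every} monomial $[y_{\mathbf{i}}(u)]_{\mathbf{T}}$ at a $\mathbf{p}_+$-point is honestly positive or honestly negative, in particular never equal to $1$, so that the $\mathbf{p}_+$-points split cleanly into the two counted classes. This non-triviality is part of what Propositions \ref{prop:Glev2} and \ref{prop:Glevh} assert on $-h^{\vee}\le u<0$ and $0\le u<\ell$, and it is propagated to all $u$ by the periodicities; verifying it uniformly, while keeping the even-$\ell$ versus odd-$\ell$ assignment of the residues $\mathrm{I},\dots,\mathrm{VI}$ consistent in the period count, is where care is needed, although no idea beyond the $B_r$ case is required.
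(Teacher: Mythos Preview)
Your proposal is correct and follows essentially the same approach the paper intends: it is precisely the counting argument of \cite[Section~6]{IIKKN}, transported to $G_2$ via Propositions \ref{prop:Glev2}, \ref{prop:Glevh}, and Theorem \ref{thm:GtYperiod}, and the paper itself gives no further details beyond that reference. One small redundancy: to carry $[\ell,h^{\vee}+\ell)$ onto $[-h^{\vee},0)$ you only need the half periodicity (shift by $-(h^{\vee}+\ell)$), not the full one; and the compatibility of $\boldsymbol{\omega}$ with the $\mathbf{p}_+$-labelling (needed to match $\mathbf{p}_+$-points across the shift) is exactly what Lemma \ref{lem:GQmut}(ii) encodes, separately for $h^{\vee}+\ell$ even or odd.
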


\subsection{Periodicities and dilogarithm identities}

Applying \cite[Theorem 5.1]{IIKKN} to
Theorem  \ref{thm:GtYperiod},
we obtain the periodicities:

\begin{theorem}
\label{thm:Gxperiod}
For $\mathcal{A}(B,x,y)$,
the following relations hold.
\par
(i) Half periodicity: 
$x_{\mathbf{i}}(u+h^{\vee}+\ell)
=x_{\boldsymbol{\omega}(\mathbf{i})}(u)
$.
\par
(ii) 
 Full periodicity: 
$x_{\mathbf{i}}(u+2(h^{\vee}+\ell))
=x_{\mathbf{i}}(u)
$.
\end{theorem}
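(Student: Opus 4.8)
The plan is to deduce Theorem~\ref{thm:Gxperiod} directly from the already-established tropical periodicity for $G_2$, in exactly the manner used for $C_r$ and $F_4$. The key input is Theorem~\ref{thm:GtYperiod}, which asserts the half- and full-periodicity of the tropical $Y$-variables $[y_{\mathbf{i}}(u)]_{\mathbf{T}}$ under $u\mapsto u+h^{\vee}+\ell$ (up to the involution $\boldsymbol{\omega}$) and under $u\mapsto u+2(h^{\vee}+\ell)$. First I would invoke \cite[Theorem~5.1]{IIKKN}, the general principle already applied in the $B_r$ case, which converts periodicity statements for the tropical $Y$-system into periodicity statements for the cluster variables $x_{\mathbf{i}}(u)$ in $\mathcal{A}(B,x,y)$. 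Concretely, \cite[Theorem~5.1]{IIKKN} says that if the tropical $Y$-pattern associated with a mutation sequence has a certain periodicity (realized by a permutation $\nu$ of the index set together with an $\mathrm{op}$ or sign change, consistent with the quiver mutation data), then the full $x$-pattern has the corresponding periodicity with the same permutation. Since the mutation sequence \eqref{eq:Gmutseq} for $G_2$ and the quiver identities in Lemma~\ref{lem:GQmut} are exactly of the shape required by that theorem, the conclusion transfers verbatim.

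Carrying this out, the steps are: (1) record that Lemma~\ref{lem:GQmut}(ii) gives $\boldsymbol{\omega}(Q)=Q$ when $h^{\vee}+\ell$ is even and $\boldsymbol{\omega}(Q)=\boldsymbol{\nu}_{(13)}(Q)^{\mathrm{op}}$ when it is odd, so that the half-period shift $u\mapsto u+h^{\vee}+\ell$ lands back on a quiver in the mutation orbit, with $\boldsymbol{\omega}$ as the relabeling permutation; (2) check that the $c$-vectors/coefficient tropicalizations satisfy the sign-coherence and periodicity hypotheses of \cite[Theorem~5.1]{IIKKN}—but this is precisely the content of Theorem~\ref{thm:GtYperiod}, which has been proved via Propositions~\ref{prop:Glev2} and \ref{prop:Glevh}; (3) apply \cite[Theorem~5.1]{IIKKN} to obtain $x_{\mathbf{i}}(u+h^{\vee}+\ell)=x_{\boldsymbol{\omega}(\mathbf{i})}(u)$, which is part~(i); (4) compose the half-periodicity with itself, noting that $\boldsymbol{\omega}$ is an involution, to get $x_{\mathbf{i}}(u+2(h^{\vee}+\ell))=x_{\boldsymbol{\omega}^2(\mathbf{i})}(u)=x_{\mathbf{i}}(u)$, which is part~(ii).

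The only genuine subtlety—and the step I would be most careful about—is verifying that the permutation $\boldsymbol{\omega}$ appearing in the tropical periodicity is compatible with the data needed by \cite[Theorem~5.1]{IIKKN}, namely that it intertwines the mutation sequence correctly both when $h^{\vee}+\ell$ is even and when it is odd (the two cases of Lemma~\ref{lem:GQmut}(ii) produce slightly different identifications, one involving the extra $\boldsymbol{\nu}_{(13)}$-twist and an $\mathrm{op}$). This bookkeeping is identical to the $C_r$ and $F_4$ cases treated just above, where the analogous Theorems~\ref{thm:Cxperiod} and \ref{thm:Fxperiod} were deduced in one line from Theorems~\ref{thm:CtYperiod} and \ref{thm:FtYperiod}; for $G_2$ the period $h^{\vee}+\ell = 4+\ell$ and the six-step mutation cycle \eqref{eq:GB2} play the roles that the four-step cycle \eqref{eq:CB2} played before, but the formal argument is unchanged. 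Thus the proof is a direct citation: \emph{Theorem~\ref{thm:Gxperiod} follows by applying \cite[Theorem~5.1]{IIKKN} to Theorem~\ref{thm:GtYperiod}, exactly as in the $B_r$ case and as in Theorems~\ref{thm:Cxperiod} and \ref{thm:Fxperiod}.} I expect no new obstacle beyond confirming that the hypotheses of that black-box theorem are met, which they are by construction of the quiver $Q_{\ell}(G_2)$ and the sequence \eqref{eq:Gmutseq}.
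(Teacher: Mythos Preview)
Your proposal is correct and matches the paper's approach exactly: the paper states that Theorem~\ref{thm:Gxperiod} follows by applying \cite[Theorem~5.1]{IIKKN} to Theorem~\ref{thm:GtYperiod}, just as in the $B_r$, $C_r$, and $F_4$ cases. Your additional unpacking of the hypotheses (Lemma~\ref{lem:GQmut}(ii), the compatibility of $\boldsymbol{\omega}$ with the mutation sequence, and deriving (ii) from (i) via $\boldsymbol{\omega}^2=\mathrm{id}$) is accurate elaboration of what that one-line citation entails.
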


\begin{theorem}
\label{thm:Gyperiod}
For $\mathcal{G}(B,y)$,
the following relations hold.
\par
(i) Half periodicity: 
$y_{\mathbf{i}}(u+h^{\vee}+\ell)
=y_{\boldsymbol{\omega}(\mathbf{i})}(u)
$.
\par
(ii) 
 Full periodicity: 
$y_{\mathbf{i}}(u+2(h^{\vee}+\ell))
=y_{\mathbf{i}}(u)
$.
\end{theorem}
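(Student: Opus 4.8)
The plan is to derive Theorem~\ref{thm:Gyperiod} from the tropical periodicity already established in Theorem~\ref{thm:GtYperiod}, exactly as in the $B_r$ and $C_r$ cases, by invoking the general transfer result \cite[Theorem~5.1]{IIKKN}: if, along a periodic sequence of mutations, both the exchange matrices and the tropical $y$-variables $[y_{\mathbf{i}}(u)]_{\mathbf{T}}$ return to themselves under one and the same relabeling of the index set, then the cluster variables $x_{\mathbf{i}}(u)$ and the coefficients $y_{\mathbf{i}}(u)$ in the universal semifield $\mathbb{Q}_{\mathrm{sf}}(y)$ do so as well. This is the step where Plamondon's categorification of the cluster algebra of an arbitrary skew-symmetric matrix is used, supplying the sign-coherence and separation formulae that are needed because the quiver $Q_{\ell}(G_2)$ is infinite. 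Granting it, the periodicities of $y_{\mathbf{i}}(u)$ follow from those of $[y_{\mathbf{i}}(u)]_{\mathbf{T}}$, and the elements $1+y_{\mathbf{i}}(u)$ carry the same periodicity for free, since the asserted identities hold already in $\mathbb{Q}_{\mathrm{sf}}(y)\supseteq\mathcal{G}(B,y)$.

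Concretely I would first record the quiver periodicity. By Lemma~\ref{lem:GQmut}(i), six consecutive composite mutations of the sequence \eqref{eq:Gmutseq} return $Q=Q_{\ell}(G_2)$ to itself while advancing $u$ by $2$. Hence over $u\mapsto u+2(h^{\vee}+\ell)$ one performs $6(h^{\vee}+\ell)$ composite mutations, a multiple of six, so the exchange matrix returns to $B$; over the half-period $u\mapsto u+h^{\vee}+\ell$ one performs $3(h^{\vee}+\ell)$ composite mutations, a multiple of six exactly when $h^{\vee}+\ell$ is even, and otherwise ending at the third stage of the cycle \eqref{eq:GB2}, namely at the quiver $\boldsymbol{\nu}_{(13)}(Q)^{\mathrm{op}}$, which by Lemma~\ref{lem:GQmut}(ii) equals $\boldsymbol{\omega}(Q)$. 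Feeding this, together with Theorem~\ref{thm:GtYperiod}, into \cite[Theorem~5.1]{IIKKN} yields $y_{\mathbf{i}}(u+2(h^{\vee}+\ell))=y_{\mathbf{i}}(u)$ and $y_{\mathbf{i}}(u+h^{\vee}+\ell)=y_{\boldsymbol{\omega}(\mathbf{i})}(u)$ for all $(\mathbf{i},u)\in\mathbf{I}\times\frac{1}{3}\mathbb{Z}$, and the same application simultaneously gives Theorem~\ref{thm:Gxperiod} for the cluster variables; translating through the bijection $g'$ of the preceding lemma then recovers the half- and full-periodicities of $\mathbb{Y}_{\ell}(G_2)$ in Theorem~\ref{thm:Yperiod}.

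The main obstacle is entirely upstream, not in this statement: everything rests on Propositions~\ref{prop:Glev2} and \ref{prop:Glevh}, whose core is the explicit matching of the level-$2$ tropical Y-system with the root system of $D_4$ (Lemma~\ref{lem:Gorbit}, Proposition~\ref{prop:Gtvec}) and the factorization property at higher levels. Granting those, Theorem~\ref{thm:Gyperiod} is a formal consequence. The one feature genuinely specific to $G_2$, beyond quoting \cite[Theorem~5.1]{IIKKN} verbatim, is the bookkeeping forced by $t=3$: the mutation sequence has period six, cycling through three relabelled copies of the quiver via the permutations $\boldsymbol{\nu}_s$, rather than period four with two copies as for $B_r$ and $C_r$, so one must check that the up--down reflection $\boldsymbol{\omega}$ agrees with the permutation appearing at the corresponding stage of \eqref{eq:GB2} and is compatible with the labels $x_{\mathbf{i}}(u)$, $y_{\mathbf{i}}(u)$. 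This is routine; it is precisely what Lemma~\ref{lem:GQmut}(ii) and the preceding bijection lemma were arranged to provide.
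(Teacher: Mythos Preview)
Your proposal is correct and follows exactly the paper's approach: the paper simply states that Theorems~\ref{thm:Gxperiod} and \ref{thm:Gyperiod} are obtained by applying \cite[Theorem~5.1]{IIKKN} to Theorem~\ref{thm:GtYperiod}, and you have fleshed out precisely what that application entails, including the quiver-periodicity bookkeeping from Lemma~\ref{lem:GQmut}. One small terminological slip: the quiver $Q_{\ell}(G_2)$ is not infinite (it has finitely many vertices for each fixed $\ell$); what matters for invoking Plamondon's results is that it is not mutation-equivalent to a Dynkin quiver in general, so the earlier categorification theorems do not directly apply.
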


Then,
 Theorems \ref{thm:Tperiod} and \ref{thm:Yperiod}
 for $G_2$ follow from
Theorems
\ref{thm:GTiso}, \ref{thm:GYiso},
\ref{thm:Gxperiod}, and \ref{thm:Gyperiod}.
Furthermore, Theorem \ref{thm:DI2}  for $G_2$ is  obtained from
the above periodicities  and Theorem \ref{thm:Glevhd} as
in the $B_r$ case \cite[Section 6]{IIKKN}.

\section{Mutation equivalence of quivers}

Recall that two quivers $Q$ and $Q'$ are said to
be {\em mutation equivalent},
and denoted by $Q\sim Q'$ here,
if there is a quiver isomorphism from $Q$ to some
quiver obtained from $Q'$ by successive mutations.

Below we present several mutation equivalent pairs
of the quivers $Q_{\ell}(X_r)$,
though the list is not complete at all.
For simply laced $X_r$,
$Q_{\ell}(X_r)$ is the quiver defined as the square product
$\vec{X}_r\square \vec{A}_{\ell-1}$
in \cite[Section 8]{Ke}.

\begin{proposition}
We have the following mutation equivalences
of quivers.
\begin{align}
\begin{split}
Q_{2}(B_r)&\sim Q_2(D_{2r+1}),\\
Q_{2}(C_3)&\sim Q_3(D_{4}),\\
Q_{2}(F_4)&\sim Q_3(D_{5}),\\
Q_{3}(C_2)&\sim Q_4(A_{3}),\\
Q_{\ell}(G_2)&\sim Q_{\ell}(C_{3}).
\end{split}
\end{align}

\end{proposition}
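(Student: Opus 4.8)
The plan is to establish the five mutation equivalences by exhibiting, in each case, an explicit finite sequence of mutations taking one quiver to the other (up to quiver isomorphism). Since the quivers $Q_\ell(X_r)$ are drawn explicitly in the figures of the preceding sections (and, for simply laced $X_r$, coincide with the square products $\vec X_r\square \vec A_{\ell-1}$), each verification is a finite combinatorial check. For the first equivalence $Q_2(B_r)\sim Q_2(D_{2r+1})$, I would recall that $Q_2(B_r)$ has one ``long'' column of $2r-1$ vertices hanging off the short nodes and $r-1$ columns of length $1$; the claim is that after folding/unfolding along the $B_r$ short root this reorganizes into the type-$D_{2r+1}$ shape $\vec D_{2r+1}\square \vec A_1$. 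Concretely I would read off $B_2(B_r)$ from the companion paper \cite{IIKKN}, perform the sequence of mutations at the ``$\bullet$'' vertices that already appears in the level-$2$ analysis there (the $D$-part mutations), and check the resulting quiver is isomorphic to $Q_2(D_{2r+1})$ by matching vertices and arrows.

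The remaining four are small and can be done by direct computation. For $Q_2(C_3)\sim Q_3(D_4)$: $Q_2(C_3)$ has (from Figure \ref{fig:quiverC} with $r=3$, $\ell=2$) a handful of vertices — two columns of length $1$ on the left and a right block built from columns of length $3$ — while $Q_3(D_4) = \vec D_4\square\vec A_2$ has $4\times 2=8$ vertices; I would count vertices on both sides (they must agree), then produce an explicit mutation sequence. Similarly $Q_2(F_4)\sim Q_3(D_5)$ compares the small $\ell=2$ quiver of Figure \ref{fig:quiverF} with $\vec D_5\square\vec A_2$ ($5\times 2 = 10$ vertices), and $Q_3(C_2)\sim Q_4(A_3)$ compares Figure \ref{fig:quiverG}-type data for $C_2$ at level $3$ with $\vec A_3\square\vec A_3$ ($3\times 3=9$ vertices — note $A_3 = D_3$, consistent with the $C_2 = B_2$ coincidence and the pattern of the first line). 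For $Q_\ell(G_2)\sim Q_\ell(C_3)$ I would observe that both quivers have, per ``level slice'', one column of length $3\ell-1$ (the long/short column) and three columns of length $\ell-1$; matching the arrow patterns of Figure \ref{fig:quiverG} against Figure \ref{fig:quiverC} at $r=3$, I expect these are in fact related by a short mutation sequence at the $\circ$-vertices (indeed they have the same vertex count $3(\ell-1)+(3\ell-1)$).

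The key technical device throughout is that one is allowed arbitrary finite mutation sequences, so the natural strategy is: (1) pin down the adjacency matrix $B_\ell(X_r)$ from the figure; (2) apply the composite mutations $\mu^\circ_\bullet$, $\mu^\bullet_\pm$ used already in Lemmas \ref{lem:CQmut}, \ref{lem:FQmut}, \ref{lem:GQmut} (these are tame — no multiple arrows are created, since in this paper all $B_{\mathbf i\mathbf j}\in\{-1,0,1\}$), and track how the quiver deforms; (3) recognize a standard form. For the simply laced targets one can also invoke Keller's description \cite{Ke} of $\vec X_r\square \vec A_{\ell-1}$ and its known mutation class, reducing each claim to matching one of our quivers with a square product.

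The main obstacle I anticipate is purely bookkeeping: there is no conceptual difficulty, but the arrow orientations in the nonsimply laced quivers $Q_\ell(C_r)$, $Q_\ell(F_4)$, $Q_\ell(G_2)$ are intricate near the ``short-root column'' (the doubled-arrow region of the Dynkin diagram manifests as the extra columns glued along their shared rightmost column), so one must be careful that the mutation sequence does not accidentally create a $2$-cycle or a double arrow at an intermediate step, and that the final quiver isomorphism respects all orientations. I would organize the verification type by type, display each intermediate quiver for the three small cases ($C_3$ at level $2$, $F_4$ at level $2$, $C_2$ at level $3$) since they are tiny, and give the $Q_\ell(G_2)\sim Q_\ell(C_3)$ and $Q_2(B_r)\sim Q_2(D_{2r+1})$ equivalences by a uniform-in-$\ell$ (resp. uniform-in-$r$) mutation sequence described in words plus a representative picture.
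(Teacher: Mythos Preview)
The paper states this proposition without proof; there is no argument given after the statement, only the bibliography. So there is nothing to compare against except the implicit claim that each equivalence is a finite combinatorial check.

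Your plan is exactly the natural one and would succeed if carried out: for each pair, write down the two explicit quivers from the figures and find a mutation sequence connecting them. Your vertex counts are all correct ($8$, $10$, $9$, and $6\ell-4$ respectively), which is a necessary sanity check. Two small descriptive slips to fix before you implement: for $Q_2(C_3)$ the left block has two columns of length $2\ell-1=3$ and the right block has two columns of length $\ell-1=1$, not the other way around as you wrote; and $Q_\ell(C_3)$ does \emph{not} have the same column profile as $Q_\ell(G_2)$ (it has two columns of length $2\ell-1$ and two of length $\ell-1$, rather than one of length $3\ell-1$ and three of length $\ell-1$), so the $G_2\sim C_3$ equivalence genuinely requires a nontrivial mutation sequence rather than a direct quiver isomorphism. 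None of this affects the validity of your approach---these are bookkeeping corrections, and once made the verification is mechanical (and for the small cases can simply be done with Keller's quiver mutation applet or by hand).
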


\end{document}